\definecolor{trp}{rgb}{1,1,1}
\definecolor{red}{rgb}{1,0,.2}
\newcommand*{\clrred}[1]{{\color{red} #1}}
\newcommand{\cred}[1]{\clrred{ #1}}
\newtheorem{theorem}{Theorem}[section]
\theoremstyle{plain}
\newtheorem{claim}[theorem]{Claim}
\newtheorem{cor}[theorem]{Corollary}
\newtheorem{corollary}[theorem]{Corollary}
\newtheorem{definition}[theorem]{Definition}
\newtheorem{example}{Example}
\newtheorem{lemma}[theorem]{Lemma}
\newtheorem{prop}[theorem]{Proposition}
\newtheorem{remark}[theorem]{Remark}
\numberwithin{equation}{section}
\newcommand{\R}{\mathbb{R}}
\newcommand{\N}{\mathbb{N}}
\newcommand{\proj}{\mathrm{proj}}
\newcommand{\ii}{\mathbf{i}}
\newcommand{\jj}{\mathbf{j}}
\newcommand{\kk}{\mathbf{k}}
\newcommand{\iih}{\boldsymbol{\hat\imath}}
\newcommand{\jjh}{\boldsymbol{\hat\jmath}}
\newcommand{\kkh}{\mathbf{\hat k}}
\newcommand{\ih}{\hat\imath}
\newcommand{\jh}{\hat\jmath}
\newcommand{\iiv}{\overline{\imath}}
\newcommand{\jjv}{\overline{\jmath}}
\newcommand*{\e}[1]{\text{e}^{#1}}
\newcommand*{\arabicdec}[1]{\the\numexpr\value{#1}-1\relax}
\definecolor{blue}{rgb}{0,0,1}
\definecolor{red}{rgb}{1,0,.7}
\newcommand{\rk}[1]{\cred{ #1}\marginpar{\cred{$\spadesuit$}}}
\begin{document}
\title[Triangular Gatzouras--Lalley-type carpets]{Triangular Gatzouras--Lalley-type planar carpets with overlaps}

\author{Istv\'an Kolossv\'ary}
\address{Istv\'an Kolossv\'ary, Budapest University of Technology and Economics, MTA-BME Stochastics Research Group, P.O. Box 91, 1521 Budapest, Hungary;  \newline MTA Alfr\'ed R\'enyi Institute of Mathematics} \email{istvanko@math.bme.hu}

\author{K\'aroly Simon}
\address{K\'aroly Simon, Budapest University of Technology and Economics, MTA-BME Stochastics Research Group, P.O. Box 91, 1521 Budapest, Hungary} \email{simonk@math.bme.hu}

\thanks{2010 {\em Mathematics Subject Classification.} Primary 28A80 Secondary 28A78
\\ \indent
{\em Key words and phrases.} self-affine set with overlaps, Gatzouras--Lalley planar carpet, Hausdorff dimension, box dimension, Ledrappier--Young formula}

\begin{abstract}
We construct a family of planar self-affine carpets with overlaps using lower triangular matrices in a way that generalizes the original Gatzouras--Lalley carpets \cite{GatzourasLalley92} defined by diagonal matrices. Assuming the rectangular open set condition, Bara\'nski proved for this construction in \cite{BaranskiTriag_2008} that for typical parameters, which can be explicitly checked, the inequalities between the Hausdorff, box and affinity dimension of the attractor are strict. We generalize this result to overlapping constructions, where we allow complete columns to be shifted along the horizontal axis (as in \cite{fraser_shmerkin_2016} and \cite{pardo-simon}) or allow parallelograms to overlap within a column in a transversal way. Our main result is to show sufficient conditions under which these overlaps do not cause the drop of the dimension of the attractor. Several examples are provided to illustrate the results, including a self-affine smiley, a family of self-affine continuous curves, examples with overlaps and an application of our results to some three-dimensional systems.
\end{abstract}
\date{\today}

\maketitle

\thispagestyle{empty}
\section{Informal Introduction}

Gatzouras--Lalley carpets \cite{GatzourasLalley92} are the attractors of self-affine Iterated Function Systems (IFS) on the plane whose first level cylinders are aligned into columns using orientation preserving maps with linear parts given by diagonal matrices, see left hand side of Figure~\ref{fig:GLandTGL}. In this paper we consider a natural generalization of such carpets by replacing the diagonal matrices with lower triangular ones so that the column structure is preserved, see right hand side of Figure~\ref{fig:GLandTGL} and Definition~\ref{def:triagLalleyG}. 
\begin{figure}[h]
		\centering 	\includegraphics[width=.6\linewidth]{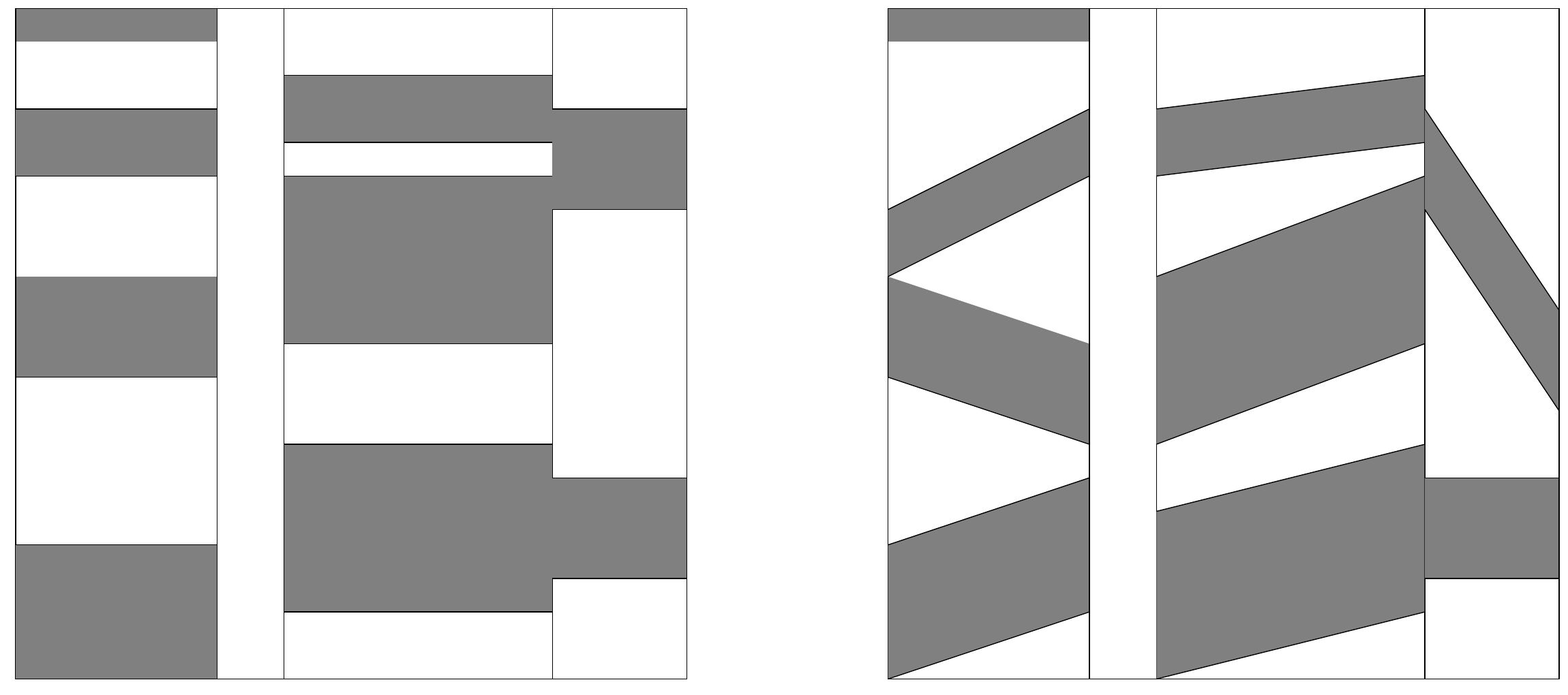}
	\caption{The IFS defining a Gatzouras--Lalley carpet on the left and a triangular Gatzouras--Lalley-type carpet on the right.}
	\label{fig:GLandTGL}
\end{figure}

We call them Triangular Gatzouras--Lalley-type (TGL) planar carpets, indicating that the linear part of the maps defining the IFS are triangular matrices and it is a natural generalization of the Gatzouras--Lalley construction. Such a particular TGL carpet (see the left hand side of Figure~\ref{fig:carpetex2}) appeared in the paper of Falconer and Miao~\cite{FalconerMiao07} and later in the paper of B\'ar\'any~\cite{barany15_LYformula}. For this particular example the box and the Hausdorff dimension of the attractor are the same. However, Bara\'nski~\cite{BaranskiTriag_2008} showed that this not true in general for TGL carpets under the assumption that the interior of the first level cylinders are disjoint (like the image in the right hand side of Figure~\ref{fig:GLandTGL}). In this paper we further generalize this result by allowing different types of overlaps between the cylinders. 

\begin{figure}[H]
	\centering
	\includegraphics[width=.85\linewidth]{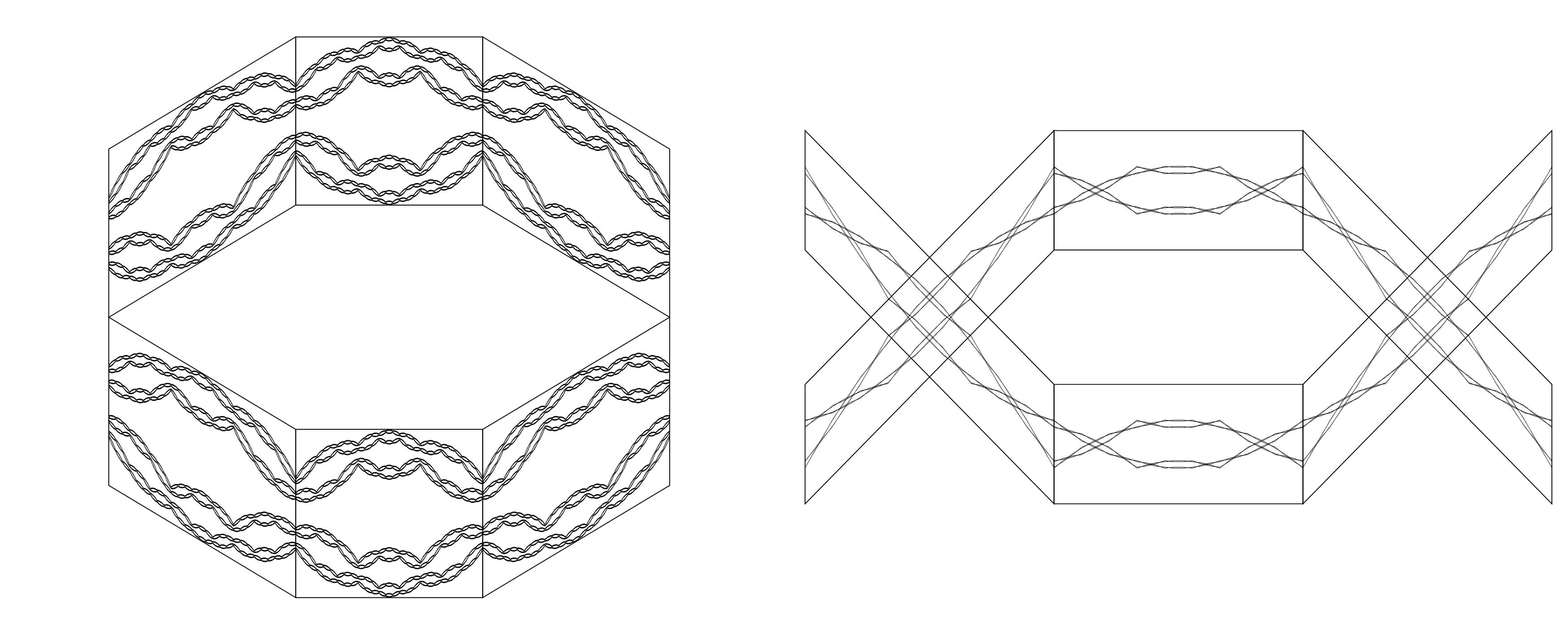}
	\caption{The attractor $\Lambda_a$ from Subsection \ref{subsec:Ex2} with parameter $a=3/10$ (left) and Subsection \ref{subsec:overlappingex} with parameter $a=3/20$ (right), shown together with the outlines of the images of $f_i([0,1]^2)$.}\label{fig:carpetex2}
\end{figure}

We distinguish three different kind of TGL carpets with overlapping cylinders, see Figure~\ref{fig:TGLOverlaps}. Under some conditions we compute the (typically different values of the) box- and Hausdorff dimension for carpets like the first two on Figure~\ref{fig:TGLOverlaps}. If the overlaps are as sophisticated as on the right hand side of Figure~\ref{fig:TGLOverlaps} then we can compute only the Hausdorff dimension. The Hausdorff dimension is not equal to the box dimension in the examples of Figure~\ref{fig:TGLOverlaps}, but they are equal for the example on the right hand side of Figure~\ref{fig:carpetex2}. 

\begin{figure}[H]
	\centering
	\includegraphics[width=.9\linewidth]{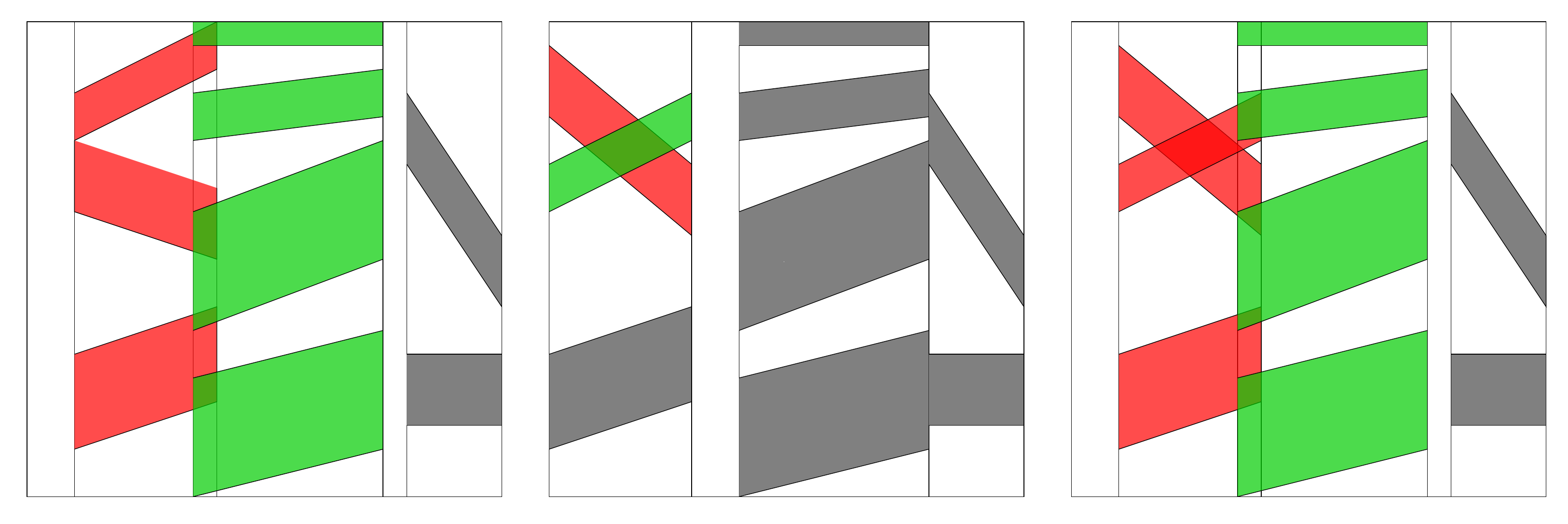}
	\caption{Triangular Gatzouras--Lalley-type carpets with different overlaps. Left: shifted columns satisfying Hochman's exponential separation condition. Center: non-overlapping columns, transversality condition. Right: mixture of both.}
	\label{fig:TGLOverlaps}
\end{figure}

\subsection{Methods used to handle overlaps}

For each type of overlap and dimension we used different methods:
\begin{itemize}
\item The upper bounds on the Hausdorff and box dimensions (after some simple observations)
 follow from proper adaptations of the results of Gatzouras--Lalley~\cite{GatzourasLalley92} and Fraser~\cite{Fraser12Boxlike}, respectively. 
\item To estimate the Hausdorff dimension from below we use the Ledrappier-Young formula of B\'ar\'any and K\"aenm\"aki \cite{BARANYAnti201788} (cited in Theorem \ref{thm:BaranyKaenmaki}) for self-affine measures. We show that this lower bound equals the upper bound
\begin{itemize}
	\item in case of overlapping like on the second figure of Figure \ref{fig:TGLOverlaps}
	by an argument inspired by the transversality method introduced in \cite{barany_rams_simon_triang_2017};
	\item in case of overlapping like on the third figure of Figure \ref{fig:TGLOverlaps},
	we introduce a new separation condition for the self-similar IFS obtained as the projection of the TGL carpet under consideration to the horizontal line. This separation condition is a	non-trivial consequence of Hochman's Exponential Separation Condition \cite{Hochman_Annals14}. We prove this in  Appendix \ref{app:NDD_WAUC} since it could be of separate interest.
\end{itemize}
  \item To estimate the box dimension from below we could not simply use the Hausdorff dimension of the attractor, because, in our case it is (typically) strictly smaller than the box dimension. Therefore,
\begin{itemize}
	\item in case of overlapping like on the first figure of Figure \ref{fig:TGLOverlaps} we used the method of Fraser and Shmerkin \cite{fraser_shmerkin_2016}: the main idea is to pass to a special subsystem of a higher iterate of the IFS which has non-overlapping columns;
	\item in case of overlapping like on the second figure of Figure \ref{fig:TGLOverlaps} we introduced a new argument to count overlapping boxes. It uses transversality and a result of Lalley~\cite{Lalley88} based on renewal theory, which gives the precise asymptotics of the number of boxes needed to cover the projection of the attractor to the horizontal line.
\end{itemize}  
\end{itemize}

\section{Formal Introduction}\label{sec:intro}

A self-affine Iterated Function System (IFS) is a finite list of contracting affine mappings on $\R^d$ of the form $ \mathcal F = \{ f_i(\underline{x}):=   A_i\underline{x} + t_i\}_{i=1}^N$, where the $A_i$ are non-singular $d\times d$ matrices and $t_i\in\R^d$ are translation vectors. It is well-known that there exists a unique non-empty compact subset $\Lambda_{\mathcal{F}}=\Lambda$ of $\R^d$, called the self-affine set or the attractor associated to $\mathcal{F}$, such that
\begin{equation*}
\Lambda = \bigcup_{i=1}^N f_i ( \Lambda).
\end{equation*}
For basic dimension theoretic definitions such as the Hausdorff, packing and (lower and upper) box dimension of a set and the Hausdorff and local dimension of measures we refer to Falconer \cite{FalconerBook}. Throughout, the Hausdorff, packing, lower and upper box dimension will be denoted by $\dim_{\rm H}, \dim_{\rm P}, \underline{\dim}_{\rm B}, \overline{\dim}_{\rm B}$ and $\dim_{\rm B}$, respectively.

A general upper bound for all aforementioned dimensions is given by the affinity dimension $\dim_{\rm{Aff}}$, introduced by Falconer~\cite{falconer_1988}, which comes from the "most natural" cover of the set. All self-affine sets satisfy
\begin{equation*}
\dim_{\rm H} \Lambda \leq \dim_{\rm P} \Lambda \leq \overline{\dim}_{\rm B}\Lambda \leq \min\{\dim_{\rm{Aff}}\Lambda,d\}.
\end{equation*}
In a generic sense, equality of dimensions is typical. Falconer proved in his seminal paper \cite{falconer_1988} that for fixed linear parts $\{A_1,\ldots,A_N\}$ if $\|A_i\|< 1/3$ and the translations are chosen randomly according to $N\times d$ dimensional Lebesgue measure then all the aforementioned dimensions of the self-affine set are equal. The $1/3$ bound was later relaxed by Solomyak~\cite{solomyak_1998} to $1/2$, which is sharp due to an example of Przytycki and Urba\'nski~\cite{PrzytyckiUrbanski1989}. Very recently B\'ar\'any, Hochman and Rapaport~\cite{BaranyHochmanRapaport} greatly improved these results in two dimensions by giving specific, but mild conditions on $\{A_1,\ldots,A_N\}$ under which the dimensions are equal. However, in specific cases, which do not fall under these conditions, strict inequality is possible. Planar carpets form a large class of examples in $\R^2$ for which this exceptional behavior is typical. The highly regular column and/or row structure causes the drop of the Hausdorff dimension. 
We continue with the formal definition of TGL carpets and then present some pictures to informally explain our contribution.

\subsection{Triangular Gatzouras--Lalley-type carpets}\label{subsec:setup} Denote the closed unit square by $R=[0,1]\times[0,1]$. Let $\mathcal A=\{A_1,\ldots,A_N\}$ be a family of $2\times 2$ invertible, strictly contractive, real-valued lower triangular matrices. The corresponding self-affine IFS is the collection of affine maps
\begin{equation}\label{def:IFS_F}
\mathcal F = \{f_i(\underline{x}):=  A_i\underline{x} + t_i\}_{i=1}^N, \text{ where } A_i=\begin{pmatrix}
b_i & 0 \\ d_i & a_i
\end{pmatrix} \text{ and } t_i=\begin{pmatrix} t_{i,1} \\ t_{i,2}
\end{pmatrix},
\end{equation}
for translation vectors $t_i$, with $t_{i,1},t_{i,2}\geq 0$. We assume that $a_i,b_i\in(0,1)$.

Orthogonal projection of $\mathcal{F}$ to the horizontal $x$-axis, denoted $\mathrm{proj}_x$, generates an important self-similar IFS on the line
\begin{equation}\label{def:verticalIFS}
\widetilde{\mathcal{H}} = \{\widetilde{h}_i(x):=  b_i x + t_{i,1}\}_{i=1}^N.
\end{equation}
We denote the attractor of $\mathcal F$ and $\widetilde{\mathcal{H}}$ by $\Lambda=\Lambda_{\mathcal{F}}$ and $\Lambda_{\widetilde{\mathcal{H}}}$ respectively.

\begin{definition}\label{def:triagLalleyG}
	We say that an IFS of the form \eqref{def:IFS_F} is \texttt{triangular} \texttt{Gatzouras}--\texttt{Lalley}-\texttt{type} \texttt{(TGL)}  and we call its attractor $\Lambda$ a triangular Gatzouras--Lalley-type planar carpet (TGL carpet for short)
	if the following conditions hold:
	\begin{enumerate}[(a)]
		\item direction-$x$ dominates, i.e.
		\begin{equation}\label{ass:dirx_dominates}
		0<a_i<b_i<1\; \text{ for all } i\in[N]:=\{1,2,\ldots,N\},
		\end{equation}
		\item column structure: there exists a partition of $[N]$ into $M>1$ sets $\mathcal{I}_1, \dots ,\mathcal{I}_M$ with cardinality $|\mathcal{I}_{\ih}|=N_{\ih}>0$ so that
		\begin{equation}
		\mathcal{I}_1=\{1,\ldots,N_1\} \text{ and }\mathcal{I}_{\ih} = \{N_1+\ldots+N_{\ih-1}+1,\ldots,N_1+\ldots+N_{\ih}\} \label{def:Partition}
		\end{equation}
		for $\ih=2,\ldots,M$. Assume that for two distinct indices $k$ and $\ell \in\{1,\ldots,N\}$
		\begin{equation}\label{ass:column_structure}
		\text{if there exists } \ih\in\!\{1,\ldots,M\} \text{ such that } k,\ell\!\in\!\mathcal I_{\ih}, \text{ then }
		\begin{cases}
		b_k=b_{\ell}=:r_{\ih}, \\ t_{k,1}=t_{\ell,1}=:u_{\ih}.
		\end{cases}
		\end{equation}
		We also introduce
		\begin{equation}\label{a72}
		\mathcal H = \{ h_{\ih}(x):=  r_{\ih} x + u_{\ih}\}_{\ih=1}^M,
		\end{equation}
		and we observe that the attractor $\Lambda_{\mathcal{H}}$ of $\mathcal{H}$
		is identical with $\Lambda_{\widetilde{\mathcal{H}}}$.
		\item we assume that $\sum_{j \in\mathcal{I}_{\ih} }a_j \leq 1$ holds for every $\ih \in\{1,\ldots,M\}$ and the non-overlapping column structure
		\begin{equation}\label{ass:NonOverlapCols}
		u_{\ih}+r_{\ih}\leq u_{\ih+1} \text{ for } \ih=1,\ldots,M-1 \;\text{ and }\; u_{M}+r_{M}\leq 1.
		\end{equation}
		\item Without loss of generality we always assume in this paper that
		\begin{description}
			\item[(A1)] $f_i(R)\subset R$ for all $i\in [N]$ and
			\item[(A2)] The smallest and the largest fixed points of the functions of $\mathcal{H}$ are
			$0$  and $1$ respectively.
		\end{description}
		
	\end{enumerate}
	Observe that the definition allows overlaps within columns (like the second figure in Figure~\ref{fig:TGLOverlaps}), but columns do not overlap.
	
	We say that $\Lambda$ is a \texttt{shifted TGL carpet} if we drop the assumption \eqref{ass:NonOverlapCols}, that is  non-overlapping column structure is NOT assumed, we require only that $\sum_{\ih=1}^M r_{\ih}\leq 1$ (like the first figure in Figure~\ref{fig:TGLOverlaps}).
\end{definition}

We often consider the following special cases:

\begin{definition}\label{def:UniformFibreDiagHomo}
We say that a shifted TGL carpet $\Lambda$ has \texttt{uniform vertical fibres} if
\begin{equation}\label{eq:UniformFibre}
\sum\limits_{j\in \mathcal{I}_{\ih} }a_j^{s-s_{\mathcal{H}}}=1 \text{ for every } \ih \in[M],
\end{equation}
where $s=\dim_{\rm B}\Lambda$ and $s_{\mathcal{H}}=\dim_{\rm B}\Lambda_{\mathcal{H}}$.

Furthermore, we call $\Lambda$ a \texttt{diagonally homogeneous shifted TGL carpet} if 
\begin{equation*}
b_i\equiv b\text{ and } a_i\equiv a \text{ for every } i\in[N].
\end{equation*}
In particular, a diagonally homogeneous carpet has uniform vertical fibres if $N/M\in\N$ and $N_{\ih}=N/M$ for every $\ih\in\{1,\ldots,M\}$.
\end{definition}

The special case when $N_{\ih}=1$ for all $\ih=1,\ldots,M$ is treated in the paper of B\'ar\'any, Rams and Simon~\cite[Lemma~3.1]{barany_rams_simon_triang_2017}.

\subsubsection*{Some notation}
The map $f_i$ is indexed by $i\in[N]$. To indicate which column $i$ belongs to in the partition \eqref{def:Partition} we use the function
\begin{equation}\label{def:phiFunc}
\phi: \{1,2,\ldots,N\}\to\{1,2,\ldots,M\},\;\; \phi(i):=\ih\; \text{ if } i\in\mathcal{I}_{\ih}.
\end{equation}
With this notation we can formulate the column structure \eqref{ass:column_structure} as
\begin{equation}\label{a73}
\text{if } \phi(k)=\phi(\ell)=\ih, \text{ then } b_k=b_{\ell}=:r_{\ih} \text{ and }  t_{k,1}=t_{\ell,1}=:u_{\ih}.
\end{equation}
Throughout, $i$ is an index from $[N]$, while $\ih$ with the hat is an index corresponding to a column from $\{1,\ldots,M\}$. We use analogous notation for infinite sequences $\ii=i_1i_2\ldots$ and $\iih=\ih_1\ih_2\ldots$, see Subsection~\ref{subsec:SymbNotation} for details.

For compositions of maps we use the standard notation $f_{i_1\ldots i_n}:=f_{i_1}\circ f_{i_2}\circ\ldots \circ f_{i_n}$, where $i_j\in~\{1,\ldots,N\}$. Similarly, for products of matrices we write
\begin{equation*}\label{a94}
A_{i_1 \dots i_n}:=A_{i_1}\cdot \ldots \cdot A_{i_n}:= \left(
\begin{array}{cc}
b_{i_1 \ldots i_n} & 0 \\
d_{i_1 \ldots i_n} & a_{i_1 \ldots i_n} \\
\end{array}
\right).
\end{equation*}
Immediate calculations give $b_{i_1 \ldots i_n}=b_{i_1}\cdot \ldots\cdot b_{i_n},\, a_{i_1 \ldots i_n}=a_{i_1}\cdot \ldots\cdot a_{i_n}$ and
\begin{equation}\label{a93}
d_{i_1 \dots i_n}=
\sum\limits_{\ell =1}^{n}
d_{i_{\ell }} \cdot
\prod_{k<\ell }a_{i_k} \cdot \prod_{r=\ell +1}^{n}b_{i_r},
\end{equation}
where by definition  $\prod\limits_{k<1}a_{i_k}:=1$  and $\prod\limits_{r=n+1}^{n}b_{i_r}:=1$. The image $R_{i_1\ldots i_n}:=f_{i_1\ldots i_n}(R)$ is a parallelogram with two vertical sides, see Figure \ref{a89}. We refer to $b_{i_1 \ldots i_n}$ as the \textit{width}, $a_{i_1 \ldots i_n}$ as the \textit{height} and $\gamma_{i_1 \dots i_n}$ as the angle of the longer side of the parallelogram $R_{i_1 \dots  i_n}$, in other words
\begin{equation}\label{a95}
\tan \gamma_{i_1 \dots i_n}:=\frac{d_{i_1 \dots i_n}}{b_{i_1 \dots i_n}}.
\end{equation}
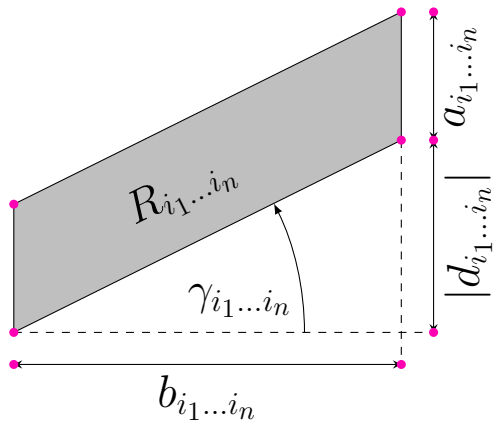
\begin{figure}[H]
	\centering
	\begin{tikzpicture}[scale=8.5, >=stealth]
	\tikzstyle help lines=[color=gray,very thin]
	\tikzstyle en grid=[style=help lines]

	\coordinate (O) at (0,0);
	\coordinate (B) at (0.6,0.3);
	\coordinate (B1) at (0.6,-0.05);
	\coordinate (C) at (0.6,0.5);
	\coordinate (O1) at (0,-0.05);
	\coordinate (D) at (0,0.2);
	\coordinate  (E) at (0.9,-0.3);
	\coordinate  (F) at (0.65,0);
	\coordinate  (G) at (0.65,0.3);
	\coordinate  (H) at (0.65,0.5);
	
	\draw[dashed] (O)--(F);

	
	\draw[dashed] (B)--(B1);
	
	\draw[<->] (O1)--(B1) node[pos=0.5,sloped,below] {\Large{$b_{i_1 \dots i_n}$}};
	\draw[<->] (G)--(H) node[pos=0.5,sloped,below] {\Large{$a_{i_1 \dots i_n}$}};
	\draw[<->] (F)--(G) node[pos=0.5,sloped,below] {\Large{$|d_{i_1 \dots i_n}|$}};

	\draw[fill=lightgray] (O)--(B)--(C)--(D)--(O);
	
	\node[rotate=26.56] at (0.26,0.22) {\Large{$R_{i_1 \dots i_n}$}};
	
	\draw[-latex] (0:0.45cm) arc (0:26.565:0.45cm);	
	
	\node at (0.35,0.05) {\Large{$\gamma_{i_1 \dots i_n}$}};
	
	\foreach \point in {O,B,C,D,O1,B1,F,G,H}
	\fill [red] (\point) circle (0.2pt);
	\end{tikzpicture}
	\caption{The skewness of $R_{i_1\ldots i_n}:=f_{i_1\ldots i_n}([0,1]^2)$}
	\label{a89}
\end{figure}
Since direction-$x$ dominates, $R_{i_1\ldots i_n}$ is extremely long and thin for large $n$. A simple argument gives that $|\tan\gamma_{i_1 \dots i_n}|$ remains uniformly bounded away from $+\infty$.
\begin{lemma}\label{lemma:d/b_bounded}
	There exists a non-negative constant $K_0<\infty$ such that for every $n$ and every finite length word $i_1\ldots i_n$
	\begin{equation*}
	\left|\frac{d_{i_1\ldots i_n}}{b_{i_1\ldots i_n}}\right|\leq K_0.
	\end{equation*}
\end{lemma}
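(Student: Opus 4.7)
The plan is to divide the explicit formula \eqref{a93} by $b_{i_1\ldots i_n}=b_{i_1}\cdots b_{i_n}$ term by term and exploit the strict domination of direction-$x$ to obtain geometric decay in the resulting sum. Concretely, dividing \eqref{a93} yields
\begin{equation*}
\frac{d_{i_1 \ldots i_n}}{b_{i_1 \ldots i_n}}
= \sum_{\ell=1}^{n} \frac{d_{i_\ell}}{b_{i_\ell}} \cdot \prod_{k<\ell}\frac{a_{i_k}}{b_{i_k}},
\end{equation*}
because each factor $b_{i_r}$ with $r>\ell$ and the single factor $b_{i_\ell}$ cancel against the denominator, while each $a_{i_k}$ with $k<\ell$ picks up a compensating $1/b_{i_k}$.

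Next I would introduce the two finite constants
\begin{equation*}
\rho := \max_{i\in[N]} \frac{a_i}{b_i} \quad \text{and}\quad D := \max_{i\in[N]} \frac{|d_i|}{b_i}.
\end{equation*}
By the domination hypothesis \eqref{ass:dirx_dominates} together with the finiteness of $[N]$, we have $\rho<1$ and $D<\infty$. Then $\prod_{k<\ell} a_{i_k}/b_{i_k} \le \rho^{\ell-1}$ uniformly in the word, and each term $|d_{i_\ell}/b_{i_\ell}|$ is bounded by $D$.

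Applying the triangle inequality and summing the geometric series gives
\begin{equation*}
\left|\frac{d_{i_1\ldots i_n}}{b_{i_1\ldots i_n}}\right|
\le D\sum_{\ell=1}^{n}\rho^{\ell-1}
\le \frac{D}{1-\rho} =: K_0,
\end{equation*}
which is independent of $n$ and of the word $i_1\ldots i_n$, as required. There is no real obstacle here: once one writes the ratio in the factored form above, the inequality $a_i<b_i$ does all the work, and the only thing to notice is that finiteness of the alphabet ensures $\rho<1$ and $D<\infty$ uniformly.
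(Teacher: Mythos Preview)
Your proof is correct and essentially identical to the paper's own argument: both divide the explicit formula \eqref{a93} by $b_{i_1\ldots i_n}$, bound each term by $\max_i |d_i|/b_i$ times a power of $\rho=\max_i a_i/b_i<1$, and sum the geometric series to obtain $K_0=\dfrac{\max_i|d_i|/b_i}{1-\max_i a_i/b_i}$.
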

\begin{proof}
	Since direction-$x$ dominates, $\max_i\{a_i/b_i\}<1$, hence using \eqref{a93}
	\begin{equation*}
	\left|\frac{d_{i_1\ldots i_n}}{b_{i_1\ldots i_n}}\right| \leq \frac{|d_{i_1}|}{b_{i_1}}+ \sum_{k=2}^{n} \frac{|d_{i_k}|}{b_{i_k}} \prod_{j=1}^{k-1}\frac{a_{i_j}}{b_{i_j}} \leq  \frac{\max_i\{|d_i|/ b_i\}}{1-\max_i\{a_i/b_i\}}<\infty.
	\end{equation*}
\end{proof}

\subsection{Our contribution explained with pictures}\label{subsec:OurContrib}
A natural way to depict an IFS $\mathcal{F}=\{f_i\}$ is to provide the images $f_i(R)$, where $R$ is
the smallest rectangle which contains $\Lambda$. Without loss of generality, we always assume in this paper that $R=[0,1]^2$. The correspondence between the shifted TGL IFS and a figure showing the collection of images of $R$ is unique.

The shaded rectangles and parallelograms in Figure~\ref{fig:GLandTGL} show the images of $R$ under the orientation preserving affine maps defining a Gatzouras--Lalley (GL) carpet~\cite{GatzourasLalley92} on the left, see Definition~\ref{def:GLCarpets}, and a triangular Gatzouras--Lalley-type (TGL) carpet on the right. These are typical examples which satisfy the Rectangular Open Set Condition (ROSC), see Definition \ref{def:separations}. Furthermore, there is a correspondence between the rectangles and parallelograms so that the height and width of corresponding ones coincide. We call the Gatzouras--Lalley carpet the \emph{GL-brother} of the TGL carpet, see Definition~\ref{def:GLBrother}. Even though the ROSC holds, it is not immediate that the dimension of the two attractors should be the same. The parallelograms can be placed in a way that there is no bi-Lipschitz map between the two attractors. Nevertheless, Bara\'nski essentially shows in \cite{BaranskiTriag_2008} that assuming the ROSC the Hausdorff and box dimension of a TGL carpet is equal to the respective dimension of its GL brother.

The IFS in Figure~\ref{fig:GLandTGL} is an example for which $\dim_{\rm H}\Lambda<\dim_{\rm B}\Lambda<\dim_{\rm{Aff}}\Lambda$. If the orthogonal projection of $\Lambda$ to the $x$-axis is the whole $[0,1]$ interval, then the box- and affinity dimensions are equal, see Corollary~\ref{cor:dimB=dimA}. Figure~\ref{fig:smiley} shows such an example, where the outlines of $f_i(R)$ are shown together with the attractor, which we call the "self-affine smiley".

\begin{figure}[]
	\centering
	\includegraphics[width=.41\linewidth]{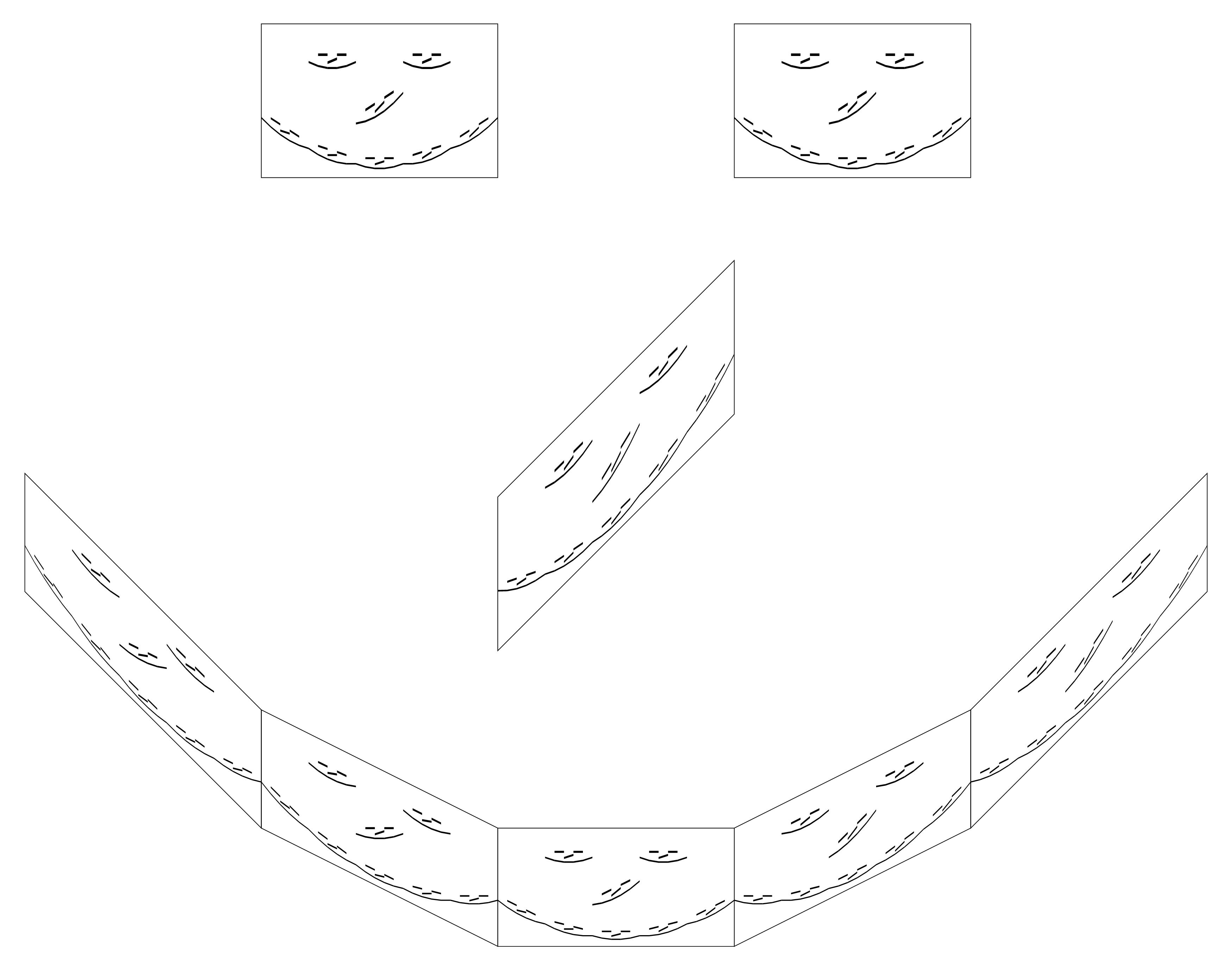}
	\caption{The "self-affine smiley", whose $\dim_{\rm H}=1.20665\ldots<1.21340\ldots=\dim_{\rm B}=\dim_{\rm{Aff}}$, see Subsection~\ref{subsec:smiley}.}
	\label{fig:smiley}
\end{figure}

A special class of examples consists of the diagonally homogeneous carpets, recall Definition~\ref{def:UniformFibreDiagHomo}. The well-known Bedford--McMullen carpets \cite{Bedford84_phd,mcmullen84} form a proper subclass of these TGL carpets. The attractor on the left-hand  side of Figure \ref{fig:carpetex2} first appeared in \cite[Figure 1 (a)]{FalconerMiao07} and then again in \cite[Subsection 4.3]{barany15_LYformula}. It is exceptional in the class of TGL carpets, since $\dim_{\rm H}\Lambda=\dim_{\rm B}\Lambda=\dim_{\rm{Aff}}\Lambda$. This is because it has uniform vertical fibres. In all these examples only the boundary of the cylinder sets $f_i(R)$ could intersect.

The main contribution of the present paper is that different types of overlaps are allowed in our construction, recall Figure~\ref{fig:TGLOverlaps}. On the left, the columns are shifted in a way that the IFS $\mathcal{H}$ on the $x$-axis generated by the columns satisfies Hochman's Exponential Separation Condition, see Definition~\ref{def:34}. This type of shifted columns was considered by Fraser and Shmerkin~\cite{fraser_shmerkin_2016} and Pardo-Sim\'on~\cite{pardo-simon} on different carpets. In the center, columns do not overlap, however, parallelograms within a column may do so if a certain transversality like condition holds, see Definition~\ref{def:separations}. The one on the right on Figure~\ref{fig:TGLOverlaps} has both types of overlaps.

By modifying the translation vectors in the example on the left-hand  side of Figure~\ref{fig:carpetex2}, we get a brother with overlaps seen on the right-hand side, for which we show in Subsection \ref{subsec:overlappingex} that transversality holds. Another concrete overlapping example satisfying transversality is "$X\equiv X$" in Figure \ref{fig:XequalX}, for which there is strict inequality between the Hausdorff, box and affinity dimensions. If instead, the construction would be "$X=X$", then the Hausdorff and box dimensions would be equal. Moreover, if there were no empty columns in this example, then the box and affinity dimensions would coincide.

\begin{figure}[]
	\centering
	\includegraphics[width=.6\linewidth]{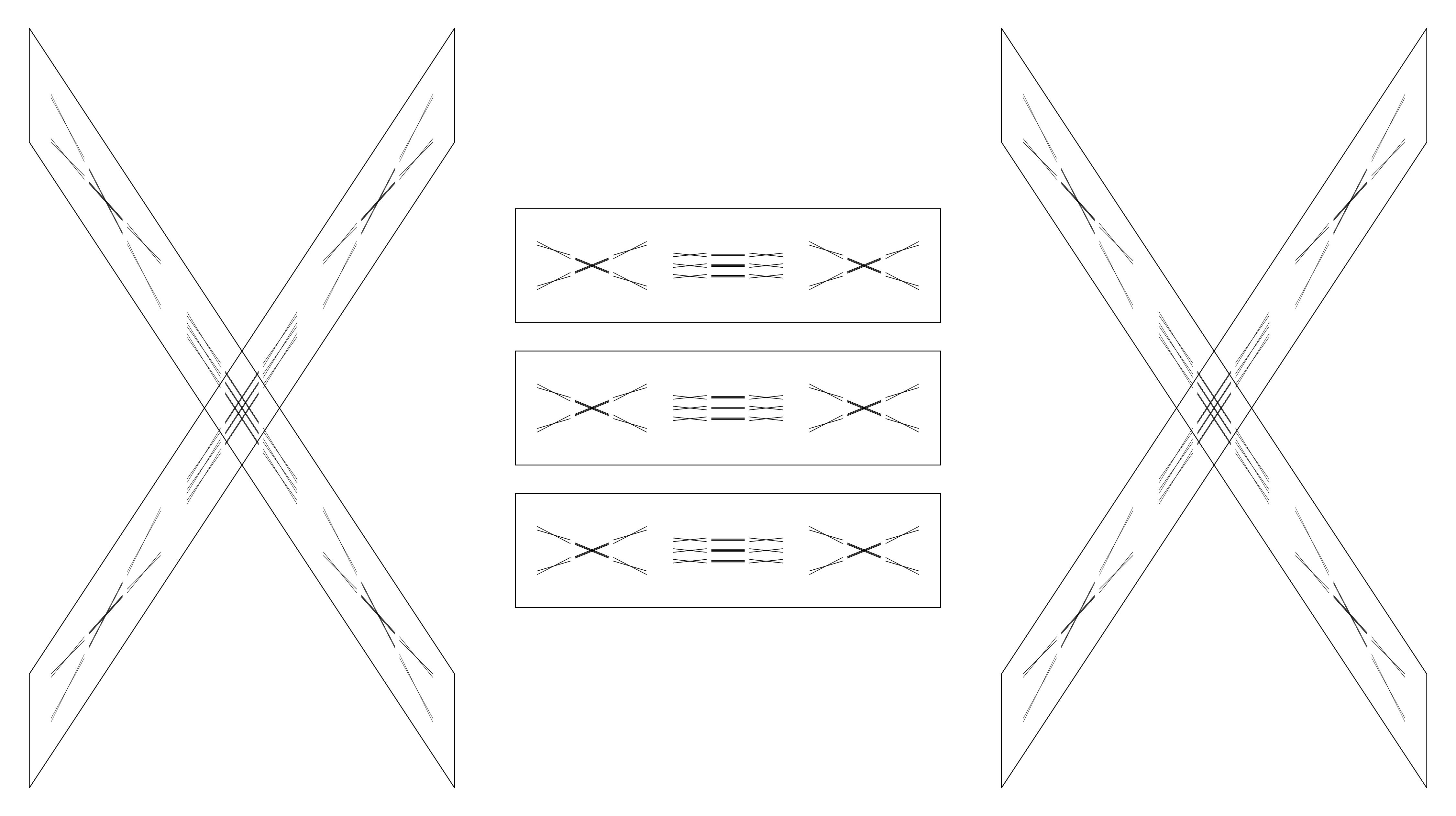}
	\caption{Example "$X\equiv X$" from Subsection~\ref{subsec:ExXequivX} for which $\dim_{\rm H}\Lambda=1.13259\ldots<\dim_{\rm B}\Lambda=1.13626\ldots<\dim_{\rm{Aff}}\Lambda=1.2170\ldots\,$.}
	\label{fig:XequalX}
\end{figure}

Section~\ref{sec:MainRes} contains the formal statements of all our main results. Roughly speaking, we show that for any TGL carpet $\Lambda$
\begin{equation*}
\dim \Lambda \leq \dim \widetilde{\Lambda},
\end{equation*}
where $\widetilde{\Lambda}$ is the GL brother of $\Lambda$, see Definition~\ref{def:GLBrother}, and $\dim$ means either box or Hausdorff dimension, see Theorems~\ref{thm:mainresUpperbound} and \ref{thm:mainBox}. When ROSC holds and $\mathcal{H}$ satisfies Hochman's condition, then equality can be deduced from recent works \cite{BARANYAnti201788,Fraser12Boxlike}. Our main contribution is that in the presence of overlaps described above, we give sufficient conditions under which $\dim \Lambda$ does not drop below $\dim\widetilde{\Lambda}$, see Theorems~\ref{thm:maindimresult} and \ref{thm:BoxwOverlaps}. In particular, for the Hausdorff dimension we allow both types of overlaps simultaneously, however for the box dimension we can prove our results only if
at most one of the two types of overlaps occurs. 

For a discussion on generalizing towards orientation reversing maps, see Subsection~\ref{subsec:NegEntries}. In particular, we calculate the dimension of a family of self-affine continuous curves $\Lambda_a$, which is generated by an IFS $\mathcal{F}_a$ containing a map that reflects on the $y$-axis, see Figure~\ref{fig:zipperex}. The formal treatment of all these examples is done in Section \ref{sec:ex}.

\begin{figure}[h]
	\centering
	\includegraphics[width=.75\linewidth]{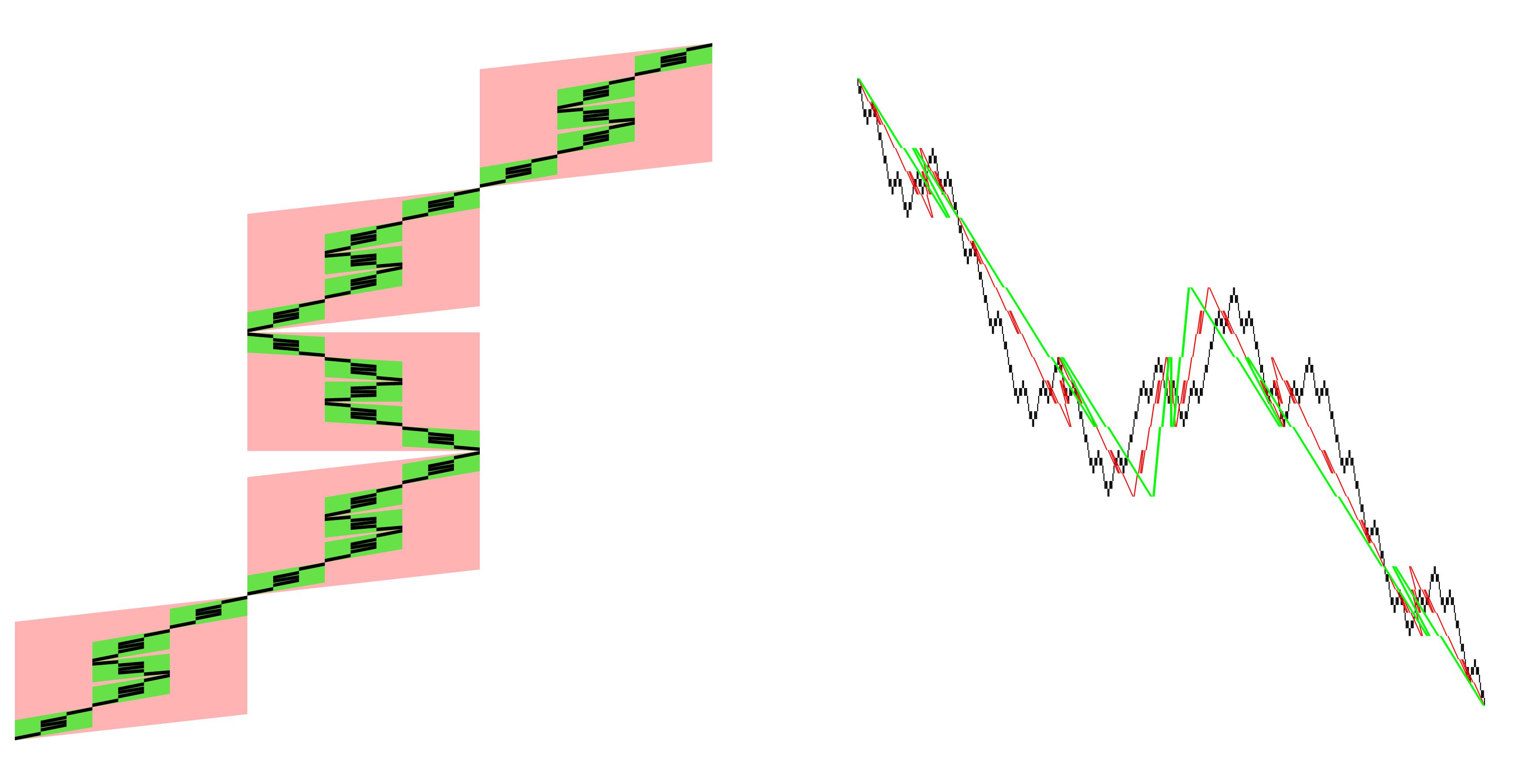}
	\caption{Left: first ({\color{red}red}), second ({\color{green}green}) and third (black) level cylinders of $\mathcal{F}_a$. Right: $\Lambda_a$ rotated 90 degrees from Subsection \ref{subsec:exZipper} with parameter $a=0.2$ (black), $0.12$ ({\color{red}red}) and $0.08$ ({\color{green}green}).}\label{fig:zipperex}
\end{figure}

One motivation to study self-affine fractals of overlapping construction is that sometimes the dimension of a higher dimensional fractal of non-overlapping construction coincides with its lower dimensional orthogonal projection which can be a self-affine fractal of overlapping construction. We obtain such a set in 3D by starting from a TGL carpet with overlaps on the $xy$-plane and then "lift" it to 3D so that the interiors of the first level cylinders are disjoint. Figure~\ref{fig:3Dcarpet} shows such an example with the first level cylinders (left), the attractor (center) and the projection of the cylinders and attractor to the $xy$-plane (right). Section~\ref{sec:ThreeD} contains the formal treatment of this type of construction.
\begin{figure}[h]
	\centering
	\includegraphics[width=.97\linewidth]{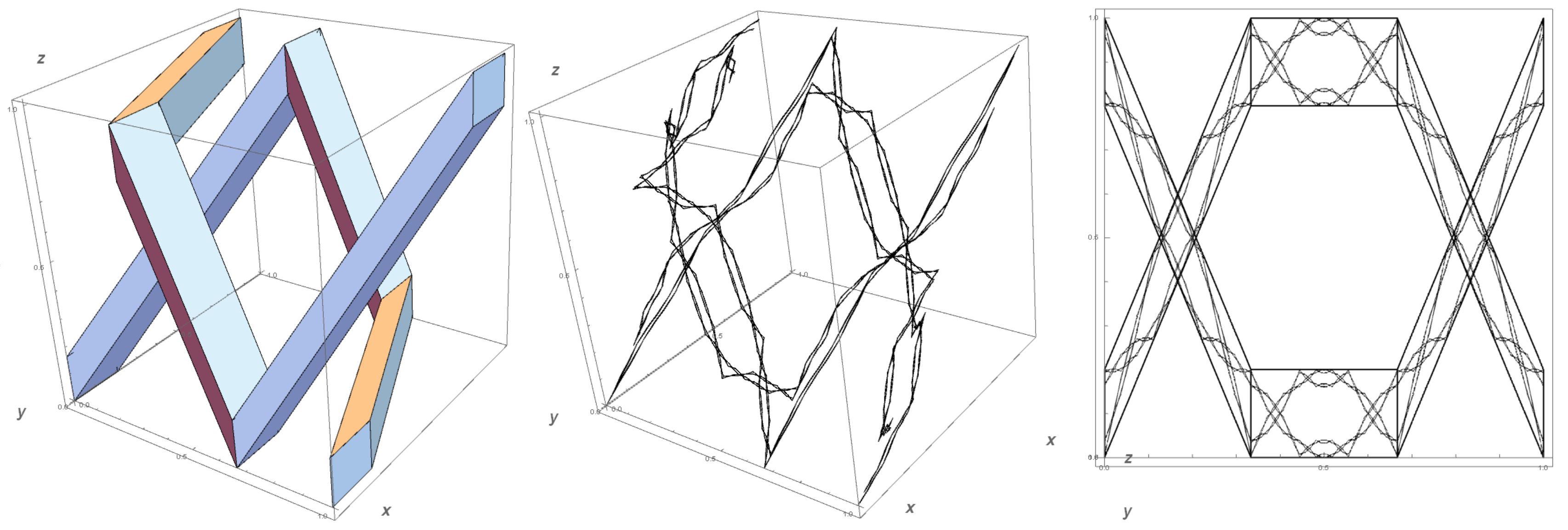}
	\caption{A three-dimensional fractal whose dimension is equal to the dimension of its orthogonal projection to the $xy$-plane.}\label{fig:3Dcarpet}
\end{figure}

\subsection{Brief overview of planar carpets}\label{subsec:carpets}
Independently of each other, Bedford~\cite{Bedford84_phd} and McMullen~\cite{ mcmullen84} were the first to study planar carpets. They split the unit square $R$ into $m$ columns of equal width and $n$ rows of equal height for some integers $n>m\geq 2$ and considered iterated function systems (IFS) of the form
\begin{equation*}
f_{(i,j)}(\underline{x}):= \begin{pmatrix} 1/m & 0 \\ 0 & 1/n \end{pmatrix} \begin{pmatrix} x \\ y \end{pmatrix} + \begin{pmatrix} i/m \\ j/n
\end{pmatrix}
\end{equation*}
for $(i,j)\in A\subseteq \{0,\ldots,m-1\}\times\{0,\ldots,n-1\}$. They gave explicit formulas for the Hausdorff and box-counting dimension of the corresponding attractor $\Lambda$.
It turns out that $\dim_{\rm H}\Lambda=\dim_{\rm B}\Lambda$ is atypical, namely, equality holds if and only if $\Lambda$ has uniform vertical fibres. 

Later Gatzouras and Lalley~\cite{GatzourasLalley92} generalized the results to the following class of IFSs.
\begin{definition}\label{def:GLCarpets}
	A self-affine IFS $\widetilde{ \mathcal F}$ is a Gatzouras--Lalley (GL) IFS and its attractor $\widetilde{\Lambda}$ is a \texttt{GL carpet} if $\widetilde{ \mathcal F}$ is a TGL IFS as in Definition~\ref{def:triagLalleyG} with the additional assumptions that all off-diagonal elements $d_i=0$ and the rectangular open set condition (ROSC) holds, i.e.
	\begin{equation}\label{ass:ROSC}
	\tilde f_i((0,1)^2)\cap \tilde f_j((0,1)^2) = \emptyset\; \text{ for all } i\neq j.
	\end{equation}
\end{definition}

\begin{definition}\label{def:GLBrother}
	Let $\Lambda$ be a shifted TGL carpet generated from the IFS $\mathcal{F}$ of the form \eqref{def:IFS_F}. We say that the Gatzouras--Lalley IFS
	\begin{equation*}
	\widetilde{ \mathcal F} = \{\tilde f_i(\underline{x}):=  \tilde A_i\underline{x} +\tilde{t}_i\}_{i=1}^N, \text{ where } \tilde A_i=\begin{pmatrix}
	\tilde b_i & 0 \\ 0 & \tilde a_i
	\end{pmatrix} \text{ and } \tilde{t}_i=\begin{pmatrix} \tilde{t}_{i,1} \\ \tilde{t}_{i,2}
	\end{pmatrix},
	\end{equation*}
	and its attractor $\widetilde{\Lambda}$ is the \texttt{GL brother} of $\mathcal{F}$ and $\Lambda$, respectively, if $\tilde a_i=a_i$ and $\tilde b_i=b_i$ for every $i\in[N]$, furthermore, $\widetilde{ \mathcal F}$ has the same column structure \eqref{a73} as $\mathcal{F}$. If the shifted TGL carpet $\Lambda$ is actually a TGL carpet (that is $\Lambda$ has non-overlapping column structure) then we also require that $\widetilde{t}_{i,1}=t_{i,1}$ holds for all $i\in[N]$.
	
	There always exists such a brother since we assume Definition \ref{def:triagLalleyG} (c) and $\sum_{\ih=1}^M r_{\ih}\leq 1$. Throughout, the GL brother of $\Lambda$ will always be denoted with the extra tilde $\widetilde{\Lambda}$.
\end{definition}

A standard technique to give a lower bound for the Hausdorff dimension of the attractor $\widetilde{ \Lambda} = \bigcup_{i\in [N]} \tilde f_{i} ( \widetilde\Lambda)$ is to study self-affine measures $\nu_{\mathbf p}$, i.e. compactly supported measures with support $\widetilde{ \Lambda}$ satisfying
\begin{equation*}
\nu_{\mathbf p}= \sum_{i=1}^{N} p_i \nu_{\mathbf p}\circ \tilde f_i^{-1},
\end{equation*}
for some probability vector $\mathbf{p}=(p_1,\ldots,p_N)$. Let $\mathcal{P}$ be the set of all probability distributions on the set $[N]$ and $\mathcal{P}_0$ be the subset when all $p_i>0$. By definition
\begin{equation*}
\sup_{\mathbf{p}\in\mathcal{P}} \dim_{\rm H} \nu_{\mathbf p} \leq \dim_{\rm H} \widetilde{ \Lambda}.
\end{equation*}
Gatzouras and Lalley proved that there always exists a $\mathbf{p}^\ast$ for which the supremum is attained, furthermore $\mathbf{p}^\ast\in\mathcal{P}_0$. Let
\begin{equation*}
\alpha^\ast:= \dim_{\rm H} \nu_{\mathbf p^\ast}=\sup_{\mathbf{p}\in\mathcal{P}} \dim_{\rm H} \nu_{\mathbf p}.
\end{equation*}
They explicitly calculated
\begin{equation}\label{eq:dimHBernoulli}
\dim_{\rm H} \nu_{\mathbf p} = \frac{\sum_{i=1}^N p_i \log p_i}{\sum_{i=1}^N p_i \log a_i} + \left( 1- \frac{\sum_{i=1}^N p_i \log b_i}{\sum_{i=1}^N p_i \log a_i}\right) \frac{\sum_{\ih=1}^M q_{\ih} \log q_{\ih}}{\sum_{i=1}^N p_i \log b_i},
\end{equation}
where $q_{\ih}=\sum_{j\in\mathcal{I}_{\ih}} p_j$. This formula is a special case of the Ledrappier--Young formula, see Subsection~\ref{subsec:LYformula} for details and references. For Bedford--McMullen carpets the optimal $\mathbf p^\ast$ can be given by routine use of the Lagrange multipliers method. The main result of \cite{GatzourasLalley92} is that for a GL carpet the $\alpha^\ast$ bound is sharp, i.e.
\begin{equation*}
\alpha^\ast = \dim_{\rm H} \widetilde{ \Lambda}.
\end{equation*}
In \cite{GatzourasLalley92} Gatzouras and Lalley also gave an implicit formula to calculate the box dimension of their carpet. Let $s_x$ be the unique real such that $\sum_{\ih=1}^{M}r_{\ih}^{s_x}=1$ ($r_{\ih}$ was defined in \eqref{ass:column_structure}). Then $\dim_{\rm B}\widetilde{ \Lambda}=s$ is the unique real such that
\begin{equation*}
\sum\limits_{i=1}^{N} b_{i}^{s_x}a_{i}^{s-s_x}=1.
\end{equation*}
Again, equality of $\dim_{\rm H} \widetilde{ \Lambda}$ and $\dim_{\rm B} \widetilde{ \Lambda}$ is highly atypical. It holds if and only if the $\alpha^\ast$-dimensional Hausdorff measure of $\widetilde{ \Lambda}$, denoted $\mathcal{H}^{\alpha^\ast}(\widetilde\Lambda)$, is positive and finite, which is equivalent to $\widetilde\Lambda$ having uniform vertical fibres, recall \eqref{eq:UniformFibre}.
For Bedford--McMullen carpets Peres showed in \cite{peres_94infinitemeasure} that $\mathcal{H}^{\alpha^\ast}(\widetilde\Lambda)=\infty$ when $\dim_{\rm H} \widetilde{ \Lambda}< \dim_{\rm B} \widetilde{ \Lambda}$.

More recently, Bara\'nski~\cite{BARANSKIcarpet_2007} kept the row and column structure but relaxed \eqref{ass:dirx_dominates} by allowing an arbitrary subdivision of the horizontal and vertical axis. After appropriately choosing which direction is "dominant", the results resemble that of \cite{GatzourasLalley92}. Continuing this work, Bara\'nski showed in \cite{BaranskiTriag_2008} how the result in \cite{BARANSKIcarpet_2007} can be adapted to obtain the dimension result for TGL carpets assuming ROSC~\eqref{ass:ROSC}. Diagonal systems assuming only ROSC and no further restrictions on the translations were studied by Feng--Wang~\cite{FengWang2005} and Fraser~\cite{Fraser12Boxlike}. Former determined the $L^q$ spectrum of self-affine measures $\nu_{\mathbf p}$ and in particular the box dimension of the attractor. In \cite{Fraser12Boxlike} linear isometries which map $[-1,1]^2$ to itself are allowed and the box dimension is determined. Fraser called these box-like sets. Observe that in all the mentioned papers the ROSC was assumed.

Carpets with overlaps were not studied until the last few years. Fraser and Shmerkin~\cite{fraser_shmerkin_2016} shift the columns of  Bedford--McMullen carpets to get overlaps, while Pardo-Sim\'on~\cite{pardo-simon} allows shifts in both directions of Bara\'nski carpets. Relying on a recent breakthrough by Hochman~\cite{Hochman_Annals14} on the dimension of self-similar measures on the line, both papers show that apart from a small exceptional set of parameters the results in \cite{Bedford84_phd,mcmullen84} and \cite{BARANSKIcarpet_2007} remain valid in the overlapping case. This is the type of shifted columns that can be seen in Figure~\ref{fig:TGLOverlaps}.

We finish the section by formalizing the separation conditions between cylinders sets.

\subsection{Separation conditions}\label{subsec:separationconds}
In our main results, we assume different extents of separation for the parallelograms $f_i(R)$, recall Figures~\ref{fig:GLandTGL} and \ref{fig:TGLOverlaps}. This will be considered in Subsection~\ref{a21}. In Subsection~\ref{a20} we consider separation conditions for $\mathcal{H}$ which are actually conditions about the extent of separation of the column structure.

\subsubsection{Separation of the cylinder parallelograms}\label{a21}
\begin{definition}[Separation conditions for a shifted TGL IFS $\mathcal{F}$]\label{def:separations}
	We say that
	\begin{itemize}
		\item $\mathcal{F}$ satisfies the \texttt{rectangular open set condition (ROSC)} if the strong open set condition (SOSC) holds for $\mathcal{F}$ with $U=(0,1)^2$, i.e. $\Lambda\cap U\neq\emptyset$ with
		\begin{equation*}
		\bigcup_{i=1}^N f_i(U)\subseteq U \,\text{ and }\, f_i(U)\cap f_j(U)=\emptyset \text{ for every } i\neq j.
		\end{equation*}
		\item \texttt{each column independently satisfies the ROSC} if for every $\ih\in[M]$ and $k,\ell\in\mathcal{I}_{\ih}$ we have $f_k(U)\cap f_{\ell}(U)=\emptyset$. In other words, if the interior of two first level cylinders intersects, then they are from different columns.
		\item
		$\mathcal{F}$ satisfies the\texttt{ transversality condition}
		if there exists a $K_1>0$ such that for every $n$ and words $(i_1\ldots i_n), (j_1\ldots j_n)\in\left\{1, \dots ,N\right\}^n$ with  $\phi(i_k)=\phi(j_k)$ for $k=1,\ldots, n$ and $i_1\neq j_1$  ($\phi$ was defined in \eqref{def:phiFunc}), we have
		\begin{equation}\label{a33}
		|\mathrm{proj}_x(
		\mathrm{int} (R_{i_1\ldots i_n} )\cap \mathrm{int} (R_{j_1\ldots j_n} ))| < K_1\cdot \max\{ a_{i_1}\cdot\ldots\cdot a_{i_n},\, a_{j_1}\cdot\ldots\cdot a_{j_n}\}.
		\end{equation}
	\end{itemize}
\end{definition}


Given two finite words $i_1\ldots i_n$ and $j_1\ldots j_n,\, i_1\neq j_1$, the angle of the two corresponding parallelograms $R_{i_1\ldots i_n}$ and $R_{j_1\ldots j_n}$ can be defined as the angle between their non-vertical sides. The transversality condition ensures that any such pair of parallelograms in the \textit{same column} have either disjoint interior or have an angle uniformly separated from zero.

Observe that this definition of transversality coincides in the diagonally homogeneous case with the one in \cite{barany_rams_simon_triang_2017}. In
\cite[Section 1.5]{barany_rams_simon_triang_2017} a sufficient condition for the transversality condition was given. Namely, the authors introduced a self-affine IFS $\widehat{\mathcal{S}}$ in $\mathbb{R}^3$ which is (in our setup)
\begin{equation*}\label{a98}
\widehat{\mathcal{S}}:=\left\{
\widehat{S}_i(\underline{x},z):=\left(f_i(\underline{x}),T_i(z)\right)
\right\}_{i=1}^N, \quad (\underline{x},z)\in [0,1]^2\times \mathbb{R},
\end{equation*}
where $\left\{f_i\right\}_{i=1}^{N}$ was defined in \eqref{def:IFS_F} and $T_i:\mathbb{R}\to\mathbb{R}$
is given by
\begin{equation*}
\mathcal{T}:=\left\{T_i(z):=\frac{a_i}{b_i} \cdot z+\frac{b_i}{d_i}\right\}_{i=1}^{N}.
\end{equation*}
\begin{figure}[h]
	\centering
	\includegraphics[width=.27\linewidth]{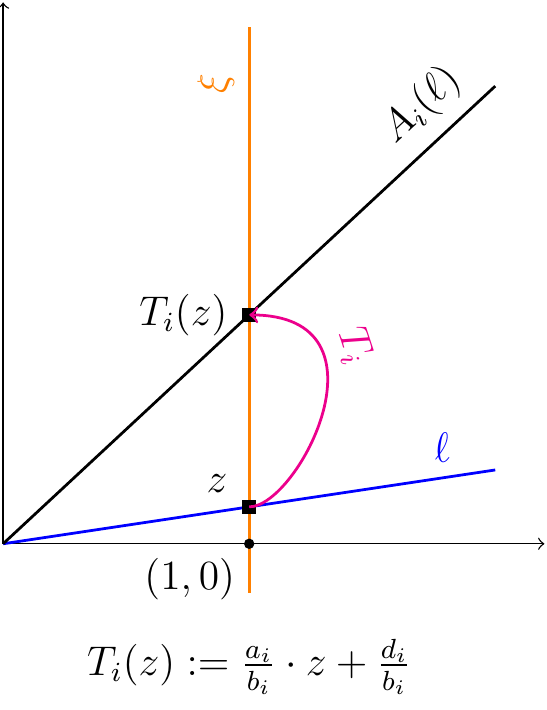}
	\caption{The IFS $\mathcal{T}$, where $z$ and $(1,z)$ are identified }\label{a88}
\end{figure}
The relevance of the IFS $\mathcal{T}$ is that
\begin{equation}\label{a91}
\tan \gamma_{i_1 \dots i_n}=T_{i_1 \dots i_n}(0).
\end{equation}
Indeed, from the definition \eqref{a95} of $\tan \gamma_{i_1 \dots i_n}$ and formula \eqref{a93} it immediately follows that
\begin{equation*}
\tan \gamma_{i_1 \dots i_n} =
\frac{d_{i_1}}{b_{i_1}}+\sum\limits_{\ell =2}^{n}\frac{d_{i_{\ell }}}{b_{i_{\ell }}} \cdot \prod\limits_{k=1 }^{\ell -1}\frac{a_{i_k}}{b_{i_k}}
= T_{i_1 \dots i_n}(0).
\end{equation*}
Using the same argument as in the proof of \cite[Lemma 1.2]{barany_rams_simon_triang_2017}
we obtain that
\begin{lemma}\label{a97}
	If $\widehat{\mathcal{S}}$ satisfies the Strong Separation Property (that is $\widehat{S}_i(\widehat{\Lambda})\cap\widehat{S}_j(\widehat{\Lambda})=\emptyset $ if $i\ne j$ and $\widehat{\Lambda}$ is the attractor of the IFS $\widehat{\mathcal{S}}$) then the transversality condition holds.
\end{lemma}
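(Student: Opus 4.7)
\emph{Proof proposal.}
The plan is to deduce \eqref{a33} from the SSP of $\widehat{\mathcal{S}}$ by extracting a uniform positive lower bound on the slope differences $|\tan\gamma_{i_1\ldots i_n}-\tan\gamma_{j_1\ldots j_n}|$ over all admissible pairs of finite words, following the strategy of \cite[Lemma 1.2]{barany_rams_simon_triang_2017}, adapted to the inhomogeneous setting of Definition \ref{def:triagLalleyG}.

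\textbf{Geometric reduction.} First I would observe that for any pair of length-$n$ words $(i_1\ldots i_n)$ and $(j_1\ldots j_n)$ satisfying $i_1\neq j_1$ and $\phi(i_k)=\phi(j_k)$ for every $k$, the column hypothesis \eqref{ass:column_structure} forces the parallelograms $R_{i_1\ldots i_n}$ and $R_{j_1\ldots j_n}$ to share a common vertical strip of width $b:=b_{i_1\ldots i_n}=b_{j_1\ldots j_n}$. Their vertical slices at each $x$ in this strip are intervals of lengths $a_{i_1\ldots i_n}$ and $a_{j_1\ldots j_n}$, with lower endpoints varying linearly in $x$ with slopes $\tan\gamma_{i_1\ldots i_n}$ and $\tan\gamma_{j_1\ldots j_n}$. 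Computing the measure of $x$'s at which the slices overlap yields
\begin{equation*}
|\mathrm{proj}_x(\mathrm{int}(R_{i_1\ldots i_n})\cap\mathrm{int}(R_{j_1\ldots j_n}))|\;\leq\;\frac{a_{i_1\ldots i_n}+a_{j_1\ldots j_n}}{|\tan\gamma_{i_1\ldots i_n}-\tan\gamma_{j_1\ldots j_n}|},
\end{equation*}
so \eqref{a33} would follow (with some finite $K_1$) once I can establish a uniform lower bound $|\tan\gamma_{i_1\ldots i_n}-\tan\gamma_{j_1\ldots j_n}|\geq c_0>0$.

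\textbf{From SSP to a $3$D gap and extraction.} Next, the SSP of $\widehat{\mathcal{S}}$ together with compactness of $\widehat\Lambda$ gives $\delta>0$ with $\mathrm{dist}(\widehat S_i(\widehat\Lambda),\widehat S_j(\widehat\Lambda))\geq\delta$ for all $i\neq j$, and hence $\mathrm{dist}(\widehat S_{i_1\ldots i_n}(\widehat\Lambda),\widehat S_{j_1\ldots j_n}(\widehat\Lambda))\geq\delta$ whenever $i_1\neq j_1$. Arguing by contradiction, I would suppose the uniform slope bound fails, yielding words of lengths $n_k\to\infty$ with slope differences tending to $0$. A Cantor diagonalisation then produces infinite sequences $\mathbf{i},\mathbf{j}\in[N]^{\mathbb{N}}$ with $i_1\neq j_1$ and $\phi(\mathbf{i})=\phi(\mathbf{j})$; since $\tan\gamma_{i_1\ldots i_n}=T_{i_1\ldots i_n}(0)$ differs from the limit $T_{\mathbf{i}}$ by at most $O((\max_i a_i/b_i)^n)\to 0$ (using Lemma \ref{lemma:d/b_bounded}), passing to the limit forces $T_{\mathbf{i}}=T_{\mathbf{j}}$. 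The column hypothesis also forces $\pi_x(\mathbf{i})=\pi_x(\mathbf{j})$, so the $\delta$-separation of $\widehat\pi(\mathbf{i}),\widehat\pi(\mathbf{j})\in\widehat\Lambda$ degenerates to the $y$-inequality
\begin{equation*}
|y(\pi(\mathbf{i}))-y(\pi(\mathbf{j}))|\;\geq\;\delta.
\end{equation*}

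\textbf{Contradiction and main obstacle.} To close the argument, I would refine the above extraction so that, in addition, the common $x$-coordinate $\pi_x(\mathbf{i})=\pi_x(\mathbf{j})$ lies inside $\mathrm{proj}_x(R_{\mathbf{u}_k}\cap R_{\mathbf{v}_k})$ for every $k$, where $\mathbf{u}_k,\mathbf{v}_k$ denote the length-$n_k$ prefixes of $\mathbf{i},\mathbf{j}$. At such a coordinate the vertical slices of the two parallelograms overlap, forcing the limit points to lie in intersecting intervals of lengths $a_{\mathbf{u}_k},a_{\mathbf{v}_k}\to 0$, whence $|y(\pi(\mathbf{i}))-y(\pi(\mathbf{j}))|\leq a_{\mathbf{u}_k}+a_{\mathbf{v}_k}\to 0$, contradicting the previous display. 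The hard part is precisely this last refinement: I must construct infinite extensions of $\mathbf{u}_k,\mathbf{v}_k$ whose $\mathcal{H}$-codings hit a specified sub-interval of $[u_k,u_k+b_k]$. Under the failure of transversality the sub-interval has length $\gtrsim\max(a_{\mathbf{u}_k},a_{\mathbf{v}_k})$, so it is not too small relative to the $\mathcal{H}$-cylinders at the same scale, and a pigeonhole argument on admissible codings should produce such extensions, exactly as carried out in \cite[Lemma 1.2]{barany_rams_simon_triang_2017}.
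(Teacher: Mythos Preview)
Your overall strategy---reduce transversality to a uniform slope-gap, then derive that gap from the SSP of $\widehat{\mathcal{S}}$---matches the paper's approach (both ultimately defer to \cite[Lemma 1.2]{barany_rams_simon_triang_2017}). However, there are two genuine issues with the execution.

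\textbf{The uniform slope bound you seek is false in general.} You aim to show $|\tan\gamma_{\mathbf{u}}-\tan\gamma_{\mathbf{v}}|\ge c_0$ for \emph{all} admissible pairs, but this can fail under SSP. Take two maps $u_1\ne v_1$ in the same column with $d_{u_1}=d_{v_1}=0$ and $R_{u_1}\cap R_{v_1}=\emptyset$; then $T_{u_1}=T_{v_1}$, so $\tan\gamma_{u_1\mathbf{w}}=\tan\gamma_{v_1\mathbf{w}}$ for every tail $\mathbf{w}$, yet $\widehat{\mathcal{S}}$ may well satisfy SSP via the $xy$-separation. Such pairs are irrelevant for transversality (empty intersection), but they falsify your contradiction hypothesis. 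The fix is to assume from the outset that $R_{\mathbf{u}}^\circ\cap R_{\mathbf{v}}^\circ\neq\emptyset$, i.e.\ start the contradiction from ``transversality fails'' rather than ``uniform slope bound fails''.

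\textbf{The obstacle then dissolves without your pigeonhole step.} Once a point $(x_0,y_0)\in R_{\mathbf{u}}^\circ\cap R_{\mathbf{v}}^\circ$ is available, write $x_0=h_{\Phi(\mathbf{u})}(p)$ and subtract the two representations of $y_0$: for \emph{any} $(\xi,\eta,\zeta)\in\widehat{\Lambda}$ one gets
\[
\big|\big(\widehat S_{\mathbf{u}}-\widehat S_{\mathbf{v}}\big)(\xi,\eta,\zeta)\big|_y
\;\le\; |d_{\mathbf{u}}-d_{\mathbf{v}}|\cdot|\xi-p|+3\max(a_{\mathbf{u}},a_{\mathbf{v}})
\;\le\; b_{\mathbf{u}}\,|\tan\gamma_{\mathbf{u}}-\tan\gamma_{\mathbf{v}}|+3\max(a_{\mathbf{u}},a_{\mathbf{v}}),
\]
while the $z$-difference is $|\tan\gamma_{\mathbf{u}}-\tan\gamma_{\mathbf{v}}|+O(\max(a_{\mathbf{u}},a_{\mathbf{v}})/b_{\mathbf{u}})$ and the $x$-difference vanishes. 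SSP forces the total to exceed $\delta$, which for $n$ large yields $|\tan\gamma_{\mathbf{u}}-\tan\gamma_{\mathbf{v}}|\ge\delta/3$, and the geometric reduction finishes. No extraction of limit sequences, and no construction of extensions landing in a prescribed sub-interval, is needed. Your proposed pigeonhole route is in fact unclear: the target interval $h_{\Phi(\mathbf{u})}^{-1}(I_k)$ has length of order $K_1\,a_{\mathbf{u}_k}/b_{\mathbf{u}_k}$, which may be much smaller than every gap of $\Lambda_{\mathcal{H}}$, so there is no guarantee it meets $\Lambda_{\mathcal{H}}$ at all.
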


The next lemma gives a different, easy-to-check sufficient condition for transversality.

\begin{lemma}\label{a87}
	Let $\mathcal{P}_{\jh}:=\left\{(k,\ell ): k,\ell \in \mathcal{I}_{\jh},\quad k\ne\ell ,\quad R_k^{\circ}\cap R_{\ell }^{\circ}\ne \emptyset \right\}$, where $A^{\circ}$ denotes the interior of a set $A$. Moreover, we introduce
	\begin{equation*}
	s_k:=\frac{d_k}{b_k},\;\;  r_k:=\frac{a_k}{b_k}, \;\;
	{r}^*:=\max\limits_{1 \leq k \leq N}r_k, \;\;  b_{\min}:=\min\limits_{1 \leq k \leq N}b_k \,\text{ and }\, s_*:=\min\limits_{1 \leq \jh \leq M \atop \mathcal{P}_{\jh}\ne \emptyset }
	\min\limits_{(k,\ell) \in\mathcal{P}_{\jh}}
	|s_k-s_\ell |.
	\end{equation*}
	Assume that
	\begin{equation*}\label{a85}
	s_* >
	2\frac{1}{b_{\min}} \cdot \frac{r^*}{1-r^*}
	\mbox{ or equivalently }
	\frac{s_*b_{\min}}{2+s_*b_{\min}}>r^*.
	\end{equation*}
	Then the transversality condition holds.
	
	In particular, in the diagonally homogeneous case transversality holds if
	\begin{equation}\label{a79}
	\frac{a}{b}<\frac{d_*}{2+d_*},
	\end{equation}
	where $d_*:=\min\limits_{1 \leq \jh \leq M \atop \mathcal{P}_{\jh}\ne \emptyset }
	\min\limits_{(k,\ell) \in\mathcal{P}_{\jh} }
	|d_k-d_\ell |$.
\end{lemma}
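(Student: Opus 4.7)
The plan is to reduce transversality to a uniform lower bound on the angle difference $|\tan\gamma_{\ii}-\tan\gamma_{\jj}|$ of the two cylinder parallelograms, and then to establish this bound by exploiting the recursion $\tan\gamma_{i_1\ldots i_n}=s_{i_1}+r_{i_1}\tan\gamma_{i_2\ldots i_n}$ coming from \eqref{a91}, together with the a priori estimate $|s_i|\le 1/b_{\min}$ (which follows from $|d_i|\le 1$, a consequence of assumption (A1)).

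First I would carry out a short geometric reduction. Since $\phi(i_k)=\phi(j_k)$ for every $k$, the parallelograms $R_{\ii}$ and $R_{\jj}$ are supported over the very same vertical strip $[u,u+b]$, with $b=b_{i_1\ldots i_n}=b_{j_1\ldots j_n}$. Writing their top and bottom edges as affine functions on this strip (each has slope $\tan\gamma$), the $x$-set at which they overlap vertically is exactly $\{x\in[u,u+b]:-a_{\jj}\le\Delta(x)\le a_{\ii}\}$, where $\Delta(x)$ is an affine function of $x$ whose slope equals $\tan\gamma_{\jj}-\tan\gamma_{\ii}$. This yields the clean estimate
\begin{equation*}
|\mathrm{proj}_x(\mathrm{int}(R_{\ii})\cap\mathrm{int}(R_{\jj}))|\le\frac{a_{\ii}+a_{\jj}}{|\tan\gamma_{\ii}-\tan\gamma_{\jj}|}\le\frac{2\max\{a_{\ii},a_{\jj}\}}{|\tan\gamma_{\ii}-\tan\gamma_{\jj}|},
\end{equation*}
so the transversality condition \eqref{a33} will follow as soon as one produces a positive constant $c$ with $|\tan\gamma_{\ii}-\tan\gamma_{\jj}|\ge c$ whenever the first-level cylinders $R_{i_1},R_{j_1}$ actually meet (otherwise $R_{\ii}\cap R_{\jj}=\emptyset$ and there is nothing to prove).

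In that case $(i_1,j_1)\in\mathcal{P}_{\phi(i_1)}$, so $|s_{i_1}-s_{j_1}|\ge s_*$. From the recursion and the triangle inequality,
\begin{equation*}
|\tan\gamma_{\ii}-\tan\gamma_{\jj}|\ge|s_{i_1}-s_{j_1}|-r^*\bigl(|\tan\gamma_{i_2\ldots i_n}|+|\tan\gamma_{j_2\ldots j_n}|\bigr).
\end{equation*}
Iterating the recursion and summing the geometric series with ratio $r^*<1$ (which is valid because direction-$x$ dominates), one obtains $|\tan\gamma_{i_2\ldots i_n}|\le 1/(b_{\min}(1-r^*))$, and likewise for the $\jj$-word. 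Hence the right-hand side is bounded below by $s_*-\frac{2r^*}{b_{\min}(1-r^*)}$, which is strictly positive by hypothesis. Taking $K_1$ to be twice the reciprocal of this lower bound then gives transversality.

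For the diagonally homogeneous specialization one substitutes $r^*=a/b$, $b_{\min}=b$ and, since $s_k-s_\ell=(d_k-d_\ell)/b$, also $s_*=d_*/b$; rearranging $s_*>\frac{2r^*}{b_{\min}(1-r^*)}$ by clearing the denominator $b(b-a)$ yields $d_*(b-a)>2a$, which is \eqref{a79}. The only delicate part of the argument is the affine-algebra bookkeeping in the geometric reduction step, since one must be careful that the two parallelograms share both the horizontal strip and the widths of their respective left/right vertical sides; once that is in place, everything else is a routine application of the contraction of the $T_i$'s.
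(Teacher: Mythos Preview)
Your proof is correct and follows essentially the same route as the paper: both arguments reduce transversality to a uniform lower bound on $|\tan\gamma_{i_1\ldots i_n}-\tan\gamma_{j_1\ldots j_n}|$, and both obtain that bound from the expansion $\tan\gamma_{i_1\ldots i_n}=s_{i_1}+\sum_{\ell\ge 2}s_{i_\ell}\prod_{k<\ell}r_{i_k}$ together with $|s_k|\le 1/b_{\min}$, arriving at the identical threshold $s_*>2r^*/(b_{\min}(1-r^*))$. Your geometric reduction (the affine bookkeeping giving $|\mathrm{proj}_x|\le (a_{\ii}+a_{\jj})/|\tan\gamma_{\ii}-\tan\gamma_{\jj}|$) is spelled out more explicitly than in the paper, which simply asserts that a uniform angle separation implies the transversality inequality; otherwise the two arguments coincide.
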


\begin{proof}
	
	Using that $R_k \subset [0,1]^2$ we obtain that $|d_k| < 1$. Hence
	\begin{equation}\label{a82}
	|s_k| \leq \frac{1}{b_{\min}}.
	\end{equation}
	For an $m\in \left\{1, \dots ,M\right\}$
	let $\Sigma_m:=\left\{\mathbf{j}\in\Sigma, j_1\in\mathcal{I}_m\right\}$.  The transversality condition holds if there exists $c>0$ such that for every $n$, for all $m\leq M$ with $\mathcal{P}_m\ne \emptyset$ and 
	\begin{equation}\label{a84}
	\mbox{for all } \mathbf{i},\mathbf{j}\in\Sigma \mbox{ with } (i_1,j_1)\in\mathcal{P}_m,
	\mbox{ we have: }
	\left|  \gamma_{\mathbf{i}|n}-\gamma_{\mathbf{j}|n}\right|>c.
	\end{equation}
	It follows from  \eqref{a91} that \eqref{a84} holds whenever for all such pair of $\mathbf{i}, \mathbf{j}$ and for all $n$
	\begin{equation*}\label{a83}
	|s_{i_1}-s_{j_1}|-\sum\limits_{\ell =2}^{n}
	\left(
	s_{i_\ell } \cdot \prod\limits_{k=1}^{\ell -1}r_{i_k}
	-
	s_{j_\ell } \cdot \prod\limits_{k=1}^{\ell -1}r_{j_k}
	\right)
	\end{equation*}
	is greater than the same positive constant uniformly. However by \eqref{a82} this holds if
	\begin{equation*}\label{a81}
	s_* >
	2\frac{1}{b_{\min}} \cdot \frac{r^*}{1-r^*}.
	\end{equation*}
\end{proof}




\subsubsection{Separation of the columns}\label{a20}

We will also need some separation conditions for the column structure which are represented by separation properties of $\mathcal{H}$, recall \eqref{a72}.

The symbolic spaces for $\mathcal{F}$ and $\mathcal{H}$ are
\begin{equation*}
\Sigma:=\left\{1, \dots ,N\right\}^{\mathbb{N}} \;\text{ and }\; \Sigma_{\mathcal{H}}:=\left\{1, \dots ,M\right\}^{\mathbb{N}}.
\end{equation*}
The natural projections form $\Sigma\to \Lambda$ and
$\Sigma_{\mathcal{H}}\to \Lambda_{\mathcal{H}}$  are $\Pi$ and $\Pi_{\mathcal{H}}$ respectively, see Subsection~\ref{subsec:SymbNotation} for details. Whenever we are given a probability vector  $\mathbf{p}$ on $\left\{1, \dots ,N\right\},$  we always associate to it  another probability vector $\mathbf{q}$ on $\left\{1, \dots ,M\right\}$  such that
\begin{equation}\label{def:q_l}
q_{\ih} :=\sum\limits_{j\in \mathcal{I}_{\ih} } p_j.
\end{equation}
Slightly abusing the notation we write $\mathcal{P}_0$ for both the set of the probability vectors of positive components on $\left\{1, \dots ,N\right\}$ and $\left\{1, \dots ,M\right\}$. The Bernoulli measure $\mathbf{p}^{\N}$ on $\Sigma$ is denoted $\mu_{\mathbf p}$ and its push forward is $\nu_{\mathbf p}=\Pi_\ast\mu_{\mathbf p}=\mu_{\mathbf p}\circ\Pi^{-1}$. Analogously for $\mu_{\mathbf q}$ and $\nu_{\mathbf q}$.

\begin{definition}[Separation conditions for $\mathcal{H}$]\label{def:34}

We say that $\mathcal{H}$ satisfies
\begin{itemize}
\item \texttt{Hochman's Exponential Separation Condition (HESC)}
(see \cite[p. 775]{Hochman_Annals14})
if there exist an $\varepsilon>0$ and  $n_k\uparrow\infty $ such that
for
\begin{equation*}\label{a37}
\Delta_n:=\min\limits_{\iiv,\jjv\in\left\{1 \dots M\right\}^n \atop \iiv\ne\jjv}
\left\{
\begin{array}{ll}
|h_{\iiv}(0)-h_{\jjv}(0)|   , & \hbox{if $h'_{\iiv}(0)=h'_{\jjv}(0)$;} \\
\infty , & \hbox{otherwise.}
\end{array}
\right.
\end{equation*}
we have $\Delta_{n_k}>\e{-\varepsilon \cdot {n_k}}$. Here $h'$ denotes the derivative of the function $h$.
\item \texttt{Weak Almost Unique Coding (WAUC)} if for all Bernoulli measures $\mu_{\mathbf q}$ there exists $\mathcal{B}_\mathcal{H}\subset\Sigma_{\mathcal{H}}$ (may depend on $\mathbf{q}$) for which
\begin{equation*}\label{b98}
\mu_{\mathbf q}(\mathcal{B}_\mathcal{H})=0 \text{ and for every } \iih\in \Sigma_{\mathcal{H}}\setminus\mathcal{B}_\mathcal{H}:\
\#(\Pi_{\mathcal{H}}^{-1}\Pi_{\mathcal{H}}(\iih)\setminus\mathcal{B}_\mathcal{H})=1.
\end{equation*}
\texttt{Almost Unique Coding (AUC)} holds if for every Bernoulli measure $\mu_{\mathbf q}$ and for $\mu_{\mathbf q}$-a.e. $\iih\in \Sigma_{\mathcal{H}}:\, \#\Pi_{\mathcal{H}}^{-1}\Pi_{\mathcal{H}}(\iih)=1$.
\item \texttt{No Dimension Drop (NDD)} if for all push forward measures $\nu_{\mathbf q}=(\Pi_{\mathcal{H}})_\ast\mu_{\mathbf q}$
\begin{equation*}\label{a31}
\dim_{\rm H} \nu_{\mathbf q}=
\frac{-\sum_{\ih=1}^M q_{\ih} \log q_{\ih}}{-\sum_{\ih=1}^M q_{\ih} \log r_{\ih}}.
\end{equation*}
\end{itemize}

\end{definition}

The following implications hold between these conditions
\begin{equation}\label{eq:CondForH}
\mathrm{HESC} \Longrightarrow \mathrm{NDD} \Longleftrightarrow WAUC.
\end{equation}
$\mathrm{HESC} \Longrightarrow \mathrm{NDD}$ follows from Hochman's work \cite[Theorem~1.1]{Hochman_Annals14}. AUC implies NDD from Feng--Hu \cite[Theorem 2.8 and Corollary 4.16]{FengHu09}, but we do not know if the reverse direction $\mathrm{NDD} \Longrightarrow \mathrm{AUC}$ holds or not. Feng informed us \cite{FengOral} that he can prove the equivalence $\mathrm{NDD} \Longleftrightarrow \mathrm{WAUC}$ for ergodic measures. Unfortunately, this result has not yet been written down at the time of preparation of this paper. However, we use it only for Bernoulli measures. For completeness, we give our own complete proof of $\mathrm{NDD} \Longleftrightarrow \mathrm{WAUC}$ for Bernoulli measures in Appendix~\ref{app:NDD_WAUC}.

The set $\mathcal{U}$ of translations $(u_{1},\ldots,u_M)$ defining $\mathcal{H}$ for which HESC does not hold is small. It is stated in \cite[Proposition~2.7]{pardo-simon} that it essentially follows from \cite[Theorem~1.10]{HochmanIndDim} that the Hausdorff and packing dimension of $\mathcal{U}$ is $M-1$, in particular $\mathcal{U}$ has 0 $M$-dimensional Lebesgue measure. Moreover, \cite[Theorem~1.5]{Hochman_Annals14} states that if the parameters $(r_1,\ldots,r_M,u_1,\ldots,u_M)$ defining $\mathcal{H}$ are all algebraic, then HESC does not hold if and only if there is an exact overlap, i.e. $\Delta_n=0$ for some $n$.

\section{Main results}\label{sec:MainRes}

We now state our main results for the Hausdorff dimension of shifted TGL carpets in Subsection~\ref{subsec:ResHausdorff}, the box dimension in Subsection~\ref{subsec:ResBox} and discuss diagonally homogeneous carpets in Subsection~\ref{subsec:ResDiagHomo}. For a discussion on generalizing towards negative entries in the main diagonal, see Subsection~\ref{subsec:NegEntries}.

\subsection{Hausdorff dimension}\label{subsec:ResHausdorff}
For any vector $\mathbf{c}=(c_1,\ldots,c_K)$ with strictly positive entries and a probability vector $(p_1,\ldots,p_K)$ we write
\begin{equation*}
\langle\mathbf{c}\rangle_{\mathbf{p}}:=  \prod_{i=1}^K c_i^{p_i}.
\end{equation*}
When no confusion is made, we suppress $\mathbf{p}$ and write $\langle\mathbf{c}\rangle=\langle\mathbf{c}\rangle_{\mathbf{p}}$. Throughout, we use this notation for the vectors $\mathbf{a}=(a_1,\ldots,a_N),\, \mathbf{b}=(b_1,\ldots,b_N),\, \mathbf{p}=(p_1,\ldots,p_N)$, $\mathbf{N}=(N_1,\ldots,N_M)$ and $\mathbf{q}=(q_1,\ldots,q_M)$, where $\mathbf{q}$ is derived from $\mathbf{p}$ via \eqref{def:q_l}. Using this notation let us denote the function on the right-hand side of \eqref{eq:dimHBernoulli} by
\begin{equation}\label{def:D(p)}
D(\mathbf{p}):= \frac{\log \langle\mathbf{p}\rangle_{\mathbf{p}}}{\log \langle\mathbf{a}\rangle_{\mathbf{p}}} + \left( 1- \frac{ \log \langle\mathbf{b}\rangle_{\mathbf{p}}}{ \log \langle\mathbf{a}\rangle_{\mathbf{p}}}\right) \frac{\log \langle\mathbf{q}\rangle_{\mathbf{q}}}{ \log \langle\mathbf{b}\rangle_{\mathbf{p}}} = \frac{\log \langle\mathbf{q}\rangle}{ \log \langle\mathbf{b}\rangle} + \frac{\log \langle\mathbf{p}\rangle - \log \langle\mathbf{q}\rangle}{\log \langle\mathbf{a}\rangle}.
\end{equation}
\begin{theorem}[Upper bound]\label{thm:mainresUpperbound}
	Regardless of overlaps, for any shifted triangular Gatzouras--Lalley-type planar carpet $\Lambda$
	\begin{equation*}
	\dim_{\rm H} \Lambda \leq \sup_{\mathbf{p}\in\mathcal{P}} D(\mathbf{p}) =:\alpha^\ast.
	\end{equation*}
	Furthermore, there always exists a $\mathbf{p}^\ast\in\mathcal{P}_0$ for which $ D(\mathbf{p}^\ast) = \alpha^\ast$.
\end{theorem}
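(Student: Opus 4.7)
The strategy is to adapt the covering argument of Gatzouras--Lalley~\cite{GatzourasLalley92} (carried out for rectangular cylinders under the ROSC) to the cylinder parallelograms of our shifted TGL carpet. Two simple observations make the adaptation nearly mechanical. First, overlaps are harmless for upper bounds: the trivial cover $\Lambda\subseteq \bigcup_\mathbf{w} R_\mathbf{w}$ works regardless of whether the parallelograms intersect. Second, by Lemma~\ref{lemma:d/b_bounded}, each $R_\mathbf{w}$ is contained in an axis-aligned rectangle of width $b_\mathbf{w}$ and height at most $(1+K_0)b_\mathbf{w}$, so up to a uniform multiplicative constant the natural cover of $\Lambda$ behaves exactly like the one for the rectangles of the GL brother $\widetilde\Lambda$ (Definition~\ref{def:GLBrother}).

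Following Gatzouras--Lalley, the key cover for the Hausdorff estimate is driven by the column structure. For each $\delta>0$ I would stop first at the column scale, selecting column paths $\hat{\mathbf{v}}$ with $r_{\hat{\mathbf{v}}}\leq\delta < r_{\hat{\mathbf{v}}^-}$, and then within each such column strip (of width $\leq\delta$) further iterate word by word until the height drops below $\delta$. Each resulting parallelogram $R_\mathbf{w}$ fits in $O(1)$ squares of side $(1+K_0)\delta$ by the skew bound, so the Hausdorff $s$-content is
\[
\mathcal{H}^s_{(1+K_0)\delta}(\Lambda) \;\lesssim\; \delta^{s}\cdot(\#\text{stopped cylinders}).
\]
Decomposing the stopped cylinders by the empirical type $\mathbf{p}\in\mathcal{P}$ of their words --- refined by the coarser type $\mathbf{q}$ of the column-path (the pushforward~\eqref{def:q_l} of $\mathbf{p}$) --- a standard Stirling-and-entropy computation shows that the contribution of words of type $\mathbf{p}$ to $\mathcal{H}^s_{(1+K_0)\delta}(\Lambda)$ is bounded by $\delta^{s-D(\mathbf{p})}$ up to subexponential factors, where $D(\mathbf{p})$ is the Ledrappier--Young-type expression~\eqref{def:D(p)}. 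Summing over a countable dense family of rational types, one obtains $\dim_{\rm H}\Lambda \leq \sup_{\mathbf{p}\in\mathcal{P}} D(\mathbf{p}) = \alpha^\ast$.

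For the existence of a maximiser $\mathbf{p}^\ast\in\mathcal{P}_0$, the function $D$ extends continuously to the compact simplex $\mathcal{P}$ via the convention $0\log 0 = 0$; the denominators $\log\langle\mathbf{a}\rangle_\mathbf{p}$ and $\log\langle\mathbf{b}\rangle_\mathbf{p}$ remain strictly negative because $a_i,b_i<1$, so a maximiser exists by compactness. It cannot lie on $\partial\mathcal{P}$ by the standard entropy-slope argument: pushing mass $\epsilon$ into a zero coordinate alters $\log\langle\mathbf{p}\rangle_\mathbf{p}$ (and possibly $\log\langle\mathbf{q}\rangle_\mathbf{q}$) by a term of order $\epsilon\log(1/\epsilon)$, while $\log\langle\mathbf{a}\rangle_\mathbf{p}$ and $\log\langle\mathbf{b}\rangle_\mathbf{p}$ change only linearly, so $D$ strictly increases. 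The main technical obstacle is the nested column-then-word counting above, which ensures that the sharp exponent $D(\mathbf{p})$ (involving both $\log\langle\mathbf{p}\rangle_\mathbf{p}$ and the coarser $\log\langle\mathbf{q}\rangle_\mathbf{q}$) appears in place of a strictly weaker uniform estimate; this is precisely the bookkeeping of Gatzouras--Lalley and Bara\'nski~\cite{GatzourasLalley92,BaranskiTriag_2008}. Once it is set up, the triangular-versus-diagonal difference is absorbed entirely into the constant $K_0$ from Lemma~\ref{lemma:d/b_bounded}, and the overlapping case is handled uniformly with the separated one.
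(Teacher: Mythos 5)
Your approach (cover $\Lambda$ directly by approximate squares, absorb the shear via Lemma~\ref{lemma:d/b_bounded}, then estimate the $s$-content by a type-class count) is genuinely different from the route the paper takes. The paper does not count: it lifts the argument to symbolic space, defines a metric $d$ on $\Sigma$ so that the balls of $(\Sigma,d)$ are exactly the Gatzouras--Lalley approximate squares, proves $\Pi$ is Lipschitz (this is where overlaps and shear are absorbed), and then applies a mass distribution principle with the explicit one-parameter family of Gatzouras--Lalley Bernoulli measures $p_i\propto a_i^{\vartheta}b_i^{\lambda-\vartheta}\gamma_i(\vartheta)^{\rho-1}$ from \cite[eq.\ (5.2)]{GatzourasLalley92}.

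I believe there is a genuine gap in the counting step, and it is exactly where you wave at ``a standard Stirling-and-entropy computation''. An approximate square $B_k(\ii)$ is parametrised by the pair $\bigl(\ii|_{L}, \Phi(\ii)|_{L+1,\ldots,k}\bigr)$ with $L=L_k(\ii)<k$: the first $L$ symbols are word symbols (controlling the height $a_{\ii|L}$) and the last $k-L$ are \emph{column} symbols (controlling the width $b_{\ii|k}$). These two blocks are free: the empirical type $\mathbf{q}_2$ of $\Phi(\ii)|_{L+1,\ldots,k}$ is not the pushforward of the type $\mathbf{p}_1$ of $\ii|_L$. Running the Stirling count with the two types decoupled, the contribution of one type-class to the $s$-content has exponent
\[
\widetilde{D}(\mathbf{p}_1,\mathbf{q}_2) \;=\; \frac{-\log\langle\mathbf{p}_1\rangle_{\mathbf{p}_1}}{-\log\langle\mathbf{a}\rangle_{\mathbf{p}_1}}+\Bigl(1-\frac{\log\langle\mathbf{b}\rangle_{\mathbf{p}_1}}{\log\langle\mathbf{a}\rangle_{\mathbf{p}_1}}\Bigr)\frac{-\log\langle\mathbf{q}_2\rangle_{\mathbf{q}_2}}{-\log\langle\mathbf{r}\rangle_{\mathbf{q}_2}},
\]
and $\sup_{\mathbf{p}_1,\mathbf{q}_2}\widetilde{D}(\mathbf{p}_1,\mathbf{q}_2)$ is in general strictly larger than $\alpha^\ast=\sup_{\mathbf{p}}D(\mathbf{p})$, because one can choose $\mathbf{q}_2$ to maximise $h(\mathbf{q}_2)/(-\log\langle\mathbf{r}\rangle_{\mathbf{q}_2})=s_{\mathcal{H}}$ independently of $\mathbf{p}_1$ rather than taking the marginal of $\mathbf{p}_1$. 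So the naive sum over types only delivers a box-dimension-flavoured bound, not $\alpha^\ast$. This is precisely why Gatzouras--Lalley use the measure construction: the parameters $\vartheta,\rho$ are tuned so that $\mu\bigl(B_k(\ii)\bigr)\geq c\,(\operatorname{diam}B_k(\ii))^{\alpha^\ast}$ for \emph{every} $\ii$, which implicitly couples the two blocks. A secondary issue is that your two-stage stopping is described backwards: you should fix $L$ word symbols first (that fixes the height, which is the slower-decaying direction) and then extend only the column word from $L+1$ to $k$ to shrink the width; as written (``column paths $\hat{\mathbf{v}}$ with $r_{\hat{\mathbf{v}}}\leq\delta$, then iterate word by word until the height drops below $\delta$''), the iteration would terminate immediately since direction-$x$ dominates implies $a_{\mathbf w}<b_{\mathbf w}$. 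Both parts of your sketch would need to be repaired, and after the repair you are essentially forced back to the variational/measure argument of \cite{GatzourasLalley92}.
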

The proof is given in Section \ref{sec:upperbound}. Throughout, let $\mathbf{q}^\ast$ denote the vector $q_{\ih}^\ast=\sum_{j\in\mathcal{I}_{\ih}} p^\ast_j$. The next theorem states sufficient conditions under which the Hausdorff dimension of a self-affine measure $\nu_{\mathbf p}$ on $\Lambda$ is equal to $D(\mathbf{p})$.


\begin{theorem}\label{thm:maindimresult}\
	Let $\mathbf{p}\in\mathcal{P}_0$, $\mu_{\mathbf{p}}:=\mathbf{p}^{\mathbb{N}}$ and
	$\nu_{\mathbf{p}}:=\Pi_*\mu_{\mathbf{p}}$. For a shifted triangular Gatzouras--Lalley-type planar carpet $\Lambda$ we have
	\begin{equation*}\label{a57}
	\dim_{\rm H} \nu_{\mathbf{p}}=D(\mathbf{p})
	\end{equation*}
	if the horizontal IFS $\mathcal{H}$ satisfies Hochman's Exponential Separation Condition (in particular, always holds for non-overlapping columns) and
	\begin{enumerate}[(i)]
		\item either each column independently satisfies the ROSC or
		\item $\Lambda$ satisfies transversality (see Definition \ref{def:separations}) and the following inequality holds:
		\begin{equation}\label{cond:main}
		\dfrac{\log \langle\mathbf{a}\rangle_{\mathbf{p}} }{\log \langle\mathbf{b}\rangle_{\mathbf{p}} }>1+\dfrac{\log \langle\mathbf{N}\rangle_{\mathbf{q}} }{-\log \langle\mathbf{q}\rangle_{\mathbf{q}}}.
		\end{equation}
	\end{enumerate}
\end{theorem}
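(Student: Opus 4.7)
The plan is to apply the Ledrappier--Young formula of B\'ar\'any--K\"aenm\"aki (Theorem~\ref{thm:BaranyKaenmaki}) to the self-affine measure $\nu_{\mathbf{p}}$. In our two-dimensional setting with $a_i < b_i$, the Lyapunov exponents of $\mu_{\mathbf{p}}$ are $\chi_1:=-\log\langle\mathbf{b}\rangle_{\mathbf{p}} < \chi_2:=-\log\langle\mathbf{a}\rangle_{\mathbf{p}}$, and the measure-theoretic entropy is $h(\mathbf{p})=-\log\langle\mathbf{p}\rangle_{\mathbf{p}}$. Writing $\eta:=\operatorname{proj}_x\nu_{\mathbf{p}}$, the formula reduces (because the strong-stable direction is a.s. vertical and aligns with the fibres of $\operatorname{proj}_x$) to
\begin{equation*}
\dim_{\rm H}\nu_{\mathbf{p}}=\dim_{\rm H}\eta+\frac{h(\mathbf{p})-h_\eta(\mathbf{p})}{\chi_2},
\end{equation*}
where $h_\eta(\mathbf{p})$ denotes the entropy extracted by $\operatorname{proj}_x$. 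The task therefore has two independent pieces: identify the $x$-projection and compute its dimension, and show that the within-column information contributes its full Shannon entropy $h(\mathbf{p})-h(\mathbf{q})$.

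\textbf{Horizontal projection via HESC.} Since $\phi$ identifies columns, $\operatorname{proj}_x\circ f_i=h_{\phi(i)}\circ\operatorname{proj}_x$, so $\eta=\nu_{\mathbf{q}}$ is the self-similar measure on $\Lambda_{\mathcal{H}}$ associated with the weights $\mathbf{q}$. Because $\mathcal H$ satisfies HESC, Hochman's theorem together with the implications summarized in \eqref{eq:CondForH} gives NDD for $\mathcal H$, so
\begin{equation*}
\dim_{\rm H}\nu_{\mathbf{q}}=\min\!\left\{1,\ \frac{-\log\langle\mathbf{q}\rangle_{\mathbf{q}}}{-\log\langle\mathbf{r}\rangle_{\mathbf{q}}}\right\}=\min\!\left\{1,\frac{h(\mathbf{q})}{\chi_1}\right\}.
\end{equation*}
Moreover NDD is equivalent to WAUC (Appendix~\ref{app:NDD_WAUC}), which means that for $\mu_{\mathbf{q}}$-a.e.\ $\iih$ the fibre $\Pi_{\mathcal H}^{-1}\Pi_{\mathcal H}(\iih)$ is a singleton outside a null set. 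Consequently, the $\operatorname{proj}_x$-image of the Bernoulli measure $\mu_{\mathbf{p}}$ on $\Sigma$ satisfies $h_\eta(\mathbf{p})=h(\mathbf{q})$: the horizontal projection retains exactly the column index of each symbol and nothing more. Substituting these two equalities into the Ledrappier--Young expression yields exactly $D(\mathbf{p})$, provided the second ingredient below holds.

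\textbf{Fibre entropy: case (i).} Under condition (i), within each column $\ih$ the maps $\{f_j\}_{j\in\mathcal I_{\ih}}$ have pairwise disjoint interior images, hence their restriction to the common vertical strip $[u_{\ih},u_{\ih}+r_{\ih}]\times[0,1]$ acts as a one-dimensional IFS in the $y$-direction satisfying the open set condition. Iterating respects the column partition, and conditioning $\mu_{\mathbf{p}}$ on the column sequence $\phi(\ii)=\iih$ produces a product of such within-column Bernoulli systems. The OSC-within-column property then gives a.s.\ injectivity of the conditional coding map, so the conditional entropy equals the full Shannon contribution $h(\mathbf{p})-h(\mathbf{q})$, yielding $\dim_{\rm H}\nu_{\mathbf{p}}=D(\mathbf{p})$.

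\textbf{Fibre entropy: case (ii) --- the main obstacle.} Here two different words $\ii,\jj$ lying in the same sequence of columns $\iih$ may give $\Pi(\ii)=\Pi(\jj)$, so one must argue that such coincidences are $\mu_{\mathbf{p}}$-negligible. Following the transversality argument of B\'ar\'any--Rams--Simon~\cite{barany_rams_simon_triang_2017}, I would estimate, for $\mu_{\mathbf{p}}$-typical $\ii$ and large $n$, the number
\begin{equation*}
Z_n(\ii):=\#\{\jj|_n:\ \phi(\jj|_n)=\phi(\ii|_n),\ R_{\ii|_n}^\circ\cap R_{\jj|_n}^\circ\neq\emptyset\},
\end{equation*}
and show via a Borel--Cantelli / second-moment argument that $\mathbb{E}[Z_n]$ grows like $\langle\mathbf{N}\rangle_{\mathbf{q}}^n$ while a typical $y$-fibre of $R_{\ii|_n}$ is hit transversally (by \eqref{a33} of Definition~\ref{def:separations}) by at most $O(K_1 a_{\ii|_n}/a_{\jj|_n})$ parallelograms -- plugging in typical contraction rates gives an expected overlap multiplicity at scale $\e{-n\chi_2}$ controlled by
\begin{equation*}
\langle\mathbf{N}\rangle_{\mathbf{q}}^n\cdot\langle\mathbf{b}\rangle_{\mathbf{p}}^n/\langle\mathbf{a}\rangle_{\mathbf{p}}^n
\ \text{ against the vertical spacing }\ \langle\mathbf{q}\rangle_{\mathbf{q}}^n.
\end{equation*}
Condition \eqref{cond:main} rearranges precisely to $\langle\mathbf{N}\rangle_{\mathbf{q}}\cdot\langle\mathbf{b}\rangle_{\mathbf{p}}/\langle\mathbf{a}\rangle_{\mathbf{p}}<\langle\mathbf{q}\rangle_{\mathbf{q}}^{-1}$ would need to be compared correctly; the sharpened form in the statement gives an exponentially decaying bound which, by Borel--Cantelli, forces the cardinality of the relevant preimage of $\Pi$ under $\mu_{\mathbf{p}}$ to be one almost surely. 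This weak almost unique coding then upgrades the fibre entropy to the full $h(\mathbf{p})-h(\mathbf{q})$, completing the identification $\dim_{\rm H}\nu_{\mathbf{p}}=D(\mathbf{p})$. The hard part is this last quantitative count: it requires both the transversality estimate \eqref{a33} and a careful choice of conditioning on the column word $\iih$ so that the HESC-based separation on the horizontal projection is not double-counted against the within-column overlaps.
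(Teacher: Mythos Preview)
Your overall architecture matches the paper's exactly: invoke the B\'ar\'any--K\"aenm\"aki Ledrappier--Young formula (Theorem~\ref{thm:BaranyKaenmaki}), use HESC $\Rightarrow$ NDD to get $\dim_{\rm H}\nu_{\mathbf q}=\log\langle\mathbf q\rangle/\log\langle\mathbf b\rangle$, and then argue that the defect term vanishes. The paper phrases this last step as showing $H=-\int\log\mu_{\alpha(\ii)}([i_1])\,d\mu_{\mathbf p}=0$, split into a ``different-column'' piece (their Claim~\ref{r78}, via WAUC) and a ``same-column'' piece (Proposition~\ref{r69}); your case (i) and the WAUC step are essentially correct. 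A small caution: the Ledrappier--Young formula you write down, with an unspecified ``$h_\eta(\mathbf p)$'', is not the form B\'ar\'any--K\"aenm\"aki prove; you should work directly with the $H$-term as in \eqref{eq:dimHmuBaranyKaenmaki}, since your $h_\eta(\mathbf p)=h(\mathbf q)$ is equivalent to $H=0$ only \emph{after} NDD is in place, and WAUC for $\mathcal H$ alone does not give it.

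The genuine gap is in your case (ii) mechanism. Transversality does not bound how many parallelograms meet a vertical fibre; it bounds the \emph{horizontal width} $|\mathrm{proj}_x(R_{\overline\imath}\cap R_{\overline\jmath})|\le K_1\langle\mathbf a\rangle^{(1-\delta)\ell}$. The paper's crucial step, which is absent from your sketch, is to pull this back by $f_{\overline\imath}^{-1}$ to an interval $J_{\overline\imath,\overline\jmath}\subset[0,1]$ of length $\lesssim(\langle\mathbf a\rangle/\langle\mathbf b\rangle)^\ell$, and then use the \emph{exact dimensionality of $\nu_{\mathbf q}$} to bound $\mu_{\mathbf p}(\overline\imath\,\Pi^{-1}f_{\overline\imath}^{-1}R_{\overline\imath,\overline\jmath})\le\mu_{\mathbf p}([\overline\imath])\cdot\nu_{\mathbf q}(J_{\overline\imath,\overline\jmath})\lesssim\mu_{\mathbf p}([\overline\imath])\cdot|J_{\overline\imath,\overline\jmath}|^{\dim_{\rm H}\nu_{\mathbf q}-\delta}$. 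Summing over the $\lesssim\langle\mathbf N\rangle^{(1+\delta)\ell}$ choices of $\overline\jmath$ in the same column gives a contraction with base $\langle\mathbf N\rangle\cdot(\langle\mathbf a\rangle/\langle\mathbf b\rangle)^{\dim_{\rm H}\nu_{\mathbf q}}$, and one checks that this is $<1$ \emph{precisely} when \eqref{cond:main} holds. Your proposed comparison ``$\langle\mathbf N\rangle\langle\mathbf b\rangle/\langle\mathbf a\rangle$ versus $\langle\mathbf q\rangle^{-1}$'' is not a rearrangement of \eqref{cond:main}; the $\langle\mathbf q\rangle$ does not enter as ``vertical spacing'' but through the exponent $\dim_{\rm H}\nu_{\mathbf q}=\log\langle\mathbf q\rangle/\log\langle\mathbf b\rangle$ applied to the horizontal ratio $\langle\mathbf a\rangle/\langle\mathbf b\rangle$. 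Without this pull-back-then-local-dimension step the transversality width bound never becomes a measure bound, and the Borel--Cantelli argument does not close.
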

We remark that Proposition \ref{prop:diaghomoCondition} provides
a simple way to check condition \eqref{cond:main} in the diagonally homogeneous case.
Section \ref{sec:dimH_lowerbound} is devoted to the proof of this theorem. As an immediate corollary, we get
\begin{corollary}[Sufficient conditions]\label{cor:dimHSufCond}
	Whenever a shifted TGL carpet $\Lambda$ satisfies the conditions of Theorem~\ref{thm:maindimresult} with replacing $\mathbf{p}$ and $\mathbf{q}$ in \eqref{cond:main} by $\mathbf{p}^\ast$ and $\mathbf{q}^\ast$, then
	\begin{equation*}\label{a54}
	\dim_{\rm H} \Lambda=\alpha^\ast.
	\end{equation*}
\end{corollary}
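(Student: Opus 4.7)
The plan is to prove this by sandwiching $\dim_{\rm H}\Lambda$ between the upper bound supplied by Theorem~\ref{thm:mainresUpperbound} and a lower bound obtained by exhibiting a single self-affine measure whose Hausdorff dimension already reaches $\alpha^\ast$. Because both pieces are on the table, the corollary should drop out without any new geometric work.

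First I would invoke Theorem~\ref{thm:mainresUpperbound}, which applies unconditionally to every shifted TGL carpet and yields
\begin{equation*}
\dim_{\rm H}\Lambda \leq \sup_{\mathbf{p}\in\mathcal{P}}D(\mathbf{p}) = \alpha^\ast,
\end{equation*}
together with the existence of an optimiser $\mathbf{p}^\ast\in\mathcal{P}_0$ with $D(\mathbf{p}^\ast)=\alpha^\ast$. Note $\mathbf{p}^\ast$ has strictly positive entries, so all log-quantities appearing in $D(\mathbf{p}^\ast)$ and in the hypothesis \eqref{cond:main} are well defined, and the associated vector $\mathbf{q}^\ast$ also lies in $\mathcal{P}_0$.

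For the matching lower bound I would apply Theorem~\ref{thm:maindimresult} to the specific probability vector $\mathbf{p}=\mathbf{p}^\ast$. By assumption $\mathcal{H}$ satisfies HESC, and either (i) each column independently satisfies the ROSC, or (ii) $\Lambda$ satisfies transversality and \eqref{cond:main} holds with $\mathbf{p}^\ast, \mathbf{q}^\ast$ in place of $\mathbf{p},\mathbf{q}$; either way the hypotheses of Theorem~\ref{thm:maindimresult} are met for $\mathbf{p}^\ast$. Hence
\begin{equation*}
\dim_{\rm H}\nu_{\mathbf{p}^\ast} = D(\mathbf{p}^\ast) = \alpha^\ast.
\end{equation*}
Since $\mathrm{supp}(\nu_{\mathbf{p}^\ast})\subseteq\Lambda$, the basic monotonicity of Hausdorff dimension under support containment gives $\dim_{\rm H}\Lambda\geq\dim_{\rm H}\nu_{\mathbf{p}^\ast}=\alpha^\ast$. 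Combining this with the upper bound from the previous step yields $\dim_{\rm H}\Lambda=\alpha^\ast$, which is the desired conclusion.

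There is no real obstacle here; the whole content is packaging. The only point that deserves a line of comment is that the hypothesis of the corollary is phrased precisely so that \eqref{cond:main} holds \emph{at the optimiser} $\mathbf{p}^\ast$ — one does not need \eqref{cond:main} to hold for every $\mathbf{p}\in\mathcal{P}_0$, only for the one vector that attains the supremum. This is exactly what is needed to feed Theorem~\ref{thm:maindimresult} and close the gap between the upper and lower bounds.
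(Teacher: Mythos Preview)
Your argument is correct and is exactly the sandwich the paper intends: the upper bound from Theorem~\ref{thm:mainresUpperbound} together with the lower bound $\dim_{\rm H}\Lambda\geq\dim_{\rm H}\nu_{\mathbf{p}^\ast}=D(\mathbf{p}^\ast)=\alpha^\ast$ obtained by applying Theorem~\ref{thm:maindimresult} at the optimiser $\mathbf{p}^\ast\in\mathcal{P}_0$. The paper in fact does not write out a proof at all, simply labelling the result an immediate corollary, so your write-up is if anything more detailed than the original.
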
				

\subsection{Box dimension}\label{subsec:ResBox}
Recall the IFSs $\mathcal{\widetilde H}$ \eqref{def:verticalIFS} and $\mathcal{H}$ \eqref{a72} obtained by projecting $\mathcal{F}$ to the $x$-axis. Recall $s_x$ was defined so that $\sum_{\ih=1}^{M}r_{\ih}^{s_x}=1$ and let $\tilde{s}_x$ be the unique real such that $\sum_{i=1}^{N}b_i^{\tilde{s}_x}=1$. Furthermore, introduce
\begin{equation*}
s_{\mathcal{H}}:= \dim_{\rm B} \Lambda_{\widetilde{\mathcal{H}}} = \dim_{\rm B} \Lambda_{\mathcal{H}}.
\end{equation*}
Since $\Lambda_{\mathcal{H}}$ is a self-similar set, $s_{\mathcal{H}}$ is well defined. If $\Lambda$ is a TGL carpet then $s_{\mathcal{H}}=s_x$, otherwise $s_{\mathcal{H}}\leq s_x$. The affinity dimension $\dim_{\mathrm{Aff}}$ of $\Lambda$ can be deduced from the result of Falconer--Miao~\cite[Corollary 2.6]{FalconerMiao07} together with the description in \cite[Subsection~1.3]{barany_rams_simon_triang_2017} and the fact that direction-$x$ dominates: $\dim_{\rm{Aff}}\Lambda=s_A$ is the unique real such that
\begin{equation}\label{def:dimA}
\sum\limits_{i=1}^{N} b_{i}^{\min\{\tilde{s}_x,1\}}a_{i}^{s_A-\min\{\tilde{s}_x,1\}}=1.
\end{equation}
In particular, if $\tilde{s}_x<1$ then $s_A=\tilde{s}_x$, otherwise $s_A$ solves $\sum_{i=1}^{N} b_{i}a_{i}^{s_A-1}=1$. So $s_A$ only depends on the main diagonals $(b_i,a_i)$, but not on the off-diagonal elements $d_i$. So,
the affinity dimension of a shifted TGL carpet $\Lambda$ is
and its GL brother coincide.

The following theorem gives an upper bound for $\dim_{\rm B}\Lambda$, which can be strictly smaller than $s_A$. It was proved for diagonal iterated function systems by Feng--Wang in \cite[Corollary~1]{FengWang2005} and also follows from Fraser's work \cite[Theorem~2.4, Corollary~2.7]{Fraser12Boxlike}. Here we extend its scope to triangular IFSs. In a different context, Hu~\cite{Hu98BoxDim} studied a related problem, where a version of Bowen's formula determines the box dimension.

\begin{theorem}[Upper bound]\label{thm:mainBox}
Regardless of overlaps, for any shifted triangular Gatzouras--Lalley-type planar carpet $\Lambda$
\begin{equation*}
\dim_{\rm P} \Lambda = \overline{\dim}_{\rm B} \Lambda \leq s\leq s_A,
\end{equation*}
where $s$ is the unique solution of the equation
\begin{equation}\label{a74}
\sum_{i=1}^{N} b_{i}^{s_{\mathcal{H}}}a_{i}^{s-s_{\mathcal{H}}}=1.
\end{equation}
In particular, if $\Lambda$ satisfies the ROSC, then $\dim_{\rm P}\Lambda=\dim_{\rm B} \Lambda = s$.
\end{theorem}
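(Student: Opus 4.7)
The statement has three parts: the upper bound $\overline{\dim}_{\rm B}\Lambda\le s$, the comparison $s\le s_A$, and (under ROSC) the matching lower bound together with $\dim_{\rm P}\Lambda=\overline{\dim}_{\rm B}\Lambda$.

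The comparison $s\le s_A$ is purely algebraic and I would dispose of it first. Writing $\sigma:=\min\{\tilde{s}_x,1\}$, one first observes $s_{\mathcal H}\le\sigma$: indeed $s_{\mathcal H}\le 1$, and $\sum_{\ih}N_{\ih}r_{\ih}^{\tilde{s}_x}=1\ge\sum_{\ih}r_{\ih}^{\tilde{s}_x}$ gives $\tilde{s}_x\ge$ the similarity dimension of $\mathcal H$, which itself dominates $s_{\mathcal H}=\dim_{\rm B}\Lambda_{\mathcal H}$. Since $a_i\le b_i<1$ and $\sigma-s_{\mathcal H}\ge 0$, the inequality $(b_i/a_i)^{\sigma-s_{\mathcal H}}\ge 1$ upgrades $b_i^{s_{\mathcal H}}a_i^{s-s_{\mathcal H}}$ to $b_i^{\sigma}a_i^{s-\sigma}$, so $\sum_i b_i^{\sigma}a_i^{s-\sigma}\ge 1=\sum_i b_i^{\sigma}a_i^{s_A-\sigma}$. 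Monotonicity of the latter in the exponent then forces $s\le s_A$.

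For $\overline{\dim}_{\rm B}\Lambda\le s$, the plan is to cover $\Lambda$ at scale $\delta$ by the parallelograms $R_{\ii|n}$ stopped at the first time $a_{\ii|n}\le\delta$, so $a_{\ii|n}\asymp\delta$. Lemma~\ref{lemma:d/b_bounded} controls the skewness, so at every vertical line $R_{\ii|n}$ has extent $a_{\ii|n}\le\delta$; it is therefore covered by at most $C_0(b_{\ii|n}/\delta+1)$ many $\delta$-mesh boxes, with $C_0=C_0(K_0)$. The summation requires the multiplicative identity $\sum_i b_i^{s_{\mathcal H}}a_i^{s-s_{\mathcal H}}=1$ (by the definition of $s$), which makes $\psi(i):=b_i^{s_{\mathcal H}}a_i^{s-s_{\mathcal H}}$ a probability on $[N]$; for every cut set of $\Sigma$ one then has $\sum\psi(\ii|n)=1$. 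To turn this into the correct bound on $\sum b_{\ii|n}/\delta$ I would organise the cut set in two layers. First stop in the horizontal IFS $\mathcal H$ at scale $\delta$, producing the self-similar cut set $\Sigma^*_{\mathcal H,\delta}$ of cardinality $\asymp\delta^{-s_{\mathcal H}}$; this accounts for the factor $r_{\iih|m}^{s_{\mathcal H}}\asymp\delta^{s_{\mathcal H}}$ in $\psi$. Second, within each $\delta$-wide column $h_{\iih|m}([0,1])\times[0,1]$, estimate the number of $\delta$-mesh boxes required to cover $\bigcup_{\ii|m:\,\phi(\ii|m)=\iih|m}R_{\ii|m}$, recursing further into sub-parallelograms whose height still exceeds $\delta$; the vertical weight $a_{\ii|m}^{s-s_{\mathcal H}}$ carries the remaining exponent. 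This mirrors the argument of Fraser~\cite{Fraser12Boxlike} and Feng--Wang~\cite{FengWang2005} in the diagonal case, and Lemma~\ref{lemma:d/b_bounded} reduces the triangular case to the diagonal one for the covering counts.

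The equality $\dim_{\rm P}\Lambda=\overline{\dim}_{\rm B}\Lambda$ follows from the standard self-affine argument, valid regardless of overlaps: in any countable decomposition $\Lambda=\bigcup_k X_k$, the Baire category theorem forces some $X_k$ to have non-empty interior in $\Lambda$, hence to contain a cylinder $f_{i_1\ldots i_n}(\Lambda)$, which is a bi-Lipschitz image of $\Lambda$ and thus has the same upper box dimension. This pins $\dim_{\rm P}\Lambda$ to $\overline{\dim}_{\rm B}\Lambda$. Under ROSC, the parallelograms $R_{\ii|n}\in\Sigma^*_\delta$ are interior-disjoint, so the very same column-grouping argument that produced the upper bound also produces a matching lower bound $N_\delta(\Lambda)\gtrsim\delta^{-s}$, yielding $\dim_{\rm B}\Lambda\ge s$.

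\emph{Main obstacle.} The delicate step is the combinatorial sum in the upper bound. The cut-set identity $\sum b_{\ii|n}^{s_{\mathcal H}}a_{\ii|n}^{s-s_{\mathcal H}}=1$ does not directly control $\sum b_{\ii|n}/\delta$, and naively using $b_{\ii|n}\le b_{\ii|n}^{s_{\mathcal H}}$ loses a factor $\delta^{s_{\mathcal H}-1}$ when $s_{\mathcal H}<1$. Closing this gap is precisely where the column structure of a TGL carpet has to be exploited: the two-level stopping above exposes, within each $\delta$-wide column, a self-affine count of the remaining exponent $s-s_{\mathcal H}$, and this is the part of the argument that differs substantively from the Falconer--Miao upper bound.
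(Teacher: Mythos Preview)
Your proposal is essentially correct and follows the same route as the paper, which likewise reduces everything to Fraser's box-like argument. The paper is even more economical: it simply observes (via Lemma~\ref{lemma:d/b_bounded}) that every cylinder parallelogram $R_{\iiv}$ sits inside a rectangle of comparable dimensions $b_{\iiv}\times (K_0+1)b_{\iiv}$, so $N_\delta(\Lambda)\le C\,N_\delta(\widetilde{\Lambda})$ for the GL brother $\widetilde{\Lambda}$, and then quotes Fraser~\cite{Fraser12Boxlike} verbatim for the diagonal system. Under ROSC the reverse inequality is equally immediate, and the $\dim_{\rm P}=\overline{\dim}_{\rm B}$ equality is handled exactly as you say (the paper cites \cite[Corollary~3.9]{FalconerBook}). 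Your explicit algebraic argument for $s\le s_A$ is correct and in fact more detailed than what the paper provides.

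The one place where you make the argument harder than it needs to be is your ``main obstacle''. You stop at $a_{\ii|n}\asymp\delta$ and then worry about controlling $\sum b_{\ii|n}/\delta$. But you should not be covering $R_{\ii|n}$; you should be covering $R_{\ii|n}\cap\Lambda$. Its projection to the $x$-axis is exactly $h_{\Phi(\ii|n)}(\Lambda_{\mathcal H})$, a copy of $\Lambda_{\mathcal H}$ scaled by $b_{\ii|n}$, and therefore needs only $C_\varepsilon\,(b_{\ii|n}/\delta)^{s_{\mathcal H}+\varepsilon}$ intervals of length $\delta$; since the cylinder has vertical extent $O(\delta)$ (again by Lemma~\ref{lemma:d/b_bounded}), this is also the $\delta$-box count. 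Summing over the cut set $\{a_{\iiv}\asymp\delta\}$ and using the multiplicative identity $\sum b_{\iiv}^{s_{\mathcal H}}a_{\iiv}^{s-s_{\mathcal H}}=1$ gives $N_\delta(\Lambda)\lesssim\delta^{-s-\varepsilon}$ in one line. Your proposed two-level (horizontal-first) stopping also works, but it is a detour: the column structure is already encoded in the fact that $\mathrm{proj}_x(R_{\iiv}\cap\Lambda)$ is a scaled $\Lambda_{\mathcal H}$, and this is precisely where the exponent $s_{\mathcal H}$ enters Fraser's modified singular value function.
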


\begin{corollary}[Equality of box- and affinity dimension]\label{cor:dimB=dimA}
For any shifted TGL carpet $\Lambda$
\begin{equation*}
s=s_A \quad\Longleftrightarrow\quad s_{\mathcal{H}}=\min\{\tilde{s}_x,1\}.
\end{equation*}
\end{corollary}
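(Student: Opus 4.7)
The plan is to observe that both $s$ and $s_A$ are specializations of a single one-parameter family of equations, and that the parameter-to-solution map is strictly monotone, so that the equivalence in the corollary drops out immediately.

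First I would introduce the implicit function $\beta \colon [0,1] \to \R$ defined by $\Phi(\alpha, \beta(\alpha)) = 1$, where
$$\Phi(\alpha, \beta) := \sum_{i=1}^{N} b_i^{\alpha}\, a_i^{\beta - \alpha} \;=\; \sum_{i=1}^{N} (b_i/a_i)^{\alpha}\, a_i^{\beta}.$$
Since each $a_i \in (0,1)$, the map $\beta \mapsto \Phi(\alpha, \beta)$ is continuous, strictly decreasing, and sweeps from $+\infty$ down to $0$ as $\beta$ grows, so $\beta(\alpha)$ is well-defined. Comparing with \eqref{a74} yields $s = \beta(s_{\mathcal{H}})$, and comparing with \eqref{def:dimA} yields $s_A = \beta(\min\{\tilde{s}_x, 1\})$.

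The key step is to show that $\beta$ is strictly increasing. Since direction-$x$ dominates (assumption \eqref{ass:dirx_dominates}), we have $b_i/a_i > 1$ for every $i$, and hence $\alpha \mapsto (b_i/a_i)^{\alpha}$ is strictly increasing. Therefore, for any $\alpha_1 < \alpha_2$, using the second form of $\Phi$,
$$\Phi(\alpha_2, \beta(\alpha_1)) \;=\; \sum_{i=1}^{N} (b_i/a_i)^{\alpha_2} a_i^{\beta(\alpha_1)} \;>\; \sum_{i=1}^{N} (b_i/a_i)^{\alpha_1} a_i^{\beta(\alpha_1)} \;=\; 1,$$
and since $\Phi$ is strictly decreasing in its second argument, restoring the value $1$ forces $\beta(\alpha_2) > \beta(\alpha_1)$. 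Injectivity of $\beta$ then gives $s = s_A \iff s_{\mathcal{H}} = \min\{\tilde{s}_x, 1\}$, which is precisely the claim.

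There is essentially no obstacle; the whole argument rests on the direction-$x$-dominance hypothesis, which supplies the strict inequality $b_i > a_i$ needed for monotonicity. One consistency check worth including is that $s_{\mathcal{H}} \leq \min\{\tilde{s}_x, 1\}$ always holds, so the equivalence lives in the correct range: from $\sum_{i} b_i^{\tilde{s}_x} = \sum_{\ih} N_{\ih} r_{\ih}^{\tilde{s}_x} = 1$ and $N_{\ih} \geq 1$ we obtain $\tilde{s}_x \geq s_x \geq s_{\mathcal{H}}$, while $s_{\mathcal{H}} \leq 1$ is automatic since $\Lambda_{\mathcal{H}} \subseteq [0,1]$ by (A2).
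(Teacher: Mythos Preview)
Your proof is correct and is essentially a detailed unpacking of the paper's one-line argument, which simply says the equivalence follows from comparing the defining equations \eqref{def:dimA} and \eqref{a74} together with $a_i<1$ and $b_i/a_i>1$. Your formalization via the strictly increasing implicit function $\beta(\alpha)$ is exactly how one makes that comparison rigorous.
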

\begin{proof}
Follows immediately from comparing equations \eqref{def:dimA} and \eqref{a74} defining $s_A$ and $s$, respectively, together with the fact that $a_i<1$ and $b_i/a_i>1$ for every $i=1.\ldots,N$.
\end{proof}

\begin{remark} \
\begin{enumerate}[a)]
\item The proof of Fraser \cite{Fraser12Boxlike} does not make use of any column structure \eqref{a73}. Hence, Theorem \ref{thm:mainBox} immediately extends to an IFS $\mathcal{F}$ of the form \eqref{def:IFS_F} as long as direction-$x$ dominates ($0<a_i<b_i<1$) and the ROSC holds.
\item Since $\Lambda$ is compact and every open set intersecting $\Lambda$ contains a bi-Lipschitz image of $\Lambda$, we get that $\dim_{\rm P} \Lambda = \overline{\dim}_{\rm B} \Lambda$, see \cite[Corollary 3.9]{FalconerBook}.
\end{enumerate}
\end{remark}

Handling overlaps to calculate the box dimension is a greater challenge, since typically $\dim_{\rm H}\Lambda<\dim_{\rm B}\Lambda$ and thus the usual technique of giving a lower bound by bounding the Hausdorff dimension from below does not suffice. Hence, a new counting argument was necessary.

\begin{theorem}[Box dimension with overlaps]\label{thm:BoxwOverlaps}
For a shifted TGL carpet $\Lambda$ we have $\underline{\dim}_{\rm B} \Lambda\geq s$, hence $\dim_{\rm B}\Lambda=s$, if either of the following hold:
\begin{enumerate}[(i)]
\item $\mathcal{H}$ satisfies HESC and each column independently satisfies the ROSC or
\item $\Lambda$ is a TGL carpet, satisfies transversality and the following inequality:
\begin{equation}\label{cond:BoxDimCond}
-\log\langle \widetilde{\mathbf{p}} \rangle_{\widetilde{\mathbf p}}+ \log\langle \widetilde{\mathbf{q}} \rangle_{\widetilde{\mathbf q}} < s_{\mathcal{H}}(\log\langle \mathbf{b} \rangle_{\widetilde{\mathbf p}}-\log\langle \mathbf{a} \rangle_{\widetilde{\mathbf p}}),
\end{equation}
where $\widetilde{\mathbf p}:=(\widetilde{p}_1,\ldots,\widetilde{p}_N)$ and $\widetilde{\mathbf q}:=(\widetilde{q}_1,\ldots,\widetilde{q}_M)$ are defined by equation \eqref{a74}:
\begin{equation}\label{def:pandqForBox}
\widetilde{p}_i=b_i^{s_{\mathcal{H}}}a_i^{s-s_{\mathcal{H}}}  \;\text{ and }\;
\widetilde{q}_{\ih}=\sum_{j\in\mathcal{I}_{\ih}} b_j^{s_{\mathcal{H}}}a_j^{s-s_{\mathcal{H}}}.
\end{equation}
\end{enumerate}

\end{theorem}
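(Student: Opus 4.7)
The claim to prove is the lower bound $\underline{\dim}_{\rm B} \Lambda \geq s$, since Theorem~\ref{thm:mainBox} supplies the upper bound and the equality $\dim_{\rm P} \Lambda = \overline{\dim}_{\rm B} \Lambda \leq s$. I would treat the two cases with separate techniques corresponding to the two types of overlap allowed.

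For case (i), I would follow the subsystem approach of Fraser and Shmerkin~\cite{fraser_shmerkin_2016}. Fix $\varepsilon>0$. HESC for $\mathcal{H}$ provides arbitrarily large $n$ with $\Delta_n > e^{-\varepsilon n}$, so after removing only a sub-exponential proportion of $n$-tuples of columns, the remaining subfamily of $\mathcal{H}^n$ has cylinder intervals that are genuinely disjoint. Pulling this back to $\mathcal F$ yields a subsystem $\mathcal{F}'_n \subseteq \mathcal{F}^n$ of $n$-fold compositions whose columns do not overlap on the $x$-axis. Because by hypothesis each column of $\mathcal F$ independently satisfies the ROSC, each product column in $\mathcal F'_n$ also does, and combined with the global column separation, $\mathcal F'_n$ satisfies the ROSC as a whole. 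Theorem~\ref{thm:mainBox} then applies to $\mathcal F'_n$ and gives $\dim_{\rm B} \Lambda_{\mathcal F'_n} = s_n$, where $s_n$ is the $n$-th iterate analogue of~\eqref{a74}. The sub-exponential removal ensures $s_n \geq s - \varepsilon$, and monotonicity $\underline{\dim}_{\rm B} \Lambda \geq \underline{\dim}_{\rm B} \Lambda_{\mathcal F'_n}$ together with $\varepsilon \to 0$ closes the argument.

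For case (ii), I would introduce a direct stopping-time cover. For $r>0$ small and $\mathbf i\in\Sigma$, set $n(\mathbf i, r) := \min\{n : a_{i_1 \dots i_n} \leq r\}$; this yields a finite antichain $\Sigma_r \subseteq \bigcup_n [N]^n$ and a natural cover of $\Lambda$ by parallelograms $R_{\mathbf j}$, $\mathbf j \in \Sigma_r$, each of height $\asymp r$. The probability vector $\widetilde{\mathbf p}$ of~\eqref{def:pandqForBox} is the natural weighting: by~\eqref{a74} it is indeed a probability vector on $[N]$, and standard large-deviation estimates under the Bernoulli measure $\mu_{\widetilde{\mathbf p}}$ give $|\Sigma_r| \asymp r^{-s}$ up to sub-exponential factors. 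The difficulty is that several such parallelograms may overlap and hence be covered by a single $r$-box. I would group the words of $\Sigma_r$ by their column word $\phi(\mathbf j)$ and invoke two ingredients: first, Lalley's renewal theorem~\cite{Lalley88} applied to $\mathcal H$ with weights $\widetilde{\mathbf q}$, giving the precise asymptotic count $\asymp r^{-s_{\mathcal H}}$ of distinct column words at the appropriate stopping level; second, transversality (Definition~\ref{def:separations}), which implies that two parallelograms lying in the same column with different first symbols have projection-intersections of length at most $K_1 \cdot \max\{a_{\mathbf j}\}$, so within any column each $r$-box is hit by a uniformly bounded number of stopping parallelograms. Multiplying these bounds yields $M_r(\Lambda) \gtrsim r^{-s}$.

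The main obstacle is the book-keeping in case (ii). Transversality gives only a one-dimensional bound on overlap projections; to upgrade this to a two-dimensional covering-multiplicity estimate one must carefully align the within-column word count (measured by the entropy difference $-\log\langle \widetilde{\mathbf p}\rangle_{\widetilde{\mathbf p}} + \log\langle \widetilde{\mathbf q}\rangle_{\widetilde{\mathbf q}}$) with the $r$-scale normalisation from Lalley's renewal theorem (measured against $s_{\mathcal H}(\log\langle \mathbf b\rangle_{\widetilde{\mathbf p}} - \log\langle \mathbf a\rangle_{\widetilde{\mathbf p}})$). The assumption~\eqref{cond:BoxDimCond} is precisely the non-degeneracy condition ensuring the former is strictly smaller than the latter; it turns the transversality estimate into a productive overlap bound rather than a bare inequality. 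Running the renewal theorem simultaneously over the sub-family of parallelograms whose heights are genuinely at scale $r$, while keeping track of the joint cylinder statistics, is where the most delicate estimates will lie.
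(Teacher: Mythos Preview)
Your overall strategy for both cases is the right one, but there is a genuine gap in case~(i) and a significant divergence from the paper in case~(ii).

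\textbf{Case (i).} The sentence ``HESC for $\mathcal{H}$ provides arbitrarily large $n$ with $\Delta_n>e^{-\varepsilon n}$, so after removing only a sub-exponential proportion of $n$-tuples of columns, the remaining subfamily of $\mathcal{H}^n$ has cylinder intervals that are genuinely disjoint'' is incorrect. The quantity $\Delta_n$ measures the separation of the \emph{fixed points} of pairs of maps with equal contraction ratio; this separation $e^{-\varepsilon n}$ is typically far smaller than the interval lengths $r_{\overline{\imath}}\in[r_{\min}^n,r_{\max}^n]$, so it says nothing about disjointness of cylinder intervals. The correct route (which the paper and Fraser--Shmerkin use) is indirect: HESC $\Rightarrow$ Hochman's theorem gives $\dim_{\mathrm H}\Lambda_{\mathcal H}=s_{\mathcal H}$ (and likewise for any subsystem) $\Rightarrow$ by the definition of box dimension one can \emph{select}, at each small scale, a collection of $\gtrsim \delta^{-(s_{\mathcal H}-\varepsilon)}$ pairwise disjoint cylinder intervals. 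One does not ``remove a sub-exponential proportion''; one extracts a large enough non-overlapping subfamily from scratch.

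\textbf{Case (ii) and the paper's unifying device.} The paper does \emph{not} work with a stopping-time antichain of variable-length words. Instead, for both cases it first passes to a \emph{diagonally homogeneous} subsystem $\mathcal G^{(k)}\subset\mathcal F^{V^{(k)}}$: the set of all length-$V^{(k)}$ words in which symbol $r$ occurs exactly $\lfloor k\widetilde{p}_r\rfloor$ times (Lemma~\ref{lemma:DiagHomoSubsys} via Claim~\ref{claim:subsys}). This subsystem has constant $a^{(k)},b^{(k)}$, has uniform vertical fibres, and the associated value $s^{(k)}$ (defined via \eqref{a74} with $s_{\mathcal H}$ kept fixed) satisfies $s^{(k)}\to s$. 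Only after this homogenization does the paper iterate to level $\ell$: for case~(i) it extracts $M^{(k,\ell)}$ non-overlapping columns via the box-dimension argument above; for case~(ii) it takes $\mathcal G^{(k,\ell)}=(\mathcal G^{(k)})^\ell$ directly. The payoff is that all parallelograms at level $\ell$ now have \emph{identical} width $(b^{(k)})^\ell$ and height $(a^{(k)})^\ell$, so Lalley's renewal theorem (Proposition~\ref{prop:exactN_delta}) gives the exact asymptotic $N_{\delta_\ell^{(k)}}(\widetilde R_{\overline\imath})=(C+o(1))(b^{(k)}/a^{(k)})^{\ell s_{\mathcal H}}$, and the transversality count (Lemma~\ref{lemma:CountingIntersections}) becomes a clean geometric-series estimate yielding a fixed fraction $(1-K_3)$ of surviving boxes. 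Condition~\eqref{cond:BoxDimCond} enters precisely as the $k\to\infty$ limit of the finite-level condition~\eqref{ass:IntersectingBoxes} (Claim~\ref{claim:TechCondBox}).

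Your stopping-time approach in case~(ii) would face two concrete obstacles that the homogenization sidesteps: (a) transversality (Definition~\ref{def:separations}) is stated for pairs of words of the \emph{same} length, so for an antichain of varying lengths you cannot apply \eqref{a33} directly; (b) your claim that ``within any column each $r$-box is hit by a uniformly bounded number of stopping parallelograms'' does not follow from transversality --- the number of words in a column grows with depth, and transversality only bounds the \emph{width} of a pairwise intersection, not the multiplicity. The paper's Lemma~\ref{lemma:CountingIntersections} gets around this by summing over the branching level $z$ of $\overline\jmath$ from $\overline\imath$, where the uniform-fibre property gives exactly $(N^{(k)}/M^{(k)})^{\ell-z}$ competitors at each $z$; this clean stratification is unavailable without the homogenization.
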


The analogue of the following sufficient and necessary condition for the equality of the box- and Hausdorff dimensions was proved in \cite[Theorem 4.6]{GatzourasLalley92}.
\begin{theorem}[Equality of box- and Hausdorff dimension]\label{b62}
	Assume the shifted TGL carpet $\Lambda$ satisfies ROSC and $\mathcal{H}$ satisfies No Dimension Drop. Then the following three conditions are equivalent,
	\begin{equation}\label{eq:dimB=Hiff}
	\dim_{\rm H}\Lambda=\dim_{\rm B}\Lambda \;\Longleftrightarrow\; s_{\mathcal{H}}=\dim_{\mathrm H}\nu_{\widetilde{\mathbf q}} \;\Longleftrightarrow\; \sum\limits_{j\in \mathcal{I}_{\ih} }a_j^{s-s_{\mathcal{H}}}=1 \text{ for every } \ih \in[M].
	\end{equation}
\end{theorem}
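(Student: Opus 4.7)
The strategy mirrors \cite[Theorem 4.6]{GatzourasLalley92} adapted to the triangular setting. Write $H(\mathbf p):=-\log\langle\mathbf p\rangle_{\mathbf p}$, $H(\mathbf q):=-\log\langle\mathbf q\rangle_{\mathbf q}$, $\chi_a(\mathbf p):=-\log\langle\mathbf a\rangle_{\mathbf p}$, $\chi_b(\mathbf p):=-\log\langle\mathbf b\rangle_{\mathbf p}$, and note that $\chi_a(\mathbf p)>\chi_b(\mathbf p)>0$ because $0<a_i<b_i<1$. Under ROSC, Theorem~\ref{thm:mainBox} yields $\dim_{\rm B}\Lambda=s$ where $s$ solves \eqref{a74}; combining ROSC with NDD and the Ledrappier--Young formula (Theorem~\ref{thm:BaranyKaenmaki}) also gives $\dim_{\rm H}\Lambda=\alpha^\ast=\sup_{\mathbf p\in\mathcal P}D(\mathbf p)$. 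Thus (i) reduces to the identity $\alpha^\ast=s$.

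The algebraic core is the universal inequality $D(\mathbf p)\leq s$, proved by applying the Gibbs inequality twice. Set $\widetilde p_i:=b_i^{s_{\mathcal H}}a_i^{s-s_{\mathcal H}}$ as in \eqref{def:pandqForBox}, which is a probability vector by \eqref{a74}, and set $\mathbf q^{(0)}:=(r_{\ih}^{s_{\mathcal H}})_{\ih=1}^{M}$, which is a probability vector because NDD forces $s_{\mathcal H}=s_x$ and therefore $\sum_{\ih}r_{\ih}^{s_{\mathcal H}}=1$. Gibbs against $\widetilde{\mathbf p}$ and against $\mathbf q^{(0)}$ gives
\begin{align*}
H(\mathbf p) &\leq -\sum_i p_i\log\widetilde p_i = s_{\mathcal H}\chi_b(\mathbf p)+(s-s_{\mathcal H})\chi_a(\mathbf p),\\
H(\mathbf q) &\leq -\sum_{\ih}q_{\ih}\log r_{\ih}^{s_{\mathcal H}} = s_{\mathcal H}\chi_b(\mathbf p),
\end{align*}
with equality iff $\mathbf p=\widetilde{\mathbf p}$ and $\mathbf q=\mathbf q^{(0)}$ respectively. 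Rewriting
$$D(\mathbf p)=\frac{H(\mathbf p)}{\chi_a(\mathbf p)}+\frac{\chi_a(\mathbf p)-\chi_b(\mathbf p)}{\chi_a(\mathbf p)\chi_b(\mathbf p)}H(\mathbf q),$$
both coefficients are strictly positive, so substituting the two Gibbs bounds and simplifying (the $s_{\mathcal H}\chi_b/\chi_a$ terms cancel) yields $D(\mathbf p)\leq s$, with equality exactly when both Gibbs inequalities are tight simultaneously.

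For $\mathbf p=\widetilde{\mathbf p}$ the induced $\widetilde q_{\ih}=r_{\ih}^{s_{\mathcal H}}\sum_{j\in\mathcal I_{\ih}}a_j^{s-s_{\mathcal H}}$, so $\widetilde{\mathbf q}=\mathbf q^{(0)}$ is exactly condition (iii). This settles $(\text{i})\Leftrightarrow(\text{iii})$: assuming (iii), $D(\widetilde{\mathbf p})=s=\alpha^\ast$; conversely, any maximizer forces both Gibbs equalities, hence (iii). For $(\text{ii})\Leftrightarrow(\text{iii})$: NDD gives $\dim_{\rm H}\nu_{\widetilde{\mathbf q}}=H(\widetilde{\mathbf q})/\chi_b(\widetilde{\mathbf q})$, and this equals $s_{\mathcal H}$ precisely when the second Gibbs inequality is tight at $\mathbf q=\widetilde{\mathbf q}$, i.e.\ $\widetilde{\mathbf q}=\mathbf q^{(0)}$, which is (iii).

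The main obstacle is the first step: confirming $\dim_{\rm H}\Lambda=\alpha^\ast$ from just ROSC $+$ NDD, weaker than the HESC hypothesis in Theorem~\ref{thm:maindimresult}. ROSC automatically makes each column independently satisfy ROSC, so the within-column separation is handled for free; the sole function of NDD is to supply the correct Hausdorff dimension for the self-similar horizontal projection $\nu_{\mathbf q^\ast}$, which is exactly the input required by the B\'ar\'any--K\"aenm\"aki Ledrappier--Young formula to promote the lower bound $D(\mathbf p^\ast)\leq\dim_{\rm H}\nu_{\mathbf p^\ast}$ to $D(\mathbf p^\ast)=\dim_{\rm H}\nu_{\mathbf p^\ast}\leq\dim_{\rm H}\Lambda$.
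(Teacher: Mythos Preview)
Your proof is correct, and it takes a somewhat different route from the paper's.

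\textbf{Comparison.} For $(i)\Leftrightarrow(iii)$ the paper does not argue directly: it first establishes (as you do) that ROSC $+$ NDD give $\dim_{\rm H}\Lambda=\alpha^\ast$ and $\dim_{\rm B}\Lambda=s$, but then passes to the GL brother $\widetilde\Lambda$, observes that the two carpets share both $\alpha^\ast$ and $s$, and invokes \cite[Theorem~4.6]{GatzourasLalley92} as a black box for the equivalence on $\widetilde\Lambda$. For $(ii)\Rightarrow(i)$ the paper uses the Ledrappier--Young style identity $s=h_{\widetilde{\mathbf p}}/\chi_{\widetilde{\mathbf p}}^2+(1-\chi_{\widetilde{\mathbf p}}^1/\chi_{\widetilde{\mathbf p}}^2)s_{\mathcal H}$ (Claim~\ref{claim:LedrappierforBox}) and subtracts the formula for $\dim_{\rm H}\nu_{\widetilde{\mathbf p}}$, while $(iii)\Rightarrow(ii)$ is checked by direct substitution. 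Your two-Gibbs argument replaces all of this by a single computation: the universal bound $D(\mathbf p)\leq s$ with sharp equality criteria simultaneously characterises when $\alpha^\ast=s$ and when $\widetilde{\mathbf q}=(r_{\ih}^{s_{\mathcal H}})$, so all three equivalences fall out at once. The gain is that your argument is self-contained and does not need to cite the Gatzouras--Lalley result; the paper's approach is terser because the combinatorial work is outsourced. Your identification of NDD $\Rightarrow s_{\mathcal H}=s_x$ (hence $\sum_{\ih}r_{\ih}^{s_{\mathcal H}}=1$) is exactly what the paper also uses, and your final paragraph correctly pinpoints that ROSC alone kills the $H$-term in Theorem~\ref{thm:BaranyKaenmaki}, so NDD (rather than HESC) suffices for $\dim_{\rm H}\nu_{\mathbf p}=D(\mathbf p)$.
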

All results for box dimension are proved in Section~\ref{sec:boxdim}.

\subsection{Diagonally homogeneous carpets}\label{subsec:ResDiagHomo}

We show how the conditions and formulas of our main results simplify in the diagonally homogeneous case. Recall the easy-to-check sufficient condition \eqref{a79} for transversality in Lemma~\ref{a87}. Moreover, observe that the vector $\widetilde{\mathbf{p}}$ becomes the uniform vector $\widetilde{p}_i=1/N$ and thus $\widetilde{q}_{\ih}=N_{\ih}/N$. A routine use of the Lagrange multipliers method gives the optimal $\mathbf{p}^\ast$
\begin{equation}\label{def:optpDiagHomo}
p_k^\ast= N_{\ih}^{\frac{\log b}{\log a}-1} \cdot \Big(\sum_{\jh=1}^M N_{\jh}^{\frac{\log b}{\log a}}\Big)^{-1} \text{ if } k\in \mathcal{I}_{\ih}.
\end{equation}
Thus, conditions \eqref{cond:main} and \eqref{cond:BoxDimCond} become
\begin{equation}\label{cond:DiagHomoCase}
\frac{\log \langle\mathbf{p}^\ast\rangle_{\mathbf{p}^\ast}}{\log \langle\mathbf{q}^\ast\rangle_{\mathbf{q}^\ast}} < \frac{\log a}{\log b} \;\;\text{ and }\;\; \frac{\log N}{\log M} +1 + \frac{\log\langle\widetilde{\mathbf{q}}\rangle_{\widetilde{\mathbf{q}}}}{\log M}< \frac{\log a}{\log b},
\end{equation}
respectively. If in addition, the system has uniform vertical fibres, then $\widetilde{p}_i=p_i^\ast=1/N$ also $\widetilde{q}_{\ih}=q_{\ih}^\ast=1/M$. Hence, both conditions \eqref{cond:main} and \eqref{cond:BoxDimCond} become
\begin{equation}\label{a0095}
\frac{\log N}{\log M} < \frac{\log a}{\log b}.
\end{equation}


Next, we give an equivalent explicit formulation of condition \eqref{cond:main}. Let $\varphi(y):=y\log y$ and for $x\in(0,1)$ define
\begin{equation*}
R(x):= x+ \left(r(x) -1\right)^{-1}, \text{ where } r(x)= \dfrac{\varphi\big( \sum_{\ih=1}^M N_{\ih}^x\big)}{\sum_{\jh=1}^M \varphi(N_{\jh}^x)}\,.
\end{equation*}

\begin{lemma}\label{lemma:PropofR(x)}
	$R(x)$ is a continuous, strictly monotone increasing function.
\end{lemma}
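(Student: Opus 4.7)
My plan is to reduce both claims to the single stronger statement that $r(x)$ itself is continuous on $(0,1)$, strictly greater than $1$, and strictly decreasing. Granted this, $(r(x)-1)^{-1}$ is positive and strictly increasing, so $R(x)=x+(r(x)-1)^{-1}$ is a sum of two strictly increasing continuous functions and both conclusions of the lemma follow at once.

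The first step will be to rewrite $r(x)$ in a tractable form. Setting $S(x):=\sum_{\ih=1}^{M}N_{\ih}^{x}$ and $f(x):=\log S(x)$, a direct computation gives $\varphi(S(x))=S(x)f(x)$ and $\sum_{\ih}\varphi(N_{\ih}^{x})=xS'(x)$, whence
\[ r(x)=\frac{f(x)}{xf'(x)}. \]
I would then establish the identity $f(x)-xf'(x)=-\sum_{\ih}\tfrac{N_{\ih}^{x}}{S(x)}\log\tfrac{N_{\ih}^{x}}{S(x)}=:H(q^{(x)})$, the Shannon entropy of the probability vector $q_{\ih}^{(x)}:=N_{\ih}^{x}/S(x)$. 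Since $M\geq 2$ and $q^{(x)}$ is never a point mass, $H(q^{(x)})>0$ strictly, so $r(x)-1=H(q^{(x)})/(xf'(x))>0$ (here I use that at least one $N_{\ih}>1$, so that $f'>0$; the remaining case is degenerate). Continuity of $R$ on $(0,1)$ is then immediate.

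For strict monotonicity of $r$, I would compute the logarithmic derivative $(\log r)'=f'/f-1/x-f''/f'$ and multiply through by $xff'>0$ to obtain
\[ xff'\,(\log r)'\;=\;x(f')^{2}-ff'-xff''\;=\;-f'\bigl(f-xf'\bigr)-xff''. \]
The three decisive features are $f'>0$, $f-xf'=H(q^{(x)})>0$ as above, and $f''\geq 0$ because $f(x)=\log\sum_{\ih}e^{x\log N_{\ih}}$ is the classical log-sum-exp function, which is well known to be convex. Both summands on the right-hand side are therefore non-positive, with the first strictly negative, so $(\log r)'<0$ and hence $r'(x)<0$ strictly. Combined with $r(x)-1>0$, this shows $(r(x)-1)^{-1}$ is strictly increasing, completing the argument.

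The main conceptual obstacle will be identifying the entropy interpretation of $f-xf'$ together with the convexity of the log-sum-exp $f$; without these identifications the sign of $(\log r)'$ looks unwieldy, but with them the inequality reduces to a sum of two manifestly non-positive terms and everything else is routine.
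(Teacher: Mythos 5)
Your proof is correct and takes a genuinely different, more conceptual route than the paper's. The paper differentiates $r(x)$ directly, writes $x\bigl(\sum_{\jh}\varphi(N_{\jh}^x)\bigr)^2\,r'(x)=A+B-C-D$, and proves $C>A$ and $D\geq B$ separately: the first from strict superadditivity of $\varphi(y)=y\log y$ (via the mean value theorem and an induction), the second from a sum-of-squares decomposition. You instead rewrite $r(x)=f(x)/(xf'(x))$ with $f(x)=\log\sum_{\ih}N_{\ih}^x$, identify $f-xf'$ with the Shannon entropy of the Gibbs weights $q^{(x)}_{\ih}=N_{\ih}^x/\sum_{\jh}N_{\jh}^x$ (hence strictly positive for $M\ge 2$), and invoke convexity of log-sum-exp for $f''\ge 0$. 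These are the paper's two inequalities in disguise: one has $\varphi\bigl(\sum y_{\ih}\bigr)-\sum\varphi(y_{\ih})=\bigl(\sum y_{\ih}\bigr)H(q)$, and the paper's sum of squares equals $x^2\bigl(\sum y_{\ih}\bigr)^2\mathrm{Var}_{q^{(x)}}(\log N)=x^2\bigl(\sum y_{\ih}\bigr)^2 f''(x)$. By naming them as entropy and convexity you replace two bespoke computations with standard facts, and you also make explicit the bound $r(x)>1$, which the paper's reduction (``$r$ strictly decreasing $\Rightarrow$ $R$ strictly increasing'') uses but does not spell out. One small caveat, which you flag and the paper does not: if every $N_{\ih}=1$ then $\sum_{\jh}\varphi(N_{\jh}^x)=0$ and $r$ is not real-valued, so the reduction to monotonicity of $r$ breaks down for both arguments; but then $R(x)\equiv x$ and the lemma holds trivially.
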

\begin{proof}
	Continuity is obvious. It is enough to show that $r(x)$ is strictly monotone decreasing. 
	Let $r'$ denote the derivative. Then
	\begin{align*}
	\underbrace{x\cdot \Big( \sum_{\jh=1}^M \varphi(N_{\jh}^x) \Big)^2 }_{>0}\cdot r'(x) &= \overbrace{\Big( \sum_{\jh=1}^M \varphi(N_{\jh}^x) \Big)^2}^{=:A} + \overbrace{\log \Big( \sum_{\ih=1}^M N_{\ih}^x\Big)\cdot \Big( \sum_{\jh=1}^M \varphi(N_{\jh}^x) \Big)^2 }^{=:B}  \\
	&- \underbrace{ \varphi\Big( \sum_{\ih=1}^M N_{\ih}^x\Big) \cdot \sum_{\jh=1}^M \varphi(N_{\jh}^x) }_{=:C} - \underbrace{ \varphi\Big( \sum_{\ih=1}^M N_{\ih}^x\Big) \sum_{\jh=1}^M \varphi(N_{\jh}^x)\cdot \log N_{\jh}^x }_{=:D}.
	\end{align*}
	We claim that $C> A$ and $D\geq B$, which will conclude the proof of the lemma. For brevity, write $y_{\ih}:=N_{\ih}^x$. $y_{\ih}=1 \Leftrightarrow N_{\ih}=1$, otherwise $y_{\ih}>1$.
	
	To show that $C> A$, it is enough to prove that for $1\leq u\leq v$
	\begin{equation}\label{eq:varphiadditive}
	\varphi(u) + \varphi(v) < \varphi(u+v).
	\end{equation}
	Then a simple induction implies that $\sum \varphi(y_{\ih})< \varphi(\sum y_{\ih})$. Recall $\varphi(1)=0$. The mean value theorem implies that
	\begin{align*}
	\varphi(u+v)-\varphi(v) &= u\cdot \varphi'(\xi),  &\text{for some }& \xi\in(v,u+v)\\
	\varphi(u)-\varphi(1) &= (u-1)\cdot \varphi'(\zeta), &\text{for some }& \zeta\in(1,u).
	\end{align*}
	Since the derivative $\varphi'(y)=1+\log y$ is strictly increasing and $\zeta<\xi$, we have $\varphi'(\zeta)<\varphi'(\xi)$. This implies \eqref{eq:varphiadditive}.
	To prove the other inequality
	\begin{equation*}
	D-B= \Big(\sum_{\ih=1}^M y_{\ih}\Big)\cdot\log \Big(\sum_{\ih=1}^M y_{\ih}\Big)  \sum_{\jh=1}^M \varphi(y_{\jh})\cdot \log y_{\jh} - \log \Big( \sum_{\ih=1}^M y_{\ih}\Big)\cdot \Big( \sum_{\jh=1}^M \varphi(y_{\jh}) \Big)^2.
	\end{equation*}
	We can pull out $\log \big(\sum y_{\ih}\big)>0$ and divide by it. This gives
	\begin{align*}
	\frac{D-B}{\log \big(\sum y_{\ih}\big)} &= \Big(\sum_{\ih=1}^M y_{\ih}\Big) \sum_{\jh=1}^M \varphi(y_{\jh})\cdot \frac{y_{\jh}\log y_{\jh}}{y_{\jh}} - \sum_{\jh=1}^M \varphi^2(y_{\jh}) - 2\sum_{\ih < \jh} \varphi(y_{\ih})\varphi(y_{\jh}) \\
	&= \sum_{\jh=1}^M \frac{\sum_{\ih=1}^M y_{\ih}}{y_{\jh}}\cdot \varphi^2(y_{\jh}) - \sum_{\jh=1}^M \varphi^2(y_{\jh}) - 2\sum_{\ih < \jh} \varphi(y_{\ih})\varphi(y_{\jh}) \\
	&= \sum_{\jh=1}^M \frac{\sum_{i\neq j} y_{\ih}}{y_{\jh}}\cdot \varphi^2(y_{\jh}) - 2\sum_{\ih < \jh} \varphi(y_{\ih})\varphi(y_{\jh}) \\
	&= \sum_{\ih < \jh}\Big( \frac{y_{\ih}}{y_{\jh}}\cdot \varphi^2(y_{\jh})+ \frac{y_{\jh}}{y_{\ih}}\cdot \varphi^2(y_{\ih}) - 2\varphi(y_{\ih})\varphi(y_{\jh}) \Big)  \\
	&=  \sum_{\ih < \jh} \Big( \sqrt{\frac{y_{\ih}}{y_{\jh}}}\varphi(y_{\jh}) - \sqrt{\frac{y_{\jh}}{y_{\ih}}}\varphi(y_{\ih}) \Big)^2 \geq 0.
	\end{align*}
\end{proof}

\begin{prop}\label{prop:diaghomoCondition}
The solution of the equation $R(x)=1$ is unique. Let $x_0$ denote this solution. Then in the diagonally homogeneous case
\begin{equation*}
\text{\eqref{cond:main} holds } \;\Longleftrightarrow\; \dfrac{\log b}{\log a}< x_0.
\end{equation*}
\end{prop}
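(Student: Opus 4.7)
The plan is to transform condition \eqref{cond:main}, evaluated at $\mathbf{p}^*$ and written in the simplified form \eqref{cond:DiagHomoCase}, into an inequality of the form $R(x) < 1$ where $x := \log b/\log a \in (0,1)$, and then invoke Lemma \ref{lemma:PropofR(x)} to conclude. Setting $S := \sum_{\jh=1}^M N_{\jh}^x$, the explicit formula \eqref{def:optpDiagHomo} gives $p_i^* = N_{\ih}^{x-1}/S$ for $i \in \mathcal{I}_{\ih}$ and hence $q_{\ih}^* = N_{\ih}^x/S$. Using the identity $x N_{\ih}^x \log N_{\ih} = \varphi(N_{\ih}^x)$, direct computation yields the closed forms
\[
Q := -\log\langle\mathbf{q}^*\rangle_{\mathbf{q}^*} = \log S - \frac{1}{S}\sum_{\ih=1}^M \varphi(N_{\ih}^x), \qquad P := -\log\langle\mathbf{p}^*\rangle_{\mathbf{p}^*} = \log S - \frac{x-1}{xS}\sum_{\ih=1}^M \varphi(N_{\ih}^x).
\]
Both $P, Q$ are positive; since $\log\langle\mathbf{p}^*\rangle_{\mathbf{p}^*}$ and $\log\langle\mathbf{q}^*\rangle_{\mathbf{q}^*}$ are negative, the inequality in \eqref{cond:DiagHomoCase} is equivalent to $xP < Q$, i.e.\ $Q - xP > 0$.

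The key algebraic step is that, multiplying $Q - xP$ by $S$ and using $S\log S = \varphi(S)$, the $\log S$ terms combine to $(1-x)\,\varphi(S)$ while the two sums combine to $-(2-x)\sum_{\ih}\varphi(N_{\ih}^x)$, yielding
\[
S\,(Q - xP) = (1-x)\,\varphi(S) - (2-x)\sum_{\ih=1}^M \varphi(N_{\ih}^x).
\]
Dividing by $\sum_{\ih}\varphi(N_{\ih}^x) > 0$ and recalling that $r(x) = \varphi(S)/\sum_{\ih}\varphi(N_{\ih}^x)$, the condition $Q > xP$ rewrites as $(1-x)\,r(x) > 2-x$. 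Since $r(x) > 1$, which is the strict superadditivity \eqref{eq:varphiadditive} of $\varphi$ already established inside the proof of Lemma \ref{lemma:PropofR(x)}, the quantity $r(x) - 1$ is strictly positive and we may invert: the condition becomes $1/(r(x) - 1) < 1 - x$, which is precisely $R(x) < 1$.

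To close the argument, Lemma \ref{lemma:PropofR(x)} asserts that $R$ is continuous and strictly increasing, so $R(x) = 1$ has at most one solution $x_0$ (uniqueness is immediate; existence in $(0,1)$ follows from boundary behaviour, since $R(x) \to 0$ as $x \to 0^+$, whereas $R(x) > 1$ as $x \to 1^-$ whenever some $N_{\ih} > 1$, again by \eqref{eq:varphiadditive}). Therefore $R(x) < 1 \Longleftrightarrow x < x_0$, which combined with the preceding equivalence proves the proposition. I do not anticipate a substantive obstacle: the function $R$ has been defined precisely to encode the reformulation above, so the only care required is sign bookkeeping, since $\log a, \log b, \log p_i^*, \log q_{\ih}^*$ are all negative and inequalities flip upon cross-multiplying by them.
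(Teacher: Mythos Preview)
Your proof is correct and follows essentially the same route as the paper's: both reduce the diagonally homogeneous instance of \eqref{cond:main} by direct algebra to the inequality $R(x)<1$ and then invoke Lemma~\ref{lemma:PropofR(x)} together with the boundary behaviour of $R$ for existence and uniqueness of $x_0$. The only cosmetic difference is that the paper rewrites the entropy denominator via the Gibbs identity $-\sum q_{\ih}\log q_{\ih}+\sum q_{\ih}y_{\ih}=\log\sum e^{y_{\ih}}$, whereas you compute $P,Q$ explicitly and factor $S(Q-xP)$; the two computations are interchangeable.
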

\begin{remark}
Observe that all the conditions for transversality, \eqref{cond:main}, \eqref{cond:BoxDimCond} are satisfied if the heights of the parallelograms $R_i$ are "small enough" compared to their width. See the examples with overlaps in Section~\ref{sec:ex} for some explicit calculations.
\end{remark}

\begin{proof}[Proof of Proposition \ref{prop:diaghomoCondition}]
	Let $x:=\log b/\log a<1$. In the diagonally homogeneous case \eqref{cond:main} simplifies to
	\begin{equation*}
	\frac{\log a}{\log b} = \frac{1}{x} > 1+ \frac{\sum_{\ih=1}^M q_{\ih}^\ast \log N_{\ih}}{-\sum_{\ih=1}^M q_{\ih}^\ast \log q_{\ih}^\ast},
	\end{equation*}
	where $q_{\ih}^\ast=N_{\ih}^x/\sum_{\jh} N_{\jh}^x$. Multiplying each side by $x$ we get
	\begin{equation}\label{eq:inproofofProp1.7}
	1> x+ \frac{\sum_{\ih=1}^M q_{\ih}^\ast \log N_{\ih}^x}{-\sum_{\ih=1}^M q_{\ih}^\ast \log q_{\ih}^\ast}.
	\end{equation}
	It is straightforward to check that for any $y_1,\ldots,y_M\in\R$ and $q_{\ih}:=e^{y_{\ih}}/\sum_{\jh} e^{y_{\jh}}$
	\begin{equation*}
	-\sum_{\ih=1}^M q_{\ih}\log q_{\ih} + \sum_{\ih=1}^M q_{\ih}\cdot y_{\ih} = \log \sum_{\ih=1}^M e^{y_{\ih}}.
	\end{equation*}
	Applying this with $y_{\ih}=\log N_{\ih}^x$ (then $q_{\ih}=q_{\ih}^\ast$) in the denominator of \eqref{eq:inproofofProp1.7} we get that \eqref{cond:main} is equivalent to
	\begin{equation*}
	1> x+ \frac{\sum_{\ih=1}^M q_{\ih}^\ast \log N_{\ih}^x}{\log \sum_{\ih=1}^M N_{\ih}^x - \sum_{\ih=1}^M q_{\ih}^\ast \log N_{\ih}^x} = R(x).
	\end{equation*}
	For $x$ small enough \eqref{cond:main} holds, since $1/x$ tends to infinity while the right hand side remains finite. On the other hand for $x=1$ it does not hold. Hence, $R(x)<1$ for small enough $x$, while $R(1)\geq 1$. Thus, Lemma \ref{lemma:PropofR(x)} implies that there exists a unique $x_0\in(0,1)$ such that $R(x_0)=1$. So any $x<x_0$ satisfies \eqref{cond:main}.
\end{proof}

Finally, in the diagonally homogeneous case, the dimension formulas agree with the ones for Bedford--McMullen carpets.
\begin{cor}\label{cor:dimH_BMcarpet}
If a diagonally homogeneous shifted TGL carpet $\Lambda$ satisfies the conditions of Theorems~\ref{thm:maindimresult} and \ref{thm:BoxwOverlaps}, then
\begin{equation*}
\dim_{\mathrm H}\Lambda= \frac{1}{-\log b} \log \sum_{\jh=1}^M N_{\jh}^{\frac{\log b}{\log a}} \;\text{ and }\; \dim_{\mathrm B}\Lambda = \frac{\log N}{-\log a} + \left(1-\frac{\log b}{\log a}\right)\frac{\log M}{-\log b}.
\end{equation*}
In particular, $\dim_{\mathrm H}\Lambda = \dim_{\mathrm B}\Lambda$ if and only if $\Lambda$ has uniform vertical fibres.
\end{cor}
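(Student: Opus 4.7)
The plan is to reduce to the two main reference theorems and then compute. Corollary~\ref{cor:dimHSufCond} gives $\dim_{\rm H}\Lambda = D(\mathbf{p}^*)$, Theorem~\ref{thm:BoxwOverlaps} gives $\dim_{\rm B}\Lambda = s$, so it suffices to evaluate both quantities explicitly in the diagonally homogeneous setting and then compare them via Jensen's inequality to handle the equality statement.

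For the Hausdorff formula, set $t := \log b/\log a \in (0,1)$ and $Z := \sum_{\jh=1}^M N_{\jh}^t$. The optimiser in \eqref{def:optpDiagHomo} reads $p_i^* = N_{\ih}^{t-1}/Z$ for $i \in \mathcal{I}_{\ih}$, so $q_{\ih}^* = N_{\ih}^t/Z$. Since $\langle\mathbf{a}\rangle_{\mathbf{p}} = a$ and $\langle\mathbf{b}\rangle_{\mathbf{p}} = b$ for every $\mathbf{p}$ in the diagonally homogeneous case, the second expression for $D(\mathbf{p})$ in \eqref{def:D(p)} reduces to the sum of two entropy-type terms divided by $\log b$ and $\log a$. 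A direct calculation gives
\[
\sum_{\ih} q_{\ih}^* \log q_{\ih}^* = \tfrac{t}{Z}\sum_{\ih} N_{\ih}^t \log N_{\ih} - \log Z, \qquad \sum_i p_i^* \log p_i^* - \sum_{\ih} q_{\ih}^* \log q_{\ih}^* = -\tfrac{1}{Z}\sum_{\ih} N_{\ih}^t \log N_{\ih},
\]
and the identity $t \log a = \log b$ cancels the $\sum N_{\ih}^t \log N_{\ih}$ terms, leaving $D(\mathbf{p}^*) = -\log Z/\log b$, which is the asserted formula. For the box formula, every map in $\mathcal{H}$ has ratio $b$ with $\sum r_{\ih} = Mb \leq 1$, and NDD (implied by HESC), together with the standard upper bound $\dim_{\rm B}\Lambda_{\mathcal H} \leq$ similarity dimension, forces $s_{\mathcal H} = \log M/(-\log b)$. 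Plugging $b_i \equiv b$, $a_i \equiv a$ into \eqref{a74} collapses it to $(N/M)\, a^{s - s_{\mathcal H}} = 1$, hence $s - s_{\mathcal H} = (\log N - \log M)/(-\log a)$, and rearranging yields the claimed expression for $\dim_{\rm B}\Lambda$.

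Both dimensions have the shape $(-\log b)^{-1}\log X$, with $X = \sum_{\jh} N_{\jh}^t$ for $\dim_{\rm H}\Lambda$ and $X = M(N/M)^t = M^{1-t}N^t$ for $\dim_{\rm B}\Lambda$. Because $x \mapsto x^t$ is strictly concave on $(0,\infty)$ for $t \in (0,1)$, Jensen's inequality yields
\[
\sum_{\jh=1}^M N_{\jh}^t \leq M\Bigl(\tfrac{1}{M}\sum_{\jh=1}^M N_{\jh}\Bigr)^t = M(N/M)^t,
\]
with equality if and only if $N_1 = \cdots = N_M = N/M$. In the diagonally homogeneous setting the uniform vertical fibre condition \eqref{eq:UniformFibre} becomes $N_{\ih}\, a^{s - s_{\mathcal H}} = 1$ for every $\ih$, which is equivalent to the $N_{\ih}$ all being equal; this completes the characterisation. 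The only step that requires real justification rather than routine algebra is the identification of $s_{\mathcal H}$ with the similarity dimension $\log M/(-\log b)$; the rest is bookkeeping.
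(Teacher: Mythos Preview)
Your proof is correct and follows essentially the same route as the paper: plug the diagonally homogeneous parameters into $D(\mathbf p^*)$ and into \eqref{a74}, use that NDD forces $s_{\mathcal H}=\log M/(-\log b)$, and compare. The only difference is cosmetic: the paper leaves the equality characterisation as an ``immediate'' observation, whereas you spell out the Jensen argument on $x\mapsto x^t$ explicitly, which is a perfectly reasonable (and arguably clearer) way to justify that $\sum_{\jh} N_{\jh}^t \le M(N/M)^t$ with equality iff all $N_{\jh}$ coincide.
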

\begin{proof}
For diagonally homogeneous shifted TGL carpets the expression \eqref{def:D(p)} for $D(\mathbf{p})$  simplifies to
\begin{equation*}
D(\mathbf{p}) =  \frac{\log \langle\mathbf{p}\rangle_{\mathbf{p}}}{\log a} + \left( 1- \frac{\log b}{\log a}\right) \frac{\log \langle\mathbf{q}\rangle_{\mathbf{q}}}{ \log b}.
\end{equation*}
Applying this for $\mathbf{p}^\ast$ from \eqref{def:optpDiagHomo} gives the result $\dim_{\rm H}\Lambda=D(\mathbf{p}^\ast)$.

The equation for the box dimension $s=\dim_{\mathrm B}\Lambda$, recall \eqref{a74}, simplifies to
\begin{equation}\label{eq:BoxDimDiagHomo}
N\cdot b^{s_\mathcal{H}}\cdot a^{s-s_\mathcal{H}}=1.
\end{equation}
Since $\mathcal{H}$ has No Dimension Drop (recall Definition~\ref{def:34}), we have $s_\mathcal{H}=\log M/(-\log b)$. Substituting this back into \eqref{eq:BoxDimDiagHomo} and expressing $s$ from the equation gives the desired formula for $\dim_{\rm B} \Lambda$.

Comparing the formula for $\dim_{\rm B} \Lambda$ with the one for $D(\mathbf{p})$, we immediately get that equality holds if and only if $N_{\ih}=N/M$ for every $\ih\in\{1,\ldots,M\}$.
\end{proof}

\section{Preliminaries}\label{sec:Prelim}

In this section, we collect important notation, definitions, preliminary lemmas and cite results used in the proofs of the subsequent sections.
\subsection{Symbolic notation}\label{subsec:SymbNotation}
Throughout, we work simultaneously with the IFSs $\mathcal{F}, \widetilde{\mathcal{H}}$ and $\mathcal{H}$, which are defined in \eqref{def:IFS_F}, \eqref{def:verticalIFS} and \eqref{a72} respectively. Their attractors are $\Lambda$, $\Lambda_{\mathcal{H}}=\Lambda_{\widetilde{\mathcal{H}}}$  respectively.
We define the symbolic spaces
\begin{equation*}\label{a66}
\Sigma=\{1,2,\ldots,N\}^\mathbb N \;\text{ and }\; \Sigma_{\mathcal{H}}=\{1,2,\ldots,M\}^\mathbb N
\end{equation*}
with elements $\ii=i_1i_2\ldots\in\Sigma$ and $\iih=\ih_1\ih_2\ldots\in\Sigma_{\mathcal{H}}$. The function $\phi: \{1,2,\ldots,N\}\to\{1,2,\ldots,M\}$, recall \eqref{def:phiFunc}, naturally defines the map $\Phi: \Sigma\to\Sigma_{\mathcal{H}}$
\begin{equation}\label{def:Phi}
\Phi(\ii):= \iih = \phi(i_1)\phi(i_2)\ldots.
\end{equation}
Finite words of length $n$ are either denoted with a 'bar' like $\iiv=i_1\ldots i_n\in\Sigma_n$ or as a truncation $\ii|n=i_1\ldots i_n$ of an infinite word $\ii$, the length is denoted $|\cdot|$. The set of all finite length words is denoted by $\Sigma^\ast=\bigcup_n\Sigma_n$ and analogously $\Sigma_{\mathcal{H}}^\ast$. The left shift operator on $\Sigma$ and $\Sigma_{\mathcal{H}}$ is $\sigma$, i.e. $\sigma(\ii)=i_2i_3\ldots$ and $\sigma(\iih)=\ih_2\ih_3\ldots$.

The longest common prefix of $\ii$ and $\jj$ is denoted $\ii\wedge\jj$, i.e. its length is $|\ii\wedge\jj|=\min \{k:\; i_k\neq j_k\}-1$. This is also valid if one of them has or both have finite length. The $n$th level cylinder set of $\ii\in\Sigma$ is $[\ii|n] := \{\jj\in\Sigma:\; |\ii\wedge\jj|\geq n\}$. Similarly for $\iiv\in\Sigma_n$ and $\iih\in\Sigma_{\mathcal{H}}$.
Recall that $R=[0,1]^2$.
We use the standard notation $A_{\ii|n}=A_{i_1}\cdot \ldots\cdot A_{i_n}$ and  $f_{\ii|n}=f_{i_1}\circ f_{i_2}\circ\ldots \circ f_{i_n}$ to write
\begin{equation*}
\Lambda_{\ii|n} := f_{\ii|n}(\Lambda) \;\text{ and }\; R_{\ii|n}:=f_{\ii|n}(R)
\end{equation*}
for the $n$th level cylinder corresponding to $\ii$. The sets $\{R_{\ii|n}\}_{n=1}^\infty$ form a nested sequence of compact sets with diameter tending to zero, hence their intersection is a unique point $x\in\Lambda$. This defines the natural projection $\Pi:\Sigma\to\Lambda$
\begin{equation}\label{def:NaturalProj}
\Pi(\ii):=\lim_{n\to\infty} \bigcap_{n=1}^\infty R_{\ii|n}=\lim_{n\to\infty} f_{\ii|n}(\underline 0) = t_{i_1} + \sum_{n=2}^{\infty}A_{\ii|n-1}\cdot t_{i_n}.
\end{equation}
The natural projections generated by $\widetilde{\mathcal{H}}$ and $\mathcal{H}$ are
\begin{equation*}\label{a65}
\widetilde{\Pi}_{\mathcal{H}}(\ii) := \lim_{n\to\infty} \widetilde h_{\ii|n}(0), \;\; \ii\in\Sigma; \;\text{ and }\;
\Pi_{\mathcal{H}}(\iih) := \lim_{n\to\infty} h_{\iih|n}(0), \;\; \iih\in\Sigma_{\mathcal{H}}.
\end{equation*}
The following commutative diagram summarizes these notations:
\begin{equation}\label{eq:diagram}
\xymatrix{ \Sigma \ar[r]^\Phi  \ar[d]_\Pi \ar[rd]^{\widetilde{\Pi}_\mathcal{H}}  & %
	\Sigma_\mathcal{H} \ar[d]^{\Pi_\mathcal{H} }\\ %
	\Lambda \ar[r]_{\text{proj}_x} & \Lambda_{\mathcal{H}}  }%
\end{equation}
We also introduce the measurable partitions $\alpha$ and $\beta$ of $\Sigma$ whose classes containing an $\ii\in\Sigma$ are defined
\begin{equation}\label{eq:PartitionsAlphaBeta}
\alpha(\ii):= \Pi^{-1}\Pi(\ii) \;\text{ and }\; \beta(\ii):= \Phi^{-1}\Phi(\ii).
\end{equation}
The fact that these partitions are measurable are immediate consequences of the definition of measurability of a partition. Alternatively, this also follows from \cite[Theorem 2.2]{simmons2012conditional}.
Thus, $\alpha(\ii)$ contains those $\jj\in\Sigma$ for which $\Pi(\ii)=\Pi(\jj)\in\Lambda$ and $\beta(\ii)$ corresponds to the 'symbolic column' of $\ii$, i.e. for $\jj\in\beta(\ii)$ we have $\widetilde{\Pi}_{\mathcal{H}}(\ii)=\widetilde{\Pi}_{\mathcal{H}}(\jj)$. These partitions play an important role when handling overlaps.

Bernoulli measures on $\Sigma$ are key in obtaining the lower bound for $\dim_{\mathrm H}\Lambda$. Recall the set
\begin{equation*}
\mathcal{P}:= \{\mathbf{p}=(p_1,\ldots,p_N):\; p_i\geq 0,\; \sum_{i=1}^{N}p_i=1\}
\end{equation*}
of all probability distributions on the set $\{1,2,\ldots,N\}$ and let $\mathcal{P}_0$ denote the subset when all $p_i>0$. The Bernoulli measure on $\Sigma$ corresponding to $\mathbf{p}\in\mathcal{P}$ is the product measure $\mu_{\mathbf p}=\mathbf{p}^\N$, i.e. the measure of a cylinder set is $\mu_{\mathbf p}([\ii|n]) = p_{i_1}\cdot\ldots\cdot p_{i_n}$. All Bernoulli measures can be uniquely disintegrated according to the family of conditional measures $\mu_{\mathbf{p},\alpha(\ii)}=\mu_{\alpha(\ii)}$ generated by the measurable partition $\alpha$. That is for all Borel sets $U\subset \Sigma$ 
\begin{equation}\label{r96}
\mu_{\mathbf{p}}(U)=\int \mu_{\alpha(\mathbf{i})}(U)d\mu_{\mathbf{p}}(\mathbf{i}).
\end{equation}

The entropy of a Bernoulli measure $\mu_{\mathbf p}$ is
\begin{equation}\label{def:entropy}
h_{\mu_{\mathbf p}} = -\sum_{i=1}^N p_i \log p_i = -\log \langle\mathbf{p}\rangle_\mathbf{p}.
\end{equation}
The push forward $\nu_{\mathbf p}:= \Pi_\ast\mu_{\mathbf{p}}$ is the self-affine measure on $\Lambda$ defined by $\nu_{\mathbf p}=\mu_{\mathbf p}\circ\Pi^{-1}$ or equivalently
\begin{equation*}
\nu_{\mathbf p}= \sum_{i=1}^{N} p_i \nu_{\mathbf p}\circ f_i^{-1}.
\end{equation*}
Recall that a $\mathbf{p}\in\mathcal{P}$ defines another distribution $\mathbf{q}=(q_1,\ldots,q_M)$ via \eqref{def:q_l}. Then $\mu_{\mathbf q}=\mathbf{q}^{\N}$ is a Bernoulli measure on $\Sigma_{\mathcal{H}}$. Moreover, the self-similar measure on $\Lambda_{\mathcal{H}}$ is $\nu_{\mathbf q} = (\Pi_{\mathcal{H}})_\ast\mu_{\mathbf p} = (\mathrm{proj}_x)_\ast \nu_{\mathbf p}$. Our convention is that $\mu$ always denotes a measure on (some) symbolic space, while $\nu$ is supported on (a part of) $R$.

\subsection{Atypical parallelograms}\label{subsec:atypical}

The exponential rate of growth of the size of $n$th level parallelograms, the number of parallelograms in a column and the column's measure can vary a lot for different $\ii\in\Sigma$. However, in measure-theoretic sense those $\ii$ which behave atypically form a small set. Define the function
\begin{equation*}
X:\, \Sigma\to \R^+,\; X(\ii):= c_{i_1},
\end{equation*}
where $\mathbf{c}=(c_1,\ldots,c_N)$ is an arbitrary vector with strictly positive elements. Let
\begin{equation*}
X_n(\ii):= \prod_{j=0}^{n-1 }X(\sigma^j\ii) = \prod_{j=1}^{n}c_{i_j}.
\end{equation*}
In particular, if $\mathbf{c}=\mathbf{a}:=(a_1,\ldots,a_N)$ or $\mathbf{b}:=(b_1,\ldots,b_N)$, then $X_n(\ii)$ is the height and width of the parallelogram $R_{\ii|n}$. If $\mathbf{c}=\mathbf{N}:=(N_{\phi(1)},\ldots,N_{\phi(N)})$ or $\mathbf{q}:=(q_{\phi(1)},\ldots,q_{\phi(N)})$, then $X_n(\ii)$ gives the number of parallelograms in and the measure of the column $\Phi(\ii)|n$.

Fix an arbitrary $\mathbf{p}\in\mathcal{P}$. Recall the notation $\langle\mathbf{c}\rangle_{\mathbf{p}}:= \prod_{j=1}^Nc_j^{p_j}$. When no confusion is made, we suppress $\mathbf{p}$ and write $\langle\mathbf{c}\rangle=\langle\mathbf{c}\rangle_{\mathbf{p}}$. In the rest of the subsection $\delta>0$ is fixed. Define
\begin{equation}\label{r74}
\mathrm{Bad}_{\delta,n}(\mathbf{c}):=
\begin{cases}
\{\ii\in\Sigma:\; X_n(\ii)<\langle\mathbf{c}\rangle^{(1-\delta)n} \,\text{ or }\, X_n(\ii)>\langle\mathbf{c}\rangle^{(1+\delta)n}\}, & \text{if } \langle\mathbf{c}\rangle>1, \\
\{\ii\in\Sigma:\; X_n(\ii)<\langle\mathbf{c}\rangle^{(1+\delta)n} \,\text{ or }\, X_n(\ii)>\langle\mathbf{c}\rangle^{(1-\delta)n}\}, & \text{if } \langle\mathbf{c}\rangle<1.
\end{cases}
\end{equation}
The definition can be extended to a positive real $t$, by setting $\mathrm{Bad}_{\delta,t}(\mathbf{c}):= \mathrm{Bad}_{\delta,\lfloor t\rfloor}(\mathbf{c})$. Let $\mu_{\mathbf p}$ be the Bernoulli measure on $\Sigma$ defined by $\mathbf{p}\in\mathcal{P}$.
\begin{lemma}\label{lemma:BadisSmall}
	IF $\langle\mathbf{c}\rangle_{\mathbf{p}}\ne 1$ then there exists a constant $C$ and an $r\in(0,1)$ such that
	\begin{equation*}
	\mu_{\mathbf p}( \mathrm{Bad}_{\delta,n}(\mathbf{c}) ) < C\cdot r^n \;\text{ for every } n\geq1.
	\end{equation*}
	Hence, the Borel-Cantelli lemma immediately implies that
	\begin{equation*}
	\mu_{\mathbf p}(\ii\in\mathrm{Bad}_{\delta,n}(\mathbf{c}) \text{ for infinitely many } n) =0.
	\end{equation*}
\end{lemma}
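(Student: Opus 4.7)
The plan is to recognise $\log X_n$ as a sum of i.i.d.\ random variables under $\mu_{\mathbf{p}}$ and apply a standard Chernoff/Cram\'er estimate. Since $\mu_{\mathbf{p}}$ is a Bernoulli product measure, the coordinates $i_1,i_2,\ldots$ are i.i.d.\ with law $\mathbf{p}$, so
\[
\log X_n(\ii) \;=\; \sum_{j=1}^{n} Y_j,\qquad Y_j := \log c_{i_j},
\]
is a sum of i.i.d.\ bounded random variables (bounded since the alphabet is finite and every $c_k>0$) with common mean $\mathbb{E}[Y_1]=\sum_{k=1}^{N}p_k\log c_k=\log\langle\mathbf{c}\rangle_{\mathbf{p}}$. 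Taking logarithms in the two cases of \eqref{r74} and observing that the case split on $\langle\mathbf{c}\rangle\gtrless 1$ is precisely what compensates the orientation flip when one raises a base below $1$ to a larger exponent, one checks that, in both cases,
\[
\mathrm{Bad}_{\delta,n}(\mathbf{c}) \;=\; \Big\{\,\ii\in\Sigma\;:\;\big|\tfrac{1}{n}\textstyle\sum_{j=1}^{n}Y_j-\log\langle\mathbf{c}\rangle_{\mathbf{p}}\big|\;\geq\;\delta\,|\log\langle\mathbf{c}\rangle_{\mathbf{p}}|\,\Big\}.
\]
The hypothesis $\langle\mathbf{c}\rangle_{\mathbf{p}}\ne 1$ guarantees $\delta\,|\log\langle\mathbf{c}\rangle_{\mathbf{p}}|>0$, so this is a genuine fixed-size deviation event independent of $n$.

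Next I would bound each of the two tails by the usual exponential Markov argument. Because the $Y_j$ are bounded, the logarithmic moment generating function $\Lambda(\lambda)=\log\sum_{k=1}^{N}p_k c_k^{\lambda}$ is finite, smooth and convex on all of $\mathbb{R}$, with $\Lambda'(0)=\log\langle\mathbf{c}\rangle_{\mathbf{p}}$. Provided $Y_1$ is not $\mu_{\mathbf{p}}$-a.s.\ constant (otherwise $X_n\equiv\langle\mathbf{c}\rangle_{\mathbf{p}}^{n}$ and $\mathrm{Bad}_{\delta,n}(\mathbf{c})=\emptyset$), $\Lambda$ is strictly convex, and the Legendre dual
\[
I(t) := \sup_{\lambda\in\mathbb{R}}\big(\lambda t - \Lambda(\lambda)\big)
\]
is strictly positive at every $t\neq\log\langle\mathbf{c}\rangle_{\mathbf{p}}$. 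Applying the Chernoff inequality separately at $t=(1+\delta)\log\langle\mathbf{c}\rangle_{\mathbf{p}}$ and $t=(1-\delta)\log\langle\mathbf{c}\rangle_{\mathbf{p}}$, and taking $\rho:=\min\{I((1-\delta)\log\langle\mathbf{c}\rangle_{\mathbf{p}}),\,I((1+\delta)\log\langle\mathbf{c}\rangle_{\mathbf{p}})\}>0$, yields
\[
\mu_{\mathbf{p}}\big(\mathrm{Bad}_{\delta,n}(\mathbf{c})\big) \;\leq\; 2\,e^{-\rho n},
\]
which is the first assertion with $C=2$ and $r:=e^{-\rho}\in(0,1)$.

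The Borel--Cantelli conclusion is then immediate from summability of the geometric series $\sum_{n}Cr^{n}<\infty$. I do not foresee any genuine obstacle in this argument; the only care point is verifying that the two branches of \eqref{r74} both collapse to the same symmetric large-deviation event, which they do exactly because the direction of the inequalities in \eqref{r74} reverses precisely when $\langle\mathbf{c}\rangle_{\mathbf{p}}$ crosses $1$.
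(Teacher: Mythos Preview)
Your proof is correct and follows essentially the same approach as the paper: both recognise $\log X_n$ as a sum of i.i.d.\ bounded random variables under $\mu_{\mathbf p}$ and appeal to Cram\'er's large deviation theorem (cited in the paper as \cite[Theorem~2.1.24]{DemboZeitouniLDP}) to obtain exponential decay of each tail. Your write-up is slightly more explicit than the paper's, in that you unify the two cases of \eqref{r74} into a single symmetric deviation event, write out the rate function via the Legendre dual, and dispose of the degenerate case where all $c_k$ agree on the support of $\mathbf p$; the paper simply treats the case $\langle\mathbf{c}\rangle>1$ and says the other case is analogous.
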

\begin{proof}
	Assume $\langle\mathbf{c}\rangle>1$. Let $S_n(X):= \frac{1}{n}\sum_{j=0}^{n-1} \log X(\sigma^j\ii)$. Then
	\begin{equation*}
	\mu_{\mathbf p}(X_n(\ii) < \langle\mathbf{c}\rangle^{(1-\delta)n}) = \mu_{\mathbf p}( S_n(X) < (1-\delta)\log \langle\mathbf{c}\rangle).
	\end{equation*}
	The $\{\log X(\sigma^j\ii)\}_j$ are independent and identically distributed with expectation
	\begin{equation*}
	\mathds{E}(\log X) = \sum_{j=1}^N p_j\log c_j = \log \langle\mathbf{c}\rangle.
	\end{equation*}
	Hence, Cram\'er's large deviation theorem \cite[Theorem 2.1.24.]{DemboZeitouniLDP} implies that $\mu_{\mathbf p}(X_n(\ii) < \langle\mathbf{c}\rangle^{(1-\delta)n})$ decays exponentially fast in $n$. The argument for $X_n(\ii)>\langle\mathbf{c}\rangle^{(1+\delta)n}$ is exactly the same. The proof is analogous when $\langle\mathbf{c}\rangle<1$.
\end{proof}


\subsection{Ledrappier--Young formula}\label{subsec:LYformula}

Let $0<\alpha_2(A)\leq\alpha_1(A)<1$ denote the two singular values of a $2\times 2$ non-singular matrix $A$. Namely, $\alpha_i(A)$ is the positive square root of the $i$th largest eigenvalue of $A^TA$, where $A^T$ is the transpose of $A$. The geometric interpretation of the singular values is that the linear map $\underline{x} \mapsto A\underline{x}$ maps the unit disk to an ellipse with principal semi-axes of length $\alpha_2(A)$ and $\alpha_1(A)$. The singular values can also be expressed with the matrix norm: $\alpha_1(A)=\|A\|$ and $\alpha_2(A)=\|A^{-1}\|^{-1}$. For a family of matrices $\mathcal{A}=\{A_1,\ldots,A_N\}$, the asymptotic exponential growth rate of the semi-axes of the ellipses determined by the maps $\underline{x} \mapsto A_{i_1\ldots i_n}\underline{x}$ is given by the Oseledets theorem.
\begin{theorem}[Oseledets \cite{Ose68}]
	Let $\mathcal{A}=\{A_1,\ldots,A_N\}$ be a set of non-singular $2\times 2$ matrices with
	$\|A_i\|<1$ for $i\in\{1,\ldots,N\}$. Then for any ergodic $\sigma$-invariant measure $\mu$ on $\Sigma$ there exist constants $0 <\chi_{\mu}^1\leq \chi_{\mu}^2$ such that
	for $\mu$-almost every $\ii$
	\begin{align*}
	\lim_{n\to\infty}\frac 1 n \log \alpha_1(A_{i_1\ldots i_n}) &= \lim_{n\to\infty}\frac 1 n \log \|A_{i_1\ldots i_n}\| = -\chi_{\mu}^1, \\
	\lim_{n\to\infty}\frac 1 n \log \alpha_2(A_{i_1\ldots i_n}) &= \lim_{n\to\infty}\frac 1 n \log \|(A_{i_1\ldots i_n})^{-1}\|^{-1} = -\chi_{\mu}^2.
	\end{align*}
	The numbers $\chi_{\mu}^1$ and $\chi_{\mu}^2$ are called the Lyapunov-exponents of $\nu=\Pi_\ast\mu$. If $\chi_{\mu}^1\neq\chi_{\mu}^2$ then we say that $\mu$ has simple Lyapunov spectrum.
\end{theorem}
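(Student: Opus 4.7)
The plan is to reduce the two-dimensional case of Oseledets' theorem to the classical ergodic theorems of Kingman and Birkhoff, exploiting the identity $\alpha_1(A)\alpha_2(A)=|\det A|$ valid for every invertible $2\times 2$ matrix. Since the full multiplicative ergodic theorem is quite involved, my sketch takes the shortest available route in two dimensions, treating the top and bottom Lyapunov exponents separately.

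First I would establish the top exponent. Set $f_n(\ii):=\log\|A_{i_1\ldots i_n}\|$. Submultiplicativity of the operator norm, $\|AB\|\leq\|A\|\|B\|$, yields $f_{n+m}(\ii)\leq f_n(\ii)+f_m(\sigma^n\ii)$, so $(f_n)$ is subadditive with respect to the shift $\sigma$. The hypothesis $\|A_i\|<1$ together with the finiteness of the alphabet gives $f_1\in L^1(\mu)$ and a uniform upper bound $f_n(\ii)\leq n\log\max_i\|A_i\|<0$, so Kingman's subadditive ergodic theorem applies. Ergodicity of $\mu$ forces the limit to be constant $\mu$-a.e., and I would define
\begin{equation*}
-\chi^1_\mu:=\lim_{n\to\infty}\tfrac{1}{n}f_n(\ii)=\inf_{n\geq 1}\tfrac{1}{n}\int f_n\,d\mu<0,
\end{equation*}
so $\chi^1_\mu>0$. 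Since $\alpha_1(A)=\|A\|$, this gives the first assertion of the theorem.

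Next I would pin down $\chi^2_\mu$ via the determinant. The function $g(\ii):=\log|\det A_{i_1}|$ is bounded (the matrices are non-singular on a finite alphabet), and because $\det$ is multiplicative, its Birkhoff sum equals $\log|\det A_{i_1\ldots i_n}|$. Birkhoff's ergodic theorem therefore gives a $\mu$-a.e.\ limit
\begin{equation*}
\frac{1}{n}\log|\det A_{i_1\ldots i_n}|\longrightarrow \int\log|\det A_{i_1}|\,d\mu(\ii),
\end{equation*}
which I would take as the definition of $-(\chi^1_\mu+\chi^2_\mu)$. Combining this with $|\det A|=\alpha_1(A)\alpha_2(A)$ and subtracting the top-exponent limit yields
\begin{equation*}
\tfrac{1}{n}\log\alpha_2(A_{i_1\ldots i_n})=\tfrac{1}{n}\log|\det A_{i_1\ldots i_n}|-\tfrac{1}{n}\log\alpha_1(A_{i_1\ldots i_n})\longrightarrow-\chi^2_\mu,
\end{equation*}
which together with $\alpha_2(A)=\|A^{-1}\|^{-1}$ is the second assertion. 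The ordering $\chi^1_\mu\leq\chi^2_\mu$ is then immediate from $\alpha_2(A)\leq\alpha_1(A)$, and positivity $\chi^2_\mu\geq\chi^1_\mu>0$ follows.

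The main potential obstacle in generalizations (unbounded cocycles, higher dimension, or countable alphabets) is verifying the integrability hypothesis of Kingman's theorem at the lower end, namely $\int\log\alpha_2(A_{i_1})\,d\mu>-\infty$. In the present setting this is automatic because the alphabet is finite and every $A_i$ is non-singular, so no delicate Oseledets splitting is required: the determinant collapses the problem into two independent applications of standard ergodic theorems.
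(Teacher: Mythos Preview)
Your argument is correct and is the standard elementary route to Oseledets' theorem in two dimensions: Kingman for the top exponent, Birkhoff for $\log|\det|$, then subtract. All the integrability checks you flag are indeed automatic here because the alphabet is finite and the matrices are non-singular, so both $\log\|A_{i_1}\|$ and $\log|\det A_{i_1}|$ are bounded functions, and the limit in Kingman's theorem is finite (not $-\infty$) since $\|A_{i_1\ldots i_n}\|\geq\alpha_2(A_{i_1\ldots i_n})\geq\prod_j\alpha_2(A_{i_j})$.

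There is nothing to compare against, however: the paper does not prove this theorem at all. It is stated as a citation of Oseledets~\cite{Ose68} and used as a black box; the only computation the paper actually carries out (in the lemma immediately following) is the explicit evaluation of $\chi^1_\mu,\chi^2_\mu$ for Bernoulli measures on lower-triangular matrices, which is done directly via the strong law of large numbers without appealing to the general machinery. So your write-up supplies a proof where the paper simply invokes the literature.
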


It is an easy exercise to calculate the Lyapunov exponents of Bernoulli measures $\mu_{\mathbf p}$ for a family of lower triangular matrices for which direction-$x$ dominates. For greater generality see Falconer--Miao \cite{FalconerMiao07}.
\begin{lemma}\label{lemma:Lyap_exp}
	Fix any $\mathbf{p}\in\mathcal{P}$ and a family of lower triangular matrices $\mathcal{A}=\{A_1,\ldots,A_N\}$ for which direction-$x$ dominates. Then the Lyapunov spectrum of the Bernoulli measure $\mu_{\mathbf p}$ is simple and the exponents equal
	\begin{equation*}\label{b63}
	\chi_{\nu_{\mathbf p}}^1 = -\sum_{i=1}^N p_i \log b_i = -\log \langle \mathbf{b} \rangle_{\mathbf{p}} \;\text{ and }\; \chi_{\nu_{\mathbf p}}^2 = -\sum_{i=1}^N p_i \log a_i = -\log \langle \mathbf{a} \rangle_{\mathbf{p}}.
	\end{equation*}
\end{lemma}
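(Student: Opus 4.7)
The plan is to first compare the operator norm of the product matrix $A_{i_1\ldots i_n}$ with its diagonal entry $b_{i_1\ldots i_n}$, then obtain $\chi^1$ via the strong law of large numbers, and finally recover $\chi^2$ using the multiplicativity of the determinant. The main obstacle is not technical but bookkeeping: showing that the off-diagonal term $d_{i_1\ldots i_n}$ (which can be much larger in absolute value than $a_{i_1\ldots i_n}$) does not distort the leading singular value from $b_{i_1\ldots i_n}$. This is exactly where Lemma~\ref{lemma:d/b_bounded} (the domination estimate $|d_{i_1\ldots i_n}/b_{i_1\ldots i_n}|\le K_0$) does the work.

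First, I would establish two-sided bounds on $\alpha_1(A_{i_1\ldots i_n})=\|A_{i_1\ldots i_n}\|$. Since $A_{i_1\ldots i_n}e_1=(b_{i_1\ldots i_n},d_{i_1\ldots i_n})^T$, one gets the lower bound $\|A_{i_1\ldots i_n}\|\ge b_{i_1\ldots i_n}$. For the upper bound, using $\|M\|^2\le\|M\|_F^2$ together with Lemma~\ref{lemma:d/b_bounded} and $a_i<b_i$,
\begin{equation*}
\|A_{i_1\ldots i_n}\|^2\le b_{i_1\ldots i_n}^2+d_{i_1\ldots i_n}^2+a_{i_1\ldots i_n}^2\le (2+K_0^2)\,b_{i_1\ldots i_n}^2.
\end{equation*}
Thus $b_{i_1\ldots i_n}\le\|A_{i_1\ldots i_n}\|\le C\,b_{i_1\ldots i_n}$ with $C=\sqrt{2+K_0^2}$, uniformly in $n$ and in the word $i_1\ldots i_n$.

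Next, taking logarithms and dividing by $n$, the bracketing constant $\log C$ is absorbed into the $o(1)$ error, so
\begin{equation*}
\lim_{n\to\infty}\frac{1}{n}\log\|A_{i_1\ldots i_n}\|=\lim_{n\to\infty}\frac{1}{n}\log b_{i_1\ldots i_n}=\lim_{n\to\infty}\frac{1}{n}\sum_{k=1}^n\log b_{i_k}
\end{equation*}
whenever the rightmost limit exists. Under $\mu_{\mathbf p}$ the random variables $\log b_{i_k}$ are i.i.d.\ with mean $\sum_{i=1}^N p_i\log b_i=\log\langle\mathbf b\rangle_{\mathbf p}$, so the strong law of large numbers gives $\mu_{\mathbf p}$-a.s.\ convergence to $\log\langle\mathbf b\rangle_{\mathbf p}$. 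This identifies $\chi^1_{\mu_{\mathbf p}}=-\log\langle\mathbf b\rangle_{\mathbf p}$.

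For the second exponent I would use the identity $\alpha_1(A)\,\alpha_2(A)=|\det A|$ valid for every $2\times 2$ matrix. Since $A_{i_1\ldots i_n}$ is lower triangular, $\det A_{i_1\ldots i_n}=a_{i_1\ldots i_n}\,b_{i_1\ldots i_n}$, so
\begin{equation*}
\frac{1}{n}\log\alpha_2(A_{i_1\ldots i_n})=\frac{1}{n}\log a_{i_1\ldots i_n}+\frac{1}{n}\log b_{i_1\ldots i_n}-\frac{1}{n}\log\alpha_1(A_{i_1\ldots i_n}).
\end{equation*}
Applying the SLLN again to the i.i.d.\ sequence $\log a_{i_k}$ (with mean $\log\langle\mathbf a\rangle_{\mathbf p}$) and using the previous step, the right-hand side converges $\mu_{\mathbf p}$-a.s.\ to $\log\langle\mathbf a\rangle_{\mathbf p}+\log\langle\mathbf b\rangle_{\mathbf p}-\log\langle\mathbf b\rangle_{\mathbf p}=\log\langle\mathbf a\rangle_{\mathbf p}$, yielding $\chi^2_{\mu_{\mathbf p}}=-\log\langle\mathbf a\rangle_{\mathbf p}$. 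Finally, simplicity of the spectrum follows from $0<a_i<b_i<1$ for every $i$: since any $\mathbf p\in\mathcal P$ has at least one positive coordinate, $\log\langle\mathbf a\rangle_{\mathbf p}=\sum p_i\log a_i<\sum p_i\log b_i=\log\langle\mathbf b\rangle_{\mathbf p}$, hence $\chi^1_{\mu_{\mathbf p}}<\chi^2_{\mu_{\mathbf p}}$.
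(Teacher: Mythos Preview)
Your proof is correct and follows essentially the same approach as the paper's sketch: use the domination estimate (Lemma~\ref{lemma:d/b_bounded}) to show the off-diagonal entry does not affect the leading singular value, then apply the strong law of large numbers. Your use of the identity $\alpha_1(A)\alpha_2(A)=|\det A|$ to extract $\chi^2$ is a clean shortcut compared to the alternative of bounding $\|A_{i_1\ldots i_n}^{-1}\|$ directly, but both routes are in the spirit of the paper's ``calculate directly'' instruction.
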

\begin{proof}[Sketch of proof]
	Both the singular values or the norm of $A_{i_1\ldots i_n}$ can be calculated directly. Since direction-$x$ dominates, the off-diagonal element does not play a role. An application of Oseledets theorem and the strong law of large numbers concludes the proof.
\end{proof}

The Ledrappier--Young formula originates from the seminal work of Ledrappier and Young~\cite{LedrappierYoungI_1985, LedrappierYoungII_1985} on determining the Hausdorff dimension of invariant measures of diffeomorphisms on compact manifolds. Through a succession of papers by Przytycki--Urba\'nski~\cite{PrzytyckiUrbanski1989}, Feng--Hu~\cite{FengHu09}, B\'ar\'any~\cite{barany15_LYformula} and B\'ar\'any--K\"{a}enm\"{a}ki~\cite{BARANYAnti201788} the formula was proved for the Hausdorff dimension of wider and wider classes of self-affine measures. In fact, Feng \cite{FengOral} recently announced that the Hausdorff dimension of the push-forward of a shift-invariant, ergodic measure $\mu$ satisfies a Ledrappier--Young type formula in full generality for any self-affine IFS on $\R^d$ which is contracting on average with respect to $\mu$. Also observe that the formulas proved in the earlier works of \cite{BARANSKIcarpet_2007, BaranskiTriag_2008, Bedford84_phd, GatzourasLalley92, mcmullen84} are all special cases of the Ledrappier--Young formula. The main result of \cite[Theorem 2.4, Corollary 2.8]{BARANYAnti201788} can be stated in a simpler form in our context when direction-$x$ dominates.

\begin{theorem}[\cite{BARANYAnti201788}, direction-$x$ dominates]\label{thm:BaranyKaenmaki}
	Let $\mathcal{F}$ be a shifted TGL-type IFS of the form \eqref{def:IFS_F}. Furthermore, using the notation from Subsection \ref{subsec:SymbNotation}, let $\mu_{\mathbf p}$ be any Bernoulli measure on $\Sigma$, $\nu_{\mathbf p}=\Pi_\ast\mu_{\mathbf{p}}$ its push forward and $\nu_{\mathbf q}=(\mathrm{proj}_x)_\ast \nu_{\mathbf p}$. Then, regardless of overlaps, $\nu_{\mathbf p}$ is exact dimensional and satisfies the Ledrappier--Young formula
	\begin{equation}\label{eq:dimHmuBaranyKaenmaki}
	\dim_{\rm H} \nu_{\mathbf p} = \frac{h_{\mu_{\mathbf p}}-H}{\chi_{\nu_{\mathbf p}}^2} + \left(1-\frac{\chi_{\nu_{\mathbf p}}^1}{\chi_{\nu_{\mathbf p}}^2}\right)\dim_{\rm H} \nu_{\mathbf q},
	\end{equation}
	where $H=-\int \log \mu_{\mathbf p, \alpha(\ii)} ([i_1]) \mathrm{d}\mu_{\mathbf p}(\ii)$. Recall $\{\mu_{\mathbf p, \alpha(\ii)}\}$ is the family of conditional measures of $\mu_{\mathbf p}$ defined by the measurable partition $\alpha(\ii)=\Pi^{-1}(\Pi(\ii))$.
	
	Moreover, if the IFS satisfies the ROSC and $\mathbf{p}\in\mathcal{P}_0$, then $H=0$.
\end{theorem}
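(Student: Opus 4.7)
The plan is to derive the statement as a direct specialisation of the general Ledrappier--Young formula of Bárány and Käenmäki \cite{BARANYAnti201788} to the TGL setting where direction-$x$ dominates. I will identify all quantities appearing in their abstract formula with the explicit ones in \eqref{eq:dimHmuBaranyKaenmaki}, and then handle the ROSC case separately as a short corollary.

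First, I would verify that the hypotheses of \cite[Theorem~2.4 and Corollary~2.8]{BARANYAnti201788} are met. Their theorem requires a $2\times 2$ self-affine IFS with a dominated splitting of the associated cocycle, applied to a Bernoulli measure. In our setting the family $\{A_i\}$ is lower triangular with $a_i<b_i$, and Lemma \ref{lemma:d/b_bounded} provides a uniform bound $|d_{i_1\cdots i_n}/b_{i_1\cdots i_n}|\le K_0$. Together these give an invariant cone around the horizontal axis that is strictly contracted by every $A_i^{-1}$, which is the standard geometric formulation of dominated splitting. Thus the general theorem applies to every Bernoulli measure $\mu_{\mathbf p}$ and gives that $\nu_{\mathbf p}$ is exact dimensional and satisfies
\begin{equation*}
\dim_{\rm H}\nu_{\mathbf p}
=\frac{h_{\mu_{\mathbf p}}-H_\alpha(\mu_{\mathbf p})}{\chi^2_{\nu_{\mathbf p}}}
+\left(1-\frac{\chi^1_{\nu_{\mathbf p}}}{\chi^2_{\nu_{\mathbf p}}}\right)\dim_{\rm H}\pi_\ast\nu_{\mathbf p},
\end{equation*}
where $\pi$ is the orthogonal projection onto the slow Oseledets subspace and $H_\alpha(\mu_{\mathbf p})$ is the Shannon information of the measurable partition $\alpha(\ii)=\Pi^{-1}\Pi(\ii)$.

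Second, I would identify each ingredient. Lemma \ref{lemma:Lyap_exp} already supplies $\chi^1_{\nu_{\mathbf p}}=-\log\langle\mathbf b\rangle_{\mathbf p}$ and $\chi^2_{\nu_{\mathbf p}}=-\log\langle\mathbf a\rangle_{\mathbf p}$, together with simplicity of the Lyapunov spectrum. Because the cone around the horizontal axis is invariant, the slow Oseledets direction coincides with the $x$-axis, so $\pi=\mathrm{proj}_x$. Commutativity of diagram \eqref{eq:diagram} then identifies the projected measure as
\begin{equation*}
\pi_\ast\nu_{\mathbf p}=(\mathrm{proj}_x)_\ast\Pi_\ast\mu_{\mathbf p}=(\Pi_{\mathcal H})_\ast\Phi_\ast\mu_{\mathbf p}=\nu_{\mathbf q}.
\end{equation*}
The entropy term $h_{\mu_{\mathbf p}}=-\log\langle\mathbf p\rangle_{\mathbf p}$ is the usual Kolmogorov--Sinai entropy \eqref{def:entropy}. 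For the conditional term, the standard identity for the Shannon information of a generating partition under a measurable partition (see e.g.\ Rokhlin's disintegration theorem) gives
\begin{equation*}
H_\alpha(\mu_{\mathbf p})=-\int\log\mu_{\mathbf p,\alpha(\ii)}([i_1])\,d\mu_{\mathbf p}(\ii)=H,
\end{equation*}
where the disintegration is the one in \eqref{r96}. Substituting yields \eqref{eq:dimHmuBaranyKaenmaki}.

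Finally, to prove $H=0$ under ROSC and $\mathbf p\in\mathcal P_0$, I would argue that $\Pi$ is injective off a $\mu_{\mathbf p}$-null set. Indeed, ROSC forces $f_i((0,1)^2)\cap f_j((0,1)^2)=\emptyset$ for $i\ne j$, so two distinct $\ii,\jj\in\Sigma$ with $\Pi(\ii)=\Pi(\jj)$ must have $\Pi(\ii)\in\bigcup_i\partial f_i(R)$; iterating, the image lies in the countable union of $\Pi(\partial R)$ under all $f_{\iiv}$, which has $\nu_{\mathbf p}$-measure zero because $\mathbf p$ has full support and the boundary of $R$ is not charged by $\nu_{\mathbf p}$ (standard self-affine argument using that $\partial R$ is a proper closed invariant set under no cylinder). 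Hence $\alpha(\ii)=\{\ii\}$ almost surely, the conditional measures $\mu_{\mathbf p,\alpha(\ii)}$ are Dirac masses, $\mu_{\mathbf p,\alpha(\ii)}([i_1])=1$, and $H=0$.

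The main obstacle is bookkeeping rather than mathematical depth: one must verify the dominated splitting hypothesis of \cite{BARANYAnti201788} for triangular, not necessarily diagonal, matrices, and confirm that in this anisotropic setting the slow Oseledets direction really is the horizontal axis so that the projected measure appearing in the abstract formula coincides with $\nu_{\mathbf q}$. Both points rest on direction-$x$ dominance and the boundedness of $d_{i_1\cdots i_n}/b_{i_1\cdots i_n}$, and once established the rest is direct substitution.
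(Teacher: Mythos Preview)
The paper does not give its own proof of this statement: it is quoted directly from \cite[Theorem~2.4, Corollary~2.8]{BARANYAnti201788} as the specialisation of their general Ledrappier--Young formula to the case where direction-$x$ dominates. Your proposal correctly carries out exactly this specialisation---verifying dominated splitting via the triangular structure and Lemma~\ref{lemma:d/b_bounded}, identifying the Lyapunov exponents via Lemma~\ref{lemma:Lyap_exp}, the projection as $\mathrm{proj}_x$ via diagram~\eqref{eq:diagram}, and invoking \cite[Corollary~2.8]{BARANYAnti201788} for $H=0$ under ROSC---so your approach is the same as the paper's (implicit) one.
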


\section{Upper bound for \texorpdfstring{$\dim_{\rm H}\Lambda$}{dimHLambda}}\label{sec:upperbound}

Consider a shifted triangular Gatzouras--Lalley-type planar carpet $\Lambda$ without any separation condition. To prove Theorem~\ref{thm:mainresUpperbound} we essentially lift the original argument in \cite{GatzourasLalley92}, formulated on the attractor $\Lambda$, to the symbolic space $\Sigma$. This can be done because the method in \cite{GatzourasLalley92} is completely symbolic in nature. Therefore, we only give a short sketch. 

The first step is to define a proper metric on $\Sigma$, which captures the distance between points on the attractor. Observe that for two points $\ii,\jj\in\Sigma$ the distance $|\Pi(\ii)-\Pi(\jj)|$ (recall \eqref{def:NaturalProj}) can be small even if $|\ii\wedge\jj|$ is small. This occurs if $|\Phi(\ii)\wedge\Phi(\jj)|=|\iih\wedge\jjh|$ (recall \eqref{def:Phi}) is much larger than $|\ii\wedge\jj|$, i.e. the corresponding cylinders belong to the same column for a long time.

\begin{lemma}\label{lemma:MetricSpace}
$(\Sigma,d)$ is a metric space, where the distance between $\ii,\jj\in\Sigma$ is defined
\begin{equation*}
d(\ii,\jj):= \prod_{k=1}^{|\iih\wedge\jjh|}b_{i_k} + \prod_{k=1}^{|\ii\wedge\jj|}a_{i_k}.
\end{equation*}
\end{lemma}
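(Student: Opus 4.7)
The plan is to split $d$ into two components
\[
D_1(\ii,\jj):=\prod_{k=1}^{|\iih\wedge\jjh|}b_{i_k},\qquad D_2(\ii,\jj):=\prod_{k=1}^{|\ii\wedge\jj|}a_{i_k},
\]
show that each is an ultra-pseudo-metric on $\Sigma$, and then conclude that the sum $d=D_1+D_2$ is a genuine metric. The only slightly delicate issue will be the symmetry of $D_1$, since its defining product uses the $b$-values of the $\ii$-representative; this is where the column structure \eqref{ass:column_structure} comes in.

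First I would verify symmetry. For $D_2$ it is immediate: when $k\le|\ii\wedge\jj|$ we have $i_k=j_k$ and so $a_{i_k}=a_{j_k}$. For $D_1$ I use that $|\iih\wedge\jjh|=|\jjh\wedge\iih|$ and that, for every $k\le|\iih\wedge\jjh|$, the indices $i_k$ and $j_k$ belong to the same column $\mathcal I_{\ih_k}$; hence by \eqref{ass:column_structure} one has $b_{i_k}=b_{j_k}=r_{\ih_k}$, so swapping $\ii$ and $\jj$ in the product changes nothing. Non-negativity is obvious. For definiteness note that $\ii=\jj$ forces both exponents to be $+\infty$, and since $\max_i b_i,\max_i a_i<1$ both products are $0$; conversely, $d(\ii,\jj)=0$ forces $|\iih\wedge\jjh|=|\ii\wedge\jj|=+\infty$, i.e.\ $\ii=\jj$.

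For the triangle inequality, I would establish an ultrametric-type bound for $D_1$ and $D_2$ separately:
\[
D_1(\ii,\kk)\le\max\{D_1(\ii,\jj),D_1(\jj,\kk)\},\qquad D_2(\ii,\kk)\le\max\{D_2(\ii,\jj),D_2(\jj,\kk)\}.
\]
For $D_1$ use the standard prefix inequality $|\iih\wedge\kkh|\ge\min\{|\iih\wedge\jjh|,|\jjh\wedge\kkh|\}$. Assuming without loss of generality that this minimum is $|\iih\wedge\jjh|$, the product defining $D_1(\ii,\kk)$ has at least as many factors (each in $(0,1)$) as $D_1(\ii,\jj)$, and on the overlapping initial segment the factors coincide by the column-structure identity $b_{i_k}=b_{j_k}=r_{\ih_k}$ used above. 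The same argument, using $i_k=j_k$ in place of the column identity, handles $D_2$.

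Finally, since $\max\{A,B\}\le A+B$ for $A,B\ge 0$, the ultra-pseudo-metric inequalities give
\[
d(\ii,\kk)=D_1(\ii,\kk)+D_2(\ii,\kk)\le\bigl(D_1(\ii,\jj)+D_1(\jj,\kk)\bigr)+\bigl(D_2(\ii,\jj)+D_2(\jj,\kk)\bigr)=d(\ii,\jj)+d(\jj,\kk),
\]
which is the triangle inequality for $d$. Combined with the symmetry and definiteness above, this shows that $(\Sigma,d)$ is a metric space. No real obstacle arises; the only point where the TGL structure is actually used is in the symmetry of $D_1$, where the hypothesis $b_k=b_\ell=r_{\ih}$ whenever $k,\ell\in\mathcal I_{\ih}$ is essential.
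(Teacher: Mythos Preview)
Your proof is correct and follows essentially the same approach as the paper: both establish the ultrametric inequality for each of the two summands separately via the prefix inequality $|\iih\wedge\kkh|\ge\min\{|\iih\wedge\jjh|,|\jjh\wedge\kkh|\}$ (the paper phrases this as a two-case split rather than naming it as an ultra-pseudo-metric, but the content is identical). Your treatment is in fact slightly more careful than the paper's, which declares symmetry ``trivial'' without mentioning that the column identity $b_k=b_\ell=r_{\ih}$ for $k,\ell\in\mathcal I_{\ih}$ is what makes $D_1$ symmetric.
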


\begin{proof}
The fact that $d$ is non-negative and symmetric is trivial. Need to check the triangle inequality, for all $\ii,\jj,\kk\in\Sigma:\; d(\ii,\jj)\leq d(\ii,\kk)+d(\kk,\jj)$. If
\begin{itemize}
	\item $|\ii\wedge\kk|\leq|\ii\wedge\jj|$ then $\prod_{\ell=1}^{|\ii\wedge\kk|}a_{i_\ell}\geq \prod_{\ell=1}^{|\ii\wedge\jj|}a_{i_\ell}$,
	\item $|\ii\wedge\kk|>|\ii\wedge\jj|$ then $|\kk\wedge\jj|=|\ii\wedge\jj|$, thus $\prod_{\ell=1}^{|\kk\wedge\jj|}a_{i_\ell} = \prod_{\ell=1}^{|\ii\wedge\jj|}a_{i_\ell}$.
\end{itemize}
Analogously, if
\begin{itemize}
	\item $|\iih\wedge\kkh|\leq|\iih\wedge\jjh|$ then $\prod_{\ell=1}^{|\iih\wedge\kkh|}b_{i_\ell}\geq \prod_{\ell=1}^{|\iih\wedge\jjh|}b_{i_\ell}$,
	\item $|\iih\wedge\kkh|>|\iih\wedge\jjh|$ then $|\kkh\wedge\jjh|=|\iih\wedge\jjh|$, thus $\prod_{\ell=1}^{|\kkh\wedge\jjh|}b_{i_\ell} = \prod_{\ell=1}^{|\iih\wedge\jjh|}b_{i_\ell}$.
\end{itemize}
The triangle inequality now follows.
\end{proof}

The next step is to prove that the natural projection with this metric is Lipschitz.
\begin{lemma}\label{lemma:dimSigmageqdimLambda}
For any shifted triangular Gatzouras--Lalley-type planar carpet
\begin{equation*}
\dim_{\rm H}\Lambda \leq \dim_{\rm H} (\Sigma,d).
\end{equation*}
\end{lemma}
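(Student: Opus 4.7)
The plan is to show that the natural projection $\Pi:(\Sigma,d)\to \Lambda$ is Lipschitz with respect to the metric $d$ on $\Sigma$ and the Euclidean metric on $\Lambda$. Since $\Pi(\Sigma)=\Lambda$ and Lipschitz maps do not increase Hausdorff dimension, this immediately gives $\dim_{\rm H}\Lambda\leq \dim_{\rm H}(\Sigma,d)$. The content therefore lies in producing a uniform constant $C$ such that $|\Pi(\ii)-\Pi(\jj)|\leq C\cdot d(\ii,\jj)$ for all $\ii,\jj\in\Sigma$.

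Fix $\ii,\jj\in\Sigma$ and set $n=|\ii\wedge\jj|$ and $m=|\iih\wedge\jjh|$; note $m\geq n$, and by the column structure \eqref{a73} we have $b_{i_k}=b_{j_k}$ for every $k\leq m$. For the horizontal distance, both $\Pi(\ii)$ and $\Pi(\jj)$ lie in the strip $\mathrm{proj}_x(R_{\ii|m})=\mathrm{proj}_x(R_{\jj|m})=h_{\iih|m}([0,1])$, an interval of length $\prod_{k=1}^m b_{i_k}$. Hence
\begin{equation*}
|\mathrm{proj}_x\Pi(\ii)-\mathrm{proj}_x\Pi(\jj)|\leq \prod_{k=1}^{m}b_{i_k}.
\end{equation*}
For the vertical distance, parametrise the parallelogram $R_{\ii|n}=R_{\jj|n}$ as
\begin{equation*}
\bigl\{(x_0+tb_{\ii|n},\,y_0+td_{\ii|n}+s\,a_{\ii|n}):\ t,s\in[0,1]\bigr\},
\end{equation*}
and write $\Pi(\ii)=(x_0+t_i b_{\ii|n},y_0+t_i d_{\ii|n}+s_i a_{\ii|n})$ and similarly for $\Pi(\jj)$ with parameters $t_j,s_j\in[0,1]$. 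The horizontal bound above forces $|t_i-t_j|b_{\ii|n}\leq\prod_{k=1}^m b_{i_k}$, so $|t_i-t_j|\leq\prod_{k=n+1}^{m}b_{i_k}$. Combining this with Lemma~\ref{lemma:d/b_bounded} which yields $|d_{\ii|n}|\leq K_0\,b_{\ii|n}$, we get
\begin{equation*}
|\mathrm{proj}_y\Pi(\ii)-\mathrm{proj}_y\Pi(\jj)|\leq |t_i-t_j|\cdot|d_{\ii|n}|+|s_i-s_j|\,a_{\ii|n}\leq K_0\prod_{k=1}^{m}b_{i_k}+\prod_{k=1}^{n}a_{i_k}.
\end{equation*}
Summing the two coordinate estimates produces
\begin{equation*}
|\Pi(\ii)-\Pi(\jj)|\leq (1+K_0)\prod_{k=1}^{m}b_{i_k}+\prod_{k=1}^{n}a_{i_k}\leq (1+K_0)\,d(\ii,\jj),
\end{equation*}
which is the desired Lipschitz estimate.

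The only nontrivial point is the vertical bound: the parallelogram $R_{\ii|n}$ may extend vertically well beyond $a_{\ii|n}$ because of the shear $d_{\ii|n}$, and a priori this could make $|\Pi(\ii)-\Pi(\jj)|$ much larger than $a_{\ii|n}$. The resolution is that any vertical excess is proportional to $|d_{\ii|n}|$, which by Lemma~\ref{lemma:d/b_bounded} is controlled by $b_{\ii|n}$; the horizontal separation of the two image points within $R_{\ii|n}$ is in turn controlled by the column width at depth $m$, yielding exactly the $\prod_{k=1}^m b_{i_k}$ term already present in $d(\ii,\jj)$. This is the step to get right; everything else is immediate, and the conclusion $\dim_{\rm H}\Lambda\leq\dim_{\rm H}(\Sigma,d)$ follows at once from the Lipschitz property of $\Pi$.
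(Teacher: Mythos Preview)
Your proof is correct and follows the same strategy as the paper: show that $\Pi:(\Sigma,d)\to\Lambda$ is Lipschitz, with Lemma~\ref{lemma:d/b_bounded} supplying the key control on the shear. The only difference is stylistic---you argue geometrically via a parametrisation of the parallelogram $R_{\ii|n}$, whereas the paper expands the series for $\Pi(\ii)-\Pi(\jj)$ directly; your route is arguably cleaner and yields the explicit constant $1+K_0$.
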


\begin{proof}

It is enough to show that there exists $C>0$ such that $|\Pi(\ii)-\Pi(\jj)|\leq C\cdot d(\ii,\jj)$, i.e. $\Pi:\Sigma\to\Lambda$ is a Lipschitz-function, which can not increase the Hausdorff dimension. 

For $\ii,\jj\in\Sigma$ let $k:=|\ii\wedge\jj|,\; \ell:=|\iih\wedge\jjh|$ and
\begin{equation*}
\begin{pmatrix} x \\ y \end{pmatrix} := \Pi(\ii)-\Pi(\jj) = \sum_{n=1}^{\infty}\begin{pmatrix} a_{\ii|n-1} t_{i_n,1} - a_{\jj|n-1} t_{j_n,1} \\ d_{\ii|n-1}t_{i_n,1} - d_{\jj|n-1}t_{j_n,1} + b_{\ii|n-1}t_{i_n,2} - b_{\jj|n-1}t_{j_n,2} \end{pmatrix}.
\end{equation*}
The first $k$ terms coincide in both coordinates and $b_{i_n}=b_{j_n}$ for $n=1,\ldots,\ell$. Thus,
\begin{align*}
x^2 &= a_{\ii|k}^2 \cdot \Big(\sum_{n=k+1}^{\infty}(a_{\sigma^k\ii|n-k-1} t_{i_n+k,1} - a_{\sigma^k\jj|n-k-1} t_{j_n+k,1}) \Big)^2 \leq a_{\ii|k}^2 \,,  \\
y^2 &\leq 2\Big[ \Big(\sum_{n=1}^{\infty}d_{\ii|n-1}t_{i_n,1}\Big)^2 + \Big(\sum_{n=1}^{\infty} d_{\jj|n-1}t_{j_n,1}\Big)^2 + \Big(\sum_{n=1}^{\infty}(b_{\ii|n-1}t_{i_n,2} - b_{\jj|n-1}t_{j_n,2})\Big)^2 \Big].
\end{align*}
In the first two sums using Lemma \ref{lemma:d/b_bounded} we can bound $d_{\ii|n-1}\leq K_0\cdot b_{\ii|n-1}$ and $d_{\jj|n-1}\leq K_0\cdot b_{\jj|n-1}$. Now we can pull out $b_{\ii|\ell}$ from all three sums. The remaining sums are all uniformly bounded in $\ii,\jj$ by some constant $c$. This gives
\begin{equation*}
y^2\leq 2K_0^2\cdot c\cdot b_{\ii|\ell}^2, \;\text{ thus }
|\Pi(\ii)-\Pi(\jj)| \leq \sqrt{a_{\ii|k}^2+2K_0^2\cdot c\cdot b_{\ii|\ell}^2} \leq C\cdot d(\ii,\jj).
\end{equation*}
\end{proof}

It remains to show that the value $\alpha^\ast$ maximizing the expression for $D(\mathbf{p})$ in \eqref{def:D(p)} is an upper bound for the Hausdorff dimension of $(\Sigma,d)$.
\begin{prop}\label{prop:dimSigmaUpperBound}
For any choice of parameters defining a shifted triangular Gatzouras--Lalley-type triangular carpet
\begin{equation*}
\dim_{\rm H} (\Sigma,d) \leq \alpha^\ast.
\end{equation*}
\end{prop}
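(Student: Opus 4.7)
The plan is to adapt the approximate-square covering scheme of Gatzouras--Lalley to the metric space $(\Sigma,d)$. For each $r\in(0,1)$ and each $\ii\in\Sigma$, set
\[
n(r,\ii):=\min\{n\geq 1:a_{\ii|n}\leq r\}\quad\text{and}\quad N(r,\ii):=\min\{N\geq 1:b_{\iih|N}\leq r\},
\]
so that $n(r,\ii)\leq N(r,\ii)$ because direction-$x$ dominates. The approximate square at $\ii$ of scale $r$ is
\[
Q(\ii,r):=\{\jj\in\Sigma:\jj|_{n(r,\ii)}=\ii|_{n(r,\ii)}\text{ and }\jjh|_{N(r,\ii)}=\iih|_{N(r,\ii)}\}.
\]
A direct verification using the column structure \eqref{a73} shows that $\{Q(\ii,r)\}_{\ii\in\Sigma}$ is a partition of $\Sigma$, that every cell has $d$-diameter at most $2r$, and that for $r'<r$ the partition at scale $r'$ refines the one at scale $r$.

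For any $\mathbf{p}\in\mathcal P_0$ with column vector $\mathbf{q}$ defined by \eqref{def:q_l}, since the first $n(r,\ii)$ coordinates of $\jj\in Q(\ii,r)$ are prescribed by $\ii$ while in positions $n(r,\ii)+1,\ldots,N(r,\ii)$ only the column constraint $\phi(j_k)=\iih_k$ is imposed, the Bernoulli mass factorises as
\[
\mu_{\mathbf{p}}(Q(\ii,r))=\prod_{k=1}^{n(r,\ii)}p_{i_k}\cdot\prod_{k=n(r,\ii)+1}^{N(r,\ii)}q_{\iih_k}.
\]
Birkhoff's ergodic theorem then yields, for $\mu_{\mathbf{p}}$-a.e.\ $\ii$, the limits $n(r,\ii)/\log(1/r)\to 1/(-\log\langle\mathbf{a}\rangle_{\mathbf{p}})$ and $N(r,\ii)/\log(1/r)\to 1/(-\log\langle\mathbf{b}\rangle_{\mathbf{p}})$, and hence the local-dimension convergence
\[
\lim_{r\to 0}\frac{-\log\mu_{\mathbf{p}}(Q(\ii,r))}{\log(1/r)}=D(\mathbf{p}).
\]

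Next I would take a maximizer $\mathbf{p}^\ast\in\mathcal P_0$ of $D$; its existence and strict positivity in all coordinates are obtained by compactness of $\mathcal P$, continuity of $D$, and a Lagrange-multiplier analysis ruling out the faces $\{p_i=0\}$, exactly as in \cite{GatzourasLalley92}. Fix $s>\alpha^\ast=D(\mathbf{p}^\ast)$, choose $\varepsilon\in(0,s-\alpha^\ast)$ and write $\rho_m:=2^{-m}$. For each $m\geq 1$ classify the cells of $\mathcal Q(\rho_m):=\{Q(\ii,\rho_m):\ii\in\Sigma\}$ by
\[
\mathcal Q^+_m:=\{Q\in\mathcal Q(\rho_m):\mu_{\mathbf{p}^\ast}(Q)\geq\rho_m^{\alpha^\ast+\varepsilon}\},\quad\mathcal Q^-_m:=\mathcal Q(\rho_m)\setminus\mathcal Q^+_m.
\]
For each $\ii$ in the $\mu_{\mathbf{p}^\ast}$-typical set from the previous paragraph, let $\tau_{m_0}(\ii)$ be the first $m\geq m_0$ for which the cell of $\ii$ lies in $\mathcal Q^+_m$; the local-dimension convergence gives $\tau_{m_0}<\infty$ almost surely. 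Using the refinement property, the stopped cells $\{Q(\ii,\rho_{\tau_{m_0}(\ii)})\}$ form a disjoint family, and since $|\mathcal Q^+_m|\leq\rho_m^{-\alpha^\ast-\varepsilon}$ (because the cells in $\mathcal Q^+_m$ are disjoint of $\mu_{\mathbf{p}^\ast}$-mass at least $\rho_m^{\alpha^\ast+\varepsilon}$), their total $s$-cost is dominated by
\[
2^s\sum_{m\geq m_0}\rho_m^s\cdot|\mathcal Q^+_m|\leq 2^s\sum_{m\geq m_0}\rho_m^{s-\alpha^\ast-\varepsilon},
\]
a geometric series that tends to $0$ as $m_0\to\infty$ because $s-\alpha^\ast-\varepsilon>0$.

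The main obstacle is the $\mu_{\mathbf{p}^\ast}$-null set on which the ergodic limits above fail, since the cells of $\mathcal Q^-_m$ could in principle be very numerous. I would apply Lemma~\ref{lemma:BadisSmall} to the vectors $\mathbf{a},\mathbf{b},\mathbf{p}^\ast$ and $\mathbf{q}^\ast$: for a suitable $\delta>0$ the set $\mathrm{Bad}_{\delta,m}$ has $\mu_{\mathbf{p}^\ast}$-measure decaying geometrically in $m$, and by Borel--Cantelli the exceptional set is, for every $m_0$, contained in $\bigcup_{m\geq m_0}\mathrm{Bad}_{\delta,m}$. Covering this union by cells of $\bigcup_{m\geq m_0}\mathcal Q^-_m$ at matching scales contributes an additional $s$-cost controlled by a further geometric series vanishing as $m_0\to\infty$. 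Combining both contributions gives $\mathcal H^s(\Sigma,d)=0$ for every $s>\alpha^\ast$, and hence $\dim_{\rm H}(\Sigma,d)\leq\alpha^\ast$.
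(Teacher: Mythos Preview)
Your setup with approximate squares $Q(\ii,r)$, the factorisation of $\mu_{\mathbf p}(Q(\ii,r))$, and the $\mu_{\mathbf p^\ast}$-almost-sure local dimension computation are all correct. The gap is in the treatment of the exceptional set.

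The difficulty is that a $\mu_{\mathbf p^\ast}$-measure bound on a set gives \emph{no} control on how many approximate squares are needed to cover it. Concretely, the cells in $\mathcal Q^-_m$ are by definition those with $\mu_{\mathbf p^\ast}$-mass \emph{below} $\rho_m^{\alpha^\ast+\varepsilon}$, so the inequality $|\mathcal Q^-_m|\cdot\rho_m^{\alpha^\ast+\varepsilon}\le 1$ fails completely; there can be as many as $N^{c m}$ such cells, and the $s$-cost $|\mathcal Q^-_m|\cdot\rho_m^{s}$ need not be summable. Likewise, $\mathrm{Bad}_{\delta,m}$ having geometrically small $\mu_{\mathbf p^\ast}$-measure does not bound its cardinality as a union of cylinders: the dominant contribution to $\mathrm{Bad}_{\delta,m}(\mathbf p^\ast)$ comes from cylinders of \emph{small} mass, which can be exponentially numerous. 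This is not a technicality one can patch: multifractal analysis shows that the level sets of the local dimension of $\mu_{\mathbf p^\ast}$ typically have positive Hausdorff dimension, so a single Bernoulli measure $\mu_{\mathbf p^\ast}$ cannot by itself certify the upper bound $\dim_{\rm H}(\Sigma,d)\le\alpha^\ast$ via a Billingsley argument.

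The paper (following \cite{GatzourasLalley92}) avoids this obstacle by not working with the maximizer $\mathbf p^\ast$ at all. Instead it introduces the one-parameter family of Bernoulli measures $p_i(\vartheta(\rho),\alpha^\ast,\rho)= a_i^{\vartheta(\rho)} b_i^{\alpha^\ast-\vartheta(\rho)}\gamma_i(\vartheta(\rho))^{\rho-1}$, $\rho\in(0,1)$, normalised so that $C\equiv 1$, and proves that for \emph{every} $\ii\in\Sigma$ there exists a $\rho$ with
\[
\liminf_{k\to\infty}\frac{\log\mu(B_k(\ii))}{\log\prod_{j\le k}b_{i_j}}\le\alpha^\ast.
\]
The pointwise (not almost-sure) conclusion is what feeds into the mass distribution principle of Lemma~\ref{lemma:MassDistrPrinciple}. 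The reduction to \cite[eq.~(5.10)]{GatzourasLalley92}, namely $\limsup_k\big(\tfrac{\rho}{k}\sum_{j\le k}\log\gamma_{i_j}-\tfrac{1}{k}\sum_{j\le L_k(\ii)}\log\gamma_{i_j}\big)\ge 0$ for a suitable $\rho$, is exactly the mechanism that handles every $\ii$, including those in your exceptional set.
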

\begin{proof}[Proof of Theorem \ref{thm:mainresUpperbound}]
The upper bound is a corollary of Lemma \ref{lemma:dimSigmageqdimLambda} and Proposition \ref{prop:dimSigmaUpperBound}. The compactness of $\mathcal{P}$ and the continuity of $D(\mathbf{p})$ implies that $\sup_{\mathbf{p}} D(\mathbf{p})$ is attained for some $\mathbf{p}^\ast\in\mathcal{P}$. Moreover, it is easy to check that $\mathbf{p}^\ast\in\mathcal{P}_0$, see \cite[Proposition 3.4]{GatzourasLalley92}.
\end{proof}
Proposition \ref{prop:dimSigmaUpperBound} is essentially proved in \cite[Section 5]{GatzourasLalley92} formulated on the attractor $\Lambda$. For completeness we sketch the main steps adapted to $(\Sigma,d)$ and cite \cite{GatzourasLalley92} when necessary. Most of the notation we bring over from \cite{GatzourasLalley92}.


The balls in $(\Sigma,d)$ are exactly the "approximate squares" defined in \cite[eq. (1.2)]{GatzourasLalley92}
\begin{align}
B_k(\ii)&:= \{\jj\in\Sigma: |\ii\wedge\jj|\geq L_k(\ii) \text{ and } |\iih\wedge\jjh|\geq k\}, \text{ where} \label{def:ApproxSquare}\\
L_k(\ii)&:= \max\Big\{n\geq 0: \prod_{j=1}^k b_{i_j} \leq \prod_{j=1}^n a_{i_j}\Big\}. \nonumber
\end{align}
Note, $k> L_k(\ii)$ for every $\ii$ and $k$, since $a_i<b_i$ for every $i$. The $\jj\in B_k(\ii)$ for which $|\ii\wedge\jj|=L_k(\ii)$ and $|\iih\wedge\jjh|=k$ are the ones for which $d(\ii,\jj)$ is maximal. The definition of $L_k(\ii)$ implies that
\begin{equation}\label{eq:RatioProdaoverProdb}
1\leq \frac{\prod_{j=1}^{L_k(\ii)}a_{i_j} }{ \prod_{j=1}^{k}b_{i_j} } \leq \max_i a_i^{-1}  \;\;\text{ for every } k,\ii.
\end{equation}
Hence, $\mathrm{diam} B_k(\ii)\leq C\cdot \prod_{j=1}^{k}b_{i_j}$ for some $C$ independent of $\ii$.

The main ingredient is a form of the mass distribution principle adapted to $(\Sigma,d)$.
\begin{lemma}\label{lemma:MassDistrPrinciple}
	Let $\mu$ be a probability measure on $\Sigma$ and assume
	\begin{equation*}
	\liminf_{k\to\infty} \frac{\log \mu(B_k(\ii))}{\log \prod_{j=1}^k b_{i_j} }\leq \alpha\; \text{ for \textbf{every} } \ii\in\Sigma,
	\end{equation*}
	where $B_k(\ii)$ is the approximate square defined in \eqref{def:ApproxSquare}. Then
	\begin{equation*}
	\dim_{\rm H}(\Sigma,d)\leq \alpha.
	\end{equation*}
\end{lemma}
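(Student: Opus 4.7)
The plan is to implement the covering form of the mass distribution principle directly on the metric space $(\Sigma,d)$. Fix $\alpha'>\alpha$; it suffices to prove $\mathcal{H}^{\alpha'}(\Sigma,d)<\infty$, since letting $\alpha'\downarrow\alpha$ then yields $\dim_{\rm H}(\Sigma,d)\leq\alpha$. The hypothesis, together with the fact that $\prod_{j=1}^k b_{i_j}\in(0,1)$ tends to $0$, guarantees that for \textbf{every} $\ii\in\Sigma$ there exist infinitely many $k\in\N$ with
\begin{equation*}
\mu(B_k(\ii))\geq\Big(\prod_{j=1}^k b_{i_j}\Big)^{\alpha'},
\end{equation*}
while \eqref{eq:RatioProdaoverProdb} supplies a constant $C$ (independent of $\ii$ and $k$) for which $\mathrm{diam}(B_k(\ii))\leq C\prod_{j=1}^k b_{i_j}$. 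Call such $k$ \emph{good} for $\ii$.

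Next I would fix a scale $\delta>0$. For each $\ii\in\Sigma$ choose a good $k(\ii)$ so large that $r_\ii:=C\prod_{j=1}^{k(\ii)}b_{i_j}\leq\delta$. By the diameter bound, $B_{k(\ii)}(\ii)\subseteq B_d(\ii,r_\ii)$, hence the measure lower bound gives $\mu(B_d(\ii,r_\ii))\geq(r_\ii/C)^{\alpha'}$. The family $\{B_d(\ii,r_\ii)\}_{\ii\in\Sigma}$ is a cover of $\Sigma$ by $d$-balls of radii bounded by $\delta$, so the $5B$-covering lemma (valid in every metric space for bounded-radius families of balls) produces a countable, pairwise disjoint subfamily $\{B_d(\ii_n,r_n)\}_{n\geq 1}$ whose $5$-fold enlargements still cover $\Sigma$. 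Estimating the Hausdorff pre-measure then yields
\begin{align*}
\mathcal{H}^{\alpha'}_{10\delta}(\Sigma)\leq\sum_n(10\,r_n)^{\alpha'}
&\leq 10^{\alpha'}C^{\alpha'}\sum_n\mu(B_d(\ii_n,r_n))\\
&\leq (10C)^{\alpha'}\mu(\Sigma)=(10C)^{\alpha'},
\end{align*}
using the measure lower bound in the first line and the disjointness of the $B_d(\ii_n,r_n)$ in the second. Since this bound is uniform in $\delta$, sending $\delta\to 0$ gives $\mathcal{H}^{\alpha'}(\Sigma,d)\leq(10C)^{\alpha'}<\infty$, as required.

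The only real technical point is the extraction of a disjoint subfamily from the initial cover by approximate squares of varying levels, and this is exactly what the $5B$-covering lemma accomplishes in one stroke; the argument works because the approximate squares $B_k(\ii)$ are contained in, and comparable to, genuine $d$-balls of radius $C\prod b_{i_j}$. The other subtlety worth flagging is that the hypothesis is assumed pointwise for \emph{every} $\ii\in\Sigma$ rather than $\mu$-a.e.—this is crucial, because with only an a.e. assumption the selected balls would cover merely a $\mu$-full subset of $\Sigma$, and a separate argument would be needed for the $\mu$-null complement. The whole lemma is the symbolic-space analogue of the covering step in \cite[Section~5]{GatzourasLalley92}, and the proof transports their geometric argument to $(\Sigma,d)$ via the identification of approximate squares with metric balls.
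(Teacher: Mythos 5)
Your proof is correct and follows essentially the same route as the paper: extract, for every $\ii$, a good level $k(\ii)$ from the $\liminf$ hypothesis, get a $\delta$-cover by approximate squares (or $d$-balls), extract a disjoint subfamily via the Vitali/$5r$-covering lemma, and sum the measures against $\mu(\Sigma)$ to bound the Hausdorff pre-measure uniformly in $\delta$. The only cosmetic difference is that the paper treats the approximate squares $B_k(\ii)$ directly as the balls of $(\Sigma,d)$ (a point it notes explicitly before the lemma), whereas you first enclose $B_{k(\ii)}(\ii)$ in a genuine $d$-ball of radius $r_\ii = C\prod b_{i_j}$; both lead to the same estimate up to the absolute constants, so there is no substantive deviation.
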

\begin{proof}
	The assumption states that for every $\varepsilon,\delta>0$ and $\ii\in\Sigma$ there exists a $k(\ii)$ such that
	\begin{equation*}
	\prod_{j=1}^{k(\ii)} b_{i_j}<\delta \;\text{ and }\; \Big(\prod_{j=1}^{k(\ii)} b_{i_j}\Big)^{\alpha+\varepsilon}\leq \mu(B_{k(\ii)}(\ii)).
	\end{equation*}
	The collection $\{B_{k(\ii)}(\ii)\}_{\ii\in\Sigma}$ is a $\delta$-cover of $\Sigma$, thus the Vitali- or $5r$-covering lemma \cite{FalconerBookI_1986} implies that there exists a (perhaps uncountable) sub-collection $J\subset\Sigma$ of \textit{disjoint} balls $B_{k(\ii)}(\ii)$ giving a $5\delta$-cover of $\Sigma$, i.e.
	\begin{equation*}
	\Sigma \subseteq \bigsqcup_{\ii\in J} 5 B_{k(\ii)}(\ii) \;\text{ and }\; B_{k(\ii)}(\ii) \cap B_{k(\jj)}(\jj)=\emptyset \,\text{ for every }\, \ii\neq\jj\in J.
	\end{equation*}
	Hence, we can bound the $\alpha+\varepsilon$-dimensional Hausdorff measure
	\begin{equation*}
	\mathcal{H}^{\alpha+\varepsilon}_{5\delta}(\Sigma) \leq (5c)^{\alpha+\varepsilon} \sum_{\ii\in J} \Big(\prod_{j=1}^{k(\ii)} b_{i_j}\Big)^{\alpha+\varepsilon} \leq (5c)^{\alpha+\varepsilon} \sum_{\ii\in J} \mu(B_{k(\ii)}(\ii)) \leq (5c)^{\alpha+\varepsilon}\cdot \mu(\Sigma)
	\end{equation*}
	independent of $\delta$ and therefore $\mathcal{H}^{\alpha+\varepsilon}(\Sigma)\leq (5c)^{\alpha+\varepsilon}<\infty$ for every $\varepsilon>0$. Thus, $\dim_{\rm H}(\Sigma,d)\leq \alpha$.
\end{proof}
The lemma implies that to prove Proposition \ref{prop:dimSigmaUpperBound} it is enough to find a measure $\mu$ satisfying the condition of the lemma with the value $\alpha^\ast$. This can be achieved using the family of Gatzouras--Lalley Bernoulli measures introduced in \cite[eq. (5.2)]{GatzourasLalley92}. Let $\vartheta\in\R, \lambda\in\R$ and $\rho\in(0,1)$. Define the probability vector $\mathbf{p}=(p_1,\ldots,p_N)$ by
\begin{equation}\label{def:GLBernoulliMeasure}
p_i=p_i(\vartheta,\lambda,\rho):= C(\vartheta,\lambda,\rho)a_i^\vartheta b_i^{\lambda-\vartheta}(\gamma_i(\vartheta))^{\rho-1},\; \text{ where } \gamma_i(\vartheta) = \sum_{j\in\mathcal{I}_{\phi(i)}} a_j^\vartheta
\end{equation}
and $C(\vartheta,\lambda,\rho)$ normalizes so that $\sum_ip_i=1$. In fact \cite[Lemma 5.1]{GatzourasLalley92} shows that there exists a real-valued continuous function $\vartheta(\rho),\, \rho\in(0,1)$, such that for every $\rho\in(0,1)$
\begin{equation*}
C(\vartheta(\rho),\alpha^\ast,\rho)=1.
\end{equation*}
From now we work with such $\mathbf{p}$.
\begin{lemma}\label{lemma:GLMeasure}
	The Bernoulli-measure $\mu:= \mathbf{p}^{\N}$ on $\Sigma$ satisfies the condition of Lemma \ref{lemma:MassDistrPrinciple} with the optimal value $\alpha^\ast$, i.e.
	\begin{equation*}
	\liminf_{k\to\infty} \frac{\mu(B_k(\ii))}{\log \prod_{j=1}^k b_{i_j}} \leq \alpha^\ast\; \text{ for \textbf{every} } \ii\in\Sigma.
	\end{equation*}
\end{lemma}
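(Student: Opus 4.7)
My plan is to compute $\mu(B_k(\ii))$ explicitly, simplify it using the specific form of the Gatzouras--Lalley Bernoulli measure \eqref{def:GLBernoulliMeasure}, and then handle the remaining terms by passing to a subsequence adapted to $\ii$. First, the combinatorial description of $B_k(\ii)$ in \eqref{def:ApproxSquare} together with the product structure of $\mu = \mathbf{p}^{\N}$ gives
\[
\mu(B_k(\ii)) = \prod_{j=1}^{L_k(\ii)} p_{i_j}\cdot\prod_{j=L_k(\ii)+1}^{k} q_{\ih_j},\qquad q_{\ih}:=\sum_{i\in\mathcal{I}_{\ih}} p_i.
\]
Summing $p_i = a_i^{\vartheta} b_i^{\alpha^\ast - \vartheta}\gamma_{\phi(i)}(\vartheta)^{\rho-1}$ over $i\in\mathcal{I}_{\ih}$ and using $b_i = r_{\ih}$ on that column yields the identity $q_{\ih} = r_{\ih}^{\alpha^\ast - \vartheta}\gamma_{\ih}(\vartheta)^{\rho}$ (with $C=1$, $\lambda = \alpha^\ast$, $\vartheta = \vartheta(\rho)$). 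Substituting this back, the measure factors as
\[
\mu(B_k(\ii)) = \Bigl(\prod_{j=1}^{L_k(\ii)} a_{i_j}\Bigr)^{\vartheta} \Bigl(\prod_{j=1}^k b_{i_j}\Bigr)^{\alpha^\ast - \vartheta}\Bigl(\prod_{j=1}^{L_k(\ii)}\gamma_{\ih_j}(\vartheta)\Bigr)^{\rho-1}\Bigl(\prod_{j=L_k(\ii)+1}^k\gamma_{\ih_j}(\vartheta)\Bigr)^{\rho}.
\]

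Next I would use \eqref{eq:RatioProdaoverProdb} to merge the first two factors into $\bigl(\prod_{j=1}^k b_{i_j}\bigr)^{\alpha^\ast}$ up to a bounded multiplicative constant independent of $\ii$ and $k$. Taking logs, dividing by $\log\prod_{j=1}^k b_{i_j} < 0$, and rewriting the $\gamma$-contribution via $(\rho-1)X + \rho Y = \rho(X+Y) - X$, one obtains
\[
\frac{\log\mu(B_k(\ii))}{\log\prod_{j=1}^k b_{i_j}} = \alpha^\ast + T_k(\ii) + o(1),\quad T_k(\ii):=\frac{\rho\log\prod_{j=1}^k \gamma_{\ih_j}(\vartheta) - \log\prod_{j=1}^{L_k(\ii)}\gamma_{\ih_j}(\vartheta)}{\log\prod_{j=1}^k b_{i_j}}.
\]
The task thus reduces to proving $\liminf_{k\to\infty} T_k(\ii) \le 0$ for every $\ii\in\Sigma$.

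To handle this last step I would fix $\ii$ and, by a diagonal compactness argument, extract a subsequence $k_n\uparrow\infty$ along which the empirical column-frequency vector on the first $k_n$ symbols, the one on the first $L_{k_n}(\ii)$ symbols, and the ratio $L_{k_n}(\ii)/k_n$ all converge. Along such a subsequence each of the three log-products in $T_{k_n}(\ii)$ is linear in $k_n$ up to $o(k_n)$, so $T_{k_n}(\ii)$ tends to an explicit expression in the limiting frequencies. The normalization $\sum_{\ih=1}^M q_{\ih} = \sum_{\ih=1}^M r_{\ih}^{\alpha^\ast - \vartheta}\gamma_{\ih}(\vartheta)^{\rho} = 1$, combined with the limit form of \eqref{eq:RatioProdaoverProdb} which links the limit of $L_{k_n}/k_n$ to the $\log a$- and $\log r$-averages, is then what forces this limit to be non-positive.

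The hard part will be exactly this variational step: turning the algebraic normalization $\sum_{\ih}q_{\ih}=1$ into non-positivity of the limiting expression for $T_{k_n}(\ii)$. The natural route is to rewrite the limit as $-D(\mathbf{f}\,\|\,\mathbf{q})$, namely a Kullback--Leibler divergence between the limiting empirical column distribution $\mathbf{f}$ and the weights $(q_{\ih})_{\ih=1}^M$, plus a correction which is handled by the defining equation of $\alpha^\ast$, and then invoke Gibbs' inequality. This mirrors the computation carried out at the geometric level in \cite[Section~5]{GatzourasLalley92}; the lift to the symbolic space is formal once the explicit product formula above is in hand, the only new ingredient being the replacement of the geometric approximate squares on $\Lambda$ by the metric balls $B_k(\ii)$ from \eqref{def:ApproxSquare}.
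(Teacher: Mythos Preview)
Your computation of $\mu(B_k(\ii))$, the substitution of the Gatzouras--Lalley weights, and the use of \eqref{eq:RatioProdaoverProdb} to isolate the $\gamma$-contribution are exactly what the paper does; both arguments reduce to the same inequality on the $\gamma$-sums and then defer to \cite[Section~5]{GatzourasLalley92}. Two small points of divergence are worth noting. First, the paper replaces the denominator $\log\prod_{j=1}^k b_{i_j}$ by $k\log\min_i b_i$ before the citation, which makes the remaining statement purely about Ces\`aro averages and matches \cite[eq.~(5.10)]{GatzourasLalley92} verbatim; your formulation with the original denominator is equivalent but requires one more line to unpack. Second, your heuristic that the limiting $T$-term rewrites as a Kullback--Leibler divergence, and is therefore nonpositive for \emph{any} fixed $\rho$, is somewhat optimistic: the paper explicitly phrases the reduced claim as ``there exists $\rho\in(0,1)$ such that\ldots'', and the argument in \cite{GatzourasLalley92} genuinely exploits the freedom to choose $\rho$ (via the continuous dependence $\vartheta=\vartheta(\rho)$) adapted to the subsequential empirical frequencies of $\ii$. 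Since you ultimately cite the same source for this step, there is no actual gap, but your KL sketch undersells the role of the parameter $\rho$.
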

\begin{proof}[Sketch of proof]
	By definition of $B_k(\ii)$
	\begin{equation*}
	\mu(B_k(\ii)) = \prod_{j=1}^{L_k(\ii)}p_{i_j}\cdot \prod_{j=L_k(\ii)+1}^{k} q_{\phi(i_j)} = \Big(\prod_{j=1}^{k} b_{i_j}\Big)^{\alpha^\ast} \cdot \frac{\prod_{j=1}^{L_k(\ii)}a_{i_j}^\vartheta}{\prod_{j=1}^{k}b_{i_j}^\vartheta} \cdot \frac{\big(\prod_{j=1}^k \gamma_{i_j}(\vartheta)\big)^\rho}{\prod_{j=1}^{L_k(\ii)} \gamma_{i_j}(\vartheta)},
	\end{equation*}
	where $q_{\phi(i_j)} =\sum_{\ell\in\mathcal{I}_{\phi(i_j)}}p_{\ell}$. Taking logarithm and dividing by $\log \prod_{j=1}^k b_{i_j}$ gives
	\begin{equation*}
	\frac{\log \mu(B_k(\ii))}{\log \prod_{j=1}^k b_{i_j}} = \alpha^\ast + \frac{\vartheta \log(\prod_{j=1}^{L_k(\ii)}a_{i_j} / \prod_{j=1}^{k}b_{i_j}) }{\log \prod_{j=1}^k b_{i_j}} +  \frac{ \log \frac{\big(\prod_{j=1}^k \gamma_{i_j}(\vartheta)\big)^\rho}{\prod_{j=1}^{L_k(\ii)} \gamma_{i_j}(\vartheta)} }{\log \prod_{j=1}^k b_{i_j}}.
	\end{equation*}
	Due to \eqref{eq:RatioProdaoverProdb}, the second term tends to zero as $k\to\infty$. We can increase the third term by replacing the denominator with $k\cdot \log \min_i b_i$. Hence, it is enough to prove that there exists $\rho\in(0,1)$ such that for $\vartheta=\vartheta(\rho)$
	\begin{equation*}
	\limsup_{k\to\infty}\, \frac{\rho}{k} \sum_{j=1}^k\log \gamma_{i_j}(\vartheta) - \frac{1}{k} \sum_{j=1}^{L_k(\ii)}\log \gamma_{i_j}(\vartheta) \,\geq\, 0.
	\end{equation*}
	This is exactly the statement in \cite[eq. (5.10)]{GatzourasLalley92}. For details see \cite[pg. 565-566]{GatzourasLalley92}.
\end{proof}

\begin{proof}[Proof of Proposition \ref{prop:dimSigmaUpperBound}]
	The Proposition is a direct corollary of Lemmas \ref{lemma:MassDistrPrinciple} and \ref{lemma:GLMeasure}.
\end{proof}

\section{Proof of Theorem~\ref{thm:maindimresult}}\label{sec:dimH_lowerbound}

Our goal is to show that the Ledrappier--Young formula \eqref{eq:dimHmuBaranyKaenmaki} of \cite{BARANYAnti201788} for $\dim_{\mathrm H}\nu_{\mathbf p}$, cited in Theorem~\ref{thm:BaranyKaenmaki},  always equals the formula for $D(\mathbf{p})$ in \eqref{def:D(p)} under the conditions of Theorem~\ref{thm:maindimresult}. For the rest of this proof, we fix a $\mathbf{p}\in\mathcal{P}_0$ and assume $\mathcal{H}$ satisfies Hochman's Exponential Separation Condition and either each column independently satisfies the ROSC or $\Lambda$ satisfies transversality and~\eqref{cond:main}.

The entropy of the system is $h_{\mu_{\mathbf p}}=-\log \langle \mathbf{p} \rangle_{\mathbf{p}}$ (recall \eqref{def:entropy}), the Lyapunov-exponents from Lemma~\ref{lemma:Lyap_exp} are $\chi_{\nu_{\mathbf p}}^2=-\log \langle\mathbf{a}\rangle_{\mathbf{p}}$ and $\chi_{\nu_{\mathbf p}}^1=-\log \langle\mathbf{b}\rangle_{\mathbf{p}}\,$. Hochman's Exponential Separation Condition for $\mathcal{H}$ implies No Dimension Drop for $\nu_{\mathbf q}$, recall \eqref{eq:CondForH}, hence $\dim_{\rm H} \nu_{\mathbf q}=\log \langle\mathbf{q}\rangle_{\mathbf{q}} / \log \langle\mathbf{b}\rangle_{\mathbf{p}}\,$. As a result, to prove the theorem it is enough to show that the integral
\begin{equation*}
H=-\int \log \mu_{\alpha(\ii)} ([i_1]) \mathrm{d}\mu_{\mathbf p}(\ii) = 0,
\end{equation*}
where $\{\mu_{\alpha(\ii)}\}$ is the family of conditional measures of $\mu_{\mathbf p}$ defined by the measurable partition $\{\alpha(\ii)=\Pi^{-1}(\Pi(\ii))\}$, recall \eqref{r96}. Since $-\log \mu_{\alpha(\ii)} ([i_1])\geq 0$, we have that $H=0$ if and only if
\begin{equation}\label{r85}
\mu_{\alpha(\ii)} ([i_1])=1 \mbox{ for $\mu_{\mathbf p}$-a.a. }\ii.
\end{equation}
Thus, it suffices to show that $\mu_{\alpha(\ii)}$ is concentrated on $\ii$ for $\mu_{\mathbf p}$-typical $\ii$. Overlaps arising from the translations of columns or from intersections within a column can in theory cause problems. However, the next two results ensure that there is a full measure subset of $\Sigma$ for which $\mu_{\alpha(\ii)}$ is a point mass distribution.

Recall from \eqref{eq:PartitionsAlphaBeta} that $\beta(\ii)=\Phi^{-1}\Phi(\ii)$ is the 'symbolic column' if $\ii$. The first claim ensures that there is a full measure subset $\Sigma_1\subset \Sigma$ where the translations of the columns have no effect.
\begin{claim}\label{r78} Assume Weak Almost Unique Coding holds for $\Sigma_{\mathcal{H}}$, recall Definition~\ref{def:34}. Then there exists a full measure subset $\Sigma_1 \subset \Sigma$ such that
	for all
	$ \mathbf{i}\in\Sigma_1$  and for all
	$ (\jh_1, \dots ,\jh_n)\ne
	(\ih_1, \dots ,\ih_n)$
	\begin{equation}\label{r93}
	\mu_{\alpha(\mathbf{i})}([j_1, \dots ,j_n])=0,
	\end{equation}
	where $\phi(i_k)=\ih_k$ and $\phi(j_k)=\jh_k$ for $k=1,\ldots,n$, recall \eqref{def:phiFunc} for the definition of $\phi$.
	
	Consequently, for every $\ii\in\Sigma_1$ we have
	\begin{equation}\label{r77}
	\mu_{\alpha(\ii)}(\beta(\ii)^c)=0.
	\end{equation}
\end{claim}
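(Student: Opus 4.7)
The plan is to transfer the Weak Almost Unique Coding property from $\Sigma_{\mathcal{H}}$ up to $\Sigma$ via the factor map $\Phi\colon\Sigma\to\Sigma_{\mathcal{H}}$ and the commutative diagram \eqref{eq:diagram}. Let $\mathbf{q}$ denote the probability vector on $\{1,\ldots,M\}$ induced from $\mathbf{p}$ via \eqref{def:q_l}, so that $\Phi_{\ast}\mu_{\mathbf{p}}=\mu_{\mathbf{q}}$, and let $\mathcal{B}_{\mathcal{H}}\subset\Sigma_{\mathcal{H}}$ be the $\mu_{\mathbf{q}}$-null set supplied by WAUC. I would first lift this null set to $\Sigma$ by defining $\mathcal{B}:=\Phi^{-1}(\mathcal{B}_{\mathcal{H}})$; the pushforward relation then gives $\mu_{\mathbf{p}}(\mathcal{B})=\mu_{\mathbf{q}}(\mathcal{B}_{\mathcal{H}})=0$.

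Next, I would apply the disintegration formula \eqref{r96} to the indicator of $\mathcal{B}$:
$$0 \;=\; \mu_{\mathbf{p}}(\mathcal{B}) \;=\; \int \mu_{\alpha(\ii)}(\mathcal{B})\,d\mu_{\mathbf{p}}(\ii),$$
so $\mu_{\alpha(\ii)}(\mathcal{B})=0$ for $\mu_{\mathbf{p}}$-a.e.\ $\ii$. I propose to take $\Sigma_1$ to be the intersection of this full-measure set with $\Sigma\setminus\mathcal{B}$; clearly $\mu_{\mathbf{p}}(\Sigma_1)=1$.

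The crux of the argument is the following observation for $\ii\in\Sigma_1$. Pick any $\jj\in\alpha(\ii)\setminus\mathcal{B}$. By definition of $\alpha$ we have $\Pi(\ii)=\Pi(\jj)$; projecting to the horizontal axis via \eqref{eq:diagram} gives
$$\Pi_{\mathcal{H}}(\Phi(\ii)) \;=\; \mathrm{proj}_x\Pi(\ii) \;=\; \mathrm{proj}_x\Pi(\jj) \;=\; \Pi_{\mathcal{H}}(\Phi(\jj)).$$
Because $\ii,\jj\notin\mathcal{B}$, both $\Phi(\ii)$ and $\Phi(\jj)$ lie outside $\mathcal{B}_{\mathcal{H}}$, and WAUC then forces $\Phi(\ii)=\Phi(\jj)$, that is $\jj\in\beta(\ii)$. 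Hence $\alpha(\ii)\setminus\mathcal{B}\subseteq\beta(\ii)$, which together with $\mu_{\alpha(\ii)}(\mathcal{B})=0$ yields $\mu_{\alpha(\ii)}(\beta(\ii)^c)=0$, proving \eqref{r77}. For \eqref{r93}, I simply note that any cylinder $[j_1,\ldots,j_n]$ with $(\jh_1,\ldots,\jh_n)\neq(\ih_1,\ldots,\ih_n)$ is contained in $\beta(\ii)^c$, hence has zero $\mu_{\alpha(\ii)}$-measure.

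I do not anticipate a serious obstacle: all non-trivial content is packaged inside WAUC. The only technicalities to check are the measurability of $\mathcal{B}$ (which follows from the continuity of $\Phi$), the measurability of $\beta(\ii)^c$ (which is a countable union of cylinders), and the applicability of the disintegration \eqref{r96} to $\mathcal{B}$ — all covered by the measurability of the partition $\alpha$ asserted just after \eqref{eq:PartitionsAlphaBeta}. Note also that the argument uses WAUC rather than AUC precisely because we can only conclude injectivity of $\Pi_{\mathcal{H}}$ on the complement of a \emph{fixed} $\mu_{\mathbf{q}}$-null set, which is exactly what is needed when we wish to exclude $\mathcal{B}$ simultaneously for $\ii$ and for every $\jj\in\alpha(\ii)$.
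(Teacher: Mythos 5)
Your proposal is correct and takes essentially the same route as the paper: define $\mathcal{B}=\Phi^{-1}(\mathcal{B}_{\mathcal{H}})$, take $\Sigma_1$ to be the set of $\ii\notin\mathcal{B}$ with $\mu_{\alpha(\ii)}(\mathcal{B})=0$ (which is exactly the paper's $\widetilde{\Sigma'}$ from Definition~\ref{r64}), and use the commutative diagram plus WAUC to conclude $\alpha(\ii)\setminus\mathcal{B}\subseteq\beta(\ii)$. The only cosmetic difference is that you derive \eqref{r77} first and then read off \eqref{r93} as a corollary, whereas the paper proves \eqref{r93} directly and obtains \eqref{r77} by a countable union over cylinders; these are trivially interchangeable.
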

The second claim defines the full-measure set $\Sigma_2\subset\Sigma$ where intersections within columns have no effect.
\begin{prop}\label{r69}
	Assume that the  conditions of Theorem~\ref{thm:maindimresult} hold. Then there exists a ${\Sigma}_2 \subset \Sigma$, with
	$\mu_{\mathbf{p}}({\Sigma}_2)=1$ such that for every $\ii\in{\Sigma}_2$ and $k\in \mathcal{I}_{\phi(i_1)}\setminus \left\{i_1\right\}$
	\begin{equation*}\label{r70}
	\mu_{\alpha(\mathbf{i})}
	\left(
	\beta(\ii)\cap\alpha(\ii)\cap[k]
	\right)=0.
	\end{equation*}
\end{prop}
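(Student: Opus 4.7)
The plan is to treat the two alternatives (i) and (ii) of Theorem~\ref{thm:maindimresult} separately.

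Under (i), column-wise ROSC iterates: for two distinct words $\iiv,\jjv$ of the same length with $\phi(\iiv)=\phi(\jjv)$, applying $f_{\iiv|k-1}=f_{\jjv|k-1}$ to the column-ROSC at the first position $k$ where they differ yields $\mathrm{int}(R_\iiv)\cap\mathrm{int}(R_\jjv)=\emptyset$. Consequently, any $\jj\in\beta(\ii)\cap\alpha(\ii)\cap[k]$ with $k\in\mathcal{I}_{\phi(i_1)}\setminus\{i_1\}$ forces $\Pi(\ii)\in\partial R_{\ii|n}\cup\partial R_{\jj|n}$ for every $n$. HESC for $\mathcal{H}$ guarantees that $\nu_{\mathbf q}$ has no atoms, so the two vertical column edges carry no $\nu_{\mathbf p}$-mass; the two slanted cylinder boundaries are handled by a standard argument, using that $\mathbf{p}\in\mathcal{P}_0$ and Birkhoff's ergodic theorem prevent the $\mu_{\mathbf p}$-typical trajectory from landing on successive cylinder edges. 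Removing this $\mu_{\mathbf p}$-null collection of $\ii$ yields $\Sigma_2$.

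Under (ii) I run a Borel--Cantelli argument. Let
$$\mathcal{E}_n := \Bigl\{\ii\in\Sigma:\,\exists\,\jj\in\Sigma \text{ with } \phi(\jj|n)=\phi(\ii|n),\; j_1\in\mathcal{I}_{\phi(i_1)}\setminus\{i_1\},\; R_{\ii|n}\cap R_{\jj|n}\ne\emptyset\Bigr\}.$$
If $\sum_n\mu_{\mathbf p}(\mathcal{E}_n)<\infty$, Borel--Cantelli produces a full-measure $\Sigma_2$ whose points $\ii$ avoid $\mathcal{E}_n$ for all large $n$. For such an $\ii$ no $\jj\in\beta(\ii)\cap\alpha(\ii)\cap[k]$ with $k\in\mathcal{I}_{\phi(i_1)}\setminus\{i_1\}$ can exist, because its existence would put $\Pi(\ii)=\Pi(\jj)$ into every $R_{\ii|n}\cap R_{\jj|n}$, contradicting $\ii\notin\mathcal{E}_n$ for large $n$. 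In particular $\mu_{\alpha(\ii)}(\beta(\ii)\cap\alpha(\ii)\cap[k])=0$ throughout $\Sigma_2$.

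The remaining work is the estimate $\sum_n\mu_{\mathbf p}(\mathcal{E}_n)<\infty$. I restrict via Lemma~\ref{lemma:BadisSmall} to the $\mu_{\mathbf p}$-typical $\ii$ on which the heights $a_{\ii|n}$, widths $b_{\ii|n}$, column masses $\prod_{k\le n}q_{\phi(i_k)}$ and column cardinalities $\prod_{k\le n}N_{\phi(i_k)}$ all track their exponential rates $\langle\mathbf a\rangle^n$, $\langle\mathbf b\rangle^n$, $\langle\mathbf q\rangle_{\mathbf q}^n$ and $\langle\mathbf N\rangle_{\mathbf q}^n$. Transversality forces any $R_\jjv$ meeting a typical $R_\iiv$ (with $\phi(\jjv)=\phi(\iiv)$ and $\jjv_1\ne\iiv_1$) to have its base $y$-coordinate in an interval of length $O\bigl(\max(a_\iiv,a_\jjv)/|s_\iiv-s_\jjv|\bigr)$, with $|s_\iiv-s_\jjv|$ uniformly separated from $0$. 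A geometric count based on this slope separation and the unit vertical budget of the column then bounds the number of such $\jjv$ by a constant times $\langle\mathbf N\rangle_{\mathbf q}^n(\langle\mathbf a\rangle/\langle\mathbf b\rangle)^n$; weighting by $p_\iiv p_\jjv$ and multiplying by the $\mathbf q$-mass $\langle\mathbf q\rangle_{\mathbf q}^n$ of the column, the condition \eqref{cond:main} is precisely the inequality that makes the resulting sum exponentially summable. The technical core, and the main obstacle, is this last column-wise counting step: the parallelograms in a single column are not independently arranged, so controlling how many can transversally cross a given $R_\iiv$ requires carefully exploiting both the uniform slope separation from transversality and the unit vertical extent of the column.
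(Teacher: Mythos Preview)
For case~(i) your sketch aligns with the paper, which simply defers to \cite[Corollary~2.8]{BARANYAnti201788}.

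For case~(ii) there is a genuine gap. Your event $\mathcal{E}_n$ depends only on $\ii|n$, so the whole cylinder $[\iiv]$ lies in $\mathcal{E}_n$ whenever some $\jjv$ meets $R_{\iiv}$; any first-moment bound therefore reads $\mu_{\mathbf p}(\mathcal{E}_n)\le\sum_{\iiv}p_{\iiv}\cdot\#\{\jjv\}$, with no contribution from the tail $\sigma^n\ii$. Transversality controls the $x$-extent of \emph{each} intersection but not their \emph{number}: distinct $\jjv$'s can cross $R_{\iiv}$ at entirely different $x$-locations, and the constraint $\sum_{\jjv}a_{\jjv}\le 1$ bounds only the total vertical slice at a \emph{fixed} $x$, not the count accumulated across the column width. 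The asserted bound $\#\{\jjv\}\lesssim\langle\mathbf N\rangle^n(\langle\mathbf a\rangle/\langle\mathbf b\rangle)^n$ is therefore unjustified. Even if it held, summability would need only $\log\langle\mathbf N\rangle_{\mathbf q}<\chi^2_{\mathbf p}-\chi^1_{\mathbf p}$, which is strictly \emph{weaker} than \eqref{cond:main} whenever $h_{\mathbf q}<\chi^1_{\mathbf p}$ (i.e.\ $\dim_{\rm H}\nu_{\mathbf q}<1$)---contradicting your claim that \eqref{cond:main} appears ``precisely'', and revealing that the dimension of $\nu_{\mathbf q}$, and hence HESC, has played no role.

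The paper's argument is organised differently and uses exactly this missing ingredient. Rather than count the $\jjv$'s, it fixes $\iiv,\jjv$ and bounds the $\mu_{\mathbf p}$-measure of those $\ii\in[\iiv]$ with $\Pi(\ii)$ lying in the overlap zone $R_{\iiv,\jjv}$. Transversality gives $|\proj_x R_{\iiv,\jjv}|\lesssim\langle\mathbf a\rangle^{(1-\delta)\ell}$; pulling back by $f_{\iiv}^{-1}$ produces an interval $J_{\iiv,\jjv}$ of length $\lesssim\langle\mathbf b\rangle^{u\ell}$ with $u\approx\chi^2_{\mathbf p}/\chi^1_{\mathbf p}-1$; and exact dimensionality of $\nu_{\mathbf q}$ at the no-dimension-drop value $h_{\mathbf q}/\chi^1_{\mathbf p}$ (this is where HESC enters) yields $\nu_{\mathbf q}(J_{\iiv,\jjv})\lesssim\langle\mathbf q\rangle^{u\ell}$. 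Summing over the $\langle\mathbf N\rangle^\ell$ admissible $\jjv$'s gives the rate $\bigl(\langle\mathbf N\rangle\,\langle\mathbf q\rangle^{u}\bigr)^\ell$, and \eqref{cond:main} is precisely the condition making this decay. The idea you are missing is to exploit the tail of $\ii$ through the measure $\nu_{\mathbf q}$ after pulling back by $f_{\iiv}^{-1}$, instead of attempting a raw geometric count at level $n$.
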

Theorem~\ref{thm:maindimresult} is a corollary of these two results. Sometimes we use the following notation:
\begin{definition}\label{r64}
	Let $F \subset \Sigma$ be a subset of full measure. Then we define
	\begin{equation*}\label{r63}
	\widetilde{F}:=\left\{\ii\in F:
	\mu_{\alpha(\ii)}(\Sigma\setminus F)=0
	\right\}.
	\end{equation*}
	Since $\mu_{\mathbf{p}}(F)=1$, the disintegration formula \eqref{r96} implies that $\mu_{\mathbf{p}}(\widetilde{F})=1$.
\end{definition}

\subsection{The proof of Theorem~\ref{thm:maindimresult} assuming Claim \ref{r78} and Proposition \ref{r69}}

\begin{proof}[Proof of Theorem~\ref{thm:maindimresult} assuming  Claim \ref{r78} and Proposition \ref{r69}]
	As we established above in \eqref{r85} that to prove the theorem it is enough to check that
	\begin{equation}\label{r84}
	\mu_{\alpha(\mathbf{i})}
	\left( \alpha(\mathbf{i})\cap [i_1]^c\right)
	=0, \mbox{ for $\mu$-a.a. } \mathbf{i}.
	\end{equation}
	Clearly,
	\begin{equation*}\label{r88}
	\alpha(\mathbf{i})\cap [i_1]^c \subset
	\Bigg(\bigcup\limits_{k\not\in\mathcal{I}_{\phi(i_1)}}\!
	(\alpha(\mathbf{i})\cap[k])\Bigg)
	\;\cup\;
	\Bigg(\bigcup\limits_{k\in\mathcal{I}_{\phi(i_1)}\setminus\left\{i_1\right\}}\!\!\!
	(\alpha(\mathbf{i})\cap[k])\Bigg).
	\end{equation*}
	It follows from \eqref{r93} that for every $\ii\in\Sigma_1$
	\begin{equation}\label{r86}
	\mu_{\alpha(\mathbf{i})}
	\Bigg( \bigcup\limits_{k\not\in\mathcal{I}_{\phi(i_1)}}
	(\alpha(\mathbf{i})\cap[k])\Bigg)=0,
	\end{equation}
	where $\Sigma_1$ is defined in Claim \ref{r78}.
	Thus, to prove the theorem we only need to verify that
	\begin{equation}\label{r82}
	\mu_{\alpha(\mathbf{i})}
	\Bigg(
	\bigcup\limits_{k\in\mathcal{I}_{\phi(i_1)}\setminus\left\{i_1\right\}}
	(\alpha(\mathbf{i})\cap[k])
	\Bigg)=0 \mbox{ for $\mu$-a.a. } \mathbf{i}.
	\end{equation}
	We can write
	\begin{multline*}\label{r80}
	\bigcup\limits_{k\in\mathcal{I}_{\phi(i_1)}\setminus\left\{i_1\right\}}
	(\alpha(\mathbf{i})\cap[k])
	\subset
	\Sigma_1^c\cup
	\Sigma_2^c \\
	\cup
	\underbrace{\Bigg(\bigcup\limits_{k\in\mathcal{I}_{\phi(i_1)}\setminus\left\{i_1\right\}}
		(\alpha(\mathbf{i})\cap\beta(\ii)\cap[k])\Bigg)}_U
	\;\cup\;
	\underbrace{\Bigg(\bigcup\limits_{k\in\mathcal{I}_{\phi(i_1)}\setminus\left\{i_1\right\}}
		(\alpha(\mathbf{i})\cap\beta(\ii)^c\cap[k])\Bigg)}_V.
	\end{multline*}
	
	It follows from Proposition~\ref{r69} that
	$\mu_{\alpha(\ii)}(U)=0$ for all $\ii\in{\Sigma}_2$ and it follows from Claim~\ref{r78} that
	$ \mu_{\alpha(\mathbf{i})}(V)=0$ for all $ \mathbf{i}\in\Sigma_1$.
	So, for all $\ii\in\Sigma_1\cap{\Sigma}_2$ \eqref{r82} holds, which together with \eqref{r86} yields that \eqref{r84} holds. This completes the proof of Theorem~\ref{thm:maindimresult} assuming Claim~\ref{r78} and Proposition~\ref{r69}.
\end{proof}

\subsection{The proof of Claim  \ref{r78}}

\begin{proof}[Proof of Claim \ref{r78}]
	In the definition of Weak Almost Unique Coding, recall Definition~\ref{def:34}, there is a set $\mathcal{B}_{\mathcal{H}} \subset \Sigma_\mathcal{H}$ defined in such a way that for $\Sigma'_\mathcal{H}:=\Sigma_\mathcal{H}\setminus \mathcal{B}_\mathcal{H}$ we have $\mu_{\mathbf{q}}(\Sigma'_\mathcal{H})=1$ and
	\begin{equation*}\label{r99}
	\iih\in \Sigma'_\mathcal{H}
	\;\Longleftrightarrow\;
	\Sigma'_\mathcal{H}\cap \big( \Pi_{\mathcal{H}}^{-1}\Pi_{\mathcal{H}}(\iih) \big)=
	\left\{\iih\right\},
	\end{equation*}
	where $\Pi_\mathcal{H}$ is the natural projection from $\Sigma_\mathcal{H}$ to $\Lambda_{\mathcal{H}}$. Let
	$$
	\mathcal{B}:=\Phi^{-1}(\mathcal{B}_\mathcal{H}) \;\mbox{ and }\;
	\Sigma':=
	\Phi^{-1}\left(\Sigma'_\mathcal{H}\right).
	$$
	Since $\mu_{\mathbf{q}}(\Sigma'_{\mathcal{H}})=1$ we can define $\Sigma_1 :=\widetilde{\Sigma'}$ (recall the notation $\widetilde{}\ $ from Definition~\ref{r64}) so that $\mu_{\mathbf{p}}(\Sigma_1)=1$ and
	\begin{equation}\label{r94}
	\mu_{\alpha(\mathbf{i})}(\mathcal{B})=0 \quad\text{for all }
	\mathbf{i}\in\Sigma_1.
	\end{equation}
	Recall $\widetilde{\Pi}_\mathcal{H}$ is the natural projection from $\Sigma$ to $\Lambda_{\mathcal{H}}$. Observe that by definition
	\begin{equation}\label{r98}
	\mathbf{i}\in\Sigma'
	\;\Longrightarrow\;
	\Sigma'\cap \Big( \widetilde{\Pi}_\mathcal{H}^{-1}
	\widetilde{\Pi}_\mathcal{H}(\mathbf{i}) \Big)=\beta(\mathbf{i}).
	\end{equation}
	Since $\alpha(\ii)\subset \widetilde{\Pi}_\mathcal{H}^{-1}
	\widetilde{\Pi}_\mathcal{H}(\mathbf{i})$, we get from \eqref{r98} that
	$\mathbf{i}\in\Sigma'
	\Longrightarrow
	\Sigma'\cap\alpha(\mathbf{i}) \subset \beta(\mathbf{i})$. Equivalently,
	\begin{equation*}\label{r97}
	\mathbf{i}\in \Sigma'
	\Longrightarrow
	\alpha(\mathbf{i}) \subset \beta(\mathbf{i})\cup \mathcal{B}.
	\end{equation*}
	By definition
	\begin{equation*}\label{r87}
	[j_1, \dots ,j_n]\cap \beta(\mathbf{i})= \emptyset \;\mbox{ iff }\;
	(\jh_1, \dots ,\jh_n)\ne
	(\ih_1, \dots ,\ih_n).
	\end{equation*}
	That is for $\ii\in\Sigma_1$ whenever
	$(\jh_1, \dots ,\jh_n)\ne
	(\ih_1, \dots ,\ih_n)$
	then $ [j_1, \dots ,j_n]\cap \alpha(\mathbf{i}) \subset \mathcal{B}$. So,  \eqref{r94} implies that \eqref{r93} holds.
	
	To obtain \eqref{r77} from \eqref{r93}, we write $\beta(\ii)^c$ as a countable union
	$$
	\beta(\ii)^c=
	\bigcup\limits_{\ell =0}^\infty
	\left\{\jj\in\Sigma:|\iih\wedge\jjh|=\ell \right\}
	=
	\bigcup\limits_{\ell =0}^\infty
	\bigcup\limits_{ \jh_r=\ih_r, r \leq \ell  \atop
		j_{\ell +1}\ne i_{\ell +1}}
	\left[j_1, \dots j_{\ell +1}\right].
	$$
	By   \eqref{r93} the measure of each cylinder of the right hand side is
	$$
	\mu_{\alpha(\ii)}(\left[j_1, \dots j_{\ell +1}\right])=0 \,\mbox{ if }\,
	\left[\jh_1, \dots \jh_{\ell +1}\right]
	\ne\left[\ih_1, \dots ,\ih_{\ell +1}\right],\quad
	\ii\in\Sigma_1.
	$$
\end{proof}

\subsection{Proof of Proposition~\ref{r69}}

If the columns independently satisfy ROSC, then the proof of \cite[Corollary~2.8]{BARANYAnti201788} can be repeated in this setting, therefore we omit it. In the remainder we assume the shifted TGL carpet $\Lambda$ satisfies transversality and \eqref{cond:main}:
\begin{equation*}
\dfrac{\log \langle\mathbf{a}\rangle_{\mathbf{p}} }{\log \langle\mathbf{b}\rangle_{\mathbf{p}} }>1+\dfrac{\log \langle\mathbf{N}\rangle_{\mathbf{q}} }{-\log \langle\mathbf{q}\rangle_{\mathbf{q}}}.
\end{equation*}
Throughout this proof we fix $\delta>0$ small enough such that
\begin{equation}\label{a13}
1+\delta
+
\frac{(1+\delta)\log \langle\mathbf{N}\rangle_{\mathbf{q}}}
{\delta\log \langle\mathbf{b}\rangle_{\mathbf{p}}
	-
	\log\langle\mathbf{q}\rangle_{\mathbf{q}}
}
<
(1-\delta)
\frac{\log \langle\mathbf{a}\rangle_{\mathbf{p}}}
{\log\langle\mathbf{b}\rangle_{\mathbf{p}}}\,.
\end{equation}
This can be achieved since the expression is continuous in $\delta$ and we assume \eqref{cond:main}. The reason that we require this is that for such a  $\delta$ and
\begin{equation}\label{r44}
u:= (1-\delta)\frac{\log \langle\mathbf{a} \rangle_{\mathbf{p}}}{\log \langle\mathbf{b} \rangle_{\mathbf{p}}}-(1+\delta),
\end{equation}
the inequality in \eqref{a13} is equivalent to
\begin{align}\label{r59}
&\langle\mathbf{N}\rangle^{(1+\delta)}\cdot \langle\mathbf{q}\rangle^{u}  \cdot \langle\mathbf{b}\rangle^{-\delta u}<1.
\end{align}
At the very end of this proof we will need this.
The importance of the $u$ defined above comes from the fact that for an arbitrary $\ell $ and $k=u \cdot \ell$,

\begin{equation}\label{r54}
\langle\mathbf{b} \rangle^k_{\mathbf{p}}
=
\frac{\langle\mathbf{a} \rangle^{(1-\delta)\ell }_{\mathbf{p}}}
{\langle\mathbf{b} \rangle^{(1+\delta)\ell }_{\mathbf{p}}}.
\end{equation}
Recall $\alpha(\mathbf{i})=\Pi^{-1}\Pi(\mathbf{i}),\, \beta(\mathbf{i})=\Phi^{-1}\Phi(\mathbf{i})$, that $\widetilde{\Pi}_\mathcal{H}$ is the natural projection from $\Sigma$ to $\Lambda_{\mathcal{H}}$ and that in \eqref{r74} we define $\mathrm{Bad}_{\delta,n}(\mathbf{c})$ for a $\mathbf{c}=(c_1, \dots ,c_N)$ with $\langle c\rangle_{\mathbf{p}}\ne 1$. 

\subsubsection*{Further notation}

Recall that Hochman's Exponential Separation Condition implies that for the self-similar measure  $\nu_\mathbf{q}$ on $\Lambda_{\mathcal{H}}$ we have $\dim_{\rm H} \nu_{\mathbf{q}}=\log \langle\mathbf{q}\rangle_{\mathbf{q}} / \log \langle\mathbf{b}\rangle_{\mathbf{p}}$. Feng and Hu~\cite{FengHu09} proved that $\nu_\mathbf{q}$ is exact dimensional. That is for $K_1$ defined in \eqref{a33} and
\begin{multline}\label{r51}
S_{n_0}:=
\Big\{
\ii\in\Sigma: \forall n \geq n_0, \\
\nu_{\mathbf{q}}\left(
\left(\widetilde{\Pi}_{\mathcal{H}}(\ii)-3K_1
\langle\mathbf{b}\rangle^n,
\widetilde{\Pi}_{\mathcal{H}}(\ii)+3K_1\langle\mathbf{b}\rangle^n
\right)\right)\in\left(\langle\mathbf{q}\rangle^{n}
\langle\mathbf{b}\rangle^{\delta n},
\langle\mathbf{q}\rangle^{n}
\langle\mathbf{b}\rangle^{-\delta n}
\right)
\Big\}
\end{multline}
we have
\begin{equation}\label{r50}
\mu_{\mathbf{p}}\left(\bigcup\limits_{n_0=1}^{\infty }
S_{n_0}
\right)=
\lim\limits_{n_0\to\infty}
\mu_{\mathbf{p}}\left(S_{n_0}\right)=1.
\end{equation}

We define the set of symbols which are "good" from level $m$ on:
\begin{equation*}\label{r62}
\mathrm{Good}_m:
=\bigcap\limits_{n \geq m}\big(
\mathrm{Bad}_{\delta,n}(\mathbf{a})
\cup
\mathrm{Bad}_{\delta,n}(\mathbf{b})
\cup
\mathrm{Bad}_{\delta,n}(\mathbf{N})
\cup
\mathrm{Bad}_{\delta,n}(\mathbf{q})
\big)^c .
\end{equation*}
Note that it follows from  Lemma \ref{lemma:BadisSmall} that for
\begin{equation}\label{r65}
\mathrm{Good}:=
\bigcup\limits_{m=1}^{\infty }\mathrm{Good}_{m}, \,\text{ we have }\,   \mu_{\mathbf{p}}(\mathrm{Good})=1.
\end{equation}

To measure vertical distance and neighborhood on $\Lambda$ we define
\begin{equation*}\label{a39}
\mathrm{dist}_y((x_0,y_0),(x,y)) := \begin{cases}
|y-y_0|, &\text{if } x=x_0; \\ \infty. &\text{otherwise}, \end{cases}
\end{equation*}
For every $m \geq 1$
the function $L_m: \mathrm{Good}\to [0,1]$  is defined  as follows: if there exists no $\jj\in \mathrm{Good}_m$ with $j_1\ne i_1$ and $\Phi(\ii)=\Phi(\jj)$ then $L_m(\ii):=1$. Otherwise we define
\begin{equation*}
L_m(\ii):= \inf \{\mathrm{dist}_y(\Pi(\ii),\Pi(\jj)):\; \jj\in \mathrm{Good}_m\cap \beta(\ii)
 \text{ such that } j_1\neq i_1  \}.
\end{equation*}
Let
\begin{align*}\label{a50}
V_{\ell}^m &:= \{\ii\in\mathrm{Good}:\; L_m(\ii)<\langle\mathbf{a}\rangle^\ell\} \\
&=
\left\{\ii\in\mathrm{Good}:\;
\exists \jj\in \beta(\ii)\cap[i_1]^c \cap \mathrm{Good}_m,\,
\mathrm{dist}_y\left(\Pi(\ii),\Pi(\jj)\right)
<\langle\mathbf{a}\rangle^\ell
\right\}.
\end{align*}
Also define
\begin{equation}\label{r75}
  \mathcal{B}_2^m:=\!
  \{\ii\in\mathrm{Good}:\, L_m(\ii)=0\}
  =\left\{
  \ii\in\mathrm{Good}:\alpha(\ii)\cap\beta(\ii)\cap[i_1]^c
  \cap \mathrm{Good}_m
  \ne \emptyset
  \right\}.
\end{equation}
Clearly, $ \mathcal{B}_2^m \subset  \mathcal{B}_2^{m+1}$ since $\mathcal{B}_2^m=\cap_{\ell\geq m} V_{\ell}^m$. The key lemma states the following.

\begin{lemma}\label{r57}
For arbitrary $m \geq 1$ we have $\mu(\mathcal{B}_2^m)=0$.
\end{lemma}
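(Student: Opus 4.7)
The plan is to show that $\mu(V_\ell^m)\to 0$ as $\ell\to\infty$. Combined with the observation that $V_{\ell+1}^m\subset V_\ell^m$ (since $\langle\mathbf{a}\rangle^{\ell+1}<\langle\mathbf{a}\rangle^\ell$) and $\bigcap_{\ell\ge m}V_\ell^m=\{\ii\in\mathrm{Good}:L_m(\ii)=0\}=\mathcal{B}_2^m$ (the infimum defining $L_m$ is attained by a limit point, using the compactness of $\mathrm{Good}_m$ as a closed subset of $\Sigma$ together with continuity of $\Pi$), this yields the lemma. Fix $\varepsilon>0$ and, via \eqref{r65} and \eqref{r50}, choose $m'\ge m$ such that $\mu(\mathrm{Good}_{m'}^c\cup S_{m'}^c)<\varepsilon$. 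Because $\mathrm{Good}_m\subset\mathrm{Good}_{m'}$, any witness $\jj\in\mathrm{Good}_m$ for some $\ii\in V_\ell^m$ inherits the Good bounds at all levels $n\ge m'$. Set $k:=\lfloor u\ell\rfloor$ with $u$ from \eqref{r44} and take $\ell$ large enough that $k\ge m'$; it suffices to show $\mu(V_\ell^m\cap\mathrm{Good}_{m'}\cap S_{m'})$ decays exponentially in $\ell$.

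The geometric engine is an extension of the transversality condition to vertical neighborhoods. Writing $U_\rho(A):=\{(x,y):|y-y'|<\rho\text{ for some }(x,y')\in A\}$ for the vertical $\rho$-neighborhood, for any pair $\iiv,\jjv\in\{1,\ldots,N\}^k$ with $\phi(\iiv)=\phi(\jjv)=:\iih$ and $i_1\ne j_1$ the set $U_{\langle\mathbf{a}\rangle^\ell}(R_\jjv)$ is itself a parallelogram with the same $x$-extent and slant as $R_\jjv$ but height $a_\jjv+2\langle\mathbf{a}\rangle^\ell$. A verbatim repetition of the slope-comparison argument in Lemma~\ref{a87} (with $a_\jjv$ replaced by $a_\jjv+2\langle\mathbf{a}\rangle^\ell$) produces a universal constant $C_1$ with
\begin{equation*}
|X(\iiv,\jjv,\ell)|:=\bigl|\mathrm{proj}_x\bigl(R_\iiv\cap U_{\langle\mathbf{a}\rangle^\ell}(R_\jjv)\bigr)\bigr|\le C_1\max\{a_\iiv,a_\jjv,\langle\mathbf{a}\rangle^\ell\}.
\end{equation*}
For $\ii\in V_\ell^m\cap\mathrm{Good}_{m'}\cap[\iiv]$ with witness $\jj$, the common $x$-coordinate $x_\ii=x_\jj$ lies in $\bigcup_\jjv X(\iiv,\jjv,\ell)$, where the union runs over at most $N_\iih$ sibling words $\jjv\in\phi^{-1}(\iih)$ with $j_1\ne i_1$.

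A union bound, combined with the identity $(\mathrm{proj}_x\circ\Pi)_*(\mu_\mathbf{p}|_{[\iiv]})=p_\iiv\cdot(h_\iih)_*\nu_\mathbf{q}$ (which follows from the commutative diagram \eqref{eq:diagram}), together with $\sum_{\iiv:\phi(\iiv)=\iih}p_\iiv=q_\iih$, yields
\begin{equation*}
\mu\bigl(V_\ell^m\cap\mathrm{Good}_{m'}\cap S_{m'}\bigr)\le\sum_\iih q_\iih\cdot N_\iih\cdot\max_\jjv\nu_\mathbf{q}\bigl(h_\iih^{-1}(X(\iiv,\jjv,\ell))\bigr),
\end{equation*}
where the sum ranges over $\iih$ satisfying the Good bounds. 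Since $h_\iih$ scales by $b_\iiv$, the preimage is an interval of length at most $C_1\max\{a_\iiv,a_\jjv,\langle\mathbf{a}\rangle^\ell\}/b_\iiv$. The restriction $\ii\in S_{m'}$, together with the Frostman-type bound contained in \eqref{r51}, controls the $\nu_\mathbf{q}$-mass of such an interval by a suitable product of $\langle\mathbf{q}\rangle$- and $\langle\mathbf{b}\rangle$-powers. Substituting the Good bounds $N_\iih\le\langle\mathbf{N}\rangle^{(1+\delta)k}$ and $q_\iih\le\langle\mathbf{q}\rangle^{(1-\delta)k}$, and applying the key identity \eqref{r54} to translate between the scales $\langle\mathbf{b}\rangle^k$, $\langle\mathbf{a}\rangle^\ell$, and $\langle\mathbf{b}\rangle^\ell$, the exponents collapse to
\begin{equation*}
\mu\bigl(V_\ell^m\cap\mathrm{Good}_{m'}\cap S_{m'}\bigr)\le C\cdot\bigl(\langle\mathbf{N}\rangle^{(1+\delta)}\cdot\langle\mathbf{q}\rangle^{u}\cdot\langle\mathbf{b}\rangle^{-\delta u}\bigr)^{\ell}.
\end{equation*}
By \eqref{r59} the base is strictly less than $1$, so this tends to $0$ exponentially; sending $\varepsilon\to 0$ then gives $\mu(\mathcal{B}_2^m)=0$.

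The main technical obstacle is the exponent bookkeeping of the last step: the scale $k=u\ell$ and the identity \eqref{r54} are engineered so that the combinatorial growth $\langle\mathbf{N}\rangle^{(1+\delta)k}$ of siblings, the column-mass factor $\langle\mathbf{q}\rangle^{(1-\delta)k}$, and the Frostman slack $\langle\mathbf{b}\rangle^{-\delta k}$ combine, after the substitutions dictated by \eqref{r54}, into an expression whose base is precisely the left-hand side of \eqref{r59}. A secondary but routine point is verifying the strengthened transversality for vertical neighborhoods, which is a short adaptation of Lemma~\ref{a87}.
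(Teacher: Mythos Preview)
Your overall strategy is the paper's: show $\mu_{\mathbf p}(V_\ell^m)\to 0$ by covering $V_\ell^m$ with the preimages of thin overlap strips, using transversality to bound their width and the exact-dimensionality of $\nu_{\mathbf q}$ to bound their mass. However, two concrete execution errors prevent your argument from going through as written.

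\textbf{Wrong level for the parallelogram words.} You take $\iiv,\jjv\in\{1,\dots,N\}^k$ with $k=\lfloor u\ell\rfloor$, whereas the paper takes them at level $\ell$. At level $\ell$ the heights satisfy $a_{\iiv},a_{\jjv}\in(\langle\mathbf a\rangle^{(1+\delta)\ell},\langle\mathbf a\rangle^{(1-\delta)\ell})$, so transversality gives $|I_{\iiv,\jjv}|\le 3K_1\langle\mathbf a\rangle^{(1-\delta)\ell}$; pulling back by $h_{\iih}^{-1}$ (which expands by at most $\langle\mathbf b\rangle^{-(1+\delta)\ell}$) yields $|J_{\iiv,\jjv}|\le 3K_1\langle\mathbf b\rangle^{k}$ directly from \eqref{r54}, and the Frostman bound at scale $\langle\mathbf b\rangle^k$ then combines with $\#\mathrm{Good}_{\ell,\iiv}^\ast\le\langle\mathbf N\rangle^{(1+\delta)\ell}$ to give exactly $(\langle\mathbf N\rangle^{1+\delta}\langle\mathbf q\rangle^{u}\langle\mathbf b\rangle^{-\delta u})^\ell$. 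Working at level $k$ instead, your heights are $\approx\langle\mathbf a\rangle^{k}$, so your $\max\{a_{\iiv},a_{\jjv},\langle\mathbf a\rangle^\ell\}$ depends on whether $(1-\delta)u\lessgtr 1$, the pullback produces $|J|\lesssim\langle\mathbf b\rangle^{u^2\ell}$ rather than $\langle\mathbf b\rangle^{u\ell}$, and the sibling count is $\langle\mathbf N\rangle^{(1+\delta)u\ell}$; the exponents do not collapse to $(\cdot)^\ell$ as you claim. (They still decay, but the bookkeeping you present is incorrect, and your stated use of the bound $q_{\iih}\le\langle\mathbf q\rangle^{(1-\delta)k}$ is superfluous since $\sum_{\iih}q_{\iih}=1$.)

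\textbf{Unjustified application of the Frostman bound.} Restricting $\ii\in S_{m'}$ controls $\nu_{\mathbf q}$-mass of intervals centred at $\widetilde\Pi_{\mathcal H}(\ii)$. But after your pullback, the interval $J=h_{\iih}^{-1}(X)$ contains $\widetilde\Pi_{\mathcal H}(\sigma^{k}\ii)$, not $\widetilde\Pi_{\mathcal H}(\ii)$, so the membership $\ii\in S_{m'}$ gives you nothing about $\nu_{\mathbf q}(J)$. The paper's fix is to split on whether $\widetilde\Pi_{\mathcal H}^{-1}(J_{\iiv,\jjv})\cap S_k\ne\emptyset$: if empty, the whole $H_{\iiv,\jjv}$ lies in $S_k^c$ and contributes at most $\mu_{\mathbf p}(S_k^c)\to 0$; if nonempty, pick any $\pmb\omega$ in the intersection and use the Frostman ball around $\widetilde\Pi_{\mathcal H}(\pmb\omega)$ to enclose $J_{\iiv,\jjv}$.
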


\begin{proof}[Proof of Proposition \ref{r69} assuming Lemma \ref{r57}]
Let
$$
\mathcal{B}_2:=\bigcup\limits_{m=1}^{\infty }\mathcal{B}_2^m
=
\left\{
\ii\in\mathrm{Good}:\alpha(\ii)\cap\beta(\ii)\cap[i_1]^c
\cap \mathrm{Good}
\ne \emptyset 
\right\}.
$$
  By Lemma \ref{r57}, $\mu(\mathcal{B}_2)=0$. That is,
  if $\ii\in\Sigma_2:= \widetilde{\mathrm{Good}}\cap \mathcal{B}_2^c$
  then on the one hand $\mu_{\alpha(\ii)}(\mathrm{Good}^c)=0$,
  on the other hand
  $\alpha(\ii)\cap\beta(\ii)\cap[i_1]^c
  \cap \mathrm{Good}= \emptyset $. This implies that
  $\mu_{\alpha(\ii)}
  \left(
  \alpha(\ii)\cap\beta(\ii)\cap[i_1]^c
  \right)=0
  $, which completes the proof of Proposition~\ref{r69}.
\end{proof}

It remains to show Lemma~\ref{r57}. The
method of the proof was inspired by
\cite[Lemma~4.7]{barany_rams_simon_triang_2017}, however there are significant differences. On the one hand, in \cite{barany_rams_simon_triang_2017} the measure corresponding to $\nu_{\mathbf q}$ is absolutely continuous with $L^q$ density and in \cite{barany_rams_simon_triang_2017} the diagonal part of all the linear parts of all the mappings are identical. These differences required a much more subtle argument in this paper.

\begin{proof}[Proof of Lemma~\ref{r57}]
Recall that we fixed an $m$. Let $\ell  \geq m$. All sets and numbers from now on in this proof can be dependent of $m$ but $m$ is fixed so we omit it from notation.

We cover $V_{\ell}^m$ by the union of the $\Pi^{-1}$ pre-images of the parallelograms like the blue one ($R_{\iiv,\jjv}$) on the right hand side of Figure \ref{fig:DimHOverlap}. These are parallelograms  slightly bigger than the intersection of $R_{\iiv}$ and the $\langle\mathbf{a}\rangle^\ell$ neighborhood of $R_{\jjv}$ for $\iiv,\jjv\in\Sigma_\ell $ with $\Phi(\iiv)=\Phi(\jjv)$.

To control the size of $\ell$th level parallelograms and the number of parallelograms in any given $\ell$-th level column set
\begin{equation*}\label{r73}
\mathrm{Bad}_{\delta,\ell}^1 :=\mathrm{Bad}_{\delta,\ell}(\mathbf{a}) \,\cup\, \mathrm{Bad}_{\delta,\ell}(\mathbf{b}) \,\cup\, \mathrm{Bad}_{\delta,\ell}(\mathbf{N}) \;\text{ and }\;
\mathrm{Bad}_{\delta,\ell}^{1,\ast} := \{\ii|\ell:\; \ii\in\mathrm{Bad}_{\delta,\ell}^1\},
\end{equation*}
where $\mathrm{Bad}_{\delta,n}(\mathbf{c})$ was defined in \eqref{r74}.
Observe that $\mathrm{Bad}_{\delta,\ell}^1$ is the union of complete $\ell $-cylinders. That is
\begin{equation*}\label{r71}
  \mathrm{Bad}_{\delta,\ell}^{1} =
\bigcup\limits_{\pmb{\omega}\in\mathrm{Bad}_{\delta,\ell}^{1,\ast}}
[\pmb{\omega}].
\end{equation*}
The level $\ell $-cylinders of the symbolic spaces excluding these bad cylinders  are:
\begin{equation*}
{\mathrm{Good}}_\ell^\ast:= \left\{1, \dots ,N\right\}^\ell\setminus\mathrm{Bad}_{\delta,\ell}^{1,\ast} \,\text{ and }\,
{\mathrm{Good}}_{\ell,\iiv}^\ast:=\{\jjv\in{\mathrm{Good}}_\ell^\ast:\, j_1\neq i_1, \Phi(\jjv)=\Phi(\iiv)\}\setminus\mathrm{Bad}_{\delta,\ell}^{1,\ast}.
\end{equation*}

For $H\subset [0,1]^2$ let
\begin{equation*}
U_y(H,r) := \bigcup_{(x_0,y_0)\in H} \{(x,y):\; x=x_0 \text{ and } |y-y_0|<r\}.
\end{equation*}
Choose $\iiv\in{\mathrm{Good}}_\ell^\ast,\, \jjv\in{\mathrm{Good}}_{\ell,\iiv}^\ast$ and define
\begin{align*}
I_{\iiv,\jjv} &:= \proj_x(R_{\iiv} \cap U_y(R_{\jjv}, \langle\mathbf{a}\rangle^{(1-\delta)\ell}), \\
R_{\iiv,\jjv} &:= (I_{\iiv,\jjv}\times [0,1]) \cap R_{\iiv}\,,\\
\widetilde{R}_{\iiv,\jjv} &:= ([\iiv] \cap \Pi^{-1}(R_{\iiv,\jjv})).
\end{align*}
$R_{\iiv,\jjv}$ consists of those elements of $R_{\iiv}$ which are physically "too close" to $R_{\jjv}$,
see Figure \ref{fig:DimHOverlap}. As a result we get a cover of $V_{\ell}^m\,$:
\begin{equation}\label{eq:coverofV}
V_{\ell}^m \subset \mathrm{Bad}_{\delta,\ell}^1 \,\cup\, \bigcup_{\iiv\in {\mathrm{Good}}_\ell^\ast} \,\bigcup_{\jjv\in {\mathrm{Good}}_{\ell,\iiv}^\ast} \widetilde{R}_{\iiv,\jjv}.
\end{equation}
Namely, if $\ii\in V_{\ell}^m$ then
either $\ii\in\mathrm{Bad}_{\delta,\ell}^1$ or $\iiv:=\ii|\ell\in\mathrm{Good}_\ell^\ast$. In the second case,
there is a
$\jj\in \beta(\ii)\cap [i_1]^c\cap\mathrm{Good}_m $ with
$\mathrm{dist}_y(\Pi(\ii),\Pi(\jj))<\langle\mathbf{a}\rangle^\ell $.
Hence, $\jjv:=\jj|\ell \in {\mathrm{Good}}_{\ell,\iiv}^\ast$. As a result, with these notations, we have $\ii\in \widetilde{R}_{\iiv,\jjv}$.

\begin{center}
	\begin{figure}[h]
		\centering
		\includegraphics[width=.9\linewidth]{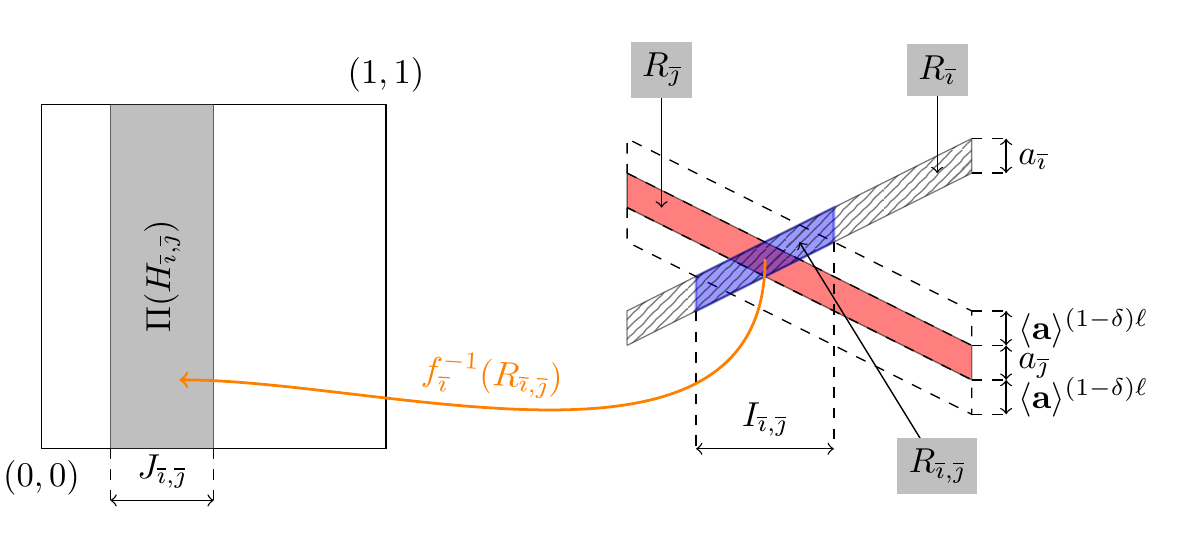}
		\caption{Intersecting parallelograms $R_{\iiv}$ and $R_{\jjv}$ in the proof of Lemma~\ref{r57}.}\label{fig:DimHOverlap}
	\end{figure}
\end{center}

If $I_{\iiv,\jjv}\neq \emptyset$, then there exists a non-empty interval $J_{\iiv,\jjv}$ such that
\begin{equation*}
f_{\iiv}^{-1}(R_{\iiv,\jjv}) = J_{\iiv,\jjv}\times [0,1].
\end{equation*}
With symbolic notation $H_{\iiv,\jjv}:= \Pi^{-1}(f_{\iiv}^{-1}(R_{\iiv,\jjv}))$ we can represent $\widetilde{R}_{\iiv,\jjv}$ as the concatenation
\begin{equation}\label{r43}
\widetilde{R}_{\iiv,\jjv} = \iiv H_{\iiv,\jjv}.
\end{equation}
On the other hand, $ H_{\iiv,\jjv} \subset \widetilde{\Pi}_{\mathcal{H}}^{-1}\left(J_{\iiv,\jjv}\right)$. Hence,
\begin{equation}\label{r48}
 \mu_{\mathbf{p}}\left(H_{\iiv,\jjv}\right)  \leq
 \mu_{\mathbf{p}}\left(\widetilde{\Pi} ^{-1}_{\mathcal{H}}(J_{\iiv,\jjv})\right)
 =\nu_{\mathbf{q}}\left(J_{\iiv,\jjv}\right).
\end{equation}
To continue we give an upper bound for $\nu_{\mathbf{q}}\left(J_{\iiv,\jjv}\right)$.
\begin{claim}\label{r46}Let
$\iiv\in {\mathrm{Good}}_\ell^\ast$ and
$\jjv\in {\mathrm{Good}}_{\ell,\iiv}^\ast$ and let $k:=u \cdot \ell $, where $u$ was defined in \eqref{r44}.
  If $\widetilde{\Pi}_{\mathcal{H}}^{-1}(J_{\iiv,\jjv})\cap S_k\ne \emptyset $ (recall \eqref{r51} for the definition of $S_k$), then
\begin{equation*}\label{r45}
  \nu_\mathbf{q}\left(J_{\iiv,\jjv}\right) \leq
  \langle\mathbf{q}\rangle^k \cdot  \langle\mathbf{b}\rangle^{-k\delta}.
\end{equation*}
\end{claim}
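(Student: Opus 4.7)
The plan is to bound the length of $J_{\iiv,\jjv}$ by $3K_1\langle\mathbf{b}\rangle^{k}$ using the transversality condition together with the ``goodness'' of $\iiv$ and $\jjv$, and then appeal to the defining inequality of $S_{k}$ to conclude.

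First, since $\Phi(\iiv)=\Phi(\jjv)$, the column structure forces $R_{\iiv}$ and $R_{\jjv}$ to share the same horizontal projection of length $b_{\iiv}=b_{\jjv}$. Parametrizing both parallelograms by their $y$-coordinates as affine functions of $x$ with slopes $\tan\gamma_{\iiv}=d_{\iiv}/b_{\iiv}$ and $\tan\gamma_{\jjv}=d_{\jjv}/b_{\iiv}$, a direct computation shows that
\[
|I_{\iiv,\jjv}|\leq \frac{a_{\iiv}+a_{\jjv}+2\langle\mathbf{a}\rangle^{(1-\delta)\ell}}{|\tan\gamma_{\iiv}-\tan\gamma_{\jjv}|}.
\]
Applying the transversality estimate \eqref{a33} to $R_{\iiv}$ and $R_{\jjv}$ (without any thickening) rearranges to $|\tan\gamma_{\iiv}-\tan\gamma_{\jjv}|\geq 1/K_{1}$. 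Splitting the numerator above into the $(a_{\iiv}+a_{\jjv})$-part (bounded by $K_{1}\max\{a_{\iiv},a_{\jjv}\}$ by transversality) and the $2\langle\mathbf{a}\rangle^{(1-\delta)\ell}$-part (bounded by $2K_{1}\langle\mathbf{a}\rangle^{(1-\delta)\ell}$), and using that $\iiv,\jjv\in\mathrm{Good}_{\ell}^{\ast}$ implies $a_{\iiv},a_{\jjv}\leq \langle\mathbf{a}\rangle^{(1-\delta)\ell}$, gives $|I_{\iiv,\jjv}|\leq 3K_{1}\langle\mathbf{a}\rangle^{(1-\delta)\ell}$.

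Next, since $J_{\iiv,\jjv}$ is obtained from $I_{\iiv,\jjv}$ by rescaling with $f_{\iiv}^{-1}$, hence by the factor $b_{\iiv}^{-1}$ in the $x$-direction, and the goodness of $\iiv$ gives $b_{\iiv}\geq \langle\mathbf{b}\rangle^{(1+\delta)\ell}$, I obtain
\[
|J_{\iiv,\jjv}|\leq 3K_{1}\,\frac{\langle\mathbf{a}\rangle^{(1-\delta)\ell}}{\langle\mathbf{b}\rangle^{(1+\delta)\ell}}=3K_{1}\,\langle\mathbf{b}\rangle^{k}
\]
by the identity \eqref{r54} and the choice $k=u\ell$. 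The hypothesis $\widetilde{\Pi}_{\mathcal{H}}^{-1}(J_{\iiv,\jjv})\cap S_{k}\ne\emptyset$ then produces some $\ii\in S_{k}$ with $x_{0}:=\widetilde{\Pi}_{\mathcal{H}}(\ii)\in J_{\iiv,\jjv}$, and the length estimate just proved forces $J_{\iiv,\jjv}\subset(x_{0}-3K_{1}\langle\mathbf{b}\rangle^{k},x_{0}+3K_{1}\langle\mathbf{b}\rangle^{k})$. Invoking the defining property of $S_{k}$ (with $n=k$) bounds the $\nu_{\mathbf{q}}$-measure of this enclosing interval by $\langle\mathbf{q}\rangle^{k}\langle\mathbf{b}\rangle^{-\delta k}$, yielding the claim.

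The main technical step is the thickened transversality estimate on $|I_{\iiv,\jjv}|$: the additive $2\langle\mathbf{a}\rangle^{(1-\delta)\ell}$ in the numerator must be absorbed at only a constant-factor cost, and this works precisely because transversality simultaneously bounds $|\tan\gamma_{\iiv}-\tan\gamma_{\jjv}|$ away from zero. Once this bound is in hand, everything else amounts to bookkeeping with the definitions of $\mathrm{Good}_{\ell}^{\ast}$, $u$ and $k$, and a direct invocation of the regularity of $\nu_{\mathbf{q}}$ encoded by membership in $S_{k}$.
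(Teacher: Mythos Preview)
Your argument follows the paper's approach exactly: bound $|I_{\iiv,\jjv}|$ by $3K_1\langle\mathbf{a}\rangle^{(1-\delta)\ell}$ via transversality and goodness, rescale by $b_{\iiv}^{-1}$ and use \eqref{r54} to get $|J_{\iiv,\jjv}|\leq 3K_1\langle\mathbf{b}\rangle^{k}$, then invoke the $S_k$ estimate to bound $\nu_{\mathbf q}(J_{\iiv,\jjv})$. One caveat: your statement that \eqref{a33} ``rearranges to'' $|\tan\gamma_{\iiv}-\tan\gamma_{\jjv}|\geq 1/K_1$ is not a literal rearrangement, since \eqref{a33} only bounds the projection of the \emph{actual} intersection $R_{\iiv}^\circ\cap R_{\jjv}^\circ$, which could be small due to truncation at the column boundary (or could be empty while $I_{\iiv,\jjv}\neq\emptyset$), rather than because the angle is large; this uniform angle separation is the intended geometric content of transversality (as the paper remarks after Definition~\ref{def:separations}) and the paper invokes it at the same level of informality.
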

\begin{proof}[Proof of Claim~\ref{r46}]
If $I_{\iiv,\jjv}\neq \emptyset$ then transversality (recall Definition \ref{def:separations}) implies that
\begin{equation*}
|I_{\iiv,\jjv}| < 3K_1\cdot \langle\mathbf{a}\rangle^{(1-\delta)\ell}.
\end{equation*}
This is the very important point where use that neither $\iiv$ nor $\jjv$ are contained in $\mathrm{Bad}_{\delta,\ell }^{1,\ast}$.
Furthermore, $f_{\iiv}^{-1}$ expands along the $x$ axis by a factor between $\langle\mathbf{b}\rangle^{-(1-\delta)\ell}$ and $\langle\mathbf{b}\rangle^{-(1+\delta)\ell}$, hence
\begin{equation}\label{r53}
|J_{\iiv,\jjv}|< 3K_1 \left(\frac{\langle\mathbf{a}\rangle^{1-\delta}}{\langle\mathbf{b}\rangle^{1+\delta}}\right)^{\ell}.
\end{equation}

If we set $k$ as in Claim \ref{r46} then as we mentioned in \eqref{r54}
the right hand side of \eqref{r53} is less than $3K_1 \cdot \langle\mathbf{b}\rangle^k\,$:
\begin{equation*}\label{r52}
|J_{\iiv,\jjv}|<  3K_1 \cdot \langle\mathbf{b}\rangle^k.
\end{equation*}
Now assume that $\widetilde{\Pi}_{\mathcal{H}}^{-1}(J_{\iiv,\jjv})\cap S_k\ne \emptyset $.
Pick an arbitrary $\pmb{\omega}\in \widetilde{\Pi}_{\mathcal{H}}^{-1}(J_{\iiv,\jjv})\cap S_k$.
  Then
  $$ J_{\iiv,\jjv} \subset
 \left( \widetilde{\Pi}_{\mathcal{H}}(\pmb{\omega})-3K_1
 \langle\mathbf{b}\rangle^k,
  \widetilde{\Pi}_{\mathcal{H}}(\pmb{\omega})+3K_1
  \langle\mathbf{b}\rangle^k
 \right) .$$ Using that $\pmb{\omega}\in S_k$,  we get that
 $\nu_{\mathbf{q}} \big( \widetilde{\Pi}_{\mathcal{H}}(\pmb{\omega})-3K_1\langle\mathbf{b}\rangle^k,
  \widetilde{\Pi}_{\mathcal{H}}(\pmb{\omega})+3K_1\langle\mathbf{b}\rangle^k
 \big) \!\leq\!  \langle\mathbf{q}\rangle^k \cdot  \langle\mathbf{b}\rangle^{-k\delta}.$

\end{proof}
Now we conclude the proof of Lemma~\ref{r57}. From the cover \eqref{eq:coverofV} of $V_{\ell}^m$ together with \eqref{r43} we obtain that for $\ell  \geq m$
\begin{equation}\label{eq:measureofV}
\mu_{\mathbf{p}}(V_{\ell}^m) \leq \mu_{\mathbf{p}} (\mathrm{Bad}_{\delta,\ell}^1) + \sum_{\iiv\in {\mathrm{Good}}_\ell^\ast} \mu_{\mathbf{p}}([\iiv]) \mu_{\mathbf{p}}\Bigg(\bigcup\limits_{\jjv\in {\mathrm{Good}}_{\ell,\iiv}^\ast}H_{\iiv,\jjv}\Bigg).
\end{equation}
To further bound \eqref{eq:measureofV},
first observe that
\begin{equation*}\label{r49}
  \#{\mathrm{Good}}_{\ell,\iiv}^\ast \leq \langle\mathbf{N}\rangle^{(1+\delta)\ell} \mbox{ whenever }
  \iiv~\in~{\mathrm{Good}}_\ell^\ast.
\end{equation*}
Moreover, using \eqref{r48} and Claim~\ref{r46}, for an arbitrary $\iiv\in {\mathrm{Good}}_\ell^\ast$ we have

\begin{align*}
   \mu_{\mathbf{p}}\Bigg(\bigcup\limits_{\jjv\in {\mathrm{Good}}_{\ell,\iiv}^\ast}H_{\iiv,\jjv}\Bigg) &\leq
  \mu_{\mathbf{p}}\left(S_{k}^{c}\right)
 +
 \sum\limits_{\jjv\in {\mathrm{Good}}_{\ell,\iiv}^\ast \atop
 \widetilde{\Pi}_{\mathcal{H}}^{-1}(J_{\iiv,\jjv})\cap S_k\ne \emptyset
 }^{}   \mu_{\mathbf{p}}\left(H_{\iiv,\jjv}\right)
  \leq \mu_{\mathbf{p}}\left(S_{k}^{c}\right)
 +
 \sum\limits_{\jjv\in {\mathrm{Good}}_{\ell,\iiv}^\ast \atop
 \widetilde{\Pi}_{\mathcal{H}}^{-1}(J_{\iiv,\jjv})\cap S_k\ne \emptyset
 }^{}  \nu_{\mathbf{q}}(J_{\iiv,\jjv})\\
  &\leq  \mu_{\mathbf{p}}\left(S_{k}^{c}\right)
 +
 \sum\limits_{\jjv\in {\mathrm{Good}}_{\ell,\iiv}^\ast \atop
 \widetilde{\Pi}_{\mathcal{H}}^{-1}(J_{\iiv,\jjv})\cap S_k\ne \emptyset
 }^{} \langle\mathbf{q}\rangle^k \cdot  \langle\mathbf{b}\rangle^{-k\delta}\\
 &\leq  \mu_{\mathbf{p}}\left(S_{k}^{c}\right) +
 \left(\langle\mathbf{N}\rangle^{(1+\delta)} \cdot
 \langle\mathbf{q}\rangle^u \cdot  \langle\mathbf{b}\rangle^{-u\delta}\right)^{\ell }.
\end{align*}
Pluggung this back into \eqref{eq:measureofV}, we deduce from Lemma \ref{lemma:BadisSmall}, \eqref{r50} and \eqref{r59} that for every $m$,
$$
\lim\limits_{\ell \to\infty}
 \mu_{\mathbf{p}}(V_{\ell}^m)=0.
$$
By \eqref{r75}, this implies that $ \mu_{\mathbf{p}}\left(\mathcal{B}_2^m\right)=0.$

\end{proof}

\section{Proof of results for box dimension}\label{sec:boxdim}

We begin the section by briefly commenting on how the upper bound for $\overline{\dim}_{\rm B} \Lambda$, recall Theorem~\ref{thm:mainBox}, follows directly from the work of Fraser~\cite{Fraser12Boxlike} and then prove Theorem~\ref{thm:BoxwOverlaps} in Subsection~\ref{subsec:ProofBoxDim}.

For $\delta>0$ and a bounded set $F\subset\R^2$ let $N_\delta(F)$ denote the minimal number of closed axes parallel rectangles for which the vertical sides are not shorter than the horizontal sides but the vertical sides are not longer than $(K_0+1)$-times the horizontal sides, where $K_0$ was defined in Lemma~\ref{lemma:d/b_bounded}. Then
\begin{equation*}
\underline{\dim}_{\rm B} F = \liminf_{\delta\to 0} \frac{\log N_\delta(F)}{-\log \delta} \;\text{ and }\; \overline{\dim}_{\rm B} F = \limsup_{\delta\to 0} \frac{\log N_\delta(F)}{-\log \delta}.
\end{equation*}
In particular, it is enough to consider $\delta\to 0$ through the sequence $\delta_k=c^k$ for some $0<c<1$, see~\cite[Section 3.1]{FalconerBook}.

For $t\geq 0$ and any finite length word $\iiv\in\Sigma^\ast$, Fraser defined the modified singular value function $\psi^t$, which in our context is
\begin{equation*}
\psi^t(f_{\iiv}):= b_{\iiv}^{s_{\mathcal{H}}}\cdot a_{\iiv}^{t-s_{\mathcal{H}}},
\end{equation*}
where $s_{\mathcal{H}}= \dim_{\rm B} \Lambda_{\mathcal{H}}$. He showed that the unique solution $s$ of the equation
\begin{equation*}
\lim_{n\to\infty}\Big(\sum_{\iiv\in\Sigma_n}\psi^s(f_{\iiv})\Big)^{1/n}=1
\end{equation*}
is an upper bound for $\overline{\dim}_{\rm B}\Lambda$ and equals $\dim_{\rm B}\Lambda$ if $\Lambda$ satisfies the ROSC. In our context this equation simply becomes \eqref{a74}: $\sum_{i=1}^{N} b_{i}^{s_{\mathcal{H}}}a_{i}^{s-s_{\mathcal{H}}}=1$. The slight modification of the GL brother $\widetilde{\Lambda}$ ensures that the solution of \eqref{a74} for $\Lambda$ and $\widetilde{\Lambda}$ is the same.

For any TGL carpet $\Lambda$, it follows from Lemma~\ref{lemma:d/b_bounded} that the longer side of any parallelogram $R_{\iiv}$ is at most $(K_0+1)\cdot b_{\iiv}$. This implies that there exists a constant $C$ (independent of $\delta$) such that $N_{\delta}(\Lambda)\leq C\cdot N_{\delta}(\widetilde{ \Lambda})$. Hence, $\overline{\dim}_{\rm B}(\Lambda)\leq\overline{\dim}_{\rm B}(\widetilde{ \Lambda})\leq s$. Furthermore, when the ROSC is assumed, it is clear that the reversed inequalities also hold. This implies $\underline{\dim}_{\rm B}(\Lambda)\geq\underline{\dim}_{\rm B}(\widetilde{ \Lambda})= s$. This proves Theorem~\ref{thm:mainBox}.

In the presence of overlaps, one must be more careful when counting the intersections. The next subsection shows how to select a diagonally homogeneous subsystem from a higher iterate of $\mathcal{F}$.

\subsection{Diagonally homogeneous subsystems}\label{subsec:BoxPrelim}

Recall for a $\mathbf p\in\mathcal{P}_0$
\begin{align*}
h_{\mathbf p} &= -\sum_{i=1}^N p_i \log p_i = -\log \langle\mathbf{p}\rangle_\mathbf{p}, \\
\chi_{\mathbf p}^1 &= -\sum_{i=1}^N p_i \log b_i = -\log \langle \mathbf{b} \rangle_{\mathbf{p}} \;\text{ and }\; \chi_{\mathbf p}^2 = -\sum_{i=1}^N p_i \log a_i = -\log \langle \mathbf{a} \rangle_{\mathbf{p}}.
\end{align*}
The following is a Ledrappier--Young like formula for the solution $s$ of~\eqref{a74}. It generalizes the formula in Corollary~\ref{cor:dimH_BMcarpet} for the diagonally homogeneous case. A similar result for Bedford-McMullen like systems in arbitrary dimension was proved in \cite[Theorem 2.15]{FengHu09}.
\begin{claim}\label{claim:LedrappierforBox}
	For $\widetilde{\mathbf p}:=(\widetilde{p}_1,\ldots,\widetilde{p}_N)$ defined by $\widetilde{p}_i=b_i^{s_{\mathcal{H}}}a_i^{s-s_{\mathcal{H}}}$, recall \eqref{def:pandqForBox}, we have
	\begin{equation*}
	s = \frac{h_{\widetilde{\mathbf p}}}{\chi_{\widetilde{\mathbf p}}^2} + \left(1-\frac{\chi_{\widetilde{\mathbf p}}^1}{\chi_{\widetilde{\mathbf p}}^2}\right)s_{\mathcal{H}},
	\end{equation*}
	where $s_{\mathcal{H}}=\dim_{\rm B}\Lambda_{\mathcal{H}} $.
\end{claim}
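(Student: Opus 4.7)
The plan is to verify the identity by a short direct computation, using that $\widetilde{\mathbf p}$ is a probability vector because equation \eqref{a74} states precisely $\sum_{i=1}^N b_i^{s_{\mathcal H}} a_i^{s-s_{\mathcal H}} = 1$. No deep ingredient is needed; everything reduces to rewriting $\log \widetilde p_i$ via the definition $\widetilde p_i = b_i^{s_{\mathcal H}} a_i^{s - s_{\mathcal H}}$ and collecting terms.

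More precisely, I would first observe that
\begin{equation*}
\log \widetilde p_i \;=\; s_{\mathcal H} \log b_i + (s - s_{\mathcal H})\log a_i \qquad \text{for every } i \in [N].
\end{equation*}
Multiplying by $-\widetilde p_i$ and summing over $i$, the definitions of $h_{\widetilde{\mathbf p}}$, $\chi_{\widetilde{\mathbf p}}^1$ and $\chi_{\widetilde{\mathbf p}}^2$ yield
\begin{equation*}
h_{\widetilde{\mathbf p}} \;=\; s_{\mathcal H}\, \chi_{\widetilde{\mathbf p}}^1 + (s - s_{\mathcal H})\, \chi_{\widetilde{\mathbf p}}^2.
\end{equation*}

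Dividing this identity by $\chi_{\widetilde{\mathbf p}}^2 > 0$ and adding $\bigl(1 - \chi_{\widetilde{\mathbf p}}^1/\chi_{\widetilde{\mathbf p}}^2\bigr)\, s_{\mathcal H}$ to both sides, the two terms involving $s_{\mathcal H}\, \chi_{\widetilde{\mathbf p}}^1/\chi_{\widetilde{\mathbf p}}^2$ cancel and one is left with
\begin{equation*}
\frac{h_{\widetilde{\mathbf p}}}{\chi_{\widetilde{\mathbf p}}^2} + \left(1 - \frac{\chi_{\widetilde{\mathbf p}}^1}{\chi_{\widetilde{\mathbf p}}^2}\right) s_{\mathcal H} \;=\; (s - s_{\mathcal H}) + s_{\mathcal H} \;=\; s,
\end{equation*}
which is the desired formula. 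I do not anticipate any genuine obstacle here; the only point worth flagging is that the fact $\widetilde{\mathbf p} \in \mathcal P$ (a probability vector, not merely a positive vector) is exactly the content of \eqref{a74}, so the claim may be read as saying that $s$ is the value assumed by Theorem~\ref{thm:BaranyKaenmaki}'s Ledrappier--Young-type expression on the particular Bernoulli weight $\widetilde{\mathbf p}$ for which direction-$x$ dominates.
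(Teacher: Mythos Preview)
Your proof is correct and follows exactly the same approach as the paper: the paper's one-line argument is ``Immediately follows from the observation that $h_{\widetilde{\mathbf p}} = s_{\mathcal{H}}\chi_{\widetilde{\mathbf p}}^1+(s-s_{\mathcal{H}})\chi_{\widetilde{\mathbf p}}^2$,'' which is precisely the identity you derive and then rearrange.
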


\begin{proof}
	Immediately follows from the observation that $h_{\widetilde{\mathbf p}} = s_{\mathcal{H}}\chi_{\widetilde{\mathbf p}}^1+(s-s_{\mathcal{H}})\chi_{\widetilde{\mathbf p}}^2$.
\end{proof}

The following line of thought  is an adaptation of \cite[Section~6]{fraser_shmerkin_2016} in order to extract from an arbitrary shifted TGL IFS $\mathcal{F}=\{f_i\}_{i=1}^N$ with $M$ columns a subsystem of a high enough iterate of $\mathcal{F}^k$, which has some nice properties required to prove the theorem.

Let $\mathcal{F}^k:=\{f_{i_1\ldots i_k}: i_1,\ldots,i_k\in\Sigma_{k}\}$. The first step is to pass from $\mathcal{F}$ to a diagonally homogeneous subsystem of $\mathcal{F}^k$. Analogous arguments appear for example in \cite[Lemma~5.2]{barany2016dimension}, \cite[Lemma~6.2]{fraser_shmerkin_2016} or \cite[Lemma~4.9]{pardo-simon}.

\begin{definition}
	A subsystem $\mathcal{G}^{(k)}\subset\mathcal{F}^k$ is called a diagonally homogeneous subsystem if there exists $a^{(k)}$ and $b^{(k)}$ for which
	\begin{equation*}
	g^{(k)}_i(\underline{x})= \begin{pmatrix}
	b^{(k)} & 0 \\ d_i & a^{(k)}
	\end{pmatrix}\underline{x} + t_i, \;\;\text{ for every } g^{(k)}_i\in\mathcal{G}^{(k)}.
	\end{equation*}
\end{definition}

Fix an arbitrary vector $\mathbf{v}=(v_1,\ldots,v_N)$, where $v_i\in\N$. Let $V_{\jh}:=\sum_{i\in\mathcal{I}_{\jh}}v_i,\; \mathbf{V}:=(V_1,\ldots,V_M),\; V=V_1+\ldots+V_M$ and define
\begin{equation}\label{def:M_v}
\mathcal{M}_{\mathbf{v}}:=\{(i_1,\ldots,i_V)\in\Sigma_{V}:\; \#\{\ell\leq V:\, i_{\ell}=r\}=v_r \text{ for every } r=1,\ldots,N \}.
\end{equation}

\begin{claim}\label{claim:subsys}
	The subsystem $\mathcal{G}_{\mathbf{v}}=\{f_{i_1\ldots i_V}: (i_1,\ldots,i_V)\in \mathcal{M}_{\mathbf{v}}\}\subset \mathcal{F}^V$ with $M_{\mathbf{v}}$  columns
	\begin{enumerate}[(i)]
		\item is a diagonally homogeneous subsystem with $a^{(\mathbf{v})}=\prod_{r=1}^N a_r^{v_r} \;\text{ and }\; b^{(\mathbf{v})}=\prod_{r=1}^N b_r^{v_r},$
		\item has uniform vertical fibres with $\prod_{\ih=1}^M \frac{V_{\ih}!}{\prod_{r\in\mathcal{I}_{\ih}}v_r!}$ maps in each column and
		\item for the probability vectors $\overline{\mathbf{v}}=\mathbf{v}/V$ and $\overline{\mathbf{V}}=\mathbf{V}/V$
		\begin{align*}
		-N\log(V+1) + V \cdot h_{\overline{\mathbf v}} &\leq \log \# \mathcal{M}_{\mathbf{v}} \leq  V\cdot h_{\overline{\mathbf v}}, \\
		-N\log(V+1) + V\cdot h_{\overline{\mathbf V}} &\leq \log M_{\mathbf{v}} \leq  V\cdot h_{\overline{\mathbf V}}.
		\end{align*}
	\end{enumerate}
\end{claim}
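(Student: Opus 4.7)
The plan is to handle each of the three items in turn; all three follow from direct combinatorial bookkeeping and a classical entropy estimate for multinomial coefficients.

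For (i), I would expand $A_{i_1}\cdots A_{i_V}$ using the formulas in \eqref{a93}. Because every word $(i_1,\ldots,i_V)\in\mathcal M_{\mathbf v}$ uses each symbol $r\in[N]$ exactly $v_r$ times, the diagonal entries collapse to
\begin{equation*}
\prod_{j=1}^V b_{i_j}=\prod_{r=1}^N b_r^{v_r}=:b^{(\mathbf v)},\qquad \prod_{j=1}^V a_{i_j}=\prod_{r=1}^N a_r^{v_r}=:a^{(\mathbf v)},
\end{equation*}
independent of the order. The off-diagonal entry $d_{i_1\ldots i_V}$ does depend on the order, but this is permitted in a diagonally homogeneous subsystem, so (i) is immediate.

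For (ii), the horizontal projection of $f_{i_1\ldots i_V}$ is the composition $h_{\phi(i_1)}\circ\cdots\circ h_{\phi(i_V)}$; in the TGL framework two compositions belong to the same column of $\mathcal F^V$ iff they share their $\phi$-sequence. Any word in $\mathcal M_{\mathbf v}$ maps under $\Phi$ to a column-sequence using $\ih$ exactly $V_{\ih}$ times, and conversely any such column-sequence is realized. Hence the columns of $\mathcal G_{\mathbf v}$ are in bijection with length-$V$ arrangements of $V_1,\ldots,V_M$ occurrences of the labels $1,\ldots,M$, giving
\begin{equation*}
M_{\mathbf v}=\frac{V!}{\prod_{\ih=1}^M V_{\ih}!}\,.
\end{equation*}
To count the maps inside a fixed column, I fix a column-sequence and, for each $\ih\in[M]$ \emph{independently}, choose which of the $V_{\ih}$ positions with $\phi$-value $\ih$ carry each symbol $r\in\mathcal I_{\ih}$, subject to each $r$ being used exactly $v_r$ times. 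A multinomial count gives $\prod_{\ih=1}^M V_{\ih}!/\prod_{r\in\mathcal I_{\ih}}v_r!$, which is independent of the chosen column; this is precisely the uniform vertical fibres property. Multiplying the two counts reproduces $\#\mathcal M_{\mathbf v}=V!/\prod_{r}v_r!$ as a sanity check.

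For (iii), I would invoke the classical type-class estimate. Since $\#\mathcal M_{\mathbf v}=\binom{V}{v_1,\ldots,v_N}$, the upper bound
\begin{equation*}
\#\mathcal M_{\mathbf v}\cdot e^{-V h_{\overline{\mathbf v}}}=\sum_{(i_1,\ldots,i_V)\in\mathcal M_{\mathbf v}}\prod_{r=1}^N \overline{v}_r^{v_r}\leq 1
\end{equation*}
follows from $\overline{\mathbf v}^{\otimes V}$ being a probability measure, and the lower bound comes from the standard observation that $\overline{\mathbf v}$ maximises its own type-class probability among the at most $(V+1)^N$ possible types (Cover--Thomas, Lemma 11.1.3), yielding $\#\mathcal M_{\mathbf v}\geq (V+1)^{-N}e^{V h_{\overline{\mathbf v}}}$. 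Taking logarithms gives the stated bounds. The identical argument applied to sequences over the alphabet $[M]$ with type $\overline{\mathbf V}$ yields the bounds on $\log M_{\mathbf v}$; keeping the exponent $N$ rather than the sharper $M$ in the $(V+1)^{-N}$ factor only weakens the lower bound, which is permissible since $M\leq N$. There is no genuine obstacle in this proof; the only point that warrants a little care is the column-independent count in (ii), which splitting the product column-by-column renders transparent.
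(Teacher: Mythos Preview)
Your proof is correct and follows the same approach as the paper, which simply states that (i) and (ii) are immediate and that (iii) follows from \cite[Lemma~2.1.8]{DemboZeitouniLDP} (the type-class estimate, equivalent to the Cover--Thomas lemma you cite). Your write-up merely supplies the routine details the paper omits, including the observation that replacing $(V+1)^{-M}$ by $(V+1)^{-N}$ in the lower bound for $M_{\mathbf v}$ only weakens it.
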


\begin{proof}
	Parts (i) and (ii) are immediate. Part (iii) follows directly from \cite[Lemma~2.1.8]{DemboZeitouniLDP}.
	
\end{proof}

\begin{lemma}\label{lemma:DiagHomoSubsys}
	Let $\mathcal{F}=\{f_i\}_{i=1}^N$ be a shifted TGL IFS with $M$ columns. For every $k$ choose $\mathbf{v}_k=(v_{1,k},\ldots,v_{N,k})$ such that
	\begin{equation}\label{def:v_i,k}
	v_{i,k}=\lfloor k\widetilde{p}_i\rfloor \;\text{ for every } i=1,\ldots,N,
	\end{equation}
	where $\widetilde{p}_i$ was defined in \eqref{def:pandqForBox}.
	Let $V^{(k)}_{\jh}:=\sum_{i\in\mathcal{I}_{\jh}}v_{i,k},\; V^{(k)}=\sum_{\ih=1}^M V^{(k)}_{\ih}$ and define the subsystem
	\begin{equation*}
	\mathcal{G}^{(k)}=\mathcal{G}_{\mathbf{v}_k}=\{f_{i_1\ldots i_{V^{(k)}}}: (i_1,\ldots,i_{V^{(k)}})\in \mathcal{M}_{\mathbf{v}_k}\}\subset \mathcal{F}^{V^{(k)}},
	\end{equation*}
	where $\mathcal{M}_{\mathbf{v}_k}$ is defined by \eqref{def:M_v}. Then $\mathcal{G}^{(k)}$ satisfies the assertions of Claim \ref{claim:subsys} with $\mathbf{v}_k$. For brevity we write $a^{(\mathbf{v}_k)}=a^{(k)}$ and $b^{(\mathbf{v}_k)}=b^{(k)}$. Let
	\begin{equation*}
	N^{(k)}=\#\mathcal{M}_{\mathbf{v}_k} \text{ and } M^{(k)}=\#\big(\Phi(\mathcal{M}_{\mathbf{v}_k})\big)
	\end{equation*}
	denote the number of maps and columns in $\mathcal{G}^{(k)}$.
	
	Moreover, $\lim\limits_{k\to\infty} s^{(k)} = s$, where $s^{(k)}$ is the solution of $N^{(k)}(b^{(k)})^{s_{\mathcal{H}}}(a^{(k)})^{s^{(k)}-s_{\mathcal{H}}}=1$, i.e.
	\begin{equation*}
	s^{(k)}= \frac{\log N^{(k)}}{-\log a^{(k)}} + \left(1-\frac{\log b^{(k)}}{\log a^{(k)}}\right)s_{\mathcal{H}}.
	\end{equation*}
\end{lemma}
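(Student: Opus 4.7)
The first assertion is immediate: by construction $\mathcal{G}^{(k)} = \mathcal{G}_{\mathbf{v}_k}$, so all three parts of Claim~\ref{claim:subsys} apply verbatim with the choice $\mathbf{v} = \mathbf{v}_k$. In particular $a^{(k)} = \prod_{r=1}^N a_r^{v_{r,k}}$, $b^{(k)} = \prod_{r=1}^N b_r^{v_{r,k}}$, $N^{(k)} = \#\mathcal{M}_{\mathbf{v}_k}$, and the estimate
\begin{equation*}
-N\log(V^{(k)}+1) + V^{(k)} h_{\overline{\mathbf v}_k} \;\leq\; \log N^{(k)} \;\leq\; V^{(k)} h_{\overline{\mathbf v}_k}
\end{equation*}
holds, with $\overline{\mathbf v}_k = \mathbf v_k/V^{(k)}$.

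The plan for the convergence $s^{(k)}\to s$ is to check, one by one, the asymptotic behaviour of the three ingredients appearing in the explicit formula for $s^{(k)}$, and then invoke Claim~\ref{claim:LedrappierforBox}. Since $\widetilde{\mathbf p}$ is a probability vector (equation~\eqref{a74} is precisely $\sum_i \widetilde p_i = 1$), the definition $v_{i,k} = \lfloor k\widetilde p_i\rfloor$ gives $|v_{i,k} - k\widetilde p_i|\leq 1$, hence
\begin{equation*}
\frac{V^{(k)}}{k} \longrightarrow 1 \quad\text{and}\quad \frac{v_{i,k}}{V^{(k)}} \longrightarrow \widetilde p_i \quad(i=1,\dots,N),
\end{equation*}
so that $\overline{\mathbf v}_k \to \widetilde{\mathbf p}$ componentwise and $V^{(k)}\to\infty$.

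Using these, compute
\begin{equation*}
\frac{\log a^{(k)}}{V^{(k)}} = \sum_{r=1}^N \frac{v_{r,k}}{V^{(k)}}\log a_r \longrightarrow \sum_{r=1}^N \widetilde p_r \log a_r = -\chi^{2}_{\widetilde{\mathbf p}},
\end{equation*}
and analogously $\log b^{(k)}/V^{(k)} \to -\chi^{1}_{\widetilde{\mathbf p}}$. For the number of maps, the two-sided bound from Claim~\ref{claim:subsys}(iii) together with continuity of the entropy function on the simplex gives $\log N^{(k)}/V^{(k)} \to h_{\widetilde{\mathbf p}}$. Dividing numerator and denominator of each term in the explicit expression for $s^{(k)}$ by $V^{(k)}$ and passing to the limit yields
\begin{equation*}
\lim_{k\to\infty} s^{(k)} = \frac{h_{\widetilde{\mathbf p}}}{\chi^{2}_{\widetilde{\mathbf p}}} + \left(1 - \frac{\chi^{1}_{\widetilde{\mathbf p}}}{\chi^{2}_{\widetilde{\mathbf p}}}\right)s_{\mathcal{H}},
\end{equation*}
which equals $s$ by Claim~\ref{claim:LedrappierforBox}. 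No step is delicate; the only point that requires any care is checking that the error terms $-N\log(V^{(k)}+1)$ in the entropy bound are indeed $o(V^{(k)})$, which is clear since $V^{(k)}\to\infty$ linearly in $k$ while $\log(V^{(k)}+1)$ grows only logarithmically.
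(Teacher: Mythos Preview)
Your proof is correct and follows essentially the same approach as the paper: both arguments use $v_{i,k}=\lfloor k\widetilde p_i\rfloor$ to get $\overline{\mathbf v}_k\to\widetilde{\mathbf p}$ and $V^{(k)}\sim k$, invoke Claim~\ref{claim:subsys}(iii) for the entropy bound on $\log N^{(k)}$, and conclude via Claim~\ref{claim:LedrappierforBox}. The only cosmetic difference is that the paper writes out explicit two-sided bounds (e.g.\ $k-N\le V^{(k)}\le k$) rather than phrasing things as limits after dividing by $V^{(k)}$.
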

\begin{remark}
	The box dimension of the attractor of the IFS $\mathcal{G}^{(k)}$ is NOT equal to $s^{(k)}$, because $s_{\mathcal{H}}$ is not the box dimension $s_{\mathcal{H}}^{(k)}$ of the attractor of the IFS generated by the columns of $\mathcal{G}^{(k)}$. The problem is that $s_{\mathcal{H}}^{(k)}\not\to s_{\mathcal{H}}$ as $k\to\infty$ (except when $\dim_{\mathrm H}\Lambda=~\dim_{\mathrm B}\Lambda$).
\end{remark}
\begin{proof}
	It follows from \eqref{def:v_i,k} that $k-N\leq V^{(k)}\leq k$ and
	\begin{equation*}
	k\log\langle \mathbf{a} \rangle_{\widetilde{\mathbf p}} - \sum_{i=1}^N\log  a_i \leq \log a^{(k)} \leq k\log\langle \mathbf{a} \rangle_{\widetilde{\mathbf p}}.
	\end{equation*}
	Same holds for $\log b^{(k)}$. Furthermore, for the probability vector $\overline{\mathbf{v}}_k=\mathbf{v}_k/V^{(k)}$
	\begin{equation*}
	\widetilde{p}_i -\frac{1}{k}\leq \frac{v_{i,k}}{V^{(k)}}\leq \widetilde{p}_i + \frac{\widetilde{p}_i N}{k-N}.
	\end{equation*}
	Thus $\lim_{k\to\infty}h_{\overline{\mathbf v}_k}=h_{\widetilde{\mathbf p}}$. We can use Claim~\ref{claim:subsys} (iii) to bound $\log \# \mathcal{M}_{\mathbf{v}_k}$. Hence, putting together all the above we get
	\begin{equation*}
	\lim_{k\to\infty} s^{(k)} = \frac{h_{\widetilde{\mathbf p}}}{-\log\langle \mathbf{a} \rangle_{\widetilde{\mathbf p}}} + \left(1-\frac{\log\langle \mathbf{b} \rangle_{\widetilde{\mathbf p}}}{\log\langle \mathbf{a} \rangle_{\widetilde{\mathbf p}}}\right)s_{\mathcal{H}},
	\end{equation*}
	which is equal to $s$ due to Claim \ref{claim:LedrappierforBox}.
\end{proof}

If $\mathcal{G}^{(k)}$ already has non-overlapping columns, then the rest of the construction is not necessary. Otherwise, we can pass further to a subsystem $\mathcal{G}^{(k,\ell)}\subset \big(\mathcal{G}^{(k)}\big)^\ell$ by throwing away "not too many" columns of $\big(\mathcal{G}^{(k)}\big)^\ell$ in order to ensure that $\mathcal{G}^{(k,\ell)}$ has non-overlapping columns.

Projecting $\mathcal{G}^{(k)}$ to the $x$-axis gives a subsystem of $\mathcal{H}^{V^{(k)}}$
\begin{equation*}
\mathcal{G}_{\mathcal{H}}^{(k)}:= \{h_{\ih_1\ldots\ih_{V^{(k)}}}:\, \text{there exists } (i_1,\ldots,i_{V^{(k)}})\in \mathcal{M}_{\mathbf{v}_k} \text{ s. t. } \Phi(i_1\ldots i_{V^{(k)}})=\ih_1\ldots\ih_{V^{(k)}}\},
\end{equation*}
which has a total of $M^{(k)}$ maps, each with contracting ratio $b^{(k)}$.  Observe that $\mathcal{G}_{\mathcal{H}}^{(k)}$ also satisfies Hochman's Exponential Separation Condition, because this condition is assumed for $\mathcal{H}$ and this property passes on to any subsystem. Hence, the Hausdorff and box dimension of $\mathcal{G}_{\mathcal{H}}^{(k)}$ satisfies
\begin{equation}\label{eq:dimOfG^k_H}
s^{(k)}_{\mathcal{H}} = \frac{\log M^{(k)}}{-\log b^{(k)}}\,.
\end{equation}
It follows from the definition of box dimension that for every $\varepsilon>0$ there exists a subset of the columns of $\big(\mathcal{G}^{(k)}\big)^\ell$, which are non-overlapping and have cardinality
\begin{equation}\label{eq:BoundMkl}
M^{(k,\ell)}\geq C_{\varepsilon} \big((b^{(k)})^\ell\big)^{-(s^{(k)}_{\mathcal{H}}-\varepsilon)} \stackrel{\eqref{eq:dimOfG^k_H}}{=} C_{\varepsilon} \cdot  \big(M^{(k)}\big)^\ell \big(b^{(k)}\big)^{\ell\varepsilon}.
\end{equation}
This is the subsystem $\mathcal{G}^{(k,\ell)}$ which we will use in the proof of Theorem~\ref{thm:BoxwOverlaps} under condition~(i). When condition (ii) of Theorem~\ref{thm:BoxwOverlaps} is assumed we use $\mathcal{G}^{(k,\ell)}=\big(\mathcal{G}^{(k)}\big)^\ell$ since in this case non-overlapping columns are assumed for $\Lambda$. Next, we present our argument to count the number of intersections within a column when $\Lambda$ has non-overlapping columns.

\subsection{Counting intersections}\label{subsec:DiagHomoBoxDim}

Let $\mathcal{F}$ be an arbitrary TGL IFS and $\mathcal{G}^{(k,\ell)}= \big(\mathcal{G}^{(k)}\big)^\ell$ be the subsystem defined in the previous subsection. Then $\mathcal{G}^{(k,\ell)}$ is diagonally homogeneous with main diagonal $((b^{(k)})^\ell,(a^{(k)})^\ell)$, has uniform vertical fibres with $(N^{(k)}/M^{(k)})^\ell$ maps in each column and the columns are non-overlapping. For every $f_{\iiv}\in\mathcal{G}^{(k,\ell)},\, \iiv$ can be written
\begin{equation*}
\iiv=\iiv_1\iiv_2\ldots\iiv_{\ell},\, \text{ where } \iiv_j\in \mathcal{M}_{\mathbf{v}_k} \text{ for } j=1,\ldots,\ell.
\end{equation*}
Let $\Sigma^{(k,\ell)}:=\{\iiv:\, f_{\iiv}\in\mathcal{G}^{(k,\ell)}\}$ and for the rest of the subsection fix such an $\iiv\in\Sigma^{(k,\ell)}$. Let
\begin{equation*}
\Sigma_{\iiv}^{\sim} := \{\jjv=\jjv_1\ldots\jjv_{\ell}\in\Sigma^{(k,\ell)}:\; \Phi(\jjv)=\Phi(\iiv)  \text{ and } \jjv\neq\iiv\},
\end{equation*}
i.e. $\Sigma_{\iiv}^{\sim}$ collects those $\jjv$ which belong to the symbolic column of $\iiv$. Recall $\Lambda_{\iiv}=f_{\iiv}(\Lambda),\, R_{\iiv}=f_{\iiv}([0,1]^2)$. Let
\begin{equation*}
\widetilde{R}_{\iiv}:= \big(\proj_x(f_{\iiv}(\Lambda))\times[0,1]\big)\cap R_{\iiv} \;\text{ and }\; \delta^{(k)}_{\ell}:=(a^{(k)})^\ell.
\end{equation*}
Our aim is to give a uniform upper bound for $N_{\delta^{(k)}_{\ell}}\big(\widetilde{R}_{\iiv}\cap (\cup_{\jjv\in\Sigma_{\iiv}^{\sim}}\widetilde{R}_{\jjv})\big)$. Observe that for every $\jjv\in\Sigma_{\iiv}^{\sim}$
\begin{equation}\label{eq:infectedBoxes}
N_{\delta^{(k)}_{\ell}}\big(\widetilde{R}_{\iiv}\cap \widetilde{R}_{\jjv}\big) = N_{\delta^{(k)}_{\ell}}\big(\proj_x(\Lambda_{\iiv}\cap \Lambda_{\jjv})\big) = N_{\delta^{(k)}_{\ell}/(b^{(k)})^\ell}\big(h_{\iiv}^{-1}(\proj_x(\Lambda_{\iiv}\cap \Lambda_{\jjv}))\big).
\end{equation}

We state a result of Lalley \cite[Theorem~1]{Lalley88}, which gives the precise asymptotic of $N_\delta(\Lambda_{\mathcal{H}})$. A set $\{r_1,\ldots,r_M\}$ of positive reals is $\tau$-arithmetic, if $\tau>0$ is the greatest number such that each $r_i$ is an integer multiple of $\tau$, and non-arithmetic if no such $\tau$ exists. We use the notation $f(\delta)\sim g(\delta)$ to denote that $\lim_{\delta\to0}f(\delta)/g(\delta)=1$. Let $F$ be a self-similar set on $[0,1]$ with contracting ratios $\{r_1,\ldots,r_M\}$. Assume $F$ satisfies the strong OSC and let $\dim_{\mathrm H}F=\dim_{\mathrm B}F=t$, where $t$ is the solution of $\sum_{\ih=1}^M r_{\ih}^t=1$.
\begin{prop}\cite[Theorem~1]{Lalley88}\label{prop:exactN_delta}
If $\{\log r_1^{-1},\ldots,\log r_M^{-1}\}$ is a non-arithmetic set, then for some $K>0$
\begin{equation*}
N_\delta(F)\sim K \delta^{-t} \; \text{ as } \delta\to0.
\end{equation*}
On the other hand, if $\{\log r_1^{-1},\ldots,\log r_M^{-1}\}$ is $\tau$-arithmetic, then for the subsequence $\delta_n=e^{-n\tau}$ there exists a constant $K'>0$ such that
\begin{equation*}
N_{\delta_n}(F)\sim K'\delta_n^{-t} \; \text{ as } n\to\infty.
\end{equation*}
\end{prop}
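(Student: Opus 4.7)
The plan is to set up a renewal equation for $N_\delta(F)$ using the self-similarity $F = \bigcup_{\ih=1}^M f_{\ih}(F)$ with $f_{\ih}(x) = r_{\ih} x + u_{\ih}$, and then invoke Feller's renewal theorem (the non-lattice vs lattice dichotomy is precisely the origin of the dichotomy in the conclusion). It is cleanest to work on a logarithmic scale: set $s := \log(1/\delta)$ and $\phi(s) := N_{e^{-s}}(F)$. Then the quantity I would like to analyse is $\psi(s) := \phi(s)\cdot e^{-st}$, since $\sum_{\ih=1}^M r_{\ih}^t = 1$ turns the renewal kernel into a probability distribution on the positive reals $x_{\ih} := \log r_{\ih}^{-1}$ with weights $r_{\ih}^t$.

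First I would use the strong OSC to choose $s_0$ so large that for every $s \geq s_0$ the $\delta$-neighborhoods of the sets $f_{\ih}(F)$ are pairwise disjoint inside an $e^{-s}$-cover of $F$. This gives the clean self-similar recursion
\begin{equation*}
\phi(s) = \sum_{\ih=1}^M \phi(s - x_{\ih}) + z(s), \qquad s \geq s_0,
\end{equation*}
where $z(s)$ is a correction term coming from covering rectangles that intersect more than one $f_{\ih}(F)$; the non-overlapping assumption and the bounded geometry of the construction make $z$ uniformly bounded. Multiplying by $e^{-st}$ and using $e^{-st}\phi(s-x_{\ih}) = r_{\ih}^t\cdot e^{-(s-x_{\ih})t}\phi(s-x_{\ih})$ converts this into the standard renewal equation
\begin{equation*}
\psi(s) = \sum_{\ih=1}^M r_{\ih}^t \cdot \psi(s - x_{\ih}) + \tilde{z}(s),
\end{equation*}
with $\tilde{z}(s) = e^{-st} z(s)$ directly Riemann integrable on $[0,\infty)$ because $t > 0$ and $z$ is bounded with support bounded away from $+\infty$ in the relevant sense.

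At this point I would apply Feller's key renewal theorem. In the non-arithmetic case (i.e.\ $\{x_1,\dots,x_M\}$ non-arithmetic), the theorem gives
\begin{equation*}
\lim_{s\to\infty} \psi(s) = \frac{1}{\sum_{\ih=1}^M r_{\ih}^t\, x_{\ih}} \int_0^\infty \tilde{z}(u)\, du =: K,
\end{equation*}
which yields $N_\delta(F) \sim K\delta^{-t}$ as $\delta \to 0$. In the $\tau$-arithmetic case, the renewal measure is concentrated on the lattice $\tau\mathbb{Z}$, so the analogous statement holds only along the subsequence $s_n = n\tau$, i.e.\ $\delta_n = e^{-n\tau}$, giving $N_{\delta_n}(F) \sim K' \delta_n^{-t}$ for a different constant $K'$ (which in fact depends on the fractional part of $s/\tau$).

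The main obstacle, as usual with renewal arguments for covering numbers, is the error term $z(s)$: one needs to show that the "bad" covers (rectangles straddling two children $f_{\ih}(F)$, or rectangles of size comparable to $r_{\ih}$ rather than much smaller) contribute a $z$ that is both bounded and decays fast enough to be directly Riemann integrable after the factor $e^{-st}$ is inserted. The strong OSC is what makes this controllable: it provides a uniform positive lower bound for the distance between the pieces $f_{\ih}(F)$ relative to their diameters, so for $s$ sufficiently large the pieces are resolved by the cover and the correction is supported in a bounded range of $s$ plus an exponentially small tail. Everything else is bookkeeping around Feller's theorem.
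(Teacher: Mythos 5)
The paper does not prove this proposition itself: it is cited verbatim as \cite[Theorem~1]{Lalley88}, so there is no in-text argument to compare against. Your sketch is a correct reconstruction of the method Lalley actually uses (a renewal equation for the covering function combined with Feller's key renewal theorem, with the lattice/non-lattice dichotomy producing exactly the two cases in the statement), so in spirit you have rediscovered the cited proof rather than taken a different route.

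Two technical points in your write-up deserve a caveat. First, you assert that under SOSC one can choose $s_0$ so that for $s\geq s_0$ the $\delta$-neighborhoods of $f_{\ih}(F)$ are pairwise disjoint. Under SOSC on the line the pieces $f_{\ih}(F)$ may still touch at boundary points, so this disjointness claim is literally false in general. What is true, and what Lalley exploits, is that the pairwise intersections are finite (hence of strictly smaller dimension), so the over- or under-count in $N_\delta(F)=\sum_{\ih}N_\delta(f_{\ih}F)+z(s)$ is uniformly bounded; that bounded, piecewise-constant $z$ is what one feeds into the renewal equation, not a $z$ supported on a bounded range. Second, direct Riemann integrability of $\tilde z(s)=e^{-st}z(s)$ does not follow from boundedness of $z$ alone; one needs that $z$ has only jump discontinuities accumulating in a controlled way (which holds here because $\delta\mapsto N_\delta(F)$ is a right-continuous step function), plus the exponential decay from $t>0$. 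With those two points repaired, the argument is sound and matches the expected proof.
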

\begin{remark}
The reason why we can not handle both types of overlaps simultaneously for the box dimension is that we are unaware of an analogous result in the case that SOSC is not assumed. This question could be of independent interest.
\end{remark}

We use the proposition for the self-similar set $\Lambda_{\mathcal{H}}$ with contracting ratios $(r_1,\ldots,r_M)$. If $\{\log r_1^{-1},\ldots,\log r_M^{-1}\}$ is $\tau$-arithmetic, then we can choose $\ell=\ell(n)$ so that
\begin{equation*}
\min\{e^{-\tau},1\}\cdot e^{-\tau n}< \delta_{\ell}^{(k)}\leq \max\{e^{-\tau},1\}\cdot e^{-\tau n},
\end{equation*}
which implies that
\begin{equation*}
\lim_{n\to\infty}\frac{\ell(n)}{n}=\frac{\tau}{-\log a^{(k)}} \;\text{ and }\; \lim_{n\to\infty} \frac{N_{e^{-\tau n}}(\Lambda_{\mathcal{H}})}{N_{\delta_{\ell}^{(k)}}(\Lambda_{\mathcal{H}})} = c
\end{equation*}
for some universal constant $c$. Thus the proposition implies that
\begin{equation}\label{eq:PreciseN(R)}
N_{\delta^{(k)}_{\ell}}\big(\widetilde{R}_{\iiv}\big)= N_{\delta^{(k)}_{\ell}/(b^{(k)})^\ell}(\Lambda_{\mathcal{H}})= (C+o(1))(b^{(k)}/a^{(k)})^{\ell s_{\mathcal{H}}},
\end{equation}
where the constant $C$ only depends on whether $\{\log r_1^{-1},\ldots,\log r_M^{-1}\}$ is $\tau$-arithmetic or not and the $o(1)\to 0$ as $\ell\to\infty$. The next lemma ensures that a positive proportion of these boxes do not get covered by boxes coming from the cover of $\widetilde{R}_{\jjv}$ for some $\jjv\in\Sigma_{\iiv}^{\sim}$.

\begin{lemma}\label{lemma:CountingIntersections}
If $\mathcal{F}$ satisfies transversality and
\begin{equation}\label{ass:IntersectingBoxes}
\frac{N^{(k)}}{M^{(k)}}\big(1+K_1^{s_{\mathcal{H}}}\big) < \left(\frac{b^{(k)}}{a^{(k)}}\right)^{s_{\mathcal{H}}},
\end{equation}
then there exists $K_3<1$ such that for $\ell$ large enough and every $\iiv\in \Sigma^{(k,\ell)}$ we have
\begin{equation*}
N_{\delta^{(k)}_{\ell}}\Big(\widetilde{R}_{\iiv}\cap \bigcup_{\jjv\in\Sigma_{\iiv}^{\sim}}\widetilde{R}_{\jjv}\Big) \leq K_3 N_{\delta^{(k)}_{\ell}}\big(\widetilde{R}_{\iiv}\big).
\end{equation*}
\end{lemma}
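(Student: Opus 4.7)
My plan is to bound, for each $\jjv\in \Sigma_\iiv^\sim$, the quantity $N_{\delta^{(k)}_\ell}(\widetilde R_\iiv \cap \widetilde R_\jjv)$ individually using the transversality hypothesis, then sum these bounds and compare the total with $N_{\delta^{(k)}_\ell}(\widetilde R_\iiv)$, which is sharply pinned down by \eqref{eq:PreciseN(R)}. The key geometric observation is that since $\iiv$ and $\jjv$ belong to a common column of $\mathcal{G}^{(k,\ell)}$, they also belong to a common column of the $V^{(k)}\ell$-th iterate of $\mathcal{F}$, so transversality \eqref{a33} applies and yields
\begin{equation*}
|\proj_x(\Lambda_\iiv \cap \Lambda_\jjv)| \leq K_1 (a^{(k)})^\ell.
\end{equation*}
Writing $\Pi_\iiv^\jjv := h_\iiv^{-1}(\proj_x(\Lambda_\iiv \cap \Lambda_\jjv))\subset\Lambda_{\mathcal H}$, this gives the uniform diameter bound $\operatorname{diam}(\Pi_\iiv^\jjv) \leq K_1 (a^{(k)}/b^{(k)})^\ell$.

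Because $\Lambda$ is a TGL carpet, the non-overlapping column structure forces $\mathcal{H}$ to satisfy the SOSC, hence $\Lambda_\mathcal{H}$ is an Ahlfors $s_\mathcal{H}$-regular self-similar set and Proposition~\ref{prop:exactN_delta} applies. A standard covering argument using self-similarity of $\Lambda_\mathcal{H}$ together with Proposition~\ref{prop:exactN_delta} shows that uniformly over subsets $E \subset \Lambda_\mathcal{H}$ of diameter at most $K_1 r$,
\begin{equation*}
N_r(E) \leq (K + o(1)) K_1^{s_\mathcal{H}}\quad\text{as } r \to 0,
\end{equation*}
where $K$ is the same asymptotic constant appearing in \eqref{eq:PreciseN(R)}. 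Combining this with \eqref{eq:infectedBoxes} and the observation from Lemma~\ref{lemma:d/b_bounded} that the slant of every parallelogram is bounded so each vertical strip of width $\delta^{(k)}_\ell$ contributes at most $1+K_0$ covering boxes, I obtain
\begin{equation*}
N_{\delta^{(k)}_\ell}(\widetilde R_\iiv \cap \widetilde R_\jjv) \leq (1+K_0)(K+o(1)) K_1^{s_\mathcal{H}},
\end{equation*}
uniformly in $\iiv,\jjv$.

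Summing over the $(N^{(k)}/M^{(k)})^\ell - 1$ elements of $\Sigma_\iiv^\sim$, and using \eqref{eq:PreciseN(R)} to estimate $N_{\delta^{(k)}_\ell}(\widetilde R_\iiv)\geq (C+o(1))(b^{(k)}/a^{(k)})^{\ell s_\mathcal{H}}$ from below (with the \emph{same} constant $C=K$), I form the ratio
\begin{equation*}
\frac{N_{\delta^{(k)}_\ell}\bigl(\widetilde R_\iiv \cap \bigcup_{\jjv\in\Sigma^\sim_\iiv} \widetilde R_\jjv\bigr)}{N_{\delta^{(k)}_\ell}(\widetilde R_\iiv)} \leq (1+o(1))\cdot K_1^{s_\mathcal{H}} \cdot \left(\frac{N^{(k)}/M^{(k)}}{(b^{(k)}/a^{(k)})^{s_\mathcal{H}}}\right)^{\!\ell}.
\end{equation*}
Hypothesis \eqref{ass:IntersectingBoxes} forces the base of the exponent to be strictly less than $1/(1+K_1^{s_\mathcal{H}}) < 1$, so the right-hand side is bounded above by some $K_3 < 1$ for all $\ell$ large enough.

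The main obstacle is controlling the implicit multiplicative constants precisely enough to extract the exact form of hypothesis \eqref{ass:IntersectingBoxes}, especially the appearance of the factor $1+K_1^{s_\mathcal{H}}$ rather than merely $K_1^{s_\mathcal{H}}$; the statement demands a uniform $K_3 < 1$ valid for all sufficiently large $\ell$, not merely asymptotic exponential decay. To handle this carefully I would (i) track the common asymptotic constant from Proposition~\ref{prop:exactN_delta} (both $N_{\delta^{(k)}_\ell}(\widetilde R_\iiv)$ and the per-$\jjv$ overlap bound descend from applying it to $\Lambda_\mathcal{H}$ at scale $(a^{(k)}/b^{(k)})^\ell$, so the constants cancel up to a factor depending only on $K_0$), and (ii) separate the contribution of $\jjv=\iiv$ (worth $1$ in the exponential) from the contribution of proper overlaps (worth $K_1^{s_\mathcal{H}}$ each), which accounts for the $1+K_1^{s_\mathcal{H}}$ factor in \eqref{ass:IntersectingBoxes}.
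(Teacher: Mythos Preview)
Your application of transversality is incorrect. The transversality condition \eqref{a33} requires the two words to differ at their \emph{first} symbol in the original alphabet. For $\jjv\in\Sigma_\iiv^\sim$ you only know $\jjv\neq\iiv$ and $\Phi(\jjv)=\Phi(\iiv)$; such a $\jjv$ may agree with $\iiv$ on a long common prefix. If $|\iiv\wedge\jjv|=z$ (counting $\mathcal{M}_{\mathbf{v}_k}$-blocks), you must first factor out the common prefix $f_{\iiv_1\ldots\iiv_z}$ and only then apply transversality to the suffixes. Doing so gives
\[
|\proj_x(\widetilde R_\iiv\cap\widetilde R_\jjv)|\leq K_1\,(b^{(k)})^{z}(a^{(k)})^{\ell-z},
\]
which for $z$ close to $\ell$ is of order $(b^{(k)})^\ell$, not $(a^{(k)})^\ell$. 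Consequently $N_{\delta^{(k)}_\ell}(\widetilde R_\iiv\cap\widetilde R_\jjv)$ is of order $(b^{(k)}/a^{(k)})^{zs_\mathcal{H}}$, which is \emph{not} bounded uniformly in $\ell$. Your claimed uniform bound $(1+K_0)(K+o(1))K_1^{s_\mathcal{H}}$ fails precisely for these deep-prefix $\jjv$, and so does the ratio estimate that follows.

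The paper's proof repairs this by stratifying the sum over $\jjv$ according to $z=|\iiv\wedge\jjv|\in\{0,\ldots,\ell-1\}$: at each level there are at most $(N^{(k)}/M^{(k)})^{\ell-z}$ words, each contributing at most $(C+o(1))K_1^{s_\mathcal{H}}(b^{(k)}/a^{(k)})^{zs_\mathcal{H}}$ boxes. Summing the resulting geometric series yields the constant
\[
K_3=\frac{K_1^{s_\mathcal{H}}}{\dfrac{M^{(k)}}{N^{(k)}}\Bigl(\dfrac{b^{(k)}}{a^{(k)}}\Bigr)^{s_\mathcal{H}}-1},
\]
and $K_3<1$ is \emph{equivalent} to \eqref{ass:IntersectingBoxes}. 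This is where the factor $1+K_1^{s_\mathcal{H}}$ actually comes from; it has nothing to do with separating the term $\jjv=\iiv$ as you suggest.
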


\begin{proof}
Fix $\jjv\in\Sigma^{(k,\ell)}$ such that $|\jjv\wedge\iiv|=z$, where we count $\iiv_m,\jjv_m\in\mathcal{M}_{\mathbf{v}_k}$ as one symbol. Thus, $z\in\{0,1,\ldots,\ell-1\}$. Since $\mathcal{F}$ satisfies transversality, then so do all of its subsystems, in particular $\mathcal{G}^{(k,\ell)}$ as well. Hence,
\begin{equation*}
|\proj_x\big(\widetilde{R}_{\iiv}\cap \widetilde{R}_{\jjv}\big)| \leq K_1 \big(b^{(k)}\big)^{z} \big(a^{(k)}\big)^{\ell-z},
\end{equation*}
see Figure~\ref{fig:BoxOverlap}. This together with \eqref{eq:infectedBoxes} and Proposition~\ref{prop:exactN_delta} yields that
\begin{equation*}
N_{\delta^{(k)}_{\ell}}(\widetilde{R}_{\iiv}\cap \widetilde{R}_{\jjv}) \leq (C+o(1)) K_1^{s_{\mathcal{H}}} \left(\frac{b^{(k)}}{a^{(k)}}\right)^{z s_{\mathcal{H}}}.
\end{equation*}
Since $\mathcal{G}^{(k,\ell)}$ has uniform vertical fibres, it follows that $\#\{\jjv\in\Sigma_{\iiv}^{\sim}:\, |\jjv\wedge\iiv|=z\}\leq(N^{(k)}/M^{(k)})^{\ell-z}$. Thus from a simple union bound we get
\begin{align*}
N_{\delta^{(k)}_{\ell}}\Big(\widetilde{R}_{\iiv}\cap \bigcup_{\jjv\in\Sigma_{\iiv}^{\sim}}\widetilde{R}_{\jjv}\Big) &\leq \sum_{z=0}^{\ell-1} \left(\frac{N^{(k)}}{M^{(k)}}\right)^{\ell-z} (C+o(1)) K_1^{s_{\mathcal{H}}} \left(\frac{b^{(k)}}{a^{(k)}}\right)^{z s_{\mathcal{H}}} \\
&= \underbrace{\frac{K_1^{s_{\mathcal{H}}}}{\frac{M^{(k)}}{N^{(k)}} \big(\frac{b^{(k)}}{a^{(k)}}\big)^{s_{\mathcal{H}}} -1 }}_{=:K_3} (C+o(1))\! \left( \!\!\left(\frac{b^{(k)}}{a^{(k)}}\right)^{\ell s_{\mathcal{H}}} \!\!\!-\! \left(\frac{M^{(k)}}{N^{(k)}}\right)^\ell \right) \!\leq K_3 N_{\delta^{(k)}_{\ell}}\big(\widetilde{R}_{\iiv}\big),
\end{align*}
where the last inequality holds if $N^{(k)}/M^{(k)}\leq (b^{(k)}/a^{(k)})^{ s_{\mathcal{H}}}$. This holds, because \eqref{ass:IntersectingBoxes} is an even stronger assumption. Furthermore, simple arithmetic shows that $K_3<1$ if and only if \eqref{ass:IntersectingBoxes} holds.
\end{proof}

\begin{center}
	\begin{figure}[h]
		\centering
		\includegraphics[width=.85\linewidth]{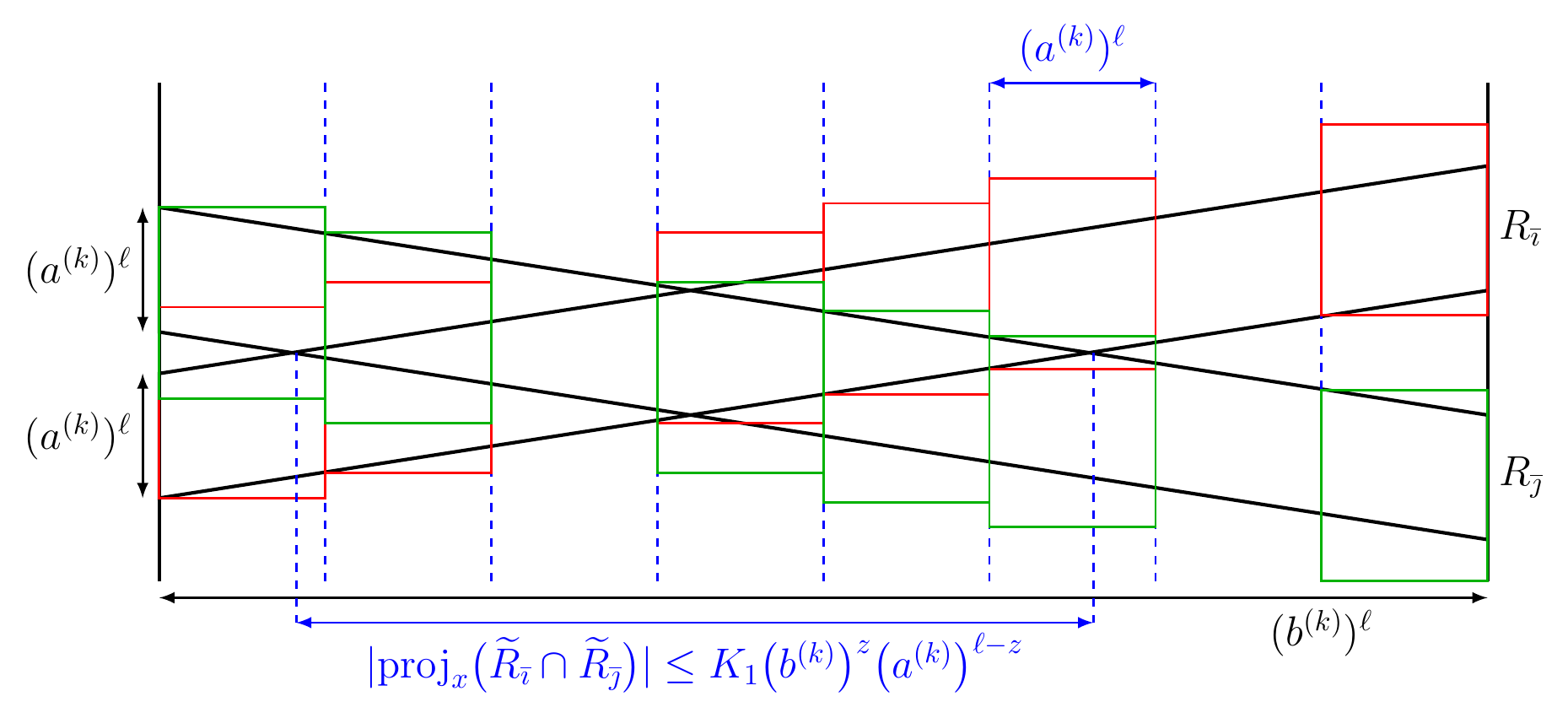}
		\caption{Intersecting parallelograms $R_{\iiv}$ and $R_{\jjv}$ in the proof of Lemma \ref{lemma:CountingIntersections}.}\label{fig:BoxOverlap}
	\end{figure}
\end{center}

\subsection{Proof of Theorem~\ref{thm:BoxwOverlaps}}\label{subsec:ProofBoxDim}

Throughout the proof, $s$ is the target box dimension defined as the solution of \eqref{a74}: $\sum_{i=1}^{N} b_{i}^{s_{\mathcal{H}}}a_{i}^{s-s_{\mathcal{H}}}=1$. Fix $\varepsilon>0$. We work with the subsystem $\mathcal{G}^{(k,\ell)}$ defined in Subsection~\ref{subsec:BoxPrelim}. It will be enough to cover the subset
\begin{equation*}
\bigcup_{\iiv\in \mathcal{G}^{(k,\ell)}} f_{\iiv}(\Lambda)\subseteq \Lambda,
\end{equation*}
with boxes of size $\delta^{(k)}_{\ell}:=(a^{(k)})^{\ell}$. Recall $\widetilde{R}_{\iiv}= (\proj_x(f_{\iiv}(\Lambda))\times[0,1])\cap (f_{\iiv}([0,1]^2))$.

\begin{proof}[Conclusion of proof assuming condition (i) of Theorem~\ref{thm:BoxwOverlaps}]
Assume $\mathcal{F}$ generates a shifted TGL carpet $\Lambda$ for which $\mathcal{H}$ satisfies Hochman's Exponential Separation Condition and the columns independently satisfy ROSC. In this case it is enough to use the definition of box dimension to bound $N_{\delta^{(k)}_{\ell}}\big(\widetilde{R}_{\iiv}\big)\geq C_{\varepsilon}(b^{(k)}/a^{(k)})^{\ell(s_{\mathcal{H}}-\varepsilon)}$ for some constant $C_{\varepsilon}$ depending only on $\varepsilon$. Recall from Lemma~\ref{lemma:DiagHomoSubsys} that $s^{(k)}\to s$. We choose $k$ so large that $s^{(k)}\geq s-\varepsilon$ and we bound
\begin{align}
\liminf_{\ell\to\infty}\frac{\log N_{\delta^{(k)}_{\ell}}(\Lambda)}{-\log \delta^{(k)}_{\ell}} &\geq \liminf_{\ell\to\infty} \frac{\log\big( M^{(k,\ell)}  (N^{(k)}/M^{(k)})^{\ell} (b^{(k)}/a^{(k)})^{\ell (s_{\mathcal{H}}-\varepsilon)} \big)}{-\ell \log a^{(k)}} \label{eq:liminfDimB}\\
&\geq \underbrace{ \frac{\log N^{(k)}}{-\log a^{(k)}} + \left(1-\frac{\log b^{(k)}}{\log a^{(k)}}\right)s_{\mathcal{H}} }_{=s^{(k)}\geq s-\varepsilon}-\varepsilon  \geq s-2\varepsilon, \nonumber
\end{align}
where for the second inequality we substituted the lower bound for $M^{(k,\ell)}$ from \eqref{eq:BoundMkl}. Letting $\varepsilon\searrow0$ yields $\underline{\dim}_{\mathrm B}\Lambda\geq s$ as claimed.
\end{proof}

\begin{proof}[Conclusion of proof assuming condition (ii) of Theorem~\ref{thm:BoxwOverlaps}]
For the remainder we assume that $\mathcal{F}$ has non-overlapping columns, satisfies transversality and \eqref{cond:BoxDimCond}:
\begin{equation*}
h_{\widetilde{\mathbf p}}-h_{\widetilde{\mathbf q}} < s_{\mathcal{H}}(\log\langle \mathbf{b} \rangle_{\widetilde{\mathbf p}}-\log\langle \mathbf{a} \rangle_{\widetilde{\mathbf p}}),
\end{equation*}
where $h_{\widetilde{\mathbf p}}=-\log\langle \widetilde{\mathbf{p}}\rangle_{\widetilde{\mathbf p}}$ and $\widetilde{p}_i=b_i^{s_{\mathcal{H}}}a_i^{s-s_{\mathcal{H}}}$. We need to check that condition \eqref{ass:IntersectingBoxes} of Lemma~\ref{lemma:CountingIntersections} is satisfied, since it ensures that a positive proportion of the boxes needed to cover $f_{\iiv}(\Lambda)$ are not intersected by any boxes covering $f_{\jjv}(\Lambda)$ for $\jjv\neq\iiv$.
\begin{claim}\label{claim:TechCondBox}
Assumption \eqref{cond:BoxDimCond} implies condition \eqref{ass:IntersectingBoxes} of Lemma~\ref{lemma:CountingIntersections} for all $k$ large enough.
\end{claim}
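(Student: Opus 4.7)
The plan is to take the logarithm of \eqref{ass:IntersectingBoxes}, divide everything by $V^{(k)}$, and verify that each normalized quantity converges as $k\to\infty$ to precisely the corresponding term appearing in \eqref{cond:BoxDimCond}. Since \eqref{cond:BoxDimCond} is a \emph{strict} inequality between the two limits, it will automatically be inherited by \eqref{ass:IntersectingBoxes} for all sufficiently large $k$.

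First I would record the elementary asymptotics of the parameters attached to $\mathbf{v}_k$. From $v_{i,k}=\lfloor k\widetilde{p}_i\rfloor$ and $\sum_i\widetilde{p}_i=1$ one gets $k-N\leq V^{(k)}\leq k$, so $V^{(k)}\to\infty$ and $v_{i,k}/V^{(k)}\to\widetilde{p}_i$. Consequently $V^{(k)}_{\ih}/V^{(k)}\to\sum_{j\in\mathcal{I}_{\ih}}\widetilde{p}_j=\widetilde{q}_{\ih}$. Applied to $\log a^{(k)}=\sum_r v_{r,k}\log a_r$ and $\log b^{(k)}=\sum_r v_{r,k}\log b_r$, this gives
\begin{equation*}
\frac{\log a^{(k)}}{V^{(k)}}\longrightarrow \log\langle\mathbf{a}\rangle_{\widetilde{\mathbf p}},\qquad
\frac{\log b^{(k)}}{V^{(k)}}\longrightarrow \log\langle\mathbf{b}\rangle_{\widetilde{\mathbf p}}.
\end{equation*}
For the cardinalities $N^{(k)}=\#\mathcal{M}_{\mathbf{v}_k}$ and $M^{(k)}$, I would feed the bounds of Claim \ref{claim:subsys}(iii) through the same normalization: dividing
\[-N\log(V^{(k)}+1)+V^{(k)}h_{\overline{\mathbf v}_k}\leq\log N^{(k)}\leq V^{(k)}h_{\overline{\mathbf v}_k}\]
by $V^{(k)}$ and using continuity of the entropy together with $\overline{\mathbf v}_k\to\widetilde{\mathbf p}$ yields $\log N^{(k)}/V^{(k)}\to h_{\widetilde{\mathbf p}}=-\log\langle\widetilde{\mathbf p}\rangle_{\widetilde{\mathbf p}}$, and analogously $\log M^{(k)}/V^{(k)}\to h_{\widetilde{\mathbf q}}=-\log\langle\widetilde{\mathbf q}\rangle_{\widetilde{\mathbf q}}$.

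To finish, take logarithms in \eqref{ass:IntersectingBoxes} to rewrite it as
\begin{equation*}
\log N^{(k)}-\log M^{(k)}+\log(1+K_1^{s_{\mathcal H}})<s_{\mathcal H}\bigl(\log b^{(k)}-\log a^{(k)}\bigr),
\end{equation*}
divide by $V^{(k)}\to\infty$, and note that the constant term $\log(1+K_1^{s_{\mathcal H}})/V^{(k)}\to0$. The above asymptotics show that the two sides converge, respectively, to $-\log\langle\widetilde{\mathbf p}\rangle_{\widetilde{\mathbf p}}+\log\langle\widetilde{\mathbf q}\rangle_{\widetilde{\mathbf q}}$ and to $s_{\mathcal H}(\log\langle\mathbf{b}\rangle_{\widetilde{\mathbf p}}-\log\langle\mathbf{a}\rangle_{\widetilde{\mathbf p}})$, and the strict inequality between these two limits is exactly \eqref{cond:BoxDimCond}. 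Hence the inequality \eqref{ass:IntersectingBoxes} holds for all $k$ large enough.

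There is no real obstacle here; the statement is essentially a continuity/limiting argument. The only minor point to watch is that the additive logarithmic errors in Claim \ref{claim:subsys}(iii) must be controlled against the linear growth $V^{(k)}\sim k$, which is immediate from $V^{(k)}\to\infty$, and that the constant $\log(1+K_1^{s_{\mathcal H}})$ on the left of \eqref{ass:IntersectingBoxes} is indeed $O(1)$ and thus disappears after normalizing by $V^{(k)}$.
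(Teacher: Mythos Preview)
Your proposal is correct and follows essentially the same approach as the paper: take logarithms in \eqref{ass:IntersectingBoxes}, normalize (you divide by $V^{(k)}$, the paper divides by $k$, but since $k-N\le V^{(k)}\le k$ this is immaterial), and invoke the asymptotics $\log a^{(k)}\sim k\log\langle\mathbf a\rangle_{\widetilde{\mathbf p}}$, $\log b^{(k)}\sim k\log\langle\mathbf b\rangle_{\widetilde{\mathbf p}}$, $\log N^{(k)}=k h_{\widetilde{\mathbf p}}+o(k)$, $\log M^{(k)}=k h_{\widetilde{\mathbf q}}+o(k)$ together with the strictness of \eqref{cond:BoxDimCond}.
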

\begin{proof}[Proof of Claim~\ref{claim:TechCondBox}]
We know from Subsection~\ref{subsec:BoxPrelim} that
\begin{align*}
\log a^{(k)} &= k \log\langle \mathbf{a} \rangle_{\widetilde{\mathbf p}} +O(1),\;  \log N^{(k)} = k h_{\widetilde{\mathbf p}} +o(k), \\
\log b^{(k)} &= k \log\langle \mathbf{b} \rangle_{\widetilde{\mathbf p}} +O(1),\; \log M^{(k)} = k h_{\widetilde{\mathbf q}} +o(k).
\end{align*}
Taking the logarithm of each side of \eqref{ass:IntersectingBoxes}, substituting these values and dividing by $k$ gives
\begin{equation*}
h_{\widetilde{\mathbf p}}-h_{\widetilde{\mathbf q}} + \frac{1}{k}\log (1+K_1^{s_{\mathcal{H}}}) < s_{\mathcal{H}}(\log\langle \mathbf{b} \rangle_{\widetilde{\mathbf p}}-\log\langle \mathbf{a} \rangle_{\widetilde{\mathbf p}}),
\end{equation*}
with an error of $o(1)$ as $k\to\infty$ on either side. The second term on the left hand side also tends to zero as $k\to\infty$, thus \eqref{cond:BoxDimCond} indeed implies \eqref{ass:IntersectingBoxes} for large $k$.
\end{proof}

The conclusion of the proof of Theorem~\ref{thm:BoxwOverlaps} is now analogous to the calculation of~\eqref{eq:liminfDimB} with the exception that we need the precise value of $N_{\delta^{(k)}_{\ell}}\big(\widetilde{R}_{\iiv}\big)$ from \eqref{eq:PreciseN(R)} and we can use $\mathcal{G}^{(k,\ell)}=(\mathcal{G}^{(k)})^{\ell}$, so the number of columns $M^{(k,\ell)}=(M^{(k)})^{\ell}$. Choose $k$ so large that $s^{(k)}\geq s-\varepsilon$ and condition \eqref{ass:IntersectingBoxes} hold simultaneously. Using Lemma~\ref{lemma:CountingIntersections} we can basically repeat the calculation of~\eqref{eq:liminfDimB}
\begin{equation*}
\liminf_{\ell\to\infty}\frac{\log N_{\delta^{(k)}_{\ell}}(\Lambda)}{-\log \delta^{(k)}_{\ell}} \geq \liminf_{\ell\to\infty} \frac{\log\big( (N^{(k)})^{\ell} (1-K_3)(C+o(1))(b^{(k)}/a^{(k)})^{\ell s_{\mathcal{H}}} \big)}{-\ell \log a^{(k)}} = s^{(k)}.
\end{equation*}
This concludes the proof of Theorem~\ref{thm:BoxwOverlaps}.
\end{proof}

\subsection{Proof of Theorem \ref{b62}}\label{subsec:ProofBoxHausd}
The theorem claims that for a shifted TGL carpet $\Lambda$
\begin{equation*}
(i)\, \dim_{\rm H}\Lambda=\dim_{\rm B}\Lambda \qquad (ii)\, s_{\mathcal{H}}=\dim_{\mathrm H}\nu_{\widetilde{\mathbf q}} \qquad (iii)\, \sum_{j\in \mathcal{I}_{\ih} }a_j^{s-s_{\mathcal{H}}}=1 \text{ for every } \ih \in[M]
\end{equation*}
are equivalent, provided ROSC and No Dimension Drop (NDD, recall Definition~\ref{def:34}) hold. We show that $(i)\Leftrightarrow(iii),\; (iii)\Rightarrow (ii)$ and $(ii)\Rightarrow(i)$.

Proof of $(i)\Leftrightarrow(iii)$. Let $\widetilde{ \Lambda}$ be the GL brother of $\Lambda$, recall Definition \ref{def:GLBrother}. For a $\mathbf{p}\in\mathcal{P}_0$ let $\widetilde{\nu}_\mathbf{p}$ denote the push forward of the Bernoulli measure $\mu_{\mathbf p}$ on $\widetilde{ \Lambda}$. We have $\dim_{\mathrm H}\nu_\mathbf{p}=\dim_{\mathrm H}\widetilde{\nu}_\mathbf{p}$ for every $\mathbf{p}\in\mathcal{P}_0$. Indeed, in the beginning of Section~\ref{sec:dimH_lowerbound} we proved $\dim_{\mathrm H}\nu_\mathbf{p}=D(\mathbf{p})$ assuming ROSC and NDD, furthermore, Gatzouras--Lalley proved $\dim_{\mathrm H}\widetilde{\nu}_\mathbf{p}=D(\mathbf{p})$ \cite[Proposition~3.3]{GatzourasLalley92}. Hence, $\dim_{\mathrm H}\Lambda=\dim_{\mathrm H}\widetilde{ \Lambda}$. Also, assuming NDD, $s_{\mathcal{H}}$ is the unique real which satisfies $\sum_{\ih=1}^M r_{\ih}^{s_{\mathcal{H}}}=1$. This implies $\dim_{\mathrm B}\Lambda=\dim_{\mathrm B}\widetilde{ \Lambda}$. The analogous claim of $(i)\Leftrightarrow(iii)$ for $\widetilde{\Lambda}$ was proved in \cite[Theorem~4.6]{GatzourasLalley92}. Thus $(i)\Leftrightarrow(iii)$ in our setting as well.

Proof of $(iii)\Rightarrow(ii)$. Condition $(iii)$ implies that the vector $\widetilde{\mathbf{q}}$ is simply $\widetilde{q}_{\ih}=r_{\ih}^{s_{\mathcal{H}}}$ for $\ih\in[M]$, where $r_{\ih}=b_j$ if $j\in\mathcal{I}_{\ih}$. NDD is assumed, thus $\dim_{\mathrm H}\nu_{\widetilde{\mathbf q}} = h_{\widetilde{\mathbf{q}}}/\chi^1_{\widetilde{\mathbf{q}}}= s_{\mathcal{H}}\chi^1_{\widetilde{\mathbf{q}}}/\chi^1_{\widetilde{\mathbf{q}}}=s_{\mathcal{H}}$.

Proof of $(ii)\Rightarrow(i)$. We can use Claim~\ref{claim:LedrappierforBox} and \eqref{eq:dimHmuBaranyKaenmaki} to see that
\begin{equation*}
0 \leq \dim_{\mathrm B}\Lambda-\dim_{\mathrm H}\Lambda \leq \dim_{\mathrm B}\Lambda - \dim_{\mathrm H} \nu_{\widetilde{\mathbf p}} = \left(1-\chi_{\widetilde{\mathbf p}}^1 / \chi_{\widetilde{\mathbf p}}^2\right)\left(s_{\mathcal{H}} - \dim_{\mathrm H} \nu_{\widetilde{\mathbf q}}\right).
\end{equation*}
Clearly, $(ii)$ implies $\dim_{\rm H}\Lambda=\dim_{\rm B}\Lambda$. This concludes the proof of Theorem~\ref{b62}.

\section{Examples}\label{sec:ex}


We now treat the examples presented in Subsection \ref{subsec:OurContrib} in detail.

We do not calculate numerically the exact value of the dimensions for the TGL carpet of Figure \ref{fig:GLandTGL}, rather just comment why $\dim_{\rm H}\Lambda<\dim_{\rm B}\Lambda<\dim_{\rm{Aff}}\Lambda$. It satisfies the ROSC, thus its dimensions are equal to its GL brother. Clearly, the IFSs on $[0,1]$ generated from a vertical line in each of the columns do not have the same dimension. Hence, the third condition of \eqref{eq:dimB=Hiff} of Theorem~\ref{b62} does not hold. Furthermore, $\dim_{\rm B}\Lambda_{\mathcal{H}}<1$ because there is an empty column. Thus, Corollary \ref{cor:dimB=dimA} implies that $\dim_{\rm B}\Lambda<\dim_{\rm{Aff}}\Lambda$.

Except for the "$X\equiv X$" example, all the other ones of Subsection \ref{subsec:OurContrib} satisfy $\Lambda_{\mathcal{H}}=[0,1]$, hence Corollary~\ref{cor:dimB=dimA} implies $\dim_{\rm B}\Lambda=\dim_{\rm{Aff}}\Lambda$.

\subsection{The self-affine smiley: a non diagonally homogeneous example}\label{subsec:smiley}

The smiley is constructed from the TGL IFS
\begin{equation*}
\mathcal{F}=\left\{f_i(x)=\begin{pmatrix} b & 0 \\ d_i & a_i \end{pmatrix}x+t_i\right\}_{i=1}^8,
\end{equation*}
where $b=0.2,\, a_1=\ldots=a_5=0.1,\, a_6=a_7=a_8=0.13$ and the off-diagonal elements $d_1=-0.2,\, d_2=-0.1,\, d_3=d_7=d_8=0,\, d_4=0.1,\, d_5=d_6=0.2$. The translations were chosen so that
the mouth is constructed from $f_1,\ldots,f_5$, the nose from $f_6$ and the eyes from $f_7$ and $f_8$. It is non diagonally homogeneous since the mouth is thinner than the nose and eyes.
Clearly, $\Lambda$ does not have uniform vertical fibres, thus Theorem \ref{b62} implies $\dim_{\rm H}\Lambda<\dim_{\rm B}\Lambda$. The numerical values of the dimensions given in Figure~\ref{fig:smiley} were obtained using \textit{Wolfram Mathematica 11.2}. The box dimension was calculated from $\sum_{i=1}^{N} b_{i}^{s_{\mathcal{H}}}a_{i}^{s-s_{\mathcal{H}}}=1$, recall~\eqref{a74}, while the maximization of $D(\mathbf{p})$ \eqref{def:D(p)} gave the Hausdorff dimension.

\subsection{Example for \texorpdfstring{$\dim_{\rm H}\Lambda=\dim_{\rm B}\Lambda$}{dimHLambda=dimBLambda}}\label{subsec:Ex2}
Define the matrices
\begin{equation*}
A_1:= \begin{pmatrix} 1/3 & 0 \\ 0 & a \end{pmatrix},\; A_2:=\begin{pmatrix} 1/3 & 0 \\ 1/2-a & a \end{pmatrix},\; A_3:=\begin{pmatrix} 1/3 & 0 \\ a-1/2 & a \end{pmatrix}.
\end{equation*}
For $a\in(0,1/3)$ define the IFS $\mathcal{F}_a$ consisting of
\begin{align*}
f_1(\underline{x}) &= A_1 \underline{x} + \begin{pmatrix} 1/3 \\ 0 \end{pmatrix}, &f_2(\underline{x}) &= A_1\underline{x} + \begin{pmatrix} 1/3 \\ 1-a \end{pmatrix}, &f_3(\underline{x}) &= A_2\underline{x} + \begin{pmatrix} 0 \\ 1/2 \end{pmatrix}, \\
f_4(\underline{x}) &= A_2\underline{x} + \begin{pmatrix} 2/3 \\ 0 \end{pmatrix}, &f_5(\underline{x}) &= A_3\underline{x} + \begin{pmatrix} 0 \\ 1/2-a \end{pmatrix}, &f_6(\underline{x}) &= A_3\underline{x} + \begin{pmatrix} 2/3 \\ 1-a \end{pmatrix}.
\end{align*}
The attractor $\Lambda_a$ is shown in Figure \ref{fig:carpetex2} for $a=3/10$. Falconer and Miao showed in \cite{FalconerMiao07} how to calculate the box dimension and later B\'ar\'any in \cite{barany15_LYformula} showed that the same value is a lower bound for the Hausdorff dimension. Hence, $\dim_{\rm H}\Lambda_a=\dim_{\rm B}\Lambda_a$.

Alternatively, we can now argue that $\Lambda_a$ is a diagonally homogeneous TGL carpet for every $a\in(0,1/3)$ satisfying ROSC with uniform vertical fibres. Hence, our results apply. After some basic arithmetic, the dimension formula simplifies to
\begin{equation}\label{eq:dimHExample}
\dim_{\rm H}\Lambda_a =\dim_{\rm B}\Lambda_a= 1-\dfrac{\log 2}{\log a}.
\end{equation}

\subsection{Overlapping example}\label{subsec:overlappingex}
With a modification of the translation vectors in the previous example, we construct a carpet with overlapping cylinders, see Figure \ref{fig:carpetex2}. Define
\begin{align*}
f_1(\underline{x}) &= A_1 \underline{x} + \begin{pmatrix} 1/3 \\ 1/4 \end{pmatrix}, &f_2(\underline{x}) &= A_1\underline{x} + \begin{pmatrix} 1/3 \\ 3/4-a \end{pmatrix}, &f_3(\underline{x}) &= A_2\underline{x} + \begin{pmatrix} 0 \\ 1/4 \end{pmatrix}, \\
f_4(\underline{x}) &= A_2\underline{x} + \begin{pmatrix} 2/3 \\ 1/4 \end{pmatrix}, &f_5(\underline{x}) &= A_3\underline{x} + \begin{pmatrix} 0 \\ 3/4-a \end{pmatrix}, &f_6(\underline{x}) &= A_3\underline{x} + \begin{pmatrix} 2/3 \\ 3/4-a \end{pmatrix},
\end{align*}
where the matrices $A_1,A_2$ and $A_3$ are from Subsection~\ref{subsec:Ex2}. For $a\in(0,1/3)$ the attractor $\Lambda_a$ is a diagonally homogeneous TGL carpet with uniform vertical fibres and non-overlapping columns. Transversality must be satisfied in order to apply our results. It would suffice to check \eqref{a79} in Lemma~\ref{a87}, but in fact the constant $K_1$ in Definition~\ref{def:separations} of transversality can be directly bounded in this example.

\begin{claim}\label{claim:CheckTrans}
Transversality holds for every $a<1/6$ with
\begin{equation*}
K_1< \frac{1/9-a/3}{(1/2-a)(1/3-2a)}.
\end{equation*}
\end{claim}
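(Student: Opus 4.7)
The plan is to estimate the angle difference between any two $n$-th level parallelograms sitting in a common column and to deduce the transversality bound from an elementary planar estimate. Set $e_\ell:=d_{i_\ell}-d_{j_\ell}$.

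First I would dispose of the trivial case $\phi(i_1)=\phi(j_1)=2$. In the middle column both $f_1$ and $f_2$ have $d=0$, so $R_1$ and $R_2$ are the axis-parallel rectangles $[1/3,2/3]\times[1/4,1/4+a]$ and $[1/3,2/3]\times[3/4-a,3/4]$, which have disjoint interiors for every $a<1/4$. Consequently $R_{i_1\ldots i_n}^\circ\cap R_{j_1\ldots j_n}^\circ=\emptyset$ for every $n$ in this case, and we may assume from here on that $\phi(i_1)=\phi(j_1)\in\{1,3\}$. Then $i_1\ne j_1$ forces $|e_1|=1-2a$, while $|e_\ell|\leq 1-2a$ for every $\ell\geq 1$.

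The second step is a general geometric observation: two $n$-th level parallelograms in the same column share a common horizontal strip of width $3^{-n}$, and at each $x$ in this strip the parallelogram $R_{i_1\ldots i_n}$ occupies a vertical segment of length exactly $a^n$ whose lower endpoint is an affine function of $x$ with slope $\tan\gamma_{i_1\ldots i_n}$. The two vertical segments overlap precisely on an $x$-interval of length at most
\begin{equation*}
\frac{2a^n}{|\tan\gamma_{i_1\ldots i_n}-\tan\gamma_{j_1\ldots j_n}|},
\end{equation*}
so matters reduce to a lower bound on the angle difference.

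The core estimate uses \eqref{a91} and \eqref{a93}: since $b_i\equiv 1/3$ and $a_i\equiv a$, one has
\begin{equation*}
\tan\gamma_{i_1\ldots i_n}-\tan\gamma_{j_1\ldots j_n}=3\sum_{\ell=1}^n e_\ell(3a)^{\ell-1}.
\end{equation*}
The reverse triangle inequality combined with $|e_1|=1-2a$ and $|e_\ell|\leq 1-2a$ gives, for every $a<1/6$,
\begin{equation*}
|\tan\gamma_{i_1\ldots i_n}-\tan\gamma_{j_1\ldots j_n}|>3(1-2a)\Bigl(1-\sum_{\ell=1}^\infty (3a)^\ell\Bigr)=\frac{3(1-2a)(1-6a)}{1-3a},
\end{equation*}
and substitution into the geometric bound yields the value of $K_1$ stated in the claim. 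The main obstacle is ensuring that the geometric tail $\sum(3a)^\ell$ does not swamp the leading term, which fails as soon as $a\geq 1/6$: this is precisely why the restriction $a<1/6$ appears in the statement.
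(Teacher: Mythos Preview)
Your proposal is correct and follows essentially the same route as the paper's proof: both bound the slope (or slope difference) of the long sides of $R_{\iiv}$ from below via a geometric series in $a/b=3a$, and then convert this into an overlap bound by an elementary planar estimate. The paper fixes $i_1=3$, $j_1=5$ by symmetry, bounds $\min_{\iiv}\tan\gamma_{\iiv}$ from below (the same computation you carry out), and then invokes a ``simple geometric exercise'' to get $K_1\leq(\min_{\iiv}\tan\gamma_{\iiv})^{-1}$; you instead bound the difference $\tan\gamma_{\iiv}-\tan\gamma_{\jjv}$ directly and make the geometric step explicit via the $2a^n/|\tan\gamma_{\iiv}-\tan\gamma_{\jjv}|$ estimate, arriving at the identical constant. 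Your separate treatment of the middle column is also correct and slightly more explicit than the paper, which dismisses that case with the hypothesis $R_{\iiv}\cap R_{\jjv}\neq\emptyset$.
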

\begin{proof}
For brevity we write $d:=1/2-a$ and $b=1/3$. Let $\iiv$ and $\jjv$ be two words of length $n$ such that $i_1\neq j_1$ and $\phi(i_1)\ldots\phi(i_n)=\phi(j_1)\ldots\phi(j_n)$. Since $R_{\iiv} \cap R_{\jjv}\neq\emptyset$ and due to the symmetry in the construction, we may assume $i_1=3$ and $j_1=5$, hence $d_{i_1}=d$. A simple geometric exercise gives that $K_1\leq (\min_{\iiv}\, \tan \gamma_{\iiv})^{-1}$, where $\tan \gamma_{\iiv}=d_{\iiv} / b_{\iiv}$. We need a lower bound for $\tan \gamma_{\iiv}$. From \eqref{a93} we get that
\begin{equation*}
\tan \gamma_{i_1 \dots i_n}=\frac{d_{i_1 \dots i_n}}{b_{i_1 \dots i_n}} = \frac{1}{a}\,\sum_{\ell=1}^nd_{i_\ell}\Big(\frac{a}{b}\Big)^\ell = \frac{1}{a}\left( \frac{da}{b}+\sum_{\ell=2}^nd_{i_\ell}\Big(\frac{a}{b}\Big)^\ell\right).
\end{equation*}
This is minimal if $d_{i_\ell}=-d$ for every $\ell\geq2$. Thus, we obtain the lower bound
\begin{equation*}
\tan \gamma_{i_1 \dots i_n}\geq \frac{d}{b} \left( 1- \sum_{\ell=2}^n\Big(\frac{a}{b}\Big)^{\ell-1}\right) \geq \frac{d}{b} \left(1-\frac{a/b}{1-a/b}\right) = \frac{d(b-2a)}{b(b-a)}.
\end{equation*}
This remains positive iff $a<b/2=1/6$. Substituting $d$ and $b$ gives the bound for $K_1$.
\end{proof}
\begin{corollary}
For every $a<1/6:\; \dim_{\rm H}\Lambda_a=\dim_{\rm B}\Lambda_a=1-\log2/\log a$.
\end{corollary}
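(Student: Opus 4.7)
The proof will be a verification that all hypotheses of Theorems~\ref{thm:maindimresult} and~\ref{thm:BoxwOverlaps} (under option (ii)) are satisfied for this family, followed by an appeal to Corollary~\ref{cor:dimH_BMcarpet}. Let me lay out the steps.

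First, I would observe that $\Lambda_a$ is a diagonally homogeneous TGL carpet with $N=6$ maps, $M=3$ columns, common width $b=1/3$ and common height $a$. Each column contains $N_{\ih}=2$ maps, so $N/M=2\in\mathbb{N}$ and the carpet has uniform vertical fibres in the sense of Definition~\ref{def:UniformFibreDiagHomo}. The columns themselves do not overlap (the three column strips are $[0,1/3]$, $[1/3,2/3]$, $[2/3,1]$), so $\Lambda_a$ is an actual TGL carpet, and the horizontal IFS is $\mathcal{H}=\{x/3,\,x/3+1/3,\,x/3+2/3\}$, which satisfies the strong separation property and hence HESC trivially.

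Next, transversality is exactly the content of Claim~\ref{claim:CheckTrans}, which holds for all $a<1/6$. It remains to verify conditions~\eqref{cond:main} and~\eqref{cond:BoxDimCond}. Since $\Lambda_a$ is diagonally homogeneous with uniform vertical fibres, the discussion following~\eqref{cond:DiagHomoCase} shows that both conditions collapse to the single inequality~\eqref{a0095}, namely
\begin{equation*}
\frac{\log N}{\log M}<\frac{\log a}{\log b}.
\end{equation*}
Substituting $N=6$, $M=3$, $b=1/3$ this reads $\log 6/\log 3<\log a/\log(1/3)$, i.e.\ $\log 6<-\log a$, which is precisely $a<1/6$. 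Thus both Theorem~\ref{thm:maindimresult}(ii) and Theorem~\ref{thm:BoxwOverlaps}(ii) apply.

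Finally, Corollary~\ref{cor:dimH_BMcarpet} gives explicit formulas. A direct computation yields
\begin{equation*}
\dim_{\rm H}\Lambda_a=\frac{1}{-\log b}\log\sum_{\jh=1}^{M}N_{\jh}^{\log b/\log a}=\frac{\log(3\cdot 2^{\log(1/3)/\log a})}{\log 3}=1-\frac{\log 2}{\log a},
\end{equation*}
and similarly
\begin{equation*}
\dim_{\rm B}\Lambda_a=\frac{\log N}{-\log a}+\Big(1-\frac{\log b}{\log a}\Big)\frac{\log M}{-\log b}=\frac{\log 6-\log 3}{-\log a}+1=1-\frac{\log 2}{\log a}.
\end{equation*}
Equality of the two dimensions is also predicted a priori by the uniform-vertical-fibre clause of Corollary~\ref{cor:dimH_BMcarpet}. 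All the analytical work has been done in the quoted theorems, so the main (and only) nontrivial step was the transversality bound of Claim~\ref{claim:CheckTrans}; the rest is bookkeeping in the diagonally homogeneous setting.
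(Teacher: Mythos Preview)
Your proof is correct and follows essentially the same approach as the paper: verify transversality via Claim~\ref{claim:CheckTrans}, reduce conditions~\eqref{cond:main} and~\eqref{cond:BoxDimCond} to the single inequality~\eqref{a0095} using the uniform-vertical-fibre simplification, and then read off the dimension from Corollary~\ref{cor:dimH_BMcarpet}. The paper's proof is more terse but identical in content.
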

\begin{proof}
For uniform vertical fibres both conditions \eqref{cond:main} and \eqref{cond:BoxDimCond} simplify to
\begin{equation*}
\frac{\log a}{\log b}>\frac{\log N}{\log M},\; \text{ which is satisfied here iff } a\in(0,1/6).
\end{equation*}
Thus, Claim~\ref{claim:CheckTrans} and Corollary~\ref{cor:dimH_BMcarpet} together imply that for every $a\in(0,1/6)$ we have $\dim_{\rm H}\Lambda_a=\dim_{\rm B}\Lambda_a=1-\log2/\log a$.
\end{proof}

\subsection{Example \texorpdfstring{"$X\equiv X$"}{"XX"}}\label{subsec:ExXequivX}
This diagonally homogeneous carpet, recall Figure \ref{fig:XequalX}, is a modification of the previous from Subsection \ref{subsec:overlappingex} in order to show an overlapping example for which all dimensions are different. Indeed, clearly it does not have uniform vertical fibres and there are empty columns.

The main diagonal of each matrix in the TGL IFS is $b_i\equiv b =0.28$ and $a_i\equiv a$. The off-diagonal elements are either $d_i=\pm(1/2-a)$ or $0$. The translation vectors were chosen so that $\Lambda_a$ is symmetric on both lines $x=1/2$ and $y=1/2$. In Figure \ref{fig:XequalX} $a=0.045$.

Transversality for the system can be checked the same way as in Claim \ref{claim:CheckTrans}, to obtain that transversality holds for every $a<b/2=0.14$ with
\begin{equation*}
K_1<\frac{0.28(0.28-a)}{(1/2-a)(0.28-2a)}.
\end{equation*}

\begin{corollary}
We have $\dim_{\mathrm H}\Lambda_a<\dim_{\mathrm B}\Lambda_a<\dim_{\mathrm{Aff}}\Lambda_a$, where
\begin{align*}
\dim_{\mathrm H}\Lambda_a &= 0.78556\cdot\log \Big(2\cdot 2^{\frac{1.27297}{-\log a}}+3^{\frac{1.27297}{-\log a}}\Big),\; &\text{for every } a&<0.10405\ldots\,, \\
\dim_{\mathrm B}\Lambda_a &= \frac{0.84730}{-\log a} +0.86303,\; &\text{for every } a&<0.10254\ldots\,, \\
\dim_{\mathrm{Aff}}\Lambda_a &= 1+\frac{0.67294}{-\log a},\; &\text{for every } a&<0.28\,.
\end{align*}
\end{corollary}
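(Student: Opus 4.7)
The plan is to verify that the IFS for $\Lambda_a$ falls under the scope of Theorems~\ref{thm:maindimresult} and \ref{thm:BoxwOverlaps} in the diagonally homogeneous setting, and then to read off the three dimension values from Corollary~\ref{cor:dimH_BMcarpet} and from \eqref{def:dimA}. Reading off from Figure~\ref{fig:XequalX}, the IFS has $N=7$ maps all with $(b_i,a_i)=(0.28,a)$, distributed among $M=3$ non-empty columns of sizes $(N_1,N_2,N_3)=(2,2,3)$, together with some empty columns (so that $\Lambda_\mathcal{H}\subsetneq[0,1]$). Because the non-empty columns have total width $3\cdot 0.28=0.84<1$, by symmetry of the construction the three maps of $\mathcal{H}$ may be placed with disjoint first-level cylinders, so $\mathcal H$ satisfies the SSP and hence HESC; it follows that $s_{\mathcal H}=\log 3/(-\log 0.28)=0.86303\ldots<1$.

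For the affinity dimension, since $\tilde s_x=\log 7/(-\log 0.28)\approx1.529>1$, equation \eqref{def:dimA} reads $7\cdot 0.28\cdot a^{s_A-1}=1$, giving $s_A=1+(\log 7+\log 0.28)/(-\log a)=1+0.67294/(-\log a)$ for every $a\in(0,0.28)$. For the box dimension I would first invoke the analog of Claim~\ref{claim:CheckTrans} (the same geometric computation, since the nonzero off-diagonal entries are again $\pm(1/2-a)$ and direction-$x$ dominates) to conclude that transversality holds for all $a<b/2=0.14$. Then, since the system is diagonally homogeneous, condition \eqref{cond:BoxDimCond} reduces to the second inequality in \eqref{cond:DiagHomoCase}; substituting $N=7,M=3$ and $\widetilde{q}_{\ih}=N_{\ih}/7$ with $(N_1,N_2,N_3)=(2,2,3)$, we get
\begin{equation*}
\frac{\log 7}{\log 3}+1+\frac{(4/7)\log(2/7)+(3/7)\log(3/7)}{\log 3}<\frac{\log a}{\log 0.28},
\end{equation*}
which, after numerical simplification, is equivalent to $a<0.10254\ldots\,$. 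Corollary~\ref{cor:dimH_BMcarpet} then yields the stated closed form $\dim_{\mathrm B}\Lambda_a = 0.84730/(-\log a)+0.86303$.

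For the Hausdorff dimension the same transversality estimate applies, and condition \eqref{cond:main} is, by Proposition~\ref{prop:diaghomoCondition}, equivalent to $\log b/\log a<x_0$, where $x_0$ is the unique solution in $(0,1)$ of $R(x)=1$ with
\begin{equation*}
R(x)=x+\bigl(r(x)-1\bigr)^{-1},\qquad r(x)=\frac{(2\cdot 2^x+3^x)\log(2\cdot 2^x+3^x)}{2x\cdot 2^x\log 2+x\cdot 3^x\log 3}.
\end{equation*}
A numerical root-finding step gives $x_0\approx0.5625$, and the bound $\log b/\log a<x_0$ is then equivalent to $a<0.28^{1/x_0}=0.10405\ldots\,$. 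Theorem~\ref{thm:maindimresult} combined with Corollary~\ref{cor:dimH_BMcarpet} then yields
\begin{equation*}
\dim_{\mathrm H}\Lambda_a=\frac{1}{-\log 0.28}\log\!\Bigl(2\cdot 2^{\log 0.28/\log a}+3^{\log 0.28/\log a}\Bigr),
\end{equation*}
which is exactly the stated formula after inserting $-\log 0.28=1.27297\ldots$ and $1/(-\log 0.28)=0.78556\ldots\,$. Finally, the strict inequality $\dim_{\mathrm H}\Lambda_a<\dim_{\mathrm B}\Lambda_a$ holds because the column sizes $(2,2,3)$ are not all equal, so uniform vertical fibres fail and the last statement of Corollary~\ref{cor:dimH_BMcarpet} applies; and $\dim_{\mathrm B}\Lambda_a<\dim_{\mathrm{Aff}}\Lambda_a$ follows from Corollary~\ref{cor:dimB=dimA}, since $s_\mathcal{H}=\log 3/(-\log 0.28)<1=\min\{\tilde s_x,1\}$ because of the empty columns.

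The routine parts of this plan are the arithmetic for $s_A$, the substitution into Corollary~\ref{cor:dimH_BMcarpet}, and the verification of transversality (which is the same computation as in Claim~\ref{claim:CheckTrans}). The main obstacle is the condition \eqref{cond:main}: the equation $R(x)=1$ for the non-symmetric column vector $(2,2,3)$ does not admit a closed-form solution, so the threshold $0.10405\ldots$ must be obtained by numerical root-finding using Proposition~\ref{prop:diaghomoCondition}, and one must verify that all three thresholds $0.10405,\,0.10254,\,0.28$ lie below the transversality bound $b/2=0.14$ so that Theorems~\ref{thm:maindimresult} and \ref{thm:BoxwOverlaps} apply wherever their respective conclusions are asserted.
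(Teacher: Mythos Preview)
Your proposal is correct and follows essentially the same route as the paper's proof: apply Corollary~\ref{cor:dimH_BMcarpet} and \eqref{def:dimA} for the formulas, obtain the Hausdorff threshold via Proposition~\ref{prop:diaghomoCondition} (the paper reports $x_0=0.56255\ldots$, matching your $x_0\approx0.5625$), and obtain the box-dimension threshold from the second inequality in \eqref{cond:DiagHomoCase}. Your write-up is in fact more detailed than the paper's, since you also spell out the reasons for the strict inequalities via Corollary~\ref{cor:dimH_BMcarpet} and Corollary~\ref{cor:dimB=dimA}; one cosmetic remark is that the column sizes read off from the figure are $(2,3,2)$ rather than $(2,2,3)$, but this is immaterial since all formulas are symmetric in the $N_{\hat\imath}$.
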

\begin{proof}
The formulas are applications of the ones in Corollary \ref{cor:dimH_BMcarpet} and \eqref{def:dimA}. The affinity dimension is independent of overlaps. The bound for $a$ in case of the Hausdorff dimension was obtained using Proposition \ref{prop:diaghomoCondition}. The value $x_0=0.56255\ldots $ for which $R(x_0)=1$ was calculated using \textit{Wolfram Mathematica 11.2}. Then \eqref{cond:main} holds for every $a<b^{1/x_0}=0.10405\ldots\,$. The bound on $a$ for the box dimension simply comes from substituting the parameters into the second inequality in \eqref{cond:DiagHomoCase}.
\end{proof}

\subsection{Negative entries in the main diagonal}\label{subsec:NegEntries}

Throughout we assumed that $0<a_i<b_i<1$. We now comment on letting $a_i$ or $b_i<0$. For convenience, assume ROSC and non-overlapping columns.

\begin{prop}\label{prop:NegDiagonalElements}
The dimension results of Theorems \ref{thm:maindimresult} and \ref{thm:mainBox} extend to TGL carpets satisfying the ROSC under the weaker condition that $0<|a_i|<|b_i|<1$ and for every fixed $\ih\in\{1,\ldots,M\}$ and every $k,\ell\in\mathcal{I}_{\ih}:\; b_k=b_\ell$.
\end{prop}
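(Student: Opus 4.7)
The strategy is a direct adaptation: all arguments of Sections \ref{sec:upperbound}, \ref{sec:dimH_lowerbound} and \ref{sec:boxdim} go through essentially verbatim after replacing $a_i$ and $b_i$ by $|a_i|$ and $|b_i|$ in the appropriate places, because every geometric and analytic ingredient used in those proofs depends on $(a_i,b_i,d_i)$ only through $(|a_i|,|b_i|,|d_i|)$.

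First I would record the geometric facts. The parallelogram $R_{\iiv}=f_{\iiv}([0,1]^2)$ has width $|b_{\iiv}|$ and height $|a_{\iiv}|$, and its slope $d_{\iiv}/b_{\iiv}$ is uniformly bounded by Lemma~\ref{lemma:d/b_bounded}: its proof already uses only the triangle inequality and $\max_i|a_i|/|b_i|<1$, so the estimate $|d_{\iiv}/b_{\iiv}|\leq K_0$ survives in the signed setting. Consequently, the metric $d$ on $\Sigma$ from Lemma~\ref{lemma:MetricSpace}, re-defined as $d(\ii,\jj)=\prod_{k=1}^{|\iih\wedge\jjh|}|b_{i_k}|+\prod_{k=1}^{|\ii\wedge\jj|}|a_{i_k}|$, and the Lipschitz estimate of Lemma~\ref{lemma:dimSigmageqdimLambda} carry over word for word, yielding the upper bound $\dim_{\rm H}\Lambda\leq\alpha^\ast$ via the Gatzouras--Lalley Bernoulli measure \eqref{def:GLBernoulliMeasure}, now written with $|a_i|^{\vartheta}|b_i|^{\lambda-\vartheta}$. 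The upper bound $\overline{\dim}_{\rm B}\Lambda\leq s$ of Theorem~\ref{thm:mainBox} is obtained directly from Fraser~\cite{Fraser12Boxlike}, which is formulated for box-like sets admitting reflections and therefore accommodates signed diagonal entries immediately.

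Next I would handle the algebraic and probabilistic ingredients. A direct inspection of $A_i^TA_i$ shows that its entries $(b_i^2,\, b_id_i,\, a_i^2+d_i^2)$ are unaffected by the signs of $a_i,b_i$, so the singular values depend only on $|a_i|,|b_i|,|d_i|$. This means Lemma~\ref{lemma:Lyap_exp} extends in the form $\chi^1_{\nu_{\mathbf p}}=-\sum p_i\log|b_i|$, $\chi^2_{\nu_{\mathbf p}}=-\sum p_i\log|a_i|$, and the Ledrappier--Young formula (Theorem~\ref{thm:BaranyKaenmaki} of \cite{BARANYAnti201788}, which is stated for arbitrary non-singular matrices) continues to give $\dim_{\rm H}\nu_{\mathbf p}$ in exactly the same shape as in \eqref{eq:dimHmuBaranyKaenmaki}, with $|a_i|,|b_i|$ in place of $a_i,b_i$. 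The separation conditions from Subsection~\ref{subsec:separationconds} are likewise unchanged: HESC and NDD are stated in terms of the maps $h_{\ih}(x)=r_{\ih}x+u_{\ih}$, whose derivative criterion $h'_{\iiv}(0)=h'_{\jjv}(0)$ already incorporates the product of signs, and transversality uses only $|a_{i_1}\cdots a_{i_n}|$. Under the column hypothesis $b_k=b_\ell$ for $k,\ell\in\mathcal{I}_{\ih}$, the horizontal IFS $\mathcal H=\{r_{\ih}x+u_{\ih}\}$ is still well-defined as a self-similar IFS on $\R$ (possibly with orientation-reversing maps), and Hochman's theorem \cite{Hochman_Annals14} applies to such self-similar IFSs.

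Finally I would verify that the identification $H=0$ in Section~\ref{sec:dimH_lowerbound} is transferred. The reduction \eqref{r85}\,$\Rightarrow$\,\eqref{r84} is symbolic and sign-free; Claim~\ref{r78} uses only the WAUC property of $\mathcal H$; and Proposition~\ref{r69} rests on transversality, the exact-dimensionality of $\nu_{\mathbf q}$, and the vertical-distance estimate in Claim~\ref{r46}, all of which are phrased in terms of widths $|b_{\iiv}|$, $x$-projection lengths, and cylinder counts that are insensitive to signs. The same remark applies to the counting argument of Lemma~\ref{lemma:CountingIntersections} underlying Theorem~\ref{thm:BoxwOverlaps}. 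The hard part will be the careful bookkeeping in the formula \eqref{a93} for $d_{\iiv}$, where cancellations between signed summands are now possible; the point is that Lemma~\ref{lemma:d/b_bounded} provides a universal upper bound on $|d_{\iiv}/b_{\iiv}|$ regardless of those cancellations, which is exactly what all subsequent arguments (the skewness bound, transversality in \eqref{r53}, the cover of $V_{\ell}^m$ in Lemma~\ref{r57}) actually require.
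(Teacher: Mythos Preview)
Your proposal is correct and follows essentially the same approach as the paper's sketch: both observe that all geometric quantities (parallelogram widths/heights, Lyapunov exponents, the metric on $\Sigma$, the Gatzouras--Lalley Bernoulli measures) depend only on $|a_i|,|b_i|$, invoke Fraser's box-like framework for $\dim_{\rm B}$, and use the B\'ar\'any--K\"aenm\"aki Ledrappier--Young formula together with ROSC for $\dim_{\rm H}$. The paper packages the sign argument slightly differently---writing $A_i=\widetilde A_i\cdot L$ with $L$ a coordinate reflection and noting $L([-1,1]^2)=[-1,1]^2$---but this is the same content; also note that since the proposition assumes ROSC, your discussion of transversality, Claim~\ref{r46}, Lemma~\ref{r57} and Lemma~\ref{lemma:CountingIntersections} is more than is required.
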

\begin{proof}[Sketch of proof] All lower triangular matrix can be written
\begin{equation*}
\begin{pmatrix} b_i & 0 \\ d_i & a_i \end{pmatrix} = \begin{pmatrix} |b_i| & 0 \\ \bar d_i & |a_i| \end{pmatrix}\cdot L 
\end{equation*}
where $\bar d_i=d_i$ or $-d_i$ and $L$ is a reflection on one or both of the coordinate axis. Since $L([-1,1]^2)=[-1,1]^2$, such compositions fit into the framework of Fraser's box-like sets \cite{Fraser12Boxlike}. Furthermore, the direction-$x$ dominates property is preserved. Hence, the proof of the box dimension from Section \ref{sec:boxdim} immediately extends to this setting.

The lower bound for the Hausdorff dimension follows from B\'ar\'any--K\"{a}enm\"{a}ki \cite{BARANYAnti201788} cited in Theorem \ref{thm:BaranyKaenmaki}. Since in any given column all $b_i$ have the same sign and we have ROSC, the column structure is preserved for every level. Thus, the dimension of the projected measure $\nu_{\mathbf q}$ is not affected by the negative $a_i, b_i$. For the upper bound, we can modify the metric defined on $\Sigma$ in Lemma \ref{lemma:MetricSpace} to be
\begin{equation*}
d(\ii,\jj):= \prod_{k=1}^{|\iih\wedge\jjh|}|b_{i_k}| + \prod_{k=1}^{|\ii\wedge\jj|}|a_{i_k}|.
\end{equation*}
One can easily check that $d(\ii,\jj)$ is indeed a metric and the natural projection $\Pi:\Sigma\to\Lambda$ is Lipschitz. Only the lengths of the sides of a parallelogram are important, its orientation is not. The Bernoulli measure defined in \eqref{def:GLBernoulliMeasure} can be modified by again putting $a_i$ and $b_i$ in absolute value. The original proof of Gatzouras and Lalley \cite{GatzourasLalley92} does not use that $a_i, b_i>0$, only that $0<|a_i|<|b_i|<1$.
\end{proof}

In general, if a column has $b_i$ of different signs, then the initial column structure can easily be destroyed. This is true even if $|b_i|\equiv b$ and possibly empty columns also have width~$b$, see Figure \ref{fig:badcolumns}. This motivates us to call a TGL carpet \textit{symmetric} if $N_{\ih}=N_{M-\ih+1}$ for $\ih=1,\ldots,\lfloor M/2\rfloor$ (empty columns are allowed) and $|b_i|\equiv 1/M$. For a particular symmetric carpet, in the next subsection, we show that the dimension formulas hold.
\vspace{-0.5cm}
\begin{figure}[h]
	\centering
	\includegraphics[scale=1.35]{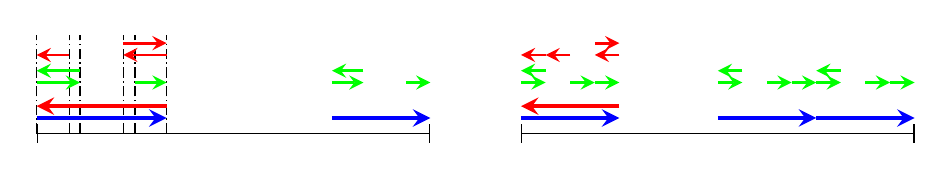}\vspace{-0.5cm}
	\caption{Orientation reversing maps generally destroy the column structure. First and second level cylinders of the horizontal IFS $\widetilde{\mathcal{H}}$ are shown, arrows indicating the orientation. Left: different $|b_i|$, right: equal $|b_i|$ and gap as well.}\label{fig:badcolumns}
\end{figure}

\subsection{A family of self-affine continuous curves}\label{subsec:exZipper}

Let $a\in(0,1/5]$ and $d=(1-5a)/4$. Define the matrices
\begin{equation*}
A=\begin{pmatrix} 1/3 & 0 \\ d & a \end{pmatrix} \;\text{ and }\; A^-=\begin{pmatrix} -1/3 & 0 \\ 0 & a \end{pmatrix}.
\end{equation*}
$A^-$ is orientation reversing. We introduce the parameterized family of IFSs $\mathcal{F}_a$ given by the functions
\begin{align*}
f_1(\underline{x}) &= A\underline{x}, &f_2(\underline{x}) &= A\underline{x} + \begin{pmatrix} 1/3 \\ a+d \end{pmatrix}, &f_3(\underline{x}) &= A^-\underline{x} + \begin{pmatrix} 2/3 \\ 2(a+d) \end{pmatrix}, \\
& &f_4(\underline{x}) &= A\underline{x} + \begin{pmatrix} 1/3 \\ 3a+2d \end{pmatrix}, &f_5(\underline{x}) &= A\underline{x} + \begin{pmatrix} 2/3 \\ 4a+3d \end{pmatrix}.
\end{align*}
The translation vectors are chosen so that $f_1(\underline{0})=\underline{0},\, f_5((1,1))=(1,1)$ and $f_i((1,1))=f_{i+1}(\underline{0})$. This ensures that $\Lambda_a$ is a continuous curve in $\R^2$, see Figure \ref{fig:zipperex}. Curves satisfying this property are also called affine zippers in the literature, see for example \cite{AseevTetenov2003,BKK18_zipper}.
Clearly, the attractor $\Lambda_a$ is a symmetric, diagonally homogeneous TGL carpet satisfying the ROSC for every value of $a$.
For $a=1/5$ all cylinders $R_{\ii|n}$ are rectangles, however it is not a classical Bedford-McMullen carpet, since $A^-$ contains a negative element.
\begin{prop}\label{prop:zipper}
For every $a\in(0,1/5]$, the Hausdorff and box dimension of $\Lambda_a$ are given by the continuous, strictly increasing functions
\begin{equation*}
\dfrac{1}{\log 3} \cdot \log\Big(2+3^{\frac{\log3}{-\log a}}\Big) = \dim_{\rm H} \Lambda_a < \dim_{\rm B} \Lambda_a = 1+\frac{\log(3/5)}{\log a} .
\end{equation*}
\end{prop}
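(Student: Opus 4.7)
The idea is to verify that $\Lambda_a$ fits into our TGL framework (despite the orientation-reversing map $f_3$), apply Corollary~\ref{cor:dimH_BMcarpet} for the Hausdorff dimension and Theorem~\ref{thm:BoxwOverlaps}(i) for the box dimension, and finally check continuity/monotonicity of the closed-form expressions.

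First I would record the structural features of $\mathcal{F}_a$. All five maps have $|b_i|=1/3$ and $a_i=a$, so $\mathcal{F}_a$ is symmetric and diagonally homogeneous in the sense of Subsection~\ref{subsec:NegEntries}. Projecting to the $x$-axis, the column structure is determined by $|b_i|$ and by the horizontal translations: $f_1$ occupies $[0,\tfrac13]$ (column 1), $f_2,f_3,f_4$ all project onto $[\tfrac13,\tfrac23]$ (column 2, irrespective of the sign of $b_3=-\tfrac13$), and $f_5$ occupies $[\tfrac23,1]$ (column 3). Hence $M=3$, $N=5$, $N_1=N_3=1$, $N_2=3$, and the horizontal IFS $\mathcal{H}=\{x/3+(\ih-1)/3\}_{\ih=1}^3$ has attractor $\Lambda_{\mathcal{H}}=[0,1]$, so $s_{\mathcal{H}}=1$ and HESC is automatic. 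A direct check using $\sum_{j\in\mathcal{I}_2}a_j=3a\leq 3/5<1$ and the stated vertical translations shows that column 2 independently satisfies the ROSC and that the columns of $\mathcal{F}_a$ are non-overlapping, so the whole IFS satisfies the ROSC.

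The main obstacle is the orientation-reversing map $f_3$ sitting inside column 2 next to the orientation-preserving $f_2,f_4$, which means Proposition~\ref{prop:NegDiagonalElements} does not apply verbatim (it requires $b_k=b_\ell$ within a column, not just $|b_k|=|b_\ell|$). I would address this by observing that every ingredient in the proofs of Theorem~\ref{thm:mainresUpperbound}, Theorem~\ref{thm:mainBox}, Theorem~\ref{thm:maindimresult}(i) and Theorem~\ref{thm:BoxwOverlaps}(i) that could a priori depend on the sign of $b_i$ in fact depends only on $|b_i|$: the symbolic metric in Lemma~\ref{lemma:MetricSpace}, Fraser's modified singular value function, the column identification (\textit{since both $\pm b_i$ yield the same horizontal image intervals}) and the horizontal IFS $\mathcal{H}$ all go through unchanged if one replaces $b_i$ by $|b_i|$ throughout. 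Concretely: within column 2 the three maps $f_2,f_3,f_4$ have the same horizontal image $[\tfrac13,\tfrac23]$, so at every level $n$ the column partition of $\Lambda_a$ coincides with that of its GL brother (formed with $|b_i|=1/3$), and the parallelograms $R_{\iiv}$ differ from those of the brother only by possible left–right reflections preserving $|\cdot|$. This extends Proposition~\ref{prop:NegDiagonalElements} to the present setting and lets me invoke Corollary~\ref{cor:dimH_BMcarpet}.

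With the framework in place, Corollary~\ref{cor:dimH_BMcarpet} applied with $b=1/3$, $M=3$, $N=5$, $(N_1,N_2,N_3)=(1,3,1)$ yields
\begin{equation*}
\dim_{\rm H}\Lambda_a=\frac{1}{\log 3}\,\log\Bigl(1+3^{\log 3/(-\log a)}+1\Bigr)=\frac{1}{\log 3}\log\Bigl(2+3^{\frac{\log 3}{-\log a}}\Bigr),
\end{equation*}
and the box dimension equation \eqref{a74} reduces to $5\cdot(1/3)\cdot a^{s-1}=1$, i.e.\ $s=1+\log(3/5)/\log a$. Strict inequality $\dim_{\rm H}\Lambda_a<\dim_{\rm B}\Lambda_a$ is a consequence of the second part of Corollary~\ref{cor:dimH_BMcarpet}, since $N_2=3\neq 1=N_1$ means the carpet does not have uniform vertical fibres.

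It remains to check continuity and strict monotonicity on $(0,1/5]$. Both expressions are manifestly continuous in $a$. For $\dim_{\rm B}\Lambda_a$, differentiating gives $\tfrac{d}{da}\bigl[1+\log(3/5)/\log a\bigr]=-\log(3/5)/(a(\log a)^2)>0$ because $\log(3/5)<0$. For $\dim_{\rm H}\Lambda_a$, setting $u=3^{\log 3/(-\log a)}$ and using $\tfrac{du}{da}=u\cdot\tfrac{(\log 3)^2}{a(\log a)^2}>0$, one differentiates $(\log 3)^{-1}\log(2+u)$ and sees it is strictly positive, giving strict monotonicity. This completes the plan.
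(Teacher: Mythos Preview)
Your argument has a genuine gap in the treatment of the orientation-reversing map $f_3$. You assert that ``at every level $n$ the column partition of $\Lambda_a$ coincides with that of its GL brother'' because $\pm b_i$ give the same horizontal image interval. This is false beyond level one: for instance $f_3\circ f_1$ projects to $[5/9,6/9]$ whereas $f_2\circ f_1$ projects to $[3/9,4/9]$, so the second-level column structure is \emph{not} the same as for a system with all $b_i=+1/3$. Consequently the horizontal IFS $\widetilde{\mathcal{H}}$ is not the three-map system $\{x/3+(\ih-1)/3\}$; it genuinely contains the map $x\mapsto -x/3+2/3$, and the projected measure $(\mathrm{proj}_x)_*\nu_{\mathbf p}$ is \emph{not} obviously the push-forward of a Bernoulli measure on $\Sigma_{\mathcal{H}}$. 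Without this, you cannot read off $\dim_{\rm H}\nu_{\mathbf q}=\log\langle\mathbf{q}\rangle/(-\log 3)$, and the Ledrappier--Young formula does not reduce to $D(\mathbf{p})$.

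The paper repairs this by exploiting the symmetry of the carpet. Since the construction is symmetric under $x\mapsto 1-x$ (columns $1$ and $3$ are interchanged), one may restrict to probability vectors with $p_1=p_5$; the optimal $\mathbf{p}^\ast$ from \eqref{def:optpDiagHomo} automatically has this property. The paper then proves directly, by tracking the parity of the number of occurrences of the symbol $3$ along a word and applying the binomial theorem twice, that for such $\mathbf{p}$ the projected measure $\nu_{\mathbf q}$ \emph{is} the push-forward of the Bernoulli measure $(p_1,p_2+p_3+p_4,p_5)^{\mathbb N}$ on the three-column system. This is the missing ingredient; once it is established, the rest of your computation of $\dim_{\rm H}\Lambda_a$ and $\dim_{\rm B}\Lambda_a$ goes through.
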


\begin{proof}
$A^-$ can be written as the composition of the reflection on the vertical axis with the diagonal matrix $\mathrm{Diag}(1/3,a)$. Hence, the proof of the box dimension carries over without difficulty.

The argument for the Hausdorff dimension follows that in Proposition \ref{prop:NegDiagonalElements}, with an extra argument why the dimension of $\nu_{\mathbf q}$ is not affected by $A^-$.

The symbolic space $\Sigma=\{1,\ldots,5\}^\N$ codes the IFS $\mathcal{F}_a$ on $[0,1]^2$ and $\widetilde{\mathcal{H}}_a$ on $[0,1]$ (recall \eqref{def:verticalIFS}). Fix a $\mathbf{p}=(p_1,\ldots,p_5)\in\mathcal{P}$. Due to the symmetry and diagonally homogeneous property we may assume that $p_1=p_5$. Let $\mu_{\mathbf p}$ be the Bernoulli measure on $\Sigma$ and $\nu_{\mathbf p}=\Pi_\ast\mu_{\mathbf p}$ its push forward. Define the IFS $\mathcal{H}_a:= \{h_i(x)=x/3+(i-1)/3,\, i=1,2,3\}$, which is coded by $\Sigma_{\mathcal{H}}=\{1',2',3'\}^\N$. The map $\phi:  \{1,\ldots,5\}\to\{1',2',3'\}$ is defined
\begin{equation*}
\phi(1) = 1',\;\; \phi(2)=\phi(3)=\phi(4)=2',\;\; \phi(5)=3'.
\end{equation*}
For $\iiv=i_1\ldots i_n\in \{1',2',3'\}^n$ let us denote $J^k(\iiv):= \{j:\, i_j=k,\, j\leq |\iiv|\},\;\#^{k}(\iiv):=|J^k(\iiv)|$ and define  $\nu_{\mathbf q} := (\mathrm{proj}_x)_\ast \nu_{\mathbf p}$. We claim that
\begin{equation}\label{eq:nuqinZipper}
\nu_{\mathbf q}( h_{\iiv}([0,1]))  = p_1^{\#^{1'}(\iiv)+\#^{3'}(\iiv)}\cdot (p_2+p_3+p_4)^{\#^{2'}(\iiv)},
\end{equation}
i.e. $\nu_{\mathbf q}$ is the push forward $(\Pi_{\mathcal{H}})_\ast\mu_{\mathbf q}$ of the Bernoulli measure $\mu_{\mathbf q}$ on $\Sigma_{\mathcal{H}}$ defined by the vector $\mathbf{q}=(q_1,q_2,q_3)=(p_1,p_2+p_3+p_4,p_5)$. This implies that $$\dim_{\rm H}\nu_{\mathbf q}=\frac{\log \langle \mathbf{q} \rangle_{\mathbf{q}} }{-\log 3}.$$
To see \eqref{eq:nuqinZipper}, choose an arbitrary $\iiv\in\{1',2',3',\}^n$. We determine those $\iih\in\{1,\ldots,5\}^n$ for which $\proj_x f_{\iih}([0,1]^2)=h_{\iiv}([0,1])$. For indices $j\in J^{2'}(\iiv)$ we can choose $2,3$ or $4$ in $\iih$. Let $J_{\ell}^{3}(\iih):=\{j:\, \iih_j=3,\, j\leq\ell\leq |\iih|\} \subseteq J^{2'}(\iiv)$. Orientation is reversed at each $j\in J_\ell^3(\iih)$. $|J_{\ell}^{3}(\iih)|$ uniquely determines $\iih_\ell$ if $i_\ell=1'$ or $3'$. Namely, whenever
\begin{equation*}
|J_{\ell}^{3}(\iih)| \text{ is}\begin{cases}
\text{odd, if } i_\ell= 1' \text{ then necessarily } \iih_\ell=5 \text{ and if } i_\ell= 3' \text{ then } \iih_\ell=1;\\
\text{even, if } i_\ell= 1' \text{ then necessarily } \iih_\ell=1 \text{ and if } i_\ell= 3' \text{ then } \iih_\ell=5.
\end{cases}
\end{equation*}
For indices $j\in J^{2'}(\iiv)\setminus J_{|\iih|}^3(\iih)$ we can freely choose $\iih_j=2$ or $4$. These are precisely the $\iih$ for which $\proj_x f_{\iih}([0,1]^2)=h_{\iiv}([0,1])$. Using that $p_1=p_5$, the measure equals
\begin{equation*}
\nu_{\mathbf q}( h_{\iiv}([0,1])) = p_1^{\#^1(\iih)+\#^5(\iih)} \binom{\#^{2'}(\iiv)}{\#^3(\iih)} p_3^{\#^3(\iih)} \binom{\#^{2'}(\iiv)-\#^3(\iih)}{\#^2(\iih)} p_2^{\#^2(\iih)}\cdot p_4^{\#^4(\iih)},
\end{equation*}
which after two applications of the binomial theorem yields $\eqref{eq:nuqinZipper}$.

Finally, we conclude that $\dim_{\rm H}\Lambda_a<\dim_{\rm B}\Lambda_a$ since $\Lambda_a$ does not have uniform vertical fibres.
\end{proof}


\section{Three-dimensional applications}\label{sec:ThreeD}
We can compute the Hausdorff dimension of some self-affine carpets in $\mathbb{R}^3$. We do not aim for full generality, rather just demonstrate how our results can be applied.
Throughout this section we always use the following definitions:
\begin{definition}\label{a02}
	Let  $\mathcal{F}$ be a TGL carpet  on $[0,1]^2$
	of the form \eqref{def:IFS_F}, that is
	\begin{equation*}\label{a0098}
	\mathcal F = \{f_i(\underline{x}):=  A_i \cdot \mathbf{x}
	+t_i\}_{i=1}^N, \text{ where } A_i=\begin{pmatrix}
	b_i & 0 \\ d_i & a_i
	\end{pmatrix} \text{ and } t_i=\begin{pmatrix} t_{i,1} \\ t_{i,2}
	\end{pmatrix}, \ \underline{x}\in[0,1]^2.
	\end{equation*}
	Furthermore, let the vectors $\mathbf{u}=(u_1, \dots ,u_N),\, \mathbf{v}=(v_1, \dots ,v_N),\,\pmb{\lambda}=(\lambda_1, \dots ,p\lambda_N)$ 
	be such that for every $1\leq i\leq N$
	$$
	u_i,v_i\in \mathbb{R} \mbox{ and } \lambda_i\in(-1,1)\setminus\left\{0\right\}.
	$$
	We say that the three dimensional self-affine IFS
	\begin{equation*}\label{a0099}
	\widehat{\mathcal{F}}
	:=
	\left\{F_i(\widehat{\underline{x}}):= \widehat{A}_i
	\cdot \widehat{\underline{x}}
	+\widehat{\mathbf{t}}_i
	\right\}_{i=1}^{N},\;
	\mbox{where }
	\widehat{A}_i= \left(
	\begin{array}{ccc}
	b_i & 0 & 0 \\
	d_i & a_i & 0 \\
	u_i & v_i & \lambda_i \\
	\end{array}
	\right),\
	\widehat{ \mathbf{t}}_i:=\left(
	\begin{array}{c}
	t_{i,1} \\
	t_{i,2} \\
	t_{i,3}\\
	\end{array}
	\right)
	\end{equation*}
	on $[0,1]^3$ is an \texttt{uplift of $\mathcal{F}$} corresponding to $(\mathbf{u},\mathbf{v},\pmb{\lambda})$
	if the following conditions hold:
	\begin{description}
		\item[(C1)] For all $1 \leq i \leq N$ we have
		\begin{equation}\label{a01}
		0<|\lambda_i|<a_i<b_i<1.
		\end{equation}
		\item[(C2)] $\widehat{\mathcal{F}}$ satisfies the ROSC (see Definition \ref{def:GLCarpets}).
	\end{description}

	Let $\Lambda$ and $\widehat{\Lambda}$ be the attractor of $\mathcal{F}$ and $\widehat{\mathcal{F}}$ respectively.
	We write $\Pi$ and $\widehat{\Pi}$ for the natural projection from $\Sigma:=\left\{1, \dots ,N\right\}^{\mathbb{N}}$ to $\Lambda$ and  $\widehat{\Lambda}$ respectively. For a probability vector $\mathbf{p}:=(p_1, \dots ,p_N)$ we set
	$\nu_{\mathbf{p}}:=\Pi_*(\mathbf{p}^{\mathbb{N}})$ and
	$\widehat{\nu}_{\mathbf{p}}:=\widehat{\Pi}_*(\mathbf{p}^{\mathbb{N}})$
\end{definition}

We obtain as a corollary of \cite[Theorem 2.3, Proposition 5.8 and Proposition 5.9]{BARANYAnti201788} that 
\begin{corollary}[B\'ar\'any, K\"aenm\"aki]
	Assume that for an uplift $\widehat{\mathcal{F}}$ of $\mathcal{F}$ we have $\mathbf{u}=\mathbf{v}=\mathbf{0}$ and all components of $\pmb{\lambda}$ are equal to the same $\lambda$. Moreover, assume that for a probability vector $\mathbf{p}=(p_1, \dots ,p_N)$ we have $h_{\mathbf p}<\chi_{\mathbf p}^1+\chi_{\mathbf p}^2$ (i.e. the entropy is less than the sum of the Lyapunov exponents). Then $\dim_{\rm H} \nu_{\mathbf{p}} =  \dim_{\rm H} \widehat{\nu}_{\mathbf{p}}$.
	
\end{corollary}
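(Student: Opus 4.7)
The plan is to sandwich $\dim_{\rm H}\widehat{\nu}_{\mathbf p}$ between $\dim_{\rm H}\nu_{\mathbf p}$ from both sides.

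For the easy inequality $\dim_{\rm H}\nu_{\mathbf p}\le \dim_{\rm H}\widehat{\nu}_{\mathbf p}$, observe that $\mathbf{u}=\mathbf{v}=\mathbf{0}$ makes the matrix $\widehat{A}_i$ block-diagonal, with blocks $A_i$ on the $xy$-plane and the scalar $\lambda$ on the $z$-axis. Consequently the orthogonal projection $\mathrm{proj}_{xy}:\mathbb R^3\to\mathbb R^2$ conjugates $F_i$ to $f_i$, so $(\mathrm{proj}_{xy})_*\widehat{\nu}_{\mathbf p}=\nu_{\mathbf p}$, and since $\mathrm{proj}_{xy}$ is Lipschitz the desired inequality follows.

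For the reverse inequality $\dim_{\rm H}\widehat{\nu}_{\mathbf p}\le\dim_{\rm H}\nu_{\mathbf p}$, I would apply the three-dimensional Ledrappier--Young formula of B\'ar\'any--K\"aenm\"aki \cite[Theorem~2.3]{BARANYAnti201788}. Hypothesis~(C1) guarantees a simple and strictly ordered Lyapunov spectrum $\chi^1_{\mathbf p}=-\log\langle\mathbf b\rangle_{\mathbf p}<\chi^2_{\mathbf p}=-\log\langle\mathbf a\rangle_{\mathbf p}<\chi^3_{\mathbf p}=-\log|\lambda|$ for $\widehat{\nu}_{\mathbf p}$, with the Oseledets flag aligned with the standard coordinate flag $\mathbb R\,e_1\subset \mathbb R^2\subset \mathbb R^3$. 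Denote by $d_1,d_2,d_3\in[0,1]$ the partial dimensions in the Ledrappier--Young decomposition, so that $\dim_{\rm H}\widehat{\nu}_{\mathbf p}=d_1+d_2+d_3$ and, by exact-dimensionality \cite[Theorem~2.2]{BARANYAnti201788}, $h_{\mathbf p}=\chi^1_{\mathbf p}d_1+\chi^2_{\mathbf p}d_2+\chi^3_{\mathbf p}d_3$. The decoupling $\mathbf u=\mathbf v=\mathbf 0$ and $\lambda_i\equiv\lambda$ implies that the $z$-coordinate of $\widehat{\Pi}(\mathbf i)$ is an additive function of the symbolic address $\mathbf i$ alone; together with the ROSC hypothesis~(C2), \cite[Propositions~5.8--5.9]{BARANYAnti201788} then identify the measurable partitions induced on $\Sigma$ by $\mathrm{proj}_x\circ\widehat{\Pi}$ and $\mathrm{proj}_{xy}\circ\widehat{\Pi}$ with the partitions $\beta$ and $\alpha$ of \eqref{eq:PartitionsAlphaBeta}, yielding $d_1=\dim_{\rm H}\nu_{\mathbf q}$ and $d_1+d_2=\dim_{\rm H}\nu_{\mathbf p}$.

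Applying the 2D Ledrappier--Young formula (Theorem~\ref{thm:BaranyKaenmaki}, with $H=0$ by ROSC) rearranges to $\chi^1_{\mathbf p}d_1+\chi^2_{\mathbf p}d_2=h_{\mathbf p}$; comparing with the 3D entropy identity above forces $d_3=0$, whence $\dim_{\rm H}\widehat{\nu}_{\mathbf p}=\dim_{\rm H}\nu_{\mathbf p}$. The hypothesis $h_{\mathbf p}<\chi^1_{\mathbf p}+\chi^2_{\mathbf p}$ enters precisely to guarantee that $d_2$ does not saturate at $1$: if it did, the 2D formula would not equate $\chi^1_{\mathbf p}d_1+\chi^2_{\mathbf p}d_2$ to $h_{\mathbf p}$, leaving a strictly positive excess entropy that would force $d_3>0$ along the $z$-axis and thereby strictly increase the dimension. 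The main obstacle I anticipate lies in Step~2 -- the identification of the abstract partial dimensions $d_1,d_2$ with the concrete values $\dim_{\rm H}\nu_{\mathbf q}$ and $\dim_{\rm H}\nu_{\mathbf p}-\dim_{\rm H}\nu_{\mathbf q}$, which is the content of \cite[Propositions~5.8--5.9]{BARANYAnti201788} and demands a careful tracking of measurable partitions and their conditional entropies, using essentially that the $z$-fibre depends on the symbolic address but not on $(x,y)$.
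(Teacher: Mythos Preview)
The paper does not give its own proof here; it simply records the statement as a direct consequence of \cite[Theorem~2.3, Propositions~5.8 and~5.9]{BARANYAnti201788}, so there is no written argument to compare yours against beyond that citation. Your easy inequality $\dim_{\rm H}\nu_{\mathbf p}\le\dim_{\rm H}\widehat\nu_{\mathbf p}$ via the Lipschitz projection $\mathrm{proj}_{xy}$ is correct, and is in fact all that the paper actually uses later (see the lower-bound half of the proof of Theorem~\ref{a0087}).

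For the reverse direction your framework is the right one, but there is a genuine gap. You invoke the two-dimensional Ledrappier--Young formula ``with $H=0$ by ROSC'' to obtain $\chi^1_{\mathbf p}d_1+\chi^2_{\mathbf p}d_2=h_{\mathbf p}$. However, ROSC is hypothesis~(C2) for the \emph{three-dimensional} uplift $\widehat{\mathcal F}$, not for $\mathcal F$; indeed the entire motivation of this section is that the planar system $\mathcal F$ may overlap. The two-dimensional formula from Theorem~\ref{thm:BaranyKaenmaki} therefore only gives $\chi^1_{\mathbf p}d_1+\chi^2_{\mathbf p}d_2=h_{\mathbf p}-H_{2D}$ with $H_{2D}=-\int\log\mu_{\alpha(\mathbf i)}([i_1])\,d\mu_{\mathbf p}(\mathbf i)$ possibly positive, and comparison with the three-dimensional entropy identity yields $\chi^3_{\mathbf p}d_3=H_{2D}$, not $d_3=0$. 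The hypothesis $h_{\mathbf p}<\chi^1_{\mathbf p}+\chi^2_{\mathbf p}$ does not by itself force $H_{2D}=0$, so as written your argument does not close. What one must actually extract from \cite[Propositions~5.8--5.9]{BARANYAnti201788} in the block-diagonal, homogeneous-$\lambda$ setting with (C1)--(C2) is a direct statement that the third partial dimension vanishes once $h_{\mathbf p}<\chi^1_{\mathbf p}+\chi^2_{\mathbf p}$; that is precisely the content you have not isolated, and your explanation of the role of the entropy hypothesis (preventing saturation of $d_2$) does not address it.
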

That is, the computation of the Hausdorff dimension of a Bernoulli measure for the three-dimensional non-overlapping system $\widehat{\mathcal{F}}$ is traced back to the corresponding two-dimensional possibly overlapping system $\mathcal{F}$. In this way, if $\mathcal{F}$ satisfies the conditions of Theorem~\ref{thm:maindimresult} then we can determine $\dim_{\rm H} (\widehat{\nu}_{\mathbf p})$ for the three-dimensional system.

In general, we cannot approximate the Hausdorff dimension
of a self-affine  set in $\mathbb{R}^3$ by the Hausdorff dimension of self-affine (or even ergodic) measures (see \cite[Theorem 2.8]{das2017hausdorff}).
However, this is possible in some special cases.

\begin{theorem}\label{a0087}
	Given a diagonally homogeneous  TGL of the form
	\begin{equation*}\label{a0098}
	\mathcal F = \{f_i(\mathbf{x}):=  A \cdot \underline{x}
	+ \mathbf{t}_i\}_{i=1}^N, \text{ where } A=\begin{pmatrix}
	b& 0 \\ d_i & a
	\end{pmatrix} \text{ and } t_i=\begin{pmatrix} t_{i,1} \\ t_{i,2}
	\end{pmatrix}, \ \underline{x}\in[0,1]^2,
	\end{equation*}
	we assume that
	\begin{description}
		\item[(i)] $\mathcal{F}$ has uniform vertical fibres (i.e. each column has the same number of maps).
		\item[(ii)] The projection of $\Lambda$ to the $x$-axis is the whole interval $[0,1]$ (this means that $1/b$ is equal to the number of columns $M$). We assume this to guarantee that the box and affinity dimensions of $\Lambda$ coincide (see Corollary \ref{cor:dimB=dimA}).
		\item[(iii)] Moreover, we assume  that the parameter $a$ is sufficiently small so that
		both conditions \eqref{a0095},
		\eqref{a79} and the transversality condition hold:
		\begin{equation}\label{a0094}
		a<\min\left\{
		b^{\frac{\log N}{\log M}}
		,
		\frac{bd_*}{2+d_*}\right\},
		\end{equation}
		where $d_*$ was defined in Lemma \ref{a87} as
		$$
		d_*:=\min\limits_{1 \leq \jh \leq M \atop \mathcal{P}_{\jh}\ne \emptyset }
		\min\limits_{(k,\ell) \in\mathcal{P}_{\jh} }
		|d_k-d_\ell |,
		$$
		where $(k,\ell) \in\mathcal{P}_{\jh}$ if $f_k([0,1]^2)$ and $f_\ell([0,1]^2)$ belong to the same column and have disjoint interior. 
	\end{description}
	We consider the self-affine IFS $\widehat{\mathcal{F}}$ which is an uplift of $\mathcal{F}$ corresponding to $(\mathbf{u},\mathbf{v},\pmb{\lambda})$ according to Definition~\ref{a02}. That is \eqref{a01} holds and $\mathbf{u}$, $\mathbf{v}$ and
	$\pmb{\lambda}$ are chosen such that
	\begin{equation*}\label{a0093}
	\widehat{\mathcal{F}}
	:=
	\big\{F_i(\widehat{\underline{x}}):= \widehat{A}_i
	\cdot \widehat{\underline{x}}
	+\widehat{\mathbf{t}}_i
	\big\}_{i=1}^{N},
	\mbox{ where }
	\widehat{A}_i=
	\begin{pmatrix}
	b & 0 & 0 \\
	d_i & a & 0 \\
	u_i & v_i & \lambda_i \\
	\end{pmatrix}
	,\
	\widehat{ t}_i:=
	\begin{pmatrix}
	t_{i,1} \\
	t_{i,2} \\
	t_{i,3}\\
	\end{pmatrix},\, \widehat{\underline{x}}\in[0,1]^3
	\end{equation*}                                       
	
	satisfies:
	\begin{itemize}
		\item $F_i\left([0,1]^3\right) \subset [0,1]^3$ holds for all $i\in\left\{1, \dots ,N\right\}$ and
		\item the set $F_i\left([0,1]^3\right)\cap F_j\left([0,1]^3\right)$ has empty interior for all $i\neq j\in\left\{1, \dots ,N\right\}$.
	\end{itemize}

	Let
	$\mathbf{p}:=\Big(\underbrace{1/N, \dots ,1/N}_{N}\Big)$.
	Using the notation of Definition \ref{a02}  we have
	\begin{equation}\label{a0096}
	\dim_{\rm H} \widehat{\nu}_{\mathbf{p}}=    \dim_{\rm H}\widehat{ \Lambda}=\dim_{\rm B}\widehat{ \Lambda}=\dim_{\rm{Aff}}\widehat{ \Lambda}
	=
	1+\frac{\log (Nb)}{-\log a}.
	\end{equation}
\end{theorem}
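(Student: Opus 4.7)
The strategy is to reduce to 2D by the orthogonal projection $\pi\colon \mathbb{R}^3 \to \mathbb{R}^2$, $\pi(x,y,z)=(x,y)$, apply Theorem~\ref{thm:maindimresult} to the planar system $\mathcal{F}$, and then use the affinity dimension to close the chain of inequalities. Since $\pi \circ F_i = f_i \circ \pi$ for every $i$, we have $\pi(\widehat\Lambda)=\Lambda$ and $\pi_*\widehat\nu_{\mathbf p}=\nu_{\mathbf p}$, and because $\pi$ is $1$-Lipschitz this forces $\dim_{\mathrm H}\widehat\nu_{\mathbf p}\geq \dim_{\mathrm H}\nu_{\mathbf p}$. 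The task is then to evaluate $\dim_{\mathrm H}\nu_{\mathbf p}$ in 2D and to match it with $\dim_{\mathrm{Aff}}\widehat\Lambda$.

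To apply Theorem~\ref{thm:maindimresult} to $\mathcal{F}$ I would first verify its hypotheses. Because $\mathrm{proj}_x\Lambda=[0,1]$ and every column has width $b$, we have $b=1/M$, so $\mathcal{H}$ tiles $[0,1]$ exactly; hence $\Delta_n\geq M^{-n}$ and HESC holds trivially. Transversality follows from the second bound in \eqref{a0094} via Lemma~\ref{a87}, while inequality \eqref{cond:main}, in the diagonally homogeneous uniform-fibre setting, reduces exactly to \eqref{a0095}, which is the first bound in \eqref{a0094}. For the uniform Bernoulli vector $\mathbf p=(1/N,\ldots,1/N)$—which is optimal for $D$ by \eqref{def:optpDiagHomo} when $N_{\ih}=N/M$ for every $\ih$—we get $\mathbf q=(1/M,\ldots,1/M)$, and \eqref{def:D(p)} simplifies to
\[
D(\mathbf p) \;=\; \frac{\log M}{-\log b} \;+\; \frac{\log N-\log M}{-\log a} \;=\; 1 + \frac{\log(Nb)}{-\log a}.
\]
Theorem~\ref{thm:maindimresult} therefore yields $\dim_{\mathrm H}\nu_{\mathbf p} = D(\mathbf p) = 1+\log(Nb)/(-\log a)$.

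For the matching upper bound I would compute $\dim_{\mathrm{Aff}}\widehat\Lambda$ directly. The dominance condition \eqref{a01} combined with a 3D extension of Lemma~\ref{lemma:d/b_bounded} shows that the singular values of $\widehat A_{\iiv}$ are uniformly comparable to $b^{|\iiv|}$, $a^{|\iiv|}$ and $\prod_k|\lambda_{i_k}|$. For $s\in[1,2]$ the Falconer singular value function is $\phi^s(\widehat A_i)=b\,a^{s-1}$, so $\sum_{i=1}^N \phi^s(\widehat A_i)=Nba^{s-1}$, and the equation $Nba^{s-1}=1$ solves to $s=1+\log(Nb)/(-\log a)$. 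This $s$ lies in $[1,2]$: $s\geq 1$ because each of the $M$ columns is non-empty (so $Nb=N/M\geq 1$), and $s\leq 2$ because the TGL requirement $\sum_{j\in\mathcal{I}_{\ih}}a_j=(N/M)a\leq 1$ gives $Nab\leq 1$. Hence $\dim_{\mathrm{Aff}}\widehat\Lambda=1+\log(Nb)/(-\log a)$, and the chain
\[
\dim_{\mathrm H}\nu_{\mathbf p}\;\leq\;\dim_{\mathrm H}\widehat\nu_{\mathbf p}\;\leq\;\dim_{\mathrm H}\widehat\Lambda\;\leq\;\dim_{\mathrm B}\widehat\Lambda\;\leq\;\dim_{\mathrm{Aff}}\widehat\Lambda
\]
collapses to the common value $1+\log(Nb)/(-\log a)$, proving \eqref{a0096}. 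All of the substantive content sits in Theorem~\ref{thm:maindimresult}: the planar projection $\mathcal{F}$ is generally overlapping in column, and the transversality assumption together with \eqref{a0095} is precisely what prevents the dimension of $\nu_{\mathbf p}$ from dropping. Once that 2D lower bound is in hand, the 3D conclusion is forced by the Lipschitz projection and the affinity computation, so the main—and essentially only—obstacle is the hypotheses check for the 2D system.
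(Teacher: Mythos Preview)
Your argument is correct and follows essentially the same route as the paper: the lower bound comes from projecting to the plane, verifying the hypotheses of Theorem~\ref{thm:maindimresult} for $\mathcal F$ with the uniform $\mathbf p$ (transversality via Lemma~\ref{a87}, condition~\eqref{cond:main} reducing to~\eqref{a0095}, HESC trivial because the columns tile $[0,1]$), and then using $\dim_{\rm H}\widehat\nu_{\mathbf p}\geq \dim_{\rm H}\nu_{\mathbf p}$; the upper bound comes from the affinity dimension together with a three-dimensional analogue of Lemma~\ref{lemma:d/b_bounded}. The only cosmetic difference is that the paper phrases the upper bound as a direct box-counting estimate (covering each level-$n$ cylinder by $b^n/a^n$ cubes of side $\sim a^n$ using Lemma~\ref{a0092}), whereas you compute the Falconer singular value function; both rest on the same off-diagonal control lemma.
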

To give the upper bound in the proof of this theorem, first we need to extend the scope of Lemma \ref{lemma:d/b_bounded} to $\mathbb{R}^3$.

\begin{lemma}\label{a0092}
	There exists $K_x,K_y$ and $K_z$ such that for an arbitrary $n$ and
	$(i_1, \dots ,i_n)\in(1, \dots ,N)^n$ we have
	\begin{equation*}\label{a0091}
	\widehat{A}_{i_1 \dots i_n} \leq
	\begin{pmatrix}
	b^n  & 0 &  0\\
	K_x \cdot b^n & a^n & 0 \\
	K_y \cdot b^n    & K_z \cdot  b^n & \lambda_{i_1 \dots i_n} \\
	\end{pmatrix},
	\end{equation*}
	that is all the elements of the matrix on the right-hand side are greater than or equal to the corresponding element on the left-hand side.
\end{lemma}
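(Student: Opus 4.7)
The plan is to compute the entries of the product matrix $\widehat{A}_{i_1\ldots i_n}$ by exploiting the lower triangular block structure of $\widehat{A}_i$: the top-left $2\times 2$ block is exactly the matrix $A_i$ from the planar IFS $\mathcal{F}$, and the third row adjoins rank-one data $(u_i,v_i,\lambda_i)$. Consequently the $(1,1),(1,2),(1,3),(2,2),(2,3)$ entries of the product are $b^n, 0, 0, a^n, 0$ respectively, the $(3,3)$ entry is $\lambda_{i_1\ldots i_n}=\prod_k\lambda_{i_k}$, and the $(2,1)$ entry is the already-studied $d_{i_1\ldots i_n}$, for which Lemma~\ref{lemma:d/b_bounded} gives $|d_{i_1\ldots i_n}|\leq K_0 b^n$. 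So we may take $K_x:=K_0$, and only the two new entries $(3,1)$ and $(3,2)$ require any work.

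To extract a recursion for these entries, I would write $\widehat{A}_{i_1\ldots i_n}=\widehat{A}_{i_1}\cdot \widehat{A}_{i_2\ldots i_n}$ and read off, from the definition of $\widehat{A}_{i_1}$, that with $w^{(n)}$ and $z^{(n)}$ denoting the $(3,1)$ and $(3,2)$ entries of $\widehat{A}_{i_1\ldots i_n}$,
\begin{align*}
z^{(n)}(i_1\ldots i_n) &= v_{i_1}\cdot a^{n-1}+\lambda_{i_1}\cdot z^{(n-1)}(i_2\ldots i_n),\\
w^{(n)}(i_1\ldots i_n) &= u_{i_1}\cdot b^{n-1}+v_{i_1}\cdot d_{i_2\ldots i_n}+\lambda_{i_1}\cdot w^{(n-1)}(i_2\ldots i_n).
\end{align*}
Iterating each identity unpacks the closed-form expressions
$$z^{(n)}=\sum_{\ell=1}^{n}\Big(\prod_{k<\ell}\lambda_{i_k}\Big)\,v_{i_\ell}\,a^{n-\ell},\qquad w^{(n)}=\sum_{\ell=1}^{n}\Big(\prod_{k<\ell}\lambda_{i_k}\Big)\bigl[u_{i_\ell}b^{n-\ell}+v_{i_\ell}\,d_{i_{\ell+1}\ldots i_n}\bigr].$$

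Now the bound $|z^{(n)}|\leq K_z b^n$ follows by estimating $|\prod_{k<\ell}\lambda_{i_k}|\leq a^{\ell-1}$ (from assumption (C1)) and $|v_{i_\ell}|\leq V_{\max}:=\max_j|v_j|$, which gives $|z^{(n)}|\leq V_{\max}\cdot n\,a^{n-1}$; since $a<b$, the sequence $n\,a^{n-1}/b^n$ is bounded and supplies $K_z$. The bound $|w^{(n)}|\leq K_y b^n$ is obtained analogously, using in addition $|d_{i_{\ell+1}\ldots i_n}|\leq K_0 b^{n-\ell}$ from Lemma~\ref{lemma:d/b_bounded}, so that
$$|w^{(n)}|\leq (U_{\max}+V_{\max}K_0)\sum_{\ell=1}^{n}a^{\ell-1}b^{n-\ell}\leq \frac{U_{\max}+V_{\max}K_0}{1-a/b}\,b^{n-1},$$
which we absorb into a constant $K_y$. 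The bound $|\lambda_{i_1\ldots i_n}|\leq a^n\leq b^n$ on the $(3,3)$ entry is immediate from (C1). Since all the quantitative steps rest on the already-established Lemma~\ref{lemma:d/b_bounded} together with the strict inequalities $|\lambda_i|<a<b$, there is no genuine obstacle; the only point needing care is to interpret the matrix inequality entrywise as a statement about absolute values (which is the sense the bound is used in later arguments).
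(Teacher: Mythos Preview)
Your proof is correct and follows essentially the same approach as the paper: derive recursions for the third-row entries of the product and bound them using $|\lambda_i|<a<b$ together with Lemma~\ref{lemma:d/b_bounded}. The only cosmetic difference is that you split $\widehat{A}_{i_1\ldots i_n}=\widehat{A}_{i_1}\cdot\widehat{A}_{i_2\ldots i_n}$ from the left, obtaining closed-form sums directly, whereas the paper splits from the right and estimates the resulting recursions inductively; your route is arguably tidier but the underlying idea is identical.
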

\begin{proof}
	For every $n$ and  $(i_1, \dots ,i_n)\in(1, \dots ,N)^n$
	we introduce $x_{i_1 \dots i_n} , y_{i_1 \dots i_n} $ and $z_{i_1 \dots i_n} $ such that
	$$
	\widehat{A}_{i_1 \dots i_n} =
	\begin{pmatrix}
	b^n  & 0 &  0\\
	x_{i_1 \dots i_n}  \cdot b^n & a^n & 0 \\
	y_{i_1 \dots i_n}  \cdot b^n    & z_{i_1 \dots i_n}  \cdot  b^n & \lambda_{i_1 \dots i_n} \\
	\end{pmatrix}.
	$$
	Since the existence of $K_x$ was proved in Lemma~\ref{lemma:d/b_bounded},
	it suffices to prove that $y_{i_1 \dots i_n}$ and
	$z_{i_1 \dots i_n}$ are uniformly bounded in $(i_1, \dots ,i_n)\in\Sigma^*$. To do so, observe that
	\begin{eqnarray}
	\label{a0090}  z_{i_1 \dots i_{n+1}} &=&z_{i_1 \dots i_{n}}\frac{a}{b}+\frac{\lambda_{i_1 \dots i_{n}}}{b^n}\frac{v_{i_{n+1}}}{b},   \\
	\label{a0089}y_{i_1 \dots i_{n+1}}    &=&
	y_{i_1 \dots i_{n}}+ z_{i_1 \dots i_{n}}\frac{d_{i_{n+1}}}{b}
	+\frac{\lambda_{i_1 \dots i_{n}}}{b^n}\frac{u_{i_{n+1}}}{b}.
	\end{eqnarray}
	By \eqref{a01} we obtain from \eqref{a0090}
	that there is an $r\in (0,1)$ and $c>0$ such that
	\begin{equation}\label{a0088}
	z_{i_1 \dots i_n}<c \cdot r^n \mbox{ for all }n \mbox{ and } (i_1, \dots ,i_n)\in\left\{1, \dots ,N\right\}^n.
	\end{equation}
	Namely, we can write down the formula for $z_{i_1 \dots i_n}$ inductively and thus we get that
	$z_{i_1 \dots i_n} \leq \left(a/b\right)^n \cdot \max_i\left\{v_i+n\frac{\max_i\{v_i\}}{b}\right\}$. From here we get that \eqref{a0088} holds.
	This settles the existence of $K_z$.
	Substituting \eqref{a0088}  into \eqref{a0089} and using \eqref{a01} again we obtain the existence of $K_y$. Namely,  the second and third summands in \eqref{a0089} are exponentially small. More precisely,
	$$
	K_y =\max\left\{u_i\right\}+\sum\limits_{n=1}^{\infty }\left(
	c \cdot r^n \cdot \frac{\max\left\{d_i\right\}}{b}+\left(\frac{\max\left\{|\lambda_{i}|\right\}}{b}\right)^n \cdot \frac{\max\left\{u_i\right\}}{b}\right),
	$$
	where all of  the maximums are taken for $i\in\left\{1, \dots ,N\right\}$.
\end{proof}

\begin{proof}[Proof of Theorem \ref{a0087}]\
	\begin{description}
		\item[Lower bound]  Observe that if condition \eqref{a0094} holds then
		it follows from Lemma~\ref{a87} that the transversality condition holds.
		Moreover, as we noted in Section~\ref{subsec:ResDiagHomo},
		condition \eqref{a0094}  also implies that conditions
		\eqref{cond:main} and \eqref{cond:BoxDimCond} hold  when $\mathbf{p}$ is chosen as above to be the uniform vector. In this way the conditions of Theorems~	\ref{thm:maindimresult}  and
		\ref{thm:BoxwOverlaps} are satisfied. As an application of these theorems, we obtain that
		$$\dim_{\rm H} \nu_{\mathbf{p}}=
		\frac{\log N}{-\log a} + \left(1-\frac{\log b}{\log a}\right)\frac{\log M}{-\log b}=
		1+\frac{\log (Nb)}{-\log a}.
		$$
		This implies that
		$$
		1+\frac{\log (Nb)}{-\log a}<
		\dim_{\rm H} \nu_{\mathbf{p}}
		\leq
		\dim_{\rm H} \widehat{\nu}_{\mathbf{p}}
		\leq
		\dim_{\rm H}\widehat{ \Lambda}.
		$$
		\item[Upper bound] 
		It is enough to prove that
		\begin{equation}\label{a0086}
		\dim_{\rm{Aff}}\widehat{ \Lambda} \leq  1+\frac{\log (Nb)}{-\log a}.
		\end{equation}
		This follows from Lemma \ref{a0092} since
		the cylinder $F_{i_1, \dots i_n}\left(\left[0,1\right]^3\right)$ can be covered by
		$N^n \cdot b^n/a^n$
		axes parallel rectangular box of dimensions
		$a^n\times K_x \cdot a^n\times (K_y+K_z) \cdot a^n$. This immediately implies that \eqref{a0086} holds.
	\end{description}
	
\end{proof}

\begin{example}
	Recall the attractor in the center of Figure~\ref{fig:3Dcarpet}. It is defined by an IFS 
	\begin{align*}
	\widehat{\mathcal{F}} &=  \big\{F_i(\widehat{\underline{x}}):= \widehat{A}_i
	\cdot \widehat{\underline{x}}
	+\widehat{\mathbf{t}}_i
	\big\}_{i=1}^{6},
	\mbox{where for } 0<\lambda<a<1/3 \\
	\widehat{A}_1 &= \widehat{A}_5 = 
	\begin{pmatrix}
	1/3 & 0 & 0 \\
	1-a & a & 0 \\
	1-\lambda & 0 & \lambda \\
	\end{pmatrix}
	,\,
	\widehat{A}_2 = \widehat{A}_6 = 
	\begin{pmatrix}
	1/3 & 0 & 0 \\
	a-1 & a & 0 \\
	0 & 0 & \lambda \\
	\end{pmatrix}
	,\,
	\widehat{A}_3 = \widehat{A}_4 = 
	\begin{pmatrix}
	1/3 & 0 & 0 \\
	0 & a & 0 \\
	\lambda-1 & 0 & \lambda \\
	\end{pmatrix}
	.
	\end{align*}
	The translations are chosen appropriately so that $\widehat{\mathcal{F}}$ satisfies the ROSC and the projection to the $xy$-plane looks like the one on the right-hand side of Figure~\ref{fig:3Dcarpet}. If $\lambda<a<1/6$, then the conditions of Theorem~\ref{a0087} hold and we have from \eqref{a0096} that for $\mathbf{p}=(1/6,\ldots,1/6)$
	\begin{equation*}
	\dim_{\rm H} \widehat{\nu}_{\mathbf{p}}=    \dim_{\rm H}\widehat{ \Lambda}=\dim_{\rm B}\widehat{ \Lambda}=\dim_{\rm{Aff}}\widehat{ \Lambda}
	=
	1-\frac{\log 2}{\log a}.
	\end{equation*}
\end{example}

\appendix
\section{No Dimension Drop is equivalent to Weak Almost Unique Coding}\label{app:NDD_WAUC}

In this appendix, we prove that for self-similar IFSs on the line and Bernoulli measures the separation conditions No Dimension Drop (NDD) and Weak Almost Unique Coding (WAUC) are equivalent. We recall notation and definitions.

\subsection*{Notation}
Let $\mathcal H = \{ h_{\ih}(x):=  r_{\ih} x + u_{\ih}\}_{\ih=1}^M$ be a contractive self-similar IFS on the real line with attractor $\Lambda_{\mathcal{H}}$. The symbolic space is $\Sigma_{\mathcal{H}}=\{1,2,\ldots,M\}^\mathbb N$ and the natural projection is $\Pi_{\mathcal{H}}(\iih) := \lim_{n\to\infty} h_{\iih|n}(0)$ for $\iih\in\Sigma_{\mathcal{H}}$. Define a partition of $\Sigma_{\mathcal{H}}$ by
\begin{equation*}
\xi(\iih):= \Pi^{-1}_{\mathcal{H}}\Pi_{\mathcal{H}}(\iih).
\end{equation*}
As we noted earlier in this paper, $\xi$ is a measurable partition of $\Sigma$.
We write $\widehat{\xi}$ for the $\sigma$-algebra generated by the measurable partition $\xi$. Then the elements of  $\widehat{\xi}$
are unions of the elements of  $\widehat{\xi}$.
For a probability vector $\mathbf{q}=(q_1,\ldots,q_M)$ we denote the Bernoulli measure on $\Sigma_{\mathcal{H}}$ by $\mu_{\mathbf q}$.
Then there exists a $\widehat{\Sigma}_\mathcal{H} \subset \Sigma_{\mathcal{H}}$, with $\mu_{\mathbf{q}}(\widehat{\Sigma}_\mathcal{H})=1$ such that for all
$\iih\in \widehat{\Sigma}_\mathcal{H}$ there exists a probability measure
$\mu_{\xi(\iih)}$ defined on $\xi(\iih)$ such that
\begin{itemize}
	\item For all $A \subset \Sigma$ Borel set the mapping $\iih\mapsto \mu_{ \xi(\iih)}(A)$ is $\widehat{\xi}$-measurable and
	\item for all Borel sets $U \subset \Sigma_{\mathcal{H}}$ we have
	\begin{equation}\label{a09}
	\mu_{\mathbf{q}}(U)=\int \mu_{\xi(\iih)}(U)d\mu_{\mathbf{q}}(\iih).
	\end{equation}
\end{itemize}
The push forward measure $\nu_{\mathbf q}=(\Pi_{\mathcal{H}})_{\ast}\mu_{\mathbf q}$ is the self-similar measure with support $\Lambda_{\mathcal{H}}$. The entropy and Lyapunov exponent of the system are
\begin{equation*}
h_{\mu_{\mathbf q}} = -\log \langle\mathbf{q}\rangle_\mathbf{q} \;\text{ and }\; \chi_{\nu_{\mathbf q}} = -\sum_{\ih=1}^M q_{\ih} \log r_{\ih} = -\log \langle \mathbf{r} \rangle_{\mathbf{q}},
\end{equation*}
respectively, where $\langle\mathbf{c}\rangle_{\mathbf{q}}=\prod_{\ih=1}^M c_i^{q_i}$.
Now we recall two separation conditions from Definition \ref{def:34}.
\subsection*{Definitions}
We say that $\mathcal{H}$ has \texttt{No Dimension Drop (NDD)} if for all probability vectors $\mathbf{q}$ with strictly positive entries we have
\begin{equation*}\label{a31}
\dim_{\rm H} \nu_{\mathbf q}=
\frac{h_{\mu_{\mathbf q}}}{\chi_{\nu_{\mathbf q}}}.
\end{equation*}
We say that $\mathcal{H}$ has \texttt{Weak Almost Unique Coding (WAUC)} if for all probability vectors $\mathbf{q}$ with strictly positive entries there exists a set $\mathcal{B}_\mathcal{H}\subset\Sigma_{\mathcal{H}}$ (may depend on $\mathbf q$) for which
\begin{equation*}\label{b98}
\mu_{\mathbf q}(\mathcal{B}_\mathcal{H})=0 \text{ and for every } \iih\in \Sigma_{\mathcal{H}}\setminus\mathcal{B}_\mathcal{H}:\
\#(\xi(\iih)\setminus\mathcal{B}_\mathcal{H})=1.
\end{equation*}

\begin{prop}
	For any self-similar IFS on the line the conditions NDD and WUAC are equivalent.
\end{prop}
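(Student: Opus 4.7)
The plan is to use the Ledrappier--Young formula of Feng--Hu \cite{FengHu09} (Theorem~2.8 and Corollary~4.16) as a bridge between the two conditions. That result asserts that $\nu_{\mathbf q}$ is exact dimensional with
\begin{equation*}
\dim_{\rm H} \nu_{\mathbf q} \;=\; \frac{h_{\mu_{\mathbf q}}-H_{\mathbf q}}{\chi_{\nu_{\mathbf q}}},\qquad H_{\mathbf q}:=-\int\log\mu_{\xi(\iih)}([\ih_1])\,d\mu_{\mathbf q}(\iih)\ge 0.
\end{equation*}
Since $\log\mu_{\xi(\iih)}([\ih_1])\le 0$, NDD is equivalent to $\mu_{\xi(\iih)}([\ih_1])=1$ for $\mu_{\mathbf q}$-a.e.\ $\iih$. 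On the other hand, WAUC will follow once we know that $\mu_{\xi(\iih)}(\{\iih\})=1$ for $\mu_{\mathbf q}$-a.e.\ $\iih$: setting $\mathcal{B}_{\mathcal{H}}$ equal to the complement of this full-measure set, any other $\jjh\in\xi(\iih)\setminus\{\iih\}$ satisfies $\xi(\jjh)=\xi(\iih)$, so $\mu_{\xi(\jjh)}=\mu_{\xi(\iih)}$ is the point-mass at $\iih$, which forces $\mu_{\xi(\jjh)}(\{\jjh\})=0$ and hence $\jjh\in\mathcal{B}_{\mathcal{H}}$. The proposition therefore reduces to proving that if $\mu_{\xi(\iih)}([\ih_1])=1$ for a.e.\ $\iih$, then $\mu_{\xi(\iih)}(\{\iih\})=1$ for a.e.\ $\iih$.

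I plan to upgrade the hypothesis to $\mu_{\xi(\iih)}([\ih_1\cdots\ih_n])=1$ for $\mu_{\mathbf q}$-a.e.\ $\iih$ and every $n\in\mathbb N$; intersecting these countably many full-measure sets and using $\bigcap_n[\ih_1\cdots\ih_n]=\{\iih\}$ then yields the point-mass conclusion. The induction on $n$ combines shift-invariance of $\mu_{\mathbf q}$ with the compatibility identity: for $\mu_{\mathbf q}$-a.e.\ $\iih$ and every Borel $E\subset \Sigma_{\mathcal{H}}$,
\begin{equation*}
\mu_{\xi(\iih)}\bigl([\ih_1]\cap\sigma^{-1}E\bigr)=\mu_{\xi(\iih)}([\ih_1])\cdot\mu_{\xi(\sigma\iih)}(E).
\end{equation*}
Granting this identity and the inductive hypothesis applied to $\sigma\iih$ (which is a.e.\ valid by shift-invariance of $\mu_{\mathbf q}$), we get $\mu_{\xi(\iih)}([\ih_1\cdots\ih_n])=\mu_{\xi(\iih)}([\ih_1])\cdot\mu_{\xi(\sigma\iih)}([\ih_2\cdots\ih_n])=1\cdot 1=1$ almost surely.

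The main obstacle is establishing the compatibility identity, which I plan to derive from Rokhlin's essential uniqueness of disintegration. The geometric input is the fibre identity $\xi(\iih)\cap[\ih_1]=\sigma^{-1}\xi(\sigma\iih)\cap[\ih_1]$: indeed $\jjh\in\xi(\iih)\cap[\ih_1]$ means $h_{\ih_1}(\Pi_{\mathcal{H}}(\sigma\jjh))=h_{\ih_1}(\Pi_{\mathcal{H}}(\sigma\iih))$, which by injectivity of $h_{\ih_1}$ is equivalent to $\sigma\jjh\in\xi(\sigma\iih)$. Using this, I would define a candidate family of fibre measures $\widetilde\mu_{\xi(\iih)}$ that is supported on $\xi(\iih)\cap[\ih_1]$, carries the mass $\mu_{\xi(\iih)}([\ih_1])$, and agrees with the push-forward of $\mu_{\xi(\sigma\iih)}$ under the prepending map $\jjh'\mapsto \ih_1\jjh'$. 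Verifying that $\iih\mapsto\widetilde\mu_{\xi(\iih)}(U)$ is $\widehat\xi$-measurable and that the disintegration identity $\int\widetilde\mu_{\xi(\iih)}(U)\,d\mu_{\mathbf q}(\iih)=\mu_{\mathbf q}(U)$ holds on cylinders $U$ reduces, via the product structure of $\mu_{\mathbf q}$, to the disintegration identity \eqref{a09} already known for $\mu_{\xi(\sigma\iih)}$. Rokhlin's uniqueness then forces $\widetilde\mu_{\xi(\iih)}=\mu_{\xi(\iih)}$ almost everywhere, which is precisely the compatibility identity.
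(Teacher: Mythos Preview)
Your overall strategy—reducing both NDD and WAUC to the point-mass condition $\mu_{\xi(\iih)}=\delta_{\iih}$ a.e.—is exactly what the paper does, and your argument that point-mass implies WAUC matches the paper's. Two points deserve comment.

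First, you only argue NDD $\Rightarrow$ WAUC; the converse is missing. It is short: given the WAUC set $\mathcal{B}_{\mathcal H}$, disintegrate $0=\mu_{\mathbf q}(\mathcal{B}_{\mathcal H})=\int \mu_{\xi(\iih)}(\mathcal{B}_{\mathcal H})\,d\mu_{\mathbf q}$ to get $\mu_{\xi(\iih)}(\mathcal{B}_{\mathcal H})=0$ a.e.; since $\xi(\iih)\setminus\mathcal{B}_{\mathcal H}=\{\iih\}$ for $\iih\notin\mathcal{B}_{\mathcal H}$, this forces $\mu_{\xi(\iih)}(\{\iih\})=1$ a.e., hence $H_{\mathbf q}=0$.

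Second, your Rokhlin argument for the compatibility identity is not quite right as written: your candidate $\widetilde\mu_{\xi(\iih)}$ has total mass $\mu_{\xi(\iih)}([\ih_1])$, so it is not a probability disintegration and Rokhlin uniqueness cannot give $\widetilde\mu_{\xi(\iih)}=\mu_{\xi(\iih)}$ unless that mass is $1$. The clean fix is to disintegrate with respect to the refined partition $\xi\vee\{[1],\dots,[M]\}$: on each $[k]$ the shift is a measure isomorphism carrying this partition to $\xi$, which gives $\mu_{(\xi\vee P_1)(\iih)}=(\sigma|_{[\ih_1]})^{-1}_*\mu_{\xi(\sigma\iih)}$, and the refinement formula $\mu_{(\xi\vee P_1)(\iih)}=\mu_{\xi(\iih)}|_{[\ih_1]}/\mu_{\xi(\iih)}([\ih_1])$ then yields your identity. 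Alternatively, note that for the induction you only ever use the identity when $\mu_{\xi(\iih)}([\ih_1])=1$, and under that hypothesis your candidate \emph{is} a probability measure (and $\ih_1$ is a.e.\ determined by $\xi(\iih)$, so the family is $\widehat\xi$-measurable), so your argument does go through for the application at hand.

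The paper avoids the compatibility identity altogether: to pass from $\mu_{\xi(\iih)}([\ih_1])=1$ to $\mu_{\xi(\iih)}([\ih_1\cdots\ih_n])=1$ it simply reapplies the dimension formula to the iterated IFS $\mathcal{H}^{(n)}=\{h_{\ih_1\cdots\ih_n}\}$ with the same Bernoulli measure (viewed on $n$-blocks). Since entropy and Lyapunov exponent both scale by $n$, NDD for $\mathcal H$ is equivalent to NDD for $\mathcal H^{(n)}$, and the first-level cylinders of $\mathcal H^{(n)}$ are exactly the $n$-cylinders of $\mathcal H$. This is shorter; your route is more hands-on and exposes the self-similar structure of the conditional measures.
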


Let $\delta_{\iih}$ denote the Dirac-delta measure concentrated on the point $\iih\in\Sigma_{\mathcal{H}}$. We show the assertion in two steps. Namely, we prove that
\begin{equation}\label{eq:StrategyNDD_WAUC}
\mathrm{NDD} \;\;\Longleftrightarrow\;\; \mu_{\xi(\iih)}=\delta_{\iih} \text{ for } \mu_{\mathbf q}\text{-a.e. } \iih\in\Sigma_{\mathcal{H}} \;\;\Longleftrightarrow\;\; \mathrm{WAUC}.
\end{equation}

\begin{proof}[Proof of first equivalence in \eqref{eq:StrategyNDD_WAUC} ]
	The result of B\'ar\'any--K\"aenm\"aki \cite[Theorem 2.3.]{BARANYAnti201788} for dimension $d=1$ states that for every Bernoulli measure $\mu_{\mathbf q}$ its push forward $\nu_{\mathbf q}$ is exact dimensional. Moreover,
	\begin{equation*}
	\dim_{\rm H} \nu_{\mathbf q}=
	\frac{h_{\mu_{\mathbf q}}-H}{\chi_{\nu_{\mathbf q}}}, \,\text{ where }\, H=-\int \log \mu_{\xi(\iih)}([\ih_1])\mathrm{d}\mu_{\mathbf q}(\iih)\geq 0.
	\end{equation*}
	From the definition of NDD we get that
	\begin{equation*}
	\mathrm{NDD} \;\;\Longleftrightarrow\;\; H=0 \;\;\Longleftrightarrow\;\; \mu_{\xi(\iih)}([\ih_1])=1 \text{ for } \mu_{\mathbf q}\text{-a.e. } \iih\in\Sigma_{\mathcal{H}}.
	\end{equation*}
	Thus it suffices to show that 
	\begin{equation}\label{a12}
	\mu_{\xi(\iih)}([\ih_1])=1 \text{ for } \mu_{\mathbf q}\text{-a.e. } \iih\in\Sigma_{\mathcal{H}} 
	\;\Longleftrightarrow\;
	\mu_{\xi(\iih)}=\delta_{\iih} \text{ for } \mu_{\mathbf q}\text{-a.e. } \iih\in\Sigma_{\mathcal{H}}.
	\end{equation}
	The $\Longleftarrow$ direction in \eqref{a12} is obvious.
	In the other direction we show that for $\mu_{\mathbf q}$-a.e. $\iih\in\Sigma_{\mathcal{H}}$
	\begin{equation*}
	\mu_{\xi(\iih)}([\ih_1])=1  \;\;\Longrightarrow\;\; \mu_{\xi(\iih)}([\ih_1\ldots\ih_n])=1 \text{ for every } n   \;\;\Longrightarrow\;\; \mu_{\xi(\iih)}=\delta_{\iih}.
	\end{equation*}
	To see the first implication fix $n$. Let $\mathcal{H}^{(n)}:=\{h_{\ih_1\ldots\ih_n}:\, (\ih_1\ldots\ih_n)\in~\{1,\ldots,M\}^n\}$ and $\Sigma_{\mathcal{H}}^{(n)}$ be the symbolic space of infinite sequences of $n$-tuples $(\ih_1\ldots\ih_n)$. There is a natural one-to-one bijection between the elements of $\Sigma_{\mathcal{H}}$ and $\Sigma_{\mathcal{H}}^{(n)}$. A Bernoulli measure $\mu_{\mathbf q}$ on $\Sigma_{\mathcal{H}}$ naturally defines a Bernoulli measure $\mu_{\mathbf{q}}^{(n)}$ on $\Sigma_{\mathcal{H}}^{(n)}$ by $\mu_{\mathbf{q}}^{(n)}([\ih_1\ldots\ih_n])=\prod_{j=1}^n\mu_{\mathbf q}([\ih_j])$. Applying \cite[Theorem 2.3.]{BARANYAnti201788} to this system yields the first implication. The second implication follows from the Monotone Convergence Theorem
	\begin{equation*}
	1=\lim_{n\to\infty}\mu_{\xi(\iih)}([\ih_1\ldots\ih_n]) = \mu_{\xi(\iih)}(\iih) \;\Longrightarrow\; \mu_{\xi(\iih)}=\delta_{\iih} \text{ for } \mu_{\mathbf q}\text{-a.e. } \iih\in\Sigma_{\mathcal{H}}.
	\end{equation*}
\end{proof}

\begin{proof}[Proof of second equivalence in \eqref{eq:StrategyNDD_WAUC}]\
	
	\begin{description}
		\item[$\Longrightarrow$ direction] We claim that the conditions in the definition of WAUC are satisfied with
		\begin{equation*}\label{a08}
		\mathcal{B}_{\mathcal{H}}:=
		(\Sigma_{\mathcal H}\setminus \widehat{\Sigma}_{\mathcal H})
		\bigcup
		\left\{
		\iih\in\Sigma_{\mathcal{H}}
		:
		\mu_{\xi(\iih)}\ne\delta_{\iih}
		\right\}.
		\end{equation*}
		By assumption $\mu_{\mathbf q}(\mathcal{B}_\mathcal{H})=0$. Moreover, for any $\iih\in\Sigma_{\mathcal{H}}\setminus\mathcal{B}_{\mathcal{H}}$ we have $\iih\in\widehat{\Sigma}_{\mathcal{H}}$, so the probability measure $\mu_{\xi(\iih)}$ exists and
		$  \mu_{\xi(\iih)}=\delta_{\iih}$.
		If  $\jjh\in \xi(\iih)\setminus \mathcal{B}_{\mathcal{H}}$ then $\xi(\jjh)=\xi(\iih)$, thus
		$$
		\delta_{\iih}=\mu_{\xi(\iih)}=\mu_{\xi(\jjh)}=\delta_{\jjh}.
		$$
		That is $\iih=\jjh$. We showed that for every  $\iih\in\Sigma_{\mathcal{H}}\setminus\mathcal{B}_{\mathcal{H}}:\; \xi(\iih)\setminus\mathcal{B}_\mathcal{H}=\left\{\iih\right\}$.
		
		\item[$\Longleftarrow$ direction] Clearly, WAUC is equivalent to the existence of $\mathcal{B}_{\mathcal{H}}$
		with $\mu_{\mathbf{q}}(\mathcal{B}_{\mathcal{H}})=0$ such that
		\begin{equation}\label{a06}
		\mbox{ if } \iih\not\in\mathcal{B}_{\mathcal{H}} \mbox{ then }
		\xi(\iih)\setminus \mathcal{B}_{\mathcal{H}}=\left\{\iih\right\}.
		\end{equation}
		Using \eqref{a09} for  $\mathcal{B}_{\mathcal{H}}$ we obtain that
		the set
		$$
		\widehat{\Sigma}_{\mathcal{H}}:=
		\left\{\iih\in\widehat{\Sigma}_{\mathcal{H}}:
		\mu_{\xi(\iih)}(\mathcal{B}_{\mathcal{H}})=0
		\right\}
		$$
		has full measure:
		\begin{equation}\label{a05}
		\mu_{\mathbf{q}}(\widehat{\Sigma}_{\mathcal{H}})=1,
		\end{equation}
		where we remind the reader that $\widehat{\Sigma}_{\mathcal{H}}$ is the set of those $\iih\in\Sigma_{\mathcal{H}}$ for which the conditional probability measure $\mu_{\xi(\iih)}$ exists. Assume that $\iih\in \widehat{\Sigma}_{\mathcal{H}}$. Then
		\begin{equation*}\label{a04}
		\mu_{\xi(\iih)} \left(\xi(\iih)\setminus\mathcal{B}_{\mathcal{H}}\right)=1
		\mbox{ and by \eqref{a06}: }
		\xi(\iih)\setminus\mathcal{B}_{\mathcal{H}}=\left\{\iih\right\}.
		\end{equation*}
		That is $\mu_{\xi(\iih)}(\left\{\iih\right\})=1$ whenever $\iih\in \widehat{\Sigma}_{\mathcal{H}}$. Combining this with \eqref{a05} we
		get that for a $\mu_{\mathbf{q}}$-full measure set of $\iih$ we have
		$\mu_{\xi(\iih)}=\delta_{\iih}$.
	\end{description}
	
\end{proof}

\subsection*{Acknowledgment}
\begin{wrapfigure}{r}{0.1\textwidth} 
	\centering\vspace{-0.4cm}
	\includegraphics[width=0.1\textwidth]{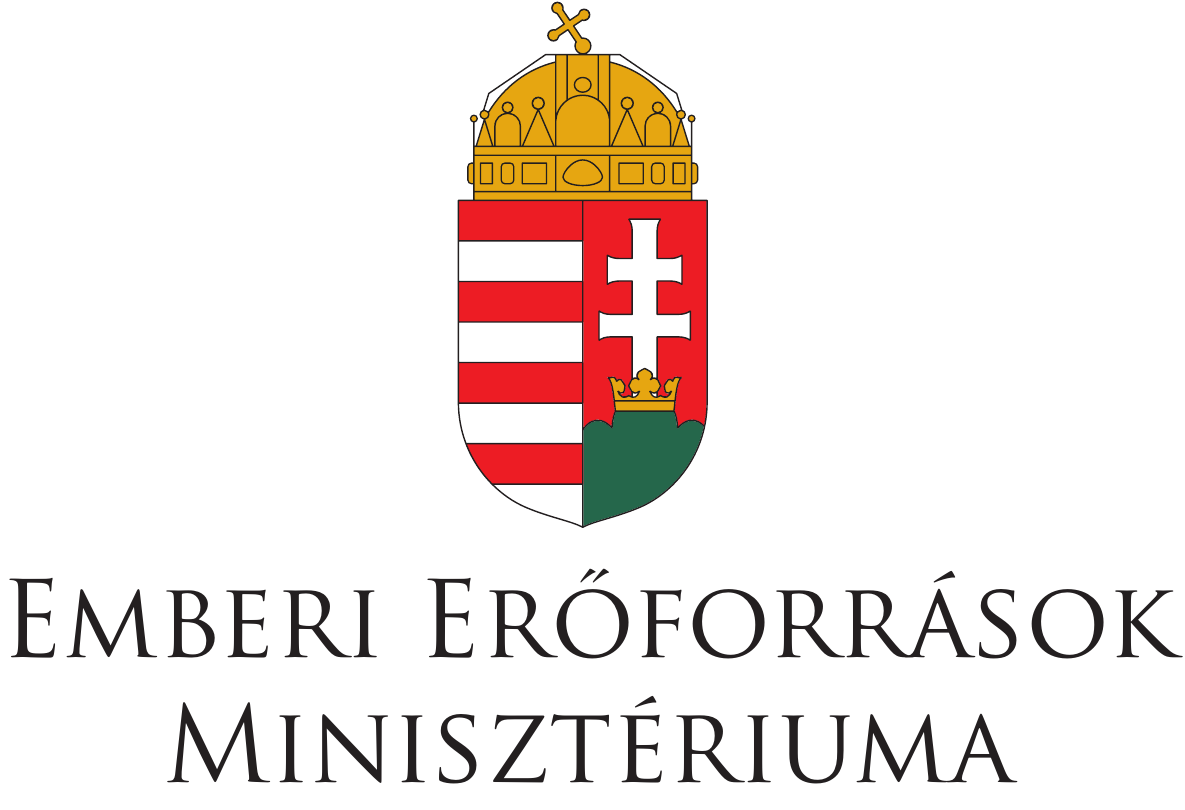}
\end{wrapfigure}
Both authors acknowledge support from the grant OTKA K123782 and MTA-BME Stochastic Research Group. Furthermore, I.K. was partially supported by the \'UNKP--17--3--IV. New National Excellence Program of the Ministry of Human Capacities.
\vspace{0.2cm}

\bibliographystyle{abbrv}
\bibliography{biblio_carpet}

\end{document}